\theoremstyle{plain}
\newtheorem{theorem}[subsubsection]{Theorem}
\newtheorem*{theorem*}{Theorem}
\newtheorem{proposition}[subsubsection]{Proposition}
\newtheorem*{proposition*}{Proposition}
\newtheorem{lemma}[subsubsection]{Lemma}
\newtheorem*{lemma*}{Lemma}
\newtheorem{corollary}[subsubsection]{Corollary}
\newtheorem*{corollary*}{Corollary}
\theoremstyle{definition}
\newtheorem{definition}[subsubsection]{Definition}
\theoremstyle{remark}
\newtheorem{remark}[subsubsection]{Remark}
\newtheorem{observation}[subsubsection]{Observation}
\newtheorem{example}[subsubsection]{Example}
\renewcommand{\comment}[1] {  }
\DeclareFontFamily{OT1}{rsfs}{}
\DeclareFontShape{OT1}{rsfs}{n}{it}{<-> rsfs10}{}
\DeclareMathAlphabet{\mathscr}{OT1}{rsfs}{n}{it}
\newcommand{\Res}{\mathrm{Res}}
\newcommand{\Z}{\mathbb{Z}}
\newcommand{\C}{\mathcal{C}}
\newcommand{\adele}{{\mathbb{A}_k}}
\newcommand{\CC}{\mathbb{C}}
\newcommand{\RR}{\mathbb{R}}
\newcommand{\Rplus}{{\RR^\times_+}}
\newcommand{\QQ}{\mathbb{Q}}
\newcommand{\Ind}{\operatorname{Ind}}
\newcommand{\Hom}{\operatorname{Hom}}
\newcommand{\End}{\operatorname{End}}
\newcommand{\Aut}{{\operatorname{Aut}}}
\newcommand{\Gm}{\mathbb{G}_m}
\newcommand{\Ga}{\mathbb{G}_a}
\newcommand{\PGL}{\operatorname{PGL}}
\newcommand{\SL}{\operatorname{SL}}
\newcommand{\tr}{\operatorname{tr}}
\newcommand{\spec}{\operatorname{spec}}
\newcommand{\Vol}{\operatorname{Vol}}
\newcommand{\diag}{{\operatorname{diag}}}
\newcommand{\adiag}{{\operatorname{adiag}}}
\newcommand{\disc}{{\operatorname{disc}}}
\newcommand{\Id}{\operatorname{Id}}
\newcommand{\Eis}{{\operatorname{Eis}}}
\newcommand{\cusp}{{\operatorname{cusp}}}
\newcommand{\pr}{{\operatorname{prEis}}}
\newcommand{\npr}{{\operatorname{npr}}}
\newcommand{\TF}{{\operatorname{TF}}}
\newcommand{\rest}{{\operatorname{rest}}}
\newcommand{\PW}{{\operatorname{PW}}}
\newcommand{\Laur}{{\operatorname{Laur}}}
\newcommand{\Nash}{{\operatorname{Nash}}}
\newcommand{\br}{{\operatorname{broad}}}
\renewcommand{\top}{{\operatorname{top}}}
\newcommand{\Dfrac}[2]{ 
  \ooalign{ 
    $\genfrac{}{}{1.2pt}0{#1}{#2}$\cr 
    $\color{white}\genfrac{}{}{.4pt}0{\phantom{#1}}{\phantom{#2}}$} 
}
\newcommand{\subjclass}[2][1991]{ 
  \let\@oldtitle\@title 
  \gdef\@title{\@oldtitle\footnotetext{#1 \emph{Mathematics subject classification.} #2}} 
}
\newcommand{\keywords}[1]{ 
  \let\@@oldtitle\@title 
  \gdef\@title{\@@oldtitle\footnotetext{\emph{Key words and phrases.} #1.}} 
}
\begin{document}

\swapnumbers

\numberwithin{equation}{section}
\setcounter{tocdepth}{2}
\title{The Selberg trace formula revisited}
\author{Yiannis Sakellaridis \footnote{
Department of Mathematics and Computer Science, Rutgers University at Newark, 101 Warren Street, Smith Hall 216, Newark, NJ 07102. 
Email: \texttt{sakellar@rutgers.edu}.}}

\subjclass[2010]{11F70}
\keywords{Selberg trace formula, automorphic forms, Plancherel formula, asymptotically finite functions}

\date{}

\maketitle

\begin{abstract}
A new approach to the Selberg trace formula, and more precisely its spectral side, is developed. The approach relies on a notion of ``Plancherel decomposition'' of ``asymptotically finite functions'', and may generalize to obtain a general relative trace formula. This is an incomplete first version that will be complemented by an account of the invariant trace formula.
\end{abstract}

\tableofcontents

\section{Introduction}

\subsection{The Selberg trace formula}

Let $H= \PGL_2$ over a global field $k$. For the introduction, we assume that $k=\QQ$, and for the rest of the paper we work with $k$: a number field, but the case of a function field is at worst verbatim, and often easier. We denote by $\adele$ the ring of adeles of $k$, and by $[H]$ the automorphic quotient $H(k)\backslash H(\adele)$ (and similarly for other groups).

The Selberg trace formula computes a number that ``does not exist'' (the trace of the operator $\mathcal R(\Phi dg)$, where $\mathcal R$ denotes the action of $H(\adele)$ on $L^2([H])$ and $\Phi$ is the space $\mathcal S(H(\adele))$ of Schwartz functions) by ``truncating'' the divergent portions of this number in both its geometric and its spectral expansions. 

In its non-invariant form, it arrives at a distribution on $H(\adele)$, which we will denote as $\TF_0$; the spectral side reads:
$$\TF_0(\Phi) =  \sum_{\pi \in \hat H^\Aut_\disc} \tr(\pi(\Phi)) + $$
\begin{eqnarray} + \frac{1}{4} \tr (M(0) \pi_0(\Phi))-  \frac{1}{4\pi i} \int_{-i\infty}^{i\infty} \tr(M(-z)M'(z)\pi_{z}(\Phi)) dz. \label{Selberg-spectral}\end{eqnarray}

The first sum is indexed by the discrete automorphic spectrum of $H$ (which in this case consists of the cuspidal representations and the quadratic idele class characters), while for the second line the notation is as follows:

Let $[H]_B = A(k)N(\adele)\backslash H(\adele)$, the ``boundary degeneration'' of $[H]$, where $B$ denotes a $k$-Borel subgroup, $N$ its unipotent radical, and $A$ the universal Cartan $A=B/N$. (The space is independent of a choice of $B$.) When we write, simply, $A$ we will mean the connected Lie group $A(\RR)^0 \simeq \RR^\times_+$. We consider the normalized action of the group $[A]$ (and hence also of $A$) on functions on $[H]_B$: 
\begin{equation}\label{action-normalized-H}
a\cdot f (x) = \delta^{-\frac{1}{2}}(a) f(ax).
\end{equation}

Then $\pi_s = C^\infty(A\backslash [H]_B, \delta^{\frac{s}{2}})$, the space of smooth functions on $[H]_B$ which are $\delta^{\frac{s}{2}}$-equivariant under the action of $A$, where $\delta$ is the modular character of the Borel subgroup. (Under the unnormalized action this would be $\delta^{\frac{1+s}{2}}$; in particular, $\Re(s) =0$ corresponds to unitary induction.)

Finally, $M(s): \pi_s \to \pi_{-s}$ is the standard intertwining operator:
$$ (M(s)f) (x) = \int_{N(\adele)} f(wnx) dn,$$
where $w = \begin{pmatrix}
 & 1 \\
 -1 &
\end{pmatrix}.$

Obviously, the representations $\pi_s$ are reducible, and one can further decompose with respect to characters of $[A]^1=$ the group of idele classes of norm one, but this is a compact group and the decomposition of \eqref{Selberg-spectral} to its eigencharacters is a trivial matter.

The purpose of this paper is to revisit the Selberg trace formula and give a conceptual explanation for the terms in \eqref{Selberg-spectral}. This approach should make it easier to develop the analogous theory for the general relative trace formula, something that has not been done yet. Of course, this is a long-term undertaking, as demonstrated by the fact that it contains Arthur's general trace formulas as special cases.

\subsection{Overview of the method}

The main idea behind the proof is to compute the Hilbert--Schmidt inner product of two convolution operators defined by Schwartz functions $\Phi_1$, $\Phi_2$ on $H$, instead of the trace of the convolution $\Phi =\Phi_1^\vee \star \Phi_2$, where $\Phi_1^\vee (g) = \Phi_1 (g^{-1})$. (The ``inner products'' in this paper are, actually, bilinear, by abuse of language.) This corresponds to viewing the group $X=H=\PGL_2$ as a $G=H\times H$-space (by left and right multiplication), and expressing the trace formula as the relative trace formula for the quotient $(X\times X)/G^\diag$. Formally, one ends up having to compute the inner product (on $[G]$) of the two kernel functions for the operators $\mathcal R(\Phi_i dg)$:
\begin{equation}\label{Kformal} \left<K_{\Phi_1},K_{\Phi_2}\right>.\end{equation}

The key point now is understanding what makes this inner product diverge: the kernel functions are \emph{``asymptotically finite functions on $[G]$ with trivial exponents''}, s.\ \S \ref{ssSchwartz} and Proposition \ref{propasfinite} -- this, essentially, means that in some directions towards infinity they behave like eigenfunctions of a multiplicative group, and otherwise are of rapid decay. (In the case of rank one, this is the notion of ``extended Schwartz space'' introduced by Casselman \cite{Cas-extended}.) If the exponents (= normalized eigencharacters) were not trivial, one would be able to define a regularized, invariant inner product of the two kernel functions (thus arriving at a canonical invariant trace formula).

However, the exponents being trivial (in a normalized sense), means that no such regularized inner product exists. Thus, the Selberg trace formula is really the \emph{constant coefficient} of the Laurent expansion that one obtains by \emph{deforming} exponents and calculating regularized inner products:
\begin{equation}\label{Kformal-s} \left<K_{\Phi_1},(K_{\Phi_2})_s\right>,\end{equation}
where $(K_{\Phi_2})_s$ is a deformation of $K_{\Phi_2}$ with exponent $s$, cf.\  \S \ref{ssvariation}. Because the expansion has a pole, this coefficient is non-invariant. Of course, this is an alternative way of producing the same expression as with truncation methods, s.\ Lemma \ref{constantcoeffs}.

Nonetheless, viewing the trace formula in terms of regularized inner products allows us to define and compute its spectral decomposition as a ``Plancherel formula'' for the inner product of two asymptotically finite functions. To develop such a Plancherel formula, one can employ the usual techniques of the Plancherel decomposition, due to Selberg and Langlands: pseudo-Eisenstein series, and contour shifts. The input for pseudo-Eisenstein series will now also be asymptotically finite functions (on the boundary degeneration $[G]_B= [H]_B \times [H]_B$), but otherwise the argument is formally the same. I develop such a Plancherel formula in complete generality in rank one, s.\ Theorem \ref{Plancherelrankone}. This is not only for ``training purposes'': such a Plancherel formula for asymptotically finite functions on $[H]$ can be applied to obtain the spectral decomposition of a relative trace formula for $\PGL_2$ -- s.\ Example \ref{torus-RTF}. In rank two, to simplify the analysis I restrict the Plancherel decomposition to the case where one of the two functions of the inner product is a kernel function, s.\ Theorem \ref{spectral-general-ranktwo}. However, it is essential to allow the second function to be general, in order to be able to approximate it by pseudo-Eisenstein series.

The spectral decomposition gives a conceptual explanation for the terms of \eqref{Selberg-spectral}: the term $\frac{1}{4} \tr (M(0) \pi_0(\Phi))$, for example, shows up in the decomposition of the inner product of two functions on $A^2 = \Rplus^2$: one is invariant under the anti-diagonal copy $A^\adiag$, and the other is asymptotically invariant under $A^\diag$; the Plancherel formula then includes a discrete term corresponding to the only character of $A^2$ that is invariant under both $A^\diag$ and $A^\adiag$: the trivial one. To be precise, this describes the restriction to $A\times A$-orbits (times the appropriate volume factor) of two functions that live on $[H]_B\times [H]_B$. The fact that such functions arise in the spectral decomposition can be understood using the language of scattering theory: the constant term of a function on $[G]$ is invariant under an action of the Weyl group $\mathbb Z/2 \times \mathbb Z/2$ of $G$ expressed in terms of certain ``scattering operators''. (Those will not appear explicitly, but as standard intertwining operators acting on Mellin transforms, s.\ Corollary \ref{constantterm-s-G}.) In particular, the constant term of a function on $[G]$ which has exponents in the $A^\diag$-direction, also has the same exponents along the $A^\adiag$-direction. Of course, in the case of kernel functions, constant terms are very explicit, s.\ Proposition \ref{constantterm-kernel}. 

The integrand $\tr(M(-z)M'(z)\pi_{z}(\Phi))$ directly reflects the definition of the trace formula as the constant coefficient of the Laurent expansion of \eqref{Kformal-s} -- equivalently, as the derivative of the Taylor expansion that we obtain by canceling the pole. More precisely, the (relatively arbitary) process chosen to deform the kernel $K_{\Phi_2}$ to a function $(K_{\Phi_2})_s$ with different exponent involves multiplication on the second variable (close to the cusp) by $\delta^\frac{s}{2}$, where $\delta$ is the modular character of the Borel viewed as a function on $[H]$ by fixing an Iwasawa decomposition. (The trace formula depends on the choice of such a decomposition.) Comparing the constant term of $K_{\Phi_2}$ with the constant term of its modification involves replacing the operator $M_1(z) M_2(z)$, where $M_1$, $M_2$ are the aforementioned intertwining operators applied to the first, resp.\ second variable of $[G]_B = [H]_B \times [H]_B$, by $M_1(z) M_2(z+s)$, and taking derivatives one obtains the operator $M'(z)$. This is just an impressionistic explanation of the phenomenon, as the actual argument involves a variation of the input of a pseudo-Eisenstein series, s.\ \S \ref{ssdiagonal}.

On a technical note, for the contour shift I rely on a result of Harish-Chandra, which states that intertwining operators are of polynomial growth away from their poles, in any bounded vertical strip in the region $\Re(s)\ge 0$. This may be the biggest technical obstacle in generalizing this method to higher rank without hard analysis -- but it is conceivable that the result of Harish-Chandra can be generalized. Fortunately, no such estimates on Eisenstein series are needed, which are signifantly more complicated, as they are related to logarithmic derivatives of intertwining operators and, hence, zeroes of $L$-functions. The trace formula that we obtain by this method is absolutely convergent from the beginning -- no hard estimates are needed to establish this.

Incidentally, such ``asymptotically finite'' behavior appears for all ``theta series'' on smooth affine varieties with an action of a reductive group, see \cite[\S 5]{SaStacks}. Thus, developing a general Plancherel formula for asymptotically finite functions would give rise to a general relative trace formula, which does not exist yet. This was part of the motivation for the present paper -- another one being that I never understood the spectral decomposition of the Selberg trace formula.

\subsection{Notational habits}

In this paper, the symbol $\mathcal S$ is reserved for various spaces of \emph{Schwartz functions}, that is, smooth functions which are of rapid decay together with their polynomial derivatives. Extensions of such spaces are denoted by $\mathcal S^+$, possibly with the appropriate indices. The definitions are given in \S \ref{ssSchwartz}.

At several points, it would be more appropriate to be working with measures, instead of functions. For example, for the group $H=\PGL_2$, setting $G=\PGL_2\times\PGL_2$ and considering $H$ as a space with a right $G$-action via 
\begin{equation}h\cdot (h_1,h_2) = h_1^{-1} h h_2,
\end{equation}
we have an isomorphism of stacks $H/H-\operatorname{conj} = (H\times H)/G^\diag$, and this induces a canonical isomorphism of the $H(F)$-conjugacy-coinvariant space of Schwartz measures on $H(F)$ (for any local field $F$) with the $G^\diag(F)$-coinvariant space of Schwartz measures on $H(F)\times H(F)$, s.\ \cite{SaStacks}. Since here we are working with Schwartz functions, we need to fix measures, and some compatibility conditions on the choices of measures are described in \S \ref{sspreparation}.

For the convenience of the reader, I systematically use similar symbols for functions on similar spaces. That includes:
\begin{itemize}
 \item $\Phi$ for functions on $H(\adele)$, $G(\adele)$;
 \item $\varphi$ for functions on $[H]$, $[G]$;
 \item $f$ for functions on $[H]_B$, $[G]_B$.
\end{itemize}

\subsection{Acknowledgements} 

This proof of the trace formula was presented as part of a course that I taught at the University of Chicago in the Spring Quarter of 2017. I am grateful to the University of Chicago for the invitation, and to the audience of this course for patiently enduring a bad first draft of this paper and the technicalities that come with the trace formula. I would also like to thank Bill Casselman and Erez Lapid for discussing some of these technicalities with me, and providing references that helped me simplify some arguments. This work was supported by 
NSF grant DMS-1502270.

\section{Cusps, asymptotically finite functions, and approximation by constant term}

What follows is an adaptation of the formalism of asymptotically finite functions of \cite{SaStacks} and of the well-known theory of the constant term to the special, simple, case of $[H]=[\PGL_2]$ and $[G]=[\PGL_2]\times [\PGL_2]$.

\subsection{Compactifications} \label{sscompactifications}

We start with $[H]=[\PGL_2]$. Let $A\simeq \Gm$ be the universal Cartan of $H$, that is: the reductive quotient $B/N$ of any Borel subgroup. By abuse of notation, we will also be using $A$ for $\RR^\times_+=$ the identity component of $\Gm(\RR) \subset \Res_{k/\mathbb Q} A (\RR)$, in other words the diagonal copy of $\RR^\times_+$ in $A(k_\infty)$. There should be no confusion, since it is $A(k_\infty)$, not $A$ as an algebraic group, which acts on quotients such as:
$$[H]_B:= A(k)N(\adele)\backslash H(\adele),$$
the ``boundary degeneration'' of $[H]$. In the function field case, the reader should choose a place corresponding to a point defined over the finite base field, and make the necessary translations, replacing $\RR$ by the corresponding localization of $k$.

The \emph{reductive Borel--Serre compactification} $\overline{[H]}$ of $[H]$ is the union of $[H]$ with the space $\infty_B := A\backslash [H]_B$. To see how they are glued, consider the diagram:
\begin{equation}\label{leftright-H} \xymatrix{
& B(k)\backslash H(\adele) \ar[dl]_{\pi_H}\ar[dr]^{\pi_B}& \\
[H] && [H]_B.
}\end{equation}
The classical picture is:
$$\xymatrix{
& \Gamma_\infty\backslash \mathcal H \ar[dl]_{\pi_H}\ar[dr]^{\pi_B}& \\
\Gamma\backslash \mathcal H && N\backslash \mathcal H.
}$$
Here $\mathcal H$ is the complex upper half plane, $\Gamma$ an arithmetic lattice in $\SL_2(\RR)$, $N\subset \SL_2(\RR)$ the subgroup of upper triangular matrices, acting by horizontal translation, and $\Gamma_\infty = \Gamma \cap N$.

In this diagram, the left arrow is an isomorphism in a neighborhood of the cusp. (A rigorous definition of what ``cusp'' means is coming up.) We will call such a neighborhood a ``Siegel neighborhood''. (Notice that this is \emph{different} from a ``Siegel set'', which is supposed to cover all of $[H]$.) On the other hand, the space $\infty_B=A\backslash [H]_B$ is naturally glued to $[H]_B$ ``at the cusp'': simply consider
\begin{equation}\label{embeddingHB} \RR_{\ge 0}\times^{A} [H]_B,\end{equation}
where we have identified $A$ with $\RR^\times_+$ by an \emph{anti-dominant} cocharacter (with respect to $B$).\footnote{This identification $A\simeq \RR^\times_+$ is convenient for the description of compatifications (because it is more convenient to embed $\Rplus$ into $\RR$, instead of $\Rplus \cup\{\infty\}$, but it contradicts standard choices for the spectral decomposition. Therefore, later in this paper we will be working with the identification $A\simeq \Rplus$ defined by the \emph{dominant} character $\delta^\frac{1}{2}$.} Then the cusp is $\infty_B = \{0\}\times^A [H]_B$. (The opposite ``infinity'' in $[H]_B$, the one we would get if we were glueing $\infty$ to $\Rplus$, instead of $0$, will be called the ``funnel''.) We can then glue $A\backslash [H]_B$ to $B(k)\backslash H(\adele) $ obtaining an embedding $\overline{B(k)\backslash H(\adele)}$, with the topology generated by the open subsets of the latter, and the preimage of open subsets of $\RR_{\ge 0}\times^{A} [H]_B$, and use the identification of $B(k)\backslash H(\adele) $ with $[H]$ close to the cusp to construct the reductive Borel--Serre compactification $\overline{[H]}$. As mentioned before, a \emph{Siegel neighborhood} $\mathscr S_B$ of the cusp will be a neighborhood of $\infty_B$ in $\overline{[H]}$, intersected with $[H]$,\footnote{Throughout the paper, when we mention neighborhoods of the cusp in various compactifications, we will be identifying them with intersections with the open orbit ($[H],[G],[H]_B$ etc.); thus, for example, a ``compact neighborhood of the cusp in $\overline{[H]}$'' is the intersection with $[H]$ of a compact neighborhood of the cusp in $\overline{[H]}$.} with the property that it maps isomorphically onto its image under the map $\pi_G$ of \eqref{leftright-H}.

Now we move to $[G]=[H]\times[H]$. Of course, it has a reductive Borel--Serre compactification $\overline{[G]} := \overline{[H]}\times \overline{[H]}$, but this is not the one that will be relevant for our purposes. A general theory of ``equivariant toroidal'' compactifications was presented in \cite{SaStacks}; here, I describe only one of them, denoted $\overline{[G]}^D$ (for ``diagonal''), which will be useful for analyzing kernel functions. It is obtained by blowing up $\overline{[G]}$ at the product $\infty_B\times \infty_B$ of the two cusps, and then removing $\infty_B  \times [H] \cup [H]\times \infty_B$. Another way to describe it is to consider the partial compactification $Y$ of $\RR^\times_+ \times \RR^\times_+$ obtained by adding a divisor in the direction of the cocharacter $t\mapsto (t,t)$ (as $t\to 0$); it is isomorphic to $\RR^\times_+ \times \RR_{\ge 0}$ after the non-canonical change of variables $(u,v) = (\frac{x}{y}, x)$ or $(u,v)=(\frac{x}{y}, y)$. We then let
\begin{equation}\label{embeddingGB}
 \overline{[G]_B}^D := Y \times^{A\times A} [G]_B, 
\end{equation}
where\footnote{We should really be writing $[G]_{B\times B}$ here, since $B$ denotes a Borel subgroup of $H$, but for notational convenience we write just $[G]_B$. The same convention applies to constant terms.} $[G]_B = [H]_B \times [H]_B$, and glue the orbit at infinity to $[G]$ as before, using the diagram \eqref{leftright-H} for $[H]\times[H]$. The orbit at infinity will be denoted by $\infty_D$; it is isomorphic to $A^\diag\backslash [G]_B$.

\begin{remark}[{``Semi-algebraic'' topology on $[H]$ and $[G]$}]

Since we will be working with Schwartz functions, it makes sense to work with a restricted ``semi-algebraic'' topology, where neighborhoods are described by polynomial inequalities. Without further mention, when talking about neighborhoods of various orbits at infinity on $\overline{[G]}$, $\overline{[G]}^D$, and the corresponding embeddings of $[G]_B$, we will mean this kind of ``semi-algebraic'' neighborhoods.

One way to describe what this means is to choose a Borel subgroup $B$ of $H$ and a good compact subgroup $K$ of $G(\adele)$ so that $G(\adele) = (B\times B)(\adele) \cdot K$; hence, $[G]_B = [A\times A]\cdot K = (A\times A) \times  ([A]^1\times [A]^1) \cdot K$ (where $[A\times A]$ is considered as a subspace of $[G]_B$ by the choice of $B$, and $[A]^1$ denotes idele classes of norm $1$). Then, our neighborhoods in $[G]_B$ should be of the form (or contain sub-neighborhoods of the form) $U=V\times ([A]^1\times [A]^1)K$, where $V$ is a subset of the real semi-algebraic torus $A\times A$  described by polynomial inequalities. By taking images via (the analog of) \eqref{leftright-H}, this induces a notion of semi-algebraic neighborhoods in the vicinity of $\infty_B\times\infty_B$ on $[G]$, and the analogous construction can be used for any class of parabolic subgroups.

This remark was not important in the rank-one case, because the orbit at infinity $\infty_B=A\backslash [H]_B$ is compact in that case, thus any neighborhood of the cusp contains a semi-algebraic neighborhood. 
\end{remark}

\subsection{Schwartz and asymptotically finite functions}\label{ssSchwartz}

We define a height function $g\mapsto \Vert g\Vert$ on $[H]$, whose precise definition is actually not important -- only its ``polynomial equivalence class'' matters. For example, we can fix a Borel subgroup $B\subset H$ and a good maximal compact subgroup $K$ of $H(\adele)$ so that $H(\adele)=B(\adele)K$, and a Siegel neighborhood $U$ of the cusp, and set:
$$ \Vert g \Vert = \begin{cases}
                    1, \mbox{ if } g\notin U;\\
                    \delta(b), \mbox{ if } g\in U\mbox{ and }g = bk,\,\, b\in B(\adele), k\in K.
                   \end{cases}$$
                   
We let $\mathcal S([H])$, the Schwartz space of $[H]$, be the space of smooth functions on $[H]$ which, together with their derivatives under the action of the Lie algebra of $H(k_\infty)$, are of rapid decay, that is:
$$ \sup_{[H]}(|X\varphi(g)| \cdot \Vert g \Vert^r) <\infty$$
for any $r$, where $X$ denotes any element of  $U(\mathfrak h)$, the universal enveloping algebra of $\mathfrak h(k_\infty) \otimes_{\RR} \CC$. Its $K$-invariants, for any open compact subgroup $K$ of the finite adeles of $H$, naturally form a nuclear Fr\'echet space; thus, $\mathcal S([H])$ is an ``$LF$-space''. In the function field case, one should replace ``rapid decay'' by compact support, and ignore references to the topology. (There is a way to define a completion of the space that has a similar topology as in the Archimedean case -- cf.\ \cite[Appendix A]{SaBE1} -- but we won't get into that.)

Similar definitions hold for the space $[H]_B$, and are left to the reader; the Schwartz functions here should be of rapid decay in both the direction of the cusp and the direction of the ``funnel''. Similar definitions also hold for the spaces $[G]$, $[G]_B$, etc., and we have: $\mathcal S([G])=\mathcal S([H])\hat\otimes\mathcal S([H])$.
For a \emph{closed} subset $U$ of either of these spaces (say, $[H]$), we denote by $\mathcal S(U)$ the space of \emph{restrictions} of Schwartz functions on the ambient space, endowed with the quotient topology for $\mathcal S([H])\to \mathcal S(U)$.

\begin{remark}\label{remarkrapid}
Throughout the paper, ``rapid decay'' will always refer to a function together with its polynomial derivatives.
\end{remark}

Let 
\begin{equation}
 \pi_s:= C^\infty(A\backslash [H]_B, \delta^{\frac{s}{2}}),
\end{equation}
the space of smooth functions on $[H]_B$ which are $\delta^{\frac{s}{2}}$-eigenfunctions with respect to the normalized action \eqref{action-normalized-H} of $A\simeq\Rplus$.

\begin{definition}\label{asymptfinite-H}
A smooth function $f$ on $[H]_B$ will be called ``asymptotically finite with exponent $s \in \CC$'' (implicitly: in the direction of the cusp) if there is an element $f^\dagger\in \pi_s$ such that $f-f^\dagger$ is of rapid decay in a compact Siegel neighborhood of the cusp, while $f$ coincides with an element of $\mathcal S([H]_B)$ away from a compact neighborhood of the cusp. The space of asymptotically finite functions on $[H]_B$ with exponent $s$ will be denoted by $\mathcal S_s^+([H]_B)$ -- its $K$-invariants, for every open compact subgroup $K$ of the finite adeles, naturally form a nuclear Fr\'echet space.

A smooth function $\varphi$ on $[H]$ will be called ``asymptotically finite with exponent $s \in \CC$'' if there is an element $\varphi^\dagger\in \pi_s$ such that
$$ \varphi|_{\mathscr S_B} - \varphi^\dagger|_{\mathscr S_B} \in \mathcal S(\mathscr S_B),$$
where $\mathscr S_B$ is any closed Siegel neighborhood of the cusp, and the functions $\varphi$, $\varphi^\dagger$ are identified as functions on this set by pulling back through the maps $\pi_H$, resp.\ $\pi_B$ of \eqref{leftright-H}. The space of asymptotically finite functions on $[H]$ with exponent $s$ will be denoted by $\mathcal S_s^+([H])$ -- its $K$-invariants, for every open compact subgroup of the finite adeles, naturally form a nuclear Fr\'echet space.
\end{definition}

Recall that the action of $A$ on functions on $[H]_B$ has been normalized as in \eqref{action-normalized-H}, so $f^\dagger$, in the above definition, satisfies $f^\dagger(ax) = \delta^{\frac{1+s}{2}}(a) f^\dagger(x)$ for $a\in A$ (and same for  $\varphi^\dagger$). The classical analog for the space $\Gamma\backslash \mathcal H$ is that the function is equal to a multiple of $y^{\frac{1+s}{2}}$, up to a function of rapid decay.

The space $\mathcal S^+_s([H])$ is naturally the space of Schwartz sections of a line bundle over the reductive Borel--Serre compactification $\overline{[H]}$. (Equivalently: smooth sections, since the space is compact.) The natural Fr\'echet space structure on $\mathcal S^+_s([H])^{K}$, mentioned in the definition, is given by the $C^\infty$-seminorms of $\varphi$ away from the cusp, of $\varphi^\dagger \in \pi_s:= C^\infty(A\backslash [H]_B, \delta^{\frac{s}{2}})$, and of $\varphi|_{\mathscr S_B} - \varphi^\dagger|_{\mathscr S_B} \in \mathcal S(\mathscr S_B)$. (See the definition preceding Remark \ref{remarkrapid} for what the Schwartz space of a closed set means! It is a quotient of the Schwartz space of an open neighborhood.) A similar description holds for the topology on $\mathcal S^+_s([H]_B)$. By the definition, we have short exact sequences:
\begin{equation}\label{short-SsHB} 0 \to \mathcal S([H]_B) \to \mathcal S^+_s([H]_B) \to \pi_s \to 0
\end{equation}
and 
\begin{equation}\label{short-SsH} 0 \to \mathcal S([H]) \to \mathcal S^+_s([H]) \to \pi_s \to 0.
\end{equation}

Now we come to $[G]=[H]\times [H]$. We will only define asymptotically finite functions with respect to the partial compactification $\overline{[G]}^D$ that we saw in the previous section. (For more general definitions, s.\ \cite{SaStacks}.) Recall that the orbit at infinity for this compactification is denoted by $\infty_D$($\simeq A^\diag\backslash [G]_B$).

 We normalize the action of $A\times A$ on $C^\infty([G]_B)=C^\infty([H]_B \times [H]_B)$ as in \eqref{action-normalized-H} (on each factor). In particular, the action of the \emph{diagonal} copy $\{a=(x,x)\}$ of $A$ is as follows:
$$ a\cdot f(x) = \delta^{-1}(a) f(ax),$$
while the action of the \emph{anti-diagonal} copy $\{a=(x,x^{-1})\}$ of $A$ is equal to the unnormalized action:
$$ a\cdot f(x) = f(ax).$$

We first consider, given an $s\in\CC$, the complex line bundle over $\infty_D$ whose sections are $(A^\diag, \delta^\frac{s}{2})$-equivariant functions on $[G]_B$ (under the normalized action); its smooth sections will simply be denoted by $C^\infty(\infty_D, \delta^\frac{s}{2})$, or by $C^\infty(A^\diag\backslash [G], \delta^\frac{s}{2})$.

The following is a very important observation:
\begin{observation}\label{observation-Schwartz}
Choose any section of the homomorphism $A\times A \to A^\diag$, and any Borel subgroup $B$ of $H$ and compact subgroup $K$ of $G$ such that $G(\adele)=(B\times B)(\adele)\cdot K$. These data give rise to an identification $[G]_B = (A\times A) \times ([A]^1\times [A]^1)K$, and a section $\sigma$ of the quotient map $[G]_B \to A^\diag\backslash [G]_B$, which we can use to trivialize the bundle defined above, i.e., for $f\in C^\infty(A^\diag\backslash [G], \delta^\frac{s}{2})$, we have $\sigma^* f \in C^\infty(A^\diag\backslash [G])$. Then, whether $\sigma^* f \in \mathcal S(A^\diag\backslash [G])$ (i.e., is of rapid decay) does not depend on the chosen section; more precisely, pull-back via $\sigma$ defines a subspace
$$ \mathcal S(A^\diag\backslash [G], \delta^\frac{s}{2}) \subset C^\infty(A^\diag\backslash [G], \delta^\frac{s}{2}),$$
with a Fr\'echet space structure that does not depend on the choices made to define $\sigma$.
\end{observation}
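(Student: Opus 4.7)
The plan is to show that any two admissible sections $\sigma_1,\sigma_2$ yield pullbacks that differ by multiplication by a smooth function $\tau$ on $A^\diag\backslash[G]_B$ taking values in a compact subset of $A^\diag\simeq\Rplus$, with all derivatives uniformly bounded. Multiplication by such a factor is a continuous automorphism of the ordinary Schwartz space $\mathcal{S}(A^\diag\backslash[G])$, which will imply both that the subspace $\mathcal{S}(A^\diag\backslash[G],\delta^{s/2})$ is well-defined inside $C^\infty(A^\diag\backslash[G],\delta^{s/2})$ and that the Fr\'echet topology pulled back via $\sigma$ is independent of the choices.

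First I would dispose of the choice of (homomorphism) section $\phi$ of $\pi\colon A\times A\to A^\diag$. Any such $\phi$ gives a decomposition $A\times A=\phi(A^\diag)\cdot\ker\pi$; the resulting section $\sigma_\phi$ of $[G]_B\to A^\diag\backslash[G]_B$ lifts the coset of $s\in\ker\pi$ to $s\in A\times A\subset[G]_B$. Since $\ker\pi$ is determined by $\pi$ alone and not by the splitting, every homomorphism section yields literally the same $\sigma_\phi$, so the problem reduces to comparing two choices of Iwasawa data $(B_1,K_1)$ and $(B_2,K_2)$.

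For the Iwasawa step, I would use that for $g\in G(\adele)$ with two decompositions $g=b_1k_1=b_2k_2$, the element $b_1^{-1}b_2=k_1k_2^{-1}$ lies in the compact set $K_1K_2^{-1}\cap B_1(\adele)B_2(\adele)$. Passing to the quotient by $N(\adele)^2$, the $A(\adele)^2$-coordinate of $g$ is perturbed by a fixed compact subset of $A(\adele)^2$; projecting onto the archimedean factor $A\times A$ and then onto $A^\diag$ yields the desired smooth function $\tau\colon A^\diag\backslash[G]_B\to A^\diag$, whose image is compact and whose derivatives in the transverse directions ($A^\adiag$ and the compact part) are uniformly bounded by the smoothness and properness of the archimedean Iwasawa projection. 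Combined with the normalized $\delta^{s/2}$-equivariance of $f$ under $A^\diag$, the relation $\sigma_1(x)=\tau(x)\sigma_2(x)$ then gives
$$\sigma_1^*f(x)=\delta^{1+s/2}(\tau(x))\cdot\sigma_2^*f(x),$$
where $\delta$ is the modular character of $B\subset H$.

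The multiplier $\delta^{1+s/2}(\tau(\cdot))$ is thus smooth, bounded, and has uniformly bounded derivatives of all orders, so multiplication by it is a continuous automorphism of $\mathcal{S}(A^\diag\backslash[G])$ and both statements of the observation follow at once. The main technical point I expect to require care is the global uniformity of $\tau$ and its derivatives: the compact perturbation of Iwasawa coordinates in $G(\adele)$ must descend to a \emph{uniformly} compact perturbation of $A^\diag$ over the entire quotient $A^\diag\backslash[G]_B$, not just fiberwise. This uses the properness of the archimedean Iwasawa projection together with the compactness of $[A]^1$ and of each $K_i$; once that is in hand, the rest of the argument is formal.
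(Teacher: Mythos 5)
Your reduction to comparing two sections and the identification of the relation $\sigma_1^*f(x)=\delta^{1+s/2}(\tau(x))\,\sigma_2^*f(x)$ is the right starting point, and your observation that the choice of splitting $\phi$ of $A\times A\to A^\diag$ is immaterial (since only $\ker\pi=A^\adiag$ enters) is fine. The gap is in the key claim that $\tau$ takes values in a compact subset of $A^\diag$.

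That claim is correct only when the two Iwasawa data share the same Borel. Your argument $b_1^{-1}b_2=k_1k_2^{-1}\in K_1K_2^{-1}$ does put $b_1^{-1}b_2$ in a compact set, but when $B_1\ne B_2$ this does \emph{not} bound the discrepancy between the two universal-Cartan coordinates, because projecting $b_1$ via $B_1\to B_1/N_1\to A$ and $b_2$ via $B_2\to B_2/N_2\to A$ are very different operations. Concretely (already for $H=\SL_2(\RR)$, one factor of $G$): take $B_1$ upper triangular, $B_2$ lower triangular, $K_1=K_2=\SO(2)$, and $g=\mathrm{diag}(t,t^{-1})$. Then $b_1=b_2=g$, so $b_1^{-1}b_2=1$ is certainly bounded; yet the $\delta^{1/2}$-coordinate of $g$ in the universal Cartan is $t^2$ via $B_1$ and $t^{-2}$ via $B_2$ (the canonical identification of $B_1/N_1$ with $B_2/N_2$ as the universal Cartan is conjugation by the Weyl element, which inverts $t$). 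The ratio $t^4$ is unbounded, so no ``uniformly compact perturbation'' of the $A\times A$-coordinate exists in this generality, and the $A^\diag$-component of the discrepancy on $A^\diag\backslash[G]_B$ need not be bounded either.

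The paper's proof is deliberately weaker and does not need boundedness: it only uses that the discrepancy is of \emph{polynomial} growth. This is the standard fact that heights defined by different Iwasawa data (Borel, maximal compact) lie in the same polynomial equivalence class (cf.\ the remark just before the definition of $\|\cdot\|$ in \S\ref{ssSchwartz}); therefore $\delta^{1+s/2}(\tau(\cdot))$ and its inverse are of polynomial growth together with all derivatives. Multiplication by such a function preserves the ordinary Schwartz space $\mathcal S(A^\diag\backslash[G])$ and gives a topological automorphism, which is all that is needed for both assertions in the Observation. So you should replace the compactness claim by a polynomial-growth estimate; with that substitution the rest of your argument goes through unchanged.
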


Indeed, this is easily seen since the character $\delta^\frac{s}{2}$ is of polynomial growth, while elements of $ S(A^\diag\backslash [G])$ decay (together with their derivatives) faster than polynomials.

\begin{definition}\label{asymptfinite-G}
A (smooth) function $f$ on $[G]_B=[H]_B\times [H]_B$ will be called ``asymptotically finite with exponent $s \in \CC$'' (with respect to the diagonal direction $D$, which will not explicitly appear in the notation) if there is an element $f^\dagger\in \Pi_s:= \mathcal S(A^\diag\backslash [G]_B, \delta^{\frac{s}{2}})$, and a compact Siegel neighborhood $U$ of the product $\infty_B\times\infty_B$ of the two cusps in $\overline{[H]}\times \overline{[H]}$, such that 
\begin{enumerate}
\item 
$f|_{U\cap [G]_B} - f^\dagger|_{U\cap [G]_B} \in \mathcal S(U\cap [G]_B)$, and
\item $f|_{[G]_B\smallsetminus \mathring U} \in  \mathcal S([G]_B\smallsetminus \mathring U).$
\end{enumerate} 
The space of asymptotically finite functions on $[G]_B$ with exponent $s$ will be denoted by $\mathcal S_s^+([G]_B)$. Its $K\times K$-invariants, for any compact open subgroup $K$ of the finite adeles of $H$, have a natural nuclear Fr\'echet space structure. 

A (smooth) function $\varphi$ on $[G]=[H]\times [H]$ will be called ``asymptotically finite with exponent $s \in \CC$'' if there is an element $\varphi^\dagger\in \Pi_s$, and a compact Siegel neighborhood $U$ of the product $\infty_B\times\infty_B$ of the two cusps in $\overline{[H]}\times \overline{[H]}$) such that:
\begin{enumerate}
\item 
$$ \varphi|_{U\cap [G]} - \varphi^\dagger|_{U \cap [G]} \in \mathcal S(U\cap [G]),$$
and
\item 
$$ \varphi|_{[G]\smallsetminus \mathring U} \in  \mathcal S([G]\smallsetminus \mathring U).$$
\end{enumerate}
The space of asymptotically finite functions on $[G]$ with exponent $s$ will be denoted by $\mathcal S_s^+([G])$. Its $K\times K$-invariants, for any compact open subgroup $K$ of the finite adeles of $H$, have a natural nuclear Fr\'echet space structure. 
\end{definition}

By the definition, we have short exact sequences
\begin{equation}\label{short-SsGB} 0 \to \mathcal S([G]_B) \to \mathcal S^+_s([G]_B) \to \Pi_s \to 0,
\end{equation}
and
\begin{equation}\label{short-SsG} 0 \to \mathcal S([G]) \to \mathcal S^+_s([G]) \to \Pi_s \to 0.
\end{equation}

\subsection{Regularized integrals} \label{ssregints}

In \cite[\S 5.6]{SaStacks}, I defined a ``regularized integral'' of asymptotically finite functions, provided the exponents are not ``critical''. In the special cases under consideration here, it translates to the following:

\begin{proposition}
 Let $s_1, s_2\in \CC$, and $\varphi_i \in \mathcal S^+_{s_i}([H])$ ($i=1,2$). Consider any holomorphic section $\CC\ni t\mapsto f_t\in \mathcal S^+_t([H])$ with $f_{s_1+s_2} =\varphi_1 \varphi_2$. Then the integral
 $$ \int_{[H]} f_t(h) dh$$
 converges for $\Re(t)\ll 0$, and has meromorphic continuation with a simple pole at $t=s_1+s_2$. If, in particular, $s_1+s_2\ne 0$, its value at $t=0$ depends only on $\varphi_1 \varphi_2$, and defines an invariant bilinear pairing 
 $$ \mathcal S^+_{s_1}([H]) \hat\otimes \mathcal S^+_{s_2}([H]), \to \CC$$
 extending the pairing $\left<\varphi_1 , \varphi_2 \right>_{[H]}  = \int_{[H]} \varphi_1 \varphi_2$ on Schwartz spaces.
 
 The same holds if we replace $[H]$ by $[H]_B$, $[G]$, or $[G]_B$, with the definitions of asympotically finite functions given previously.
\end{proposition}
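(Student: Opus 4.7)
The plan is to reduce $\int_{[H]} f_t$ to a one-variable Mellin integral near the cusp and apply standard meromorphic continuation. First, I would fix a compact Siegel neighborhood $\mathscr{S}_B$ of the cusp and use the short exact sequence \eqref{short-SsH} to split, on $\mathscr{S}_B$, $f_t = f_t^\dagger + r_t$ with $t \mapsto f_t^\dagger \in \pi_t$ and $t \mapsto r_t \in \mathcal{S}(\mathscr{S}_B)$ both holomorphic; this is legitimate because \eqref{short-SsH}, after passing to $K$-invariants, is a short exact sequence of nuclear Fr\'echet spaces that splits on holomorphic parameter families. Away from $\mathscr{S}_B$, $f_t$ lies in $\mathcal{S}([H])$. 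Combining,
\[
\int_{[H]} f_t\,dh \;=\; J(t) + \int_{\mathscr{S}_B} f_t^\dagger\,dh,
\]
with $J(t)$ entire in $t$ (integral of an absolutely convergent holomorphic family of Schwartz functions).

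For the second term, I would use Iwasawa coordinates $\mathscr{S}_B \simeq A_{\mathrm{deep}} \cdot \Omega$, with $\Omega$ a transversal to the $A$-action built from $[A]^1$ and a compact piece of $K$, in which the $H(\adele)$-invariant measure pulls back to a constant multiple of $\delta^{-1}(a)\,d\bar{a}\,d\omega$. Since $f_t^\dagger(a\cdot x) = \delta^{(t+1)/2}(a)\,f_t^\dagger(x)$, the integral factors as
\[
\int_{\mathscr{S}_B} f_t^\dagger\,dh \;=\; \Bigl(\int_\Omega f_t^\dagger(\omega)\,d\omega\Bigr)\cdot \int_{A_{\mathrm{deep}}} \delta^{(t-1)/2}(a)\,d\bar{a}.
\]
The first factor is holomorphic in $t$ (integral of a Fr\'echet-valued holomorphic family over a compact), and the second is a one-variable Mellin integral $\int_c^\infty a^{\alpha t + \beta}\,da/a$ (with $\alpha>0$, $\beta$ determined by the normalization) which converges absolutely for $\Re(t) \ll 0$ and extends meromorphically to $\CC$ with exactly one simple pole, at the critical value of $t$ asserted in the proposition.

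For independence from the choice of section: if $f_t, f_t'$ are two holomorphic families with common value $\varphi_1\varphi_2$ at $t = s_1+s_2$, their difference $h_t$ vanishes there, and elementary holomorphic division in the Fr\'echet space $\mathcal{S}^+_t([H])^K$ (by a scalar with a simple zero) yields $h_t = (t-(s_1+s_2))\,k_t$ with $k_t$ holomorphic. Consequently $\int h_t\,dh = (t-(s_1+s_2))\int k_t\,dh$; since $\int k_t\,dh$ has at worst a simple pole at $t = s_1+s_2$, this product is holomorphic there, so $\int f_t\,dh$ and $\int f_t'\,dh$ have identical principal parts, and the meromorphic continuation away from the pole is canonically attached to $\varphi_1\varphi_2$. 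Under the hypothesis $s_1 + s_2 \ne 0$, evaluating at $t = 0$ lies off the pole and yields the desired bilinear pairing; $H(\adele)$-invariance is immediate from translation-invariance of $dh$, and extension of the ordinary pairing follows because on $\mathcal{S}([H]) \hat\otimes \mathcal{S}([H])$ the constant family $f_t \equiv \varphi_1\varphi_2$ is admissible. The cases of $[H]_B$, $[G]$, $[G]_B$ follow by the same argument with \eqref{short-SsHB}, \eqref{short-SsG}, \eqref{short-SsGB} respectively; for $[G]$ one uses the $A^\diag$-cone in $\overline{[G]}^D$ in place of $A_{\mathrm{deep}}$.

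The main technical obstacle is justifying the holomorphic splitting of the short exact sequences and the subsequent holomorphic division at the level of nuclear Fr\'echet families; granted these, the entire proposition reduces to the classical meromorphic continuation of $\int_c^\infty a^s\,da/a$ and routine bookkeeping with Iwasawa coordinates.
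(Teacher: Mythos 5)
The paper does not actually prove this proposition; it cites \cite[\S 5.6]{SaStacks} and supplies only the trivialization $(1-u)+u\Delta^t$ of the family $\mathcal S^+_t([H])$. Your reconstruction --- split $f_t$ near the cusp via \eqref{short-SsH}, isolate the asymptote $f_t^\dagger$, reduce the divergent contribution to a one-variable Mellin integral along $A$ --- is the natural argument and its first half is sound.

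The uniqueness step, however, has a genuine gap. From $\int h_t\,dh=(t-(s_1+s_2))\int k_t\,dh$ you correctly deduce that the principal parts of $\int f_t$ and $\int f_t'$ at $t=s_1+s_2$ agree, but you then assert that ``the meromorphic continuation away from the pole is canonically attached to $\varphi_1\varphi_2$'' and evaluate at $t=0$. This is not so: $\int h_t$ is a nonzero meromorphic function of $t$, so $\int f_t$ and $\int f_t'$ differ at generic $t$, in particular at $t=0$; the value at $t=0$ depends on the chosen section. What holomorphic division actually gives is $\int h_{t_0}=0$ at the constraint point $t_0$, the unique $t$ where $f_{t_0}=\varphi_1\varphi_2$ is pinned down --- provided $\int k_t$ is regular there, i.e.\ provided $t_0$ avoids the pole --- and that non-criticality is what the hypothesis on $s_1+s_2$ must express. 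Related to this is a normalization you hedge rather than confront: your Mellin integral $\int_c^\infty a^{t-1}\,d^\times a$ has its simple pole at the \emph{fixed} location $t=1$, not at $t=s_1+s_2$; and since $f_i^\dagger(ax)=\delta^{(1+s_i)/2}(a)f_i^\dagger(x)$, the product of the asymptotes obeys $\pi_{s_1}\cdot\pi_{s_2}\subset\pi_{1+s_1+s_2}$, so $\varphi_1\varphi_2\in\mathcal S^+_{1+s_1+s_2}([H])$. Consequently the constraint point is $t_0=1+s_1+s_2$ and the non-criticality condition $t_0\ne 1$ is exactly $s_1+s_2\ne 0$. Writing that the pole is ``at the critical value of $t$ asserted in the proposition'' papers over precisely this bookkeeping; once it is made explicit, the division argument, re-anchored at $t_0$ rather than at $t=0$, establishes the well-definedness and the proof closes.
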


The holomorphic structure on the family of spaces $\mathcal S^+_t([H])$, mentioned in the definition, can be defined by fixing an Iwasawa decomposition $H(\adele) = B(\adele) K$, setting $\Delta(bk) = \delta^\frac{1}{2}(b)$, picking a smooth cutoff function $u$ on $[H]$ which is equal to $0$ away from a Siegel neighborhood of the cusp and $1$ in a smaller neighborhood, and identifying the spaces via multiplication by $(1-u) + u\cdot \Delta^t$, where the restriction of $\Delta$ to the Siegel neighborhood is understood as a function on $[H]$.

\textbf{Notice that the pairings $\left<\,\, , \,\,\right>$ are always bilinear in this paper. By abuse of language, we will be using the term ``inner product'' to refer to them.}

\begin{definition}
When $s_1+s_2\ne 0$, the invariniant bilinear pairing
$$ \mathcal S^+_{s_1}([H]) \hat\otimes \mathcal S^+_{s_2}([H]) \to \CC$$
of the previous proposition will be denoted
$$ \varphi_1\otimes\varphi_2 \mapsto \left<\varphi_1 , \varphi_2 \right>^*_{[H]}  = \int_{[H]}^* \varphi_1 \varphi_2.$$
Completely analogous notation will be used for the regularized pairings for the spaces $[H]_B$, $[G]$, and $[G]_B$. 
\end{definition}

\subsection{Approximation by constant term}

The theorems presented in this section are very classical and well-known; they generalize to any reductive group as in {\cite[Corollaries I.2.8, I.2.11]{MW}}.

Again, we start with the rank-one case $[H]=[\PGL_2]$. 
For $\varphi\in C([H])$, we denote by $\varphi_B \in C([H]_B)$ its constant term:
\begin{equation}\label{constantterm-B}
\varphi_B( g) = \int_{[N]} \varphi(ng) dn.
\end{equation}

\begin{theorem} \label{approxconstantterm-H}
Fix $K=$ a compact open subgroup of $H(\adele_f)$, where $\adele_f=$ the finite adeles. (In the function field case, $\adele_f=\adele$.) 

In the function field case, there is a Siegel neighborhood $\mathscr S_B$ such that $\varphi|_{\mathscr S_B} = \varphi_B|_{\mathscr S_B}$ for all $\varphi\in C^\infty([H])^K$. 

In the number field case, fix also a number $r>0$. Let $V_r\subset C([H])^K$ be the Banach space defined by the norm: $\Vert \varphi\Vert_r = \sup_{g\in [H]} (|\varphi(g)| \cdot \Vert g\Vert^{-r})$, and let  $V_r^\infty\subset C^\infty([H])^K$ be the Fr\'echet space of its smooth vectors. (Hence, a complete system of seminorms on $V_r^\infty$ consists of the seminorms 
$$\Vert \varphi\Vert_{X,r} := \Vert X\varphi \Vert_r,$$
for $X\in U(\mathfrak h)$, the universal enveloping algebra of $\mathfrak h(k_\infty) \otimes_{\RR} \CC$.)

Then, the map $\varphi \mapsto \varphi|_{\mathscr S_B} - \varphi_B|_{\mathscr S_B}$ represents a continuous morphism: 
$$V_r^\infty \to \mathcal S(\mathscr S_B),$$
where $\mathscr S_B$ denotes any closed Siegel neighborhood of the cusp, considered as a subset of $B(k)\backslash H(\adele)$ (and the functions $\varphi$, $\varphi_B$ are ``restricted'' to this set by pulling back through the maps $\pi_H$, resp.\ $\pi_B$ of \eqref{leftright-H}). 

\end{theorem}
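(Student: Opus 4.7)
The plan is to establish rapid decay of $\varphi - \varphi_B$ on the Siegel neighborhood via Fourier expansion on the compact abelian group $[N] = N(k)\backslash N(\adele)$. First I would reduce to the case $X = \mathrm{id}$: since right translation by $X \in U(\mathfrak h(k_\infty)\otimes\CC)$ commutes with the left integration in \eqref{constantterm-B}, one has $X(\varphi - \varphi_B) = (X\varphi) - (X\varphi)_B$, and $\varphi \mapsto X\varphi$ is continuous $V_r^\infty \to V_r^\infty$. It thus suffices to show that for every $j > 0$ a finite sum of seminorms of $\varphi$ controls $\sup_{g \in \mathscr S_B} |\varphi(g) - \varphi_B(g)|\cdot \Vert g\Vert^j$. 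That such a bound lifts to membership in $\mathcal S(\mathscr S_B)$ is handled by multiplying the difference with a smooth cutoff supported in $\mathscr S_B$ and equal to $1$ on a slightly smaller Siegel neighborhood, since the leftover (supported on the bounded ``annulus'' between the two neighborhoods) is compactly supported, hence trivially Schwartz.

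Fix a nontrivial additive character $\psi_0\colon k\backslash\adele \to \CC^\times$ and parametrize $\widehat{[N]}\simeq k$ via $\psi_\alpha(n) = \psi_0(\alpha n)$. The Fourier expansion on $[N]$ gives
$$\varphi(g) - \varphi_B(g) = \sum_{\alpha \in k^\times} \varphi_\alpha(g), \qquad \varphi_\alpha(g) = \int_{[N]} \varphi(ng)\,\psi_\alpha^{-1}(n)\,dn.$$
For a positive root vector $X \in \mathfrak n(k_\infty)$, using the Iwasawa decomposition $g = ak$ and the identity $\mathrm{Ad}(a^{-1}) X = \delta(a)^{-1} X$ (applicable because $N$ is abelian), iterated integration by parts $j$ times yields
$$(d\psi_\alpha(X))^j\,\varphi_\alpha(g) = \delta(a)^{-j} \int_{[N]} \bigl(R^j_{\mathrm{Ad}(k^{-1}) X}\varphi\bigr)(ng)\,\psi_\alpha^{-1}(n)\,dn.$$
Combining with the polynomial growth of $R^j_{\mathrm{Ad}(k^{-1})X}\varphi$ (uniformly in $k$ in a maximal compact), the comparability $\delta(a) \asymp \Vert g\Vert$, and $\sup_{n\in[N]} \Vert ng\Vert \asymp \Vert g\Vert$ on the Siegel neighborhood, one gets
$$|\varphi_\alpha(g)| \leq C_j\,\Vert\varphi\Vert_{X^j,r}\,|d\psi_\alpha(X)|^{-j}\,\Vert g\Vert^{r-j}.$$

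The final step is to sum over $\alpha \in k^\times$. The $K$-invariance of $\varphi$ at finite places forces $\varphi_\alpha \equiv 0$ unless $\alpha$ lies in a fractional ideal $\mathfrak a_K \subset k$ determined by $K$; at each archimedean place $v$ one may apply integration by parts along a root vector $X_v$ independently, producing decay $\prod_v |\alpha_v|^{-j_v}$, and the resulting lattice sum $\sum_{0\ne\alpha\in\mathfrak a_K}\prod_v|\alpha_v|^{-j_v}$ converges once each $j_v$ exceeds $[k_v:\RR]$. Choosing all $j_v$ large enough then gives $|\varphi(g) - \varphi_B(g)| \leq C_N\,\Vert g\Vert^{-N}$ for any $N$, with $C_N$ a finite sum of seminorms of $\varphi$, as desired. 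The main obstacle, and the only point of substance, is this adelic coordination: balancing the archimedean integration-by-parts decay with the discreteness/lattice constraints at finite places. For $k=\QQ$ the two aspects decouple and the argument reduces to the classical Fourier expansion of automorphic forms on $\Gamma_\infty\backslash\mathcal H$. In the function-field case the argument collapses further: $K$-invariance makes the sum finite and each $\varphi_\alpha$ compactly supported, so on a sufficiently deep Siegel neighborhood all nontrivial coefficients vanish identically and $\varphi \equiv \varphi_B$ there.
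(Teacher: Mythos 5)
The paper does not prove this theorem at all; it states it as classical and cites \cite[Corollaries I.2.8, I.2.11]{MW}. Your reconstruction follows the same standard route as the literature — Fourier expansion on $[N]$, integration by parts along $\mathfrak{n}$ at archimedean places, $K$-invariance at finite places forcing the nontrivial exponents into a lattice — so there is no methodological divergence to report.

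There is, however, a genuine error in your lattice-sum estimate. You claim $\sum_{0\ne\alpha\in\mathfrak a_K}\prod_v|\alpha_v|^{-j_v}$ converges once each $j_v$ exceeds $[k_v:\RR]$. This fails for every number field with infinitely many units (i.e.\ every $k$ other than $\QQ$ and the imaginary quadratic fields): for a unit $u$ of $k$ one has $\prod_v|u_v|=1$, so $\prod_v|u_v|^{-j_v}=1$ whenever all $j_v$ coincide, and the sum over units alone diverges; choosing unequal $j_v$ does not help, since the unit lattice fills out the trace-zero hyperplane. The correct step, as in the classical proofs, does not fix the number of integrations by parts at each place but integrates by parts only at the place where $|\alpha_v|$ is largest, giving $|\varphi_\alpha(g)|\ll(\max_v|\alpha_v|)^{-j}\,\delta(a)^{r-cj}$; discreteness of the lattice then forces $\max_v|\alpha_v|\gg 1$ for $\alpha\ne 0$, and $\sum_{\alpha\ne 0}(\max_v|\alpha_v|)^{-j}$ converges for $j>[k:\QQ]$. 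Two further imprecisions worth noting: the identity $\Ad(a^{-1})X=\delta(a)^{-1}X$ holds because $\dim\mathfrak{n}=1$, not because $N$ is abelian (abelian-ness is instead what makes the change of variables in the iterated integration by parts clean); and in the function-field case it is not that ``$K$-invariance makes the sum finite'' — the set of $\alpha$ with $\varphi_\alpha\not\equiv 0$ is still an infinite lattice — rather, once $g$ lies deep enough in the Siegel neighborhood, $\psi_\alpha$ must be trivial on the large compact open subgroup $gKg^{-1}\cap N(\adele)$, which annihilates $\varphi_\alpha(g)$ for every $\alpha\ne 0$ simultaneously.
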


\begin{example}\label{HmodN}
Let $X= N\backslash H$, where $N\subset B$ is the unipotent radical of a chosen Borel subgroup. Consider the morphism
$$\Sigma: \mathcal S(X(\adele)) \to C^\infty([H])$$
given by $\Sigma\Phi(g) := \sum_{\gamma\in X(k)} \Phi(\gamma g)$.

\begin{proposition*}\label{HmodNprop}
This map defines a continuous morphism
$$ \mathcal S(X(\adele))\to \mathcal S([H]).$$
\end{proposition*}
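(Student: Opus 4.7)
The plan is a three-step argument: absolute convergence of the sum, Bruhat unfolding of the constant term, and the approximation-by-constant-term result just proved.

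First, I would show that for every $\Phi \in \mathcal{S}(X(\adele))$ the sum $\Sigma\Phi(g)$ converges absolutely, uniformly on compacta of $H(\adele)$, by a standard lattice-counting-vs-rapid-decay estimate, producing a smooth $H(k)$-invariant function. The same estimate carried out in a Siegel set for $[H]$ shows $\Sigma\Phi \in V_r^\infty$ for some $r > 0$ depending continuously on $\Phi$, placing us in the hypothesis of Theorem~\ref{approxconstantterm-H}.

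Next, I would unfold the constant term using the Bruhat decomposition $N(k)\backslash H(k) = A(k) \sqcup A(k) w N(k)$; absorbing the inner $N(k)$-sum on the big cell into $[N] = N(k)\backslash N(\adele)$ yields an identity of the form
\[
(\Sigma\Phi)_B(g) \;=\; \vol([N]) \, \overline{\Phi}(g) \;+\; \overline{M\Phi}(g),
\]
where $\overline{\Phi}(g) := \sum_{a\in A(k)} \Phi(ag)$, $M\Phi(g) := \int_{N(\adele)}\Phi(wng)\,dn$ is the standard intertwining operator, and $\overline{M\Phi}$ is its analogous $A(k)$-theta sum (both carrying the modular twists from conjugating $N$ through $a$, which I will track). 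The key claim is that both summands lie in $\mathcal{S}([H]_B)$: the first because summing a Schwartz function on $X(\adele)$ over the discrete subgroup $A(k) \subset A(\adele)$ preserves rapid decay after descent to $[H]_B$; the second because $M$ maps $\mathcal{S}(X(\adele))$ continuously into the analogous Schwartz space attached to the Weyl-twisted (opposite-unipotent) model of $X$, after which the same averaging argument applies.

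With $(\Sigma\Phi)_B \in \mathcal{S}([H]_B)$ and $\Sigma\Phi \in V_r^\infty$ in hand, Theorem~\ref{approxconstantterm-H} gives $\Sigma\Phi - (\Sigma\Phi)_B \in \mathcal{S}(\mathscr{S}_B)$ on any closed Siegel neighborhood of the cusp; combined with the Schwartzness of $(\Sigma\Phi)_B$ itself (transferred to the Siegel neighborhood via the maps of \eqref{leftright-H}), this shows $\Sigma\Phi$ is Schwartz in a neighborhood of the cusp, hence globally Schwartz on $[H]$. Continuity of $\Sigma$ then follows by checking continuity of each step in the Schwartz/Fr\'echet topologies. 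The main potential obstacle is the middle step: showing that the intertwining operator $M$ preserves Schwartz spaces on $X(\adele)$. At non-archimedean places this is formal, but at archimedean places it reduces to analytic estimates on how $M$ (a Radon-type transform) acts on Schwartz classes, which is where the real work sits.
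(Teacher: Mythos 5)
Your overall plan coincides with the paper's: show $\Sigma\Phi\in V_r^\infty$, compute the constant term via the Bruhat decomposition, and appeal to Theorem~\ref{approxconstantterm-H}. However, the key claim in your middle step --- that the intertwining/Radon operator $M$ maps $\mathcal S(X(\adele))$ to the Schwartz space of the opposite-unipotent model, so that $\overline{M\Phi}\in\mathcal S([H]_B)$ --- is false, not merely delicate. The Radon transform of a Schwartz function on $X(\adele)$ is rapidly decaying only in the direction of the cusp, where the open Bruhat cell is pushed to infinity in $X\sslash N\simeq\mathbb{G}_a$ by the $A$-action. Near the funnel it is only of polynomial behavior; indeed, Corollary~\ref{corRadonH} bounds $Rf$, for $f\in\mathcal S([H]_B)$, by elements of $\pi_{-1-\epsilon}$ in the funnel direction, which is not rapid decay. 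Equivalently, the constant term $\varphi_B$ of a Schwartz function $\varphi\in\mathcal S([H])$ is never globally Schwartz on $[H]_B$, so the decomposition $(\Sigma\Phi)_B = \overline{\Phi} + \overline{M\Phi}$ cannot yield two Schwartz summands. Even at non-archimedean places $M$ fails to preserve compact support in $X(k_v)$: lines passing through a fixed compact set, with direction vector going to infinity, form a non-compact family in $N\backslash H(k_v)$. No amount of analytic work will close this gap, because the statement is wrong.

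Fortunately, Theorem~\ref{approxconstantterm-H} only requires $(\Sigma\Phi)_B$ to coincide near the cusp with the restriction of an element of $\mathcal S([H]_B)$; the funnel plays no role. The paper verifies exactly this: on the open Bruhat cell, the image of the argument in $X\sslash N\simeq\mathbb{G}_a$ escapes to $\infty$ as $\delta\to\infty$, where $\Phi$ is rapidly decaying. If you replace your claim ``both summands lie in $\mathcal S([H]_B)$'' by ``the first summand lies in $\mathcal S([H]_B)$, and the second is rapidly decaying, together with its derivatives, near the cusp, so that both restrict on a closed Siegel neighborhood to elements of $\mathcal S(\mathscr{S}_B)$'', the rest of your argument goes through unchanged.
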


\begin{proof}
Obviously, the map to $C^\infty([H])$ is continuous. By generalities about representations of moderate growth (s.\ \cite{BeKr}), it will actually be continuous as a map from $\mathcal S(X(\adele))$ to some $V_r^\infty$. By Theorem \ref{approxconstantterm-H}, it is enough to show that the restriction of the constant term $(\Sigma\Phi)_B$ to a neighborhood of the cusp coincides with a restriction of an element of $\mathcal S([H]_B)$.

We compute this constant term:
$$ (\Sigma\Phi)_B(g) = \int_{[N]} \sum_{\gamma\in X(k)}\Phi(\gamma ng) dn. $$
We break up the sum into Borel orbits (Bruhat cells, in this case). It becomes:
$$ (\Sigma\Phi)_B(g) =  \sum_{\alpha\in A(k)}\Phi(\alpha g) + \sum_{\alpha\in A(k)} \int_{N(\adele)} \Phi(\alpha w g), $$
where $w$ is the non-trivial element of the Weyl group.

The first term is an element of $\mathcal S([H]_B)$. The second is rapidly decaying at the cusp, because in the quotient $X\sslash N = \spec k[X]^N \simeq \Ga$ (the last isomorphism by identifying the image of the closed Bruhat cell with $0$), the action of $A$ is via the character $\delta$ (which goes to $\infty$ at the cusp).

\end{proof}
\end{example}

\begin{example}\label{torus}
 Let $T=\Gm\hookrightarrow H=\PGL_2$, $X=T\backslash H$. We consider the morphism 
$$\Sigma: \mathcal S(X(\adele)) \to C^\infty([H])$$
given by $\Sigma\Phi(g) := \sum_{\gamma\in X(k)} \Phi(\gamma g)$.

\begin{proposition*}
This map defines a continuous morphism
$$ \mathcal S(X(\adele))\to \mathcal S^+_{1}([H]).$$
\end{proposition*}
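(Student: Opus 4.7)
My plan is to follow the pattern of Example \ref{HmodN}. Continuity of $\Sigma$ into $C^\infty([H])$ is immediate, and by generalities about representations of moderate growth (cf.\ \cite{BeKr}) it factors continuously through some $V_r^\infty$. By Theorem \ref{approxconstantterm-H}, it then suffices to show that the constant term $(\Sigma\Phi)_B$, restricted to a Siegel neighborhood $\mathscr{S}_B$ of the cusp, is the sum of an element of $\pi_1 = C^\infty(A\backslash [H]_B, \delta^{\frac{1}{2}})$ and a Schwartz function on $\mathscr{S}_B$, with continuous dependence on $\Phi$.

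I would compute $(\Sigma\Phi)_B$ by decomposing $\sum_{\gamma \in X(k)}$ via the Bruhat decomposition: $X(k) = T(k)\backslash B(k) \sqcup T(k)\backslash B(k) w B(k)$. The closed cell is a single right $N(k)$-orbit isomorphic to $N(k)$ with trivial stabilizer; the open cell $T(k)\backslash B(k)wB(k) \cong N(k)\times N(k)$ carries a right $N(k)$-action on the second factor alone, so its $N(k)$-orbits are parametrized by $n_1 \in N(k)$ with trivial stabilizer. Unfolding each with the integral over $[N]$ gives
\begin{equation*}
(\Sigma\Phi)_B(g) = \int_{N(\adele)} \Phi(ng)\,dn + \sum_{n_1 \in N(k)} \int_{N(\adele)} \Phi(n_1 w n g)\,dn.
\end{equation*}
Using the $T(\adele)$-invariance of $\Phi$ on $X(\adele) = T(\adele)\backslash H(\adele)$ and substituting $n \mapsto a n a^{-1}$, the first integral satisfies $\int_N \Phi(nag)\,dn = \delta(a) \int_N \Phi(ng)\,dn$ for every $a \in A(\adele)$, so it lies globally in $\pi_1$. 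For the second sum, the analogous substitution together with the Weyl-group relation $waw^{-1}=a^{-1}$ yields, with $g = ak$ in the Iwasawa decomposition,
\begin{equation*}
\sum_{n_1 \in N(k)} \int_{N(\adele)} \Phi(n_1 w n\, ak)\,dn = \delta(a) \sum_{n_1 \in N(k)} \int_{N(\adele)} \Phi\bigl((an_1 a^{-1})\,w\,n'\,k\bigr)\,dn'.
\end{equation*}

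The $n_1 = 1$ summand is of the desired $\delta$-equivariant form. For $n_1 \ne 1$, my strategy is to use that $an_1 a^{-1}$ moves to infinity in the first $N$-factor of the Bruhat parametrization $T\backslash B w B \cong N\times N$ as $a$ approaches the cusp; in the model of $X$ as $\PP^1\times\PP^1\setminus\Delta$, this corresponds to pushing the integration curve toward the diagonal $\Delta$, along which Schwartz functions on $X(\adele)$ decay faster than any polynomial. Bounding each integral by this Schwartz decay and summing over $n_1 \in N(k)\setminus\{1\}$ (a discrete sum whose rational parameters have bounded denominators, dictated by the finite-adelic support of $\Phi$) should produce a remainder of order $O(\delta(a)^{1-M})$ for every $M$, hence rapid decay on $\mathscr{S}_B$. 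Combining all pieces, $(\Sigma\Phi)_B|_{\mathscr{S}_B}$ is an element of $\pi_1$ plus a Schwartz function, and Theorem \ref{approxconstantterm-H} then delivers $\Sigma\Phi \in \mathcal{S}^+_1([H])$ with the required continuity. The main obstacle I anticipate is the rapid-decay estimate for the $n_1 \ne 1$ sum, which relies on the geometric fact that the Bruhat coordinate $n_1$ encodes proximity to the boundary divisor $\Delta$ of $X$.
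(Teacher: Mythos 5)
Your argument is correct and follows essentially the same route as the paper: reduce to the constant term via Theorem \ref{approxconstantterm-H}, split the sum over $X(k)$ into right $N(k)$-orbits, isolate the two $\delta$-equivariant contributions (your closed-cell integral and the $n_1=1$ summand, matching the paper's $T1B$ and $TwB$ terms), and show that the remaining sum decays rapidly at the cusp. The paper's bookkeeping (three $T\backslash H/B$-orbits, the open one summed over $\alpha\in A(k)$, with rapid decay read off from the GIT quotient $X\sslash N \simeq \Ga$ on which $A$ acts by $\delta$) is equivalent to your Bruhat parametrization by $n_1\in N(k)$ and your $\PP^1\times\PP^1\setminus\Delta$ picture, since the first Bruhat coordinate is precisely the coordinate on $X\sslash N$ and the reciprocal of the affine distance to the diagonal.
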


\begin{proof}
Again, it is enough to show that the restriction of the constant term $(\Sigma\Phi)_B$ to a neighborhood of the cusp coincides with a restriction of an element of $\mathcal S^+_{1}([H]_B)$.

We compute this constant term:
$$ (\Sigma\Phi)_B(g) = \int_{[N]} \sum_{\gamma\in X(k)}\Phi(\gamma ng) dn. $$

Assume that $B$ has been chosen to contain $T$ (thus furnishing an isomorphism $T\simeq A$, the universal Cartan). We split the above sum into Borel orbits on $X(k)$; there are three of them:
$$ T 1 B,\,\,\, T w B,\,\,\, \mbox{ and } T\xi B,$$
where $w$ denotes the non-trivial element of the Weyl group of $T$ in $H$ and $\xi$ is a representative of the open Borel orbit. 

The sum becomes:
$$ \int_{N(\adele)} \Phi(T 1 n g) dn + \int_{N(\adele)} \Phi(T w n g) dn + \sum_{\alpha\in A(k)} \int_{N(\adele)} \Phi(T \xi \alpha n g) dn.$$

The affine quotient $X\sslash N = \spec k[X]^N$ can be identified with $\Ga$ by letting $0$ be the (common) image of the orbits $T1B$ and $TwB$, and then $A$ acts on $\Ga$ via the character $\delta$. Therefore, as $\delta(a)\to \infty$ the image of the point $\xi\cdot a$ in $X\sslash N$ goes to $\infty$, and from this it is easy to see that the third term is rapidly decaying at the cusp. Each of the other two terms is $\delta$-equivariant under the unnormalized $A(\adele)$-action, and hence $\delta^\frac{1}{2}$-equivariant under the normalized $A(\adele)$-action. 

Hence, from the above formula and Theorem \ref{approxconstantterm-H}, we deduce that $\Sigma$ is actually a continuous map into $ \mathcal S^+_{1}([H])$.
\end{proof}

\end{example}

\begin{remark}
We see in the above example that the function $\Sigma\Phi$ is asymptotic not only to an eigenfunction for the group $A\subset A(k_\infty)$, but also for all of $[A]$. This suggests that our compactifications are too large: we could have glued $[A]\backslash [H]_B$ at infinity. This would have been  achieved by replacing \eqref{embeddingGB} by
$$ \overline{[A]} \times^{[A]} [H]_B,$$
where $\overline{[A]}$ is the union of $[A]$ with a point as $\delta(a)\to \infty$. However, for the analysis it will not make any difference which compactification we are using, and the real torus $A$ is easier to handle, notationally.
\end{remark}

Now we move to rank two. The group $G=H\times H$ has four classes of parabolics, namely: $G$, $H\times B$, $B\times H$ and $B\times B$. For any conjugacy class $P=MU$ of $k$-parabolics, we denote by $[G]_P$ the ``boundary degeneration''  $M(k)U(\adele)\backslash G(\adele)$. (Recall that, by abuse of notation, we denote $[G]_{B\times B}$ simply by $[G]_B$.) It carries an action of $[A_P]$, commuting with the $G(\adele)$-action, where $A_P$ is the maximal split torus in the center of the Levi quotient $M$.

For $\varphi\in C([G])$, we denote by $\varphi_P \in C([G]_P)$ its constant term:
\begin{equation}\label{constantterm-P}
\varphi_P( g) = \int_{[U]} \varphi(ug) du.
\end{equation}

Consider first the reductive Borel--Serre compactification of $[G]$:
$$ \overline{[G]} = \overline{[H]} \times \overline{[H]}.$$

It contains four $G(\adele)$-orbits, parametrized by the classes of parabolics $P$. We will call ``$P$-cusp'' the closure of the orbit corresponding to $P$; for example, the $B\times H$-cusp is the product $\infty_B\times \overline{[H]}$. A ``Siegel neighborhood'' of a cusp is a semi-algebraic neighborhood, intersected with $[G]$, on which the left arrow of the diagram 
\begin{equation}\label{leftright-G} \xymatrix{
& P(k)\backslash G(\adele) \ar[dl]_{\pi_G}\ar[dr]^{\pi_P}& \\
[G] && [G]_P
}\end{equation}
is an isomorphism onto the image.

The theorem that follows is the theorem of approximation by the constant term, essentially as it appears in \cite[Corollaries I.2.8, I.2.11]{MW}. In it, a \emph{Siegel set} $\mathscr S$ is a compact neighborhood of the cusp in the ``reductive Borel--Serre compactification''  $\overline{(B\times B)(k)\backslash G(\adele)}$, which is large enough that it surjects onto $[G]$. We will afterwards discuss what it means in terms of Siegel neighborhoods, relating it to the statement for the function field case. We endow $[G]$ with the product of the norms $\Vert \cdot \Vert$ on $[H]$, also to be denoted by $\Vert \cdot \Vert$.

\begin{theorem}  \label{approxconstantterm}
Fix $K=$ a compact open subgroup of $G(\adele_f)$.

In the function field case, for every $P$, there is a Siegel neighborhood ${\mathscr S}_P$ of the $P$-cusp such that $\varphi|_{{\mathscr S}_P} = \varphi_P|_{{\mathscr S}_P}$ for all $\varphi \in C^\infty([G])^K$. 

In the number field case, fix also a number $r>0$ and a Siegel set $\mathscr S$. Let $V_r\subset C([G])^K$ be the Banach space defined by the norm: $\Vert \varphi\Vert_r = \sup_{g\in [H]} |\varphi(g)| \cdot \Vert g\Vert^{-r}$, and let  $V_r^\infty\subset C^\infty([G])$ be the Fr\'echet space of its smooth vectors.

Then, the map 
\begin{equation}
\varphi \mapsto \sum_{P} (-1)^{\operatorname{rk}(G)-\operatorname{rk}(P)} \varphi_P|_{\mathscr S} 
\end{equation}
represents a continuous morphism: 
$$V_r^\infty \to \mathcal S(\mathscr S).$$
\end{theorem}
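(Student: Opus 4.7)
The plan is to reduce Theorem \ref{approxconstantterm} to the rank-one statement Theorem \ref{approxconstantterm-H} by exploiting the product structure $G = H \times H$. The four conjugacy classes of parabolics are $G = H \times H$, $B \times H$, $H \times B$, and $B \times B$, and the constant term $\varphi_{P_1 \times P_2}$ is literally the iterated integral over $[N_1] \times 1$ followed by $1 \times [N_2]$ (the two integrations commute since they act on disjoint factors). Defining $T_1 \varphi = \varphi - \varphi_{B \times H}$ and $T_2 \varphi = \varphi - \varphi_{H \times B}$, a one-line computation gives
\[ T_1 T_2 \varphi \;=\; \varphi - \varphi_{B \times H} - \varphi_{H \times B} + \varphi_{B \times B} \;=\; \sum_P (-1)^{\operatorname{rk}(G)-\operatorname{rk}(P)} \varphi_P, \]
so the alternating sum in the theorem equals the composition $T_1 T_2 = T_2 T_1$, and I only need to prove that this composition sends $V_r^\infty$ continuously into $\mathcal S(\mathscr S)$.

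Next, I would apply Theorem \ref{approxconstantterm-H} in a parametric way, one factor at a time. For each fixed $g_1 \in [H]$, the slice $\varphi(g_1, \cdot) \in C^\infty([H])^{K_2}$ lies in the rank-one space $V_r^\infty$ of Theorem \ref{approxconstantterm-H}, with seminorms that are bounded by $\Vert g_1 \Vert^r$ times the corresponding seminorms of $\varphi$. The rank-one theorem then says that $T_2 \varphi|_{\mathscr S_{B,2}}$ is Schwartz in the second variable, with seminorms controlled by the same $g_1$-polynomial factor. Thus, on a neighborhood of the second-factor cusp, $T_2 \varphi$ is smooth, of polynomial growth $\Vert g_1 \Vert^r$ in $g_1$, and of rapid decay (with all $U(\mathfrak h)$-derivatives) in $g_2$. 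I then apply Theorem \ref{approxconstantterm-H} a second time, now in the first factor, to the function $T_2 \varphi$: the resulting $T_1 T_2 \varphi$ is of rapid decay in $g_1$, and the Schwartz nature in $g_2$ is preserved (Schwartz-in-$g_2$ seminorms enter as ``constants'' for the second application). Hence $T_1 T_2 \varphi$ is of rapid decay in the product height $\Vert g_1 \Vert \cdot \Vert g_2 \Vert$ on a semi-algebraic neighborhood of $\infty_B \times \infty_B$, continuously in $\varphi$.

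To extend from a neighborhood of $\infty_B \times \infty_B$ to the full Siegel set $\mathscr S$, I cover $\mathscr S$ by four semi-algebraic pieces adapted to the four cusp classes. On the ``$G$-piece'' (bounded region) the estimate is trivial; on the ``$B \times H$-piece'' the second height $\Vert g_2 \Vert$ is bounded, and the argument above applied to just the first factor (via $T_1$, noting $T_2$ contributes nothing further in a bounded $g_2$-range) yields the required decay in $\Vert g_1 \Vert$; symmetrically for the ``$H \times B$-piece''; and the ``$B \times B$-piece'' is exactly the case handled above. A smooth partition of unity on $\mathscr S$ adapted to this cover then assembles the local estimates into a continuous map $V_r^\infty \to \mathcal S(\mathscr S)$. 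The function-field case is simpler: since every $K$-invariant function is automatically constant along small enough $[N]$-orbits, one can take $\mathscr S_P$ so that $T_P\varphi|_{\mathscr S_P}$ vanishes identically.

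The main technical obstacle is the ``parametric'' refinement of Theorem \ref{approxconstantterm-H}: one must check that the seminorm estimates in the rank-one theorem depend continuously on an auxiliary parameter $g_1$, with the growth parameter $r$ on the input allowed to shift polynomially in $\Vert g_1 \Vert$. This is not an additional deep fact --- it follows from inspecting the proof of Theorem \ref{approxconstantterm-H} (which ultimately rests on the exponential decay of non-trivial Fourier coefficients along $[N]$) --- but it requires some bookkeeping to extract explicit, uniform constants. Once this parametric form is in hand, the reduction sketched above is essentially formal and mirrors the inductive proof of \cite[Corollaries I.2.8, I.2.11]{MW}.
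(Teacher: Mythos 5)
Your proposal is essentially correct, but it is worth noting that the paper does not prove Theorem \ref{approxconstantterm} at all: both this theorem and the rank-one Theorem \ref{approxconstantterm-H} on which you rely are cited from \cite[Corollaries I.2.8, I.2.11]{MW}, where they are established for an arbitrary reductive group by an induction on parabolic rank that does not exploit any product decomposition. Your argument instead uses the special structure $G = H\times H$ to reduce to two applications of the rank-one theorem. The identity $T_1T_2\varphi = \sum_P(-1)^{\rk(G)-\rk(P)}\varphi_P$ is transparent for a product and bypasses the root-system combinatorics of the general case, so in that sense you have given a genuinely more elementary route, though one specific to this particular $G$.

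The step you rightly flag as requiring care is the ``parametric'' use of Theorem \ref{approxconstantterm-H}: you apply it slicewise in $g_2$ with $g_1$ as a parameter, and then again slicewise in $g_1$ after $T_2$ has already been applied. For this to close, one needs concretely: (i) the slice map $\varphi\mapsto\varphi(g_1,\cdot)$ sends $V_r^\infty([G])$ to $V_r^\infty([H])$ with seminorms bounded by $\Vert g_1\Vert^r$ (immediate from the definition of the product height and from $U(\mathfrak g)=U(\mathfrak h)\otimes U(\mathfrak h)$); (ii) $T_2$ commutes with derivatives $X_1\in U(\mathfrak h)$ acting in the first variable (true, since left-invariant vector fields commute with the constant-term integral over $[N_2]$); and (iii) the rank-one seminorm estimates, applied slicewise, respect the decay in the parameter, so that $T_2\varphi(\cdot,g_2)$ lands in $V_r^\infty([H])$ with seminorms rapidly decaying in $\Vert g_2\Vert$ near the second cusp and only polynomially bounded away from it. All of this is standard, but it is essentially the content of the uniform estimates underlying the Moeglin--Waldspurger induction, so the dependence on that reference is shifted rather than removed unless you also reprove the rank-one case. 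Two minor imprecisions: the phrase ``$T_2$ contributes nothing further in a bounded $g_2$-range'' should not be read as saying $T_2\varphi=\varphi$ there (it does not), but only that no decay in $g_2$ is required when $\Vert g_2\Vert$ stays bounded, so the $T_1$-difference alone carries the estimate; and in the function-field case the precise mechanism is that, sufficiently close to the cusp, the $K$-orbit of a point together with $N(k)$-translation sweeps out its $[N]$-orbit, so $K$-invariance forces $[N]$-invariance -- your phrasing is a compressed (and somewhat garbled) version of this.
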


\begin{remark}
The theorem has been formulated taking into account that the center of $G$ is trivial/compact. If the center of $G$ was not compact, in the number field case we would need to specify that the rapid decay is in the direction ``transverse to the center'' -- cf.\ \cite[Corollary  I.2.11]{MW} for the complete formulation. It is these central directions that prevent the formulation of the function field case from being true in the number field case -- the ``leftovers'' from the difference between a function and its constant term do not need to be of rapid decay in ``central'' directions.

More specifically, on a Siegel neighborhood $\mathscr S_B$ (we use the same shorthand notation for $\mathscr S_{B\times B}$, $\varphi_{B\times B}$ as for $[G]_B$) of $\infty_B\times \infty_B$ one does not only see the contributions of $\varphi_B$, but also terms that decay rapidly as we approach $\infty_B\times \infty_B$, but not, for example, as we approach $\infty_B\times [H]$ inside of $\mathscr S_B$. The following picture will help us visualize what is happening:

\begin{center}
\includegraphics[scale=0.4]{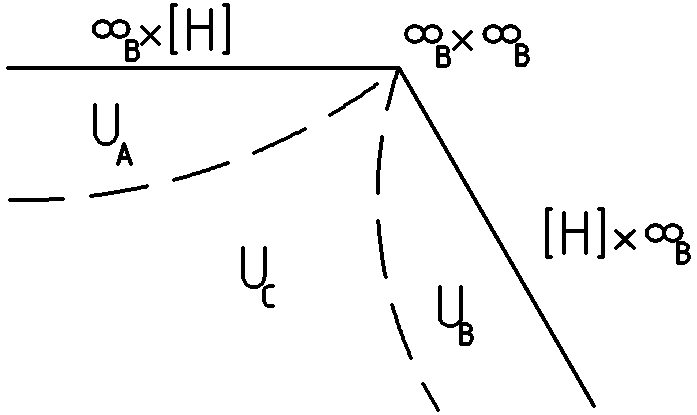}
\end{center}

Here the $U_A$ and $U_B$ are (semi-algebraic) neighborhoods of the open orbits in the corresponding cusps, and $U_C$ is their complement, all within a Siegel neighborhood $\mathscr S_B$. The difference $\varphi - \varphi_B$ coincides with the restriction of an element of $\mathcal S([G])$ when restricted to $U_C$, but not necessarily on $U_A$ or $U_B$. Similarly, $(\varphi - \varphi_{B\times H})|_{U_B}$ and  $(\varphi - \varphi_{H\times B})|_{U_A}$ coincide with restrictions of elements of $\mathcal S([G])$. Hence, $\varphi$ is approximated by $\varphi_B$ on $U_C$, by $\varphi_{B\times H}$  on $U_B$ and by $\varphi_{H\times B}$ on $U_A$; moreover, for each $P=H\times B$ or $B\times H$, the difference $\varphi_P-\varphi_B$ is of rapid decay as we approach the cusp in a direction transverse to the $A_P$-orbits, but \emph{not} necessarily in the ``central'' direction of $A_P$-orbits (i.e., those orbits whose limit is a point of $\infty_B\times [H]$ or $[H]\times\infty_B$ not belonging to $\infty_B\times\infty_B$).

\end{remark}

\section{Spectral decomposition in rank one}

The goal of this section is to develop a ``Plancherel formula'' for the (regularized) inner product of asymptotically finite functions on $[H]=[\PGL_2]$. It will be our warm-up for the case of $[G]=[H]\times[H]$, on which the Selberg trace formula needs to be understood. But it is also necessary for understanding simpler instances of the relative trace formula, such as the relative trace formula for $(X\times X)/G$, where $X=\Gm\backslash\PGL_2$ (s.\ Example \ref{torus}).

Let $s_1,s_2$ be two complex numbers, and let $\varphi_i\in \mathcal S^+_{s_i}([H])$ be two asymptotically finite functions. There is actually no reason to restrict ourselves to a single exponent at infinity -- we could consider functions with several exponents, possibly with multiplicity. This is indeed what we will do in our ``model'' space $\RR^\times_+$, but for notational simplicity we avoid it for $[H]$.

As we saw in \S \ref{ssregints}, the regularized ``inner product''
$$\left< \varphi_1 ,\varphi_2\right>^* = \int^*_{[H]} \varphi_1(h) \varphi_2(h) dh$$
makes sense and is an $H(\adele)$-invariant bilinear pairing
$$ \mathcal S^+_{s_1}([H]) \hat\otimes \mathcal S^+_{s_2}([H]) \to \CC,$$
whenever $s_1+s_2\ne 0$. Recall that we are using the name and notation of inner product for a bilinear pairing, by abuse of language. Recall also that the exponents refer to the normalized action of $A$ on $[H]_B$, cf.\ 
\eqref{action-normalized-H}. Whenever necessary to remind the space on which this bilinear pairing is taken, we will write $\left< \,\, ,\,\,\right>_{[H]}^*$. Completely analogous definitions and symbols will be used for the boundary degenerations.

Our model for analyzing the above inner product on $[H]$ is the real group $\RR^\times_+$, and we start from that.

\subsection{Paley--Wiener theorem for Schwartz functions on $\Rplus$}

\begin{theorem}\label{PW-torus1D}
Mellin transform:\footnote{Mellin transform is traditionally defined by replacing $s$ by $-s$ in the above definition; this is unfortunate, because it means that the functional $f\mapsto \check f(s)$ would be equivariant with respect to the character parametrized by $-s$. We choose this definition, which is compatible with equivariance properties of the map.}
\begin{equation}\label{Mellin} \check f (s) = \int_{\Rplus} f(x) x^{-s} d^\times x\end{equation}
is an isomorphism between $\mathcal S(\Rplus)$ and the Fr\'echet space $\mathbb H^\PW(\CC)$ of entire functions on $\CC$ which, on every bounded vertical strip $V$ and for every $N$ satisfy 
\begin{equation}\label{vertstrip} \sup_{s\in V} |f(s)| (1+|\Im(s)|)^N <\infty.\end{equation}

The inverse map is given by 
\begin{equation}\label{Mellininv} f(x) = \frac{1}{2\pi i}\int_{\sigma-i\infty}^{\sigma+i\infty} \check f(s) x^s ds\end{equation}
for any $\sigma \in \RR$.
\end{theorem}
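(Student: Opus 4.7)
The plan is to reduce the statement to the classical Paley--Wiener/Fourier theorem on $\RR$ via the logarithmic change of variables. Setting $y = \log x$ and $g(y) = f(e^y)$, the map $f \mapsto g$ identifies $\mathcal S(\Rplus)$ with the Fr\'echet space $\mathcal S_{\mathrm{exp}}(\RR)$ of smooth functions on $\RR$ all of whose derivatives decay faster than $e^{-N|y|}$ for every $N$: the multiplicative derivation $x\partial_x$ pulls back to $\partial_y$, while the Schwartz weights $x^{\pm N}$ on $\Rplus$ become the exponential weights $e^{\pm Ny}$. Under this identification the Mellin transform \eqref{Mellin} becomes the bilateral Laplace transform
\[
\check f(s) = \int_{-\infty}^\infty g(y)\, e^{-sy}\, dy,
\]
and the inversion formula \eqref{Mellininv}, after the substitution $s = \sigma + i\tau$, becomes Fourier inversion applied to the Schwartz function $y \mapsto g(y) e^{-\sigma y}$.

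In the forward direction, super-exponential decay of $g$ makes the integral defining $\check f(s)$ absolutely convergent for every $s\in\CC$ and, by differentiation under the integral, holomorphic in $s$. On any vertical strip $\sigma_1 \le \Re(s) \le \sigma_2$, $N$ iterated integrations by parts in $y$ yield $s^N \check f(s) = \int g^{(N)}(y) e^{-sy} dy$, which is bounded uniformly on the strip by a single seminorm of $g$; this gives the rapid decay \eqref{vertstrip} together with continuity of $f \mapsto \check f$ as a map into $\mathbb H^\PW(\CC)$. For the reverse direction, given $\Phi \in \mathbb H^\PW(\CC)$ I define $f$ by \eqref{Mellininv}; independence of the chosen contour $\sigma$ follows from Cauchy's theorem and the decay of $\Phi$ on strips. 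Estimating $|g(y)|\, e^{N|y|}$ by shifting the contour to $\sigma = -N$ when $y > 0$ and to $\sigma = N$ when $y < 0$ produces the required super-exponential decay of $g$; polynomial growth of $s^k \Phi(s)$ on strips translates, via the same contour integral, to smoothness of $g$ with all derivatives in $\mathcal S_{\mathrm{exp}}(\RR)$. That these are mutually inverse constructions is classical Fourier inversion applied to the Schwartz function $y\mapsto g(y) e^{-\sigma y}$ for any fixed $\sigma$.

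The only point requiring care is the bookkeeping of the topologies: one must check that each Schwartz seminorm of $f$ is controlled by finitely many strip-seminorms of $\check f$, and vice versa, so that the Mellin transform and its inverse are both continuous --- yielding an isomorphism of nuclear Fr\'echet spaces. There is no real analytic obstacle, the substance being the standard duality between entire extendibility with rapid decay on vertical strips on one side and super-exponential decay of a smooth function together with all its derivatives on the other. If anything, the ``hard part'' is purely notational: keeping straight the two distinct polynomial weights (in $s$, giving smoothness in $y$; in $x$, giving super-exponential decay in $y$) that are being interchanged.
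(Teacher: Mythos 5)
Your proof is correct and uses essentially the same approach as the paper: one direction via integration by parts with respect to the multiplicative derivation (to obtain rapid decay of $\check f$ on vertical strips), and the converse via a contour shift in the Mellin inversion integral to produce the required decay of $f$ at $0$ and $\infty$. The explicit passage to $\RR$ via $y=\log x$ is purely cosmetic --- the paper works directly on $\Rplus$ with $\partial = x\frac{d}{dx}$, which is exactly the pullback of $\partial_y$, so the two arguments are isomorphic line by line.
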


The Fr\'echet space structure is, of course, determined by the norms of the estimate. In the literature, the Paley-Wiener theorem is usually formulated for compactly supported smooth functions, in which case there is also an exponential growth condition in the $\Re(s)$-direction, so let us briefly prove the above version. We denote throughout by $\partial$ the generator $x\frac{d}{dx}$ of the Lie algebra of $\Rplus$.

\begin{proof}
The Mellin transform is clearly entire, and by standard Fourier theory, 
$$\int_{\Rplus} x^s \partial^N f(x) d^\times x = (-s)^N \check f(s),$$
so for $m\le \Re(s)\le M$ and any $N$, we have an estimate
$$ |\check f(s)| |\Im(s)|^N \le \int_{\Rplus} \left| x^{-s} \partial^N f(x)  \right|d^\times x \le $$
$$ \le \int_{\Rplus} \left| x^{-m} \partial^N f(x)  \right|d^\times x + \int_{\Rplus} \left| x^{-M} \partial^N f(x)  \right|d^\times x, $$
and both integrals in the last expression are seminorms on $\mathcal S(\Rplus)$.

Vice versa, Mellin inversion \eqref{Mellininv}
satisfies, because of the estimate on vertical strips for $\check f$, 
$$ x^M \partial^N f(x)  = \frac{1}{2\pi i} \int_{\sigma-i\infty}^{\sigma+i\infty} s^N \check f(s) x^{s+M} ds. $$ Since the value of $\sigma$ is arbitrary (by a contour shift argument which is allowed, given the estimate on vertical strips for $\check f$), we can take $\sigma+M=0$, and then we have
$$ \left | x^M \partial^N f(x)   \right| \le \frac{1}{2\pi} \int_{-M-i\infty}^{-M+i\infty} |s^N \check f(s) | ds$$
for all $x$, finishing the proof.
\end{proof}

\subsection{Mellin transform of asymptotically finite functions on the torus}

Let $E^+$, $E^-$ denote two finite multisets of complex numbers, identified with characters (``exponents'') of $\Rplus$ by associating $s\in \CC$ with the character $t\mapsto t^s$. The space $\mathcal S^+_{(E^+,E^-)}(\Rplus)$ is the space of smooth functions which coincide with generalized eigenfunctions with the given characters, in neighborhoods of $0$ ($E^+$) and $\infty$ ($E^-$), up to Schwartz functions. Notice that if $s\in \CC$ appears with multiplicity, it refers to \emph{generalized} eigenfunctions of degree same as the multiplicity, e.g., multiplicity $3$ means: $t\mapsto t^s(c_0+c_1 \log t + c_2 (\log t)^2)$. The space $\mathcal S^+_{(E^+,E^-)}(\Rplus)$ has a natural Fr\'echet space structure, and $\mathcal S^+_{(\emptyset,\emptyset)}(\Rplus) = \mathcal S(\Rplus)$.

Let us start with the case $E^- = \emptyset$. Extending the Paley--Wiener theorem of the previous subsection, we have: 

\begin{theorem}\label{theorem-Mellin-asfinite}
Mellin transform \eqref{Mellin} on $\mathcal S^+_{(E^+,\emptyset)}(\Rplus)$ converges for $\Re(s)\ll 0$, admits meromorphic continuation to $\CC$ with poles at the points of $E^+$ and multiplicities equal to the multiplicity of the exponents, and defines an isomorphism between $\mathcal S_{(E^+,\emptyset )}(\Rplus)$ and the Fr\'echet space $\mathbb H^\PW_{(E^+, \emptyset)}$ of meromorphic functions on $\CC$, with poles at $E^+$ (with the given multiplicities), and satisfying the same estimate \eqref{vertstrip} on bounded vertical strips away from neighborhoods of the poles. The inverse map is given by \eqref{Mellininv} for any $\sigma\ll 0$.
\end{theorem}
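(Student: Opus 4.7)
The plan is to handle the two directions separately, reducing each to a computation on a single model generalized eigenfunction cut off near $0$; the Schwartz portion is absorbed by the previous theorem.

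For the forward direction, I fix a smooth cutoff $u$ on $\Rplus$ equal to $1$ near $0$ and vanishing for $t$ large, and decompose any $f\in \mathcal{S}^+_{(E^+,\emptyset)}(\Rplus)$ as $f = u\cdot p + g$, where $p = \sum_{s_0,k} c_{s_0,k}\,t^{s_0}(\log t)^k$ gathers the exponents of $E^+$ with their multiplicities and $g\in\mathcal{S}(\Rplus)$. By Theorem \ref{PW-torus1D}, $\check g$ is entire and lies in $\mathbb H^\PW(\CC)$, so it suffices to analyze a single term of the form $u(t)\,t^{s_0}(\log t)^k$. The substitution $t = e^{-\tau}$ converts its Mellin transform into a Laplace-type integral
$$\check f_{\mathrm{sing}}(s) \;=\; (-1)^k\int_{\RR}\phi(\tau)\,\tau^k\, e^{-\tau(s_0-s)}\,d\tau,$$
where $\phi(\tau):=u(e^{-\tau})$ is smooth, vanishes for $\tau\ll 0$, and equals $1$ for $\tau\gg 0$. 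This converges absolutely for $\Re(s)<\Re(s_0)$.

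Integration by parts $N$ times in $\tau$ introduces a factor of $(s_0-s)^{-N}$ (all boundary terms vanish, since $\phi$ vanishes at $-\infty$ and $e^{-\tau(s_0-s)}$ decays at $+\infty$ for $\Re(s)<\Re(s_0)$) and replaces $\phi(\tau)\tau^k$ in the integrand by $\partial_\tau^N\bigl(\phi(\tau)\tau^k\bigr)$. For $N = k+1$ the term coming from differentiating $\tau^k$ alone vanishes, and only contributions involving derivatives $\phi^{(j)}$ with $j\ge 1$ survive -- all of which are compactly supported. The resulting integral is then entire in $s$ and uniformly bounded on any bounded vertical strip; a direct evaluation at $s=s_0$ yields $k!$ (up to sign), which is nonzero. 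This exhibits the meromorphic continuation of $\check f_{\mathrm{sing}}$ as a function with a single pole of order exactly $k+1$ at $s_0$. Applying the same argument with arbitrary $N$ gives the bound $|\check f(s)|\le C_{V,N}\,|s_0-s|^{-N}$ on any bounded vertical strip $V$ disjoint from a neighborhood of $s_0$, and hence the required $(1+|\Im s|)^{-N}$-decay.

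For the inverse direction, given $F\in \mathbb H^\PW_{(E^+,\emptyset)}$, I define $f$ by the inversion formula \eqref{Mellininv} with some $\sigma\ll 0$. Since $F$ has no poles to the left of $E^+$, shifting the contour arbitrarily far to the left and invoking the vertical strip estimate for $F$ gives rapid decay of $f$, together with its $\partial$-derivatives, as $t\to\infty$. To read off the asymptotic behavior near $0$, I shift the contour instead to $\Re(s)=\sigma_1\gg 0$, crossing the finite set $E^+$; by residues, each pole of order $k+1$ at $s_0$ contributes exactly the expected generalized eigenfunction $t^{s_0}\cdot(\text{polynomial in }\log t\text{ of degree }k)$, while the remaining integral on $\Re(s)=\sigma_1$ is rapidly decaying as $t\to 0$ by the symmetric argument. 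Continuity in both directions follows by tracking the defining seminorms through these operations, and the topological isomorphism is automatic by the open mapping theorem for Fr\'echet spaces. I expect the most delicate step to be the uniform vertical strip estimate in the forward direction, which the integration-by-parts bookkeeping above is designed to control.
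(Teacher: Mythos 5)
Your proof is correct, but it takes a genuinely different route from the paper's.

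The paper's argument is inductive and algebraic: it observes that multiplication by $(s-\sigma_0)$ on the Mellin side corresponds to the differential operator $(\partial - s_0)$ on the function side (where $\partial = x\,d/dx$), and then proves that $(\partial - s_0) \colon \mathcal S^+_{(E^+,\emptyset)}(\Rplus) \to \mathcal S^+_{(E^+\setminus\{\sigma_0\},\emptyset)}(\Rplus)$ is a topological isomorphism with explicit continuous inverse $F \mapsto -x^{s_0}\int_x^\infty t^{-s_0}F(t)\,d^\times t$. Iterating this, it strips one exponent at a time and reduces everything to Theorem \ref{PW-torus1D}; the vertical-strip estimate and the identification of the poles then come for free from the base case and the factor $(s-\sigma_0)$. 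Your argument instead isolates the singular part $u(t)\,t^{s_0}(\log t)^k$ directly, converts it to a Laplace-type integral, and establishes the meromorphic continuation, pole order, and vertical-strip bounds by repeated integration by parts; the inverse is then reconstructed by contour shifts in Mellin inversion. The two proofs trade off differently: the paper's reduction is shorter and makes the Fr\'echet continuity essentially automatic from the base case, while yours is more self-contained and gives a transparent hands-on picture of exactly where the pole of order $k+1$ and the residue polynomial in $\log t$ come from, at the cost of some extra bookkeeping in the integration by parts and in the final continuity/open-mapping step. A small point you should spell out in the inverse direction: you must still check that the Mellin transform of the $f$ you construct actually recovers $F$; this follows from uniqueness of Fourier/Mellin inversion on the half-plane $\Re(s) < \min_{s_0\in E^+}\Re(s_0)$ together with analytic continuation, and is worth a sentence even if routine.
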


\begin{proof}
Notice that, if $\sigma_0 \in E^+$, then  multiplication by $(s-\sigma_0)$ defines an isomorphism
$$\mathbb H^\PW_{(E^+, \emptyset)} \xrightarrow\sim \mathbb H^\PW_{(E^+\smallsetminus \{\sigma_0\}, \emptyset)},$$
where only one copy of $\sigma_0$ is removed from the multiset $E^+$. We will prove, analogously, that the operator $(\partial -s_0)$ defines an isomorphism
$$ \mathcal S_{(E^+,\emptyset)}(\Rplus)  \xrightarrow\sim \mathcal S_{(E^+\smallsetminus \{\sigma_0\},\emptyset)}(\Rplus), $$
thus reducing the statement by induction, and by the formula 
$$ \widecheck{((\partial-s_0) f)}(s) = (s-s_0) \check f(s),$$
to the case of Theorem \ref{PW-torus1D}. 

(These statements only use the Mellin transform in the convergent region, so they also prove its meromorphic continuation.)

To show that the operator $(\partial -s_0)$ defines the aforementioned isomorphism, assume that 
$$ f(x) \sim \sum_{s'} P_{s'}(\log x) x^{s'}$$
as $x\to 0$, where the sum is finite and the $P_{s'}$ are polynomials. 
Then 
$$(\partial -s_0) f (x)\sim \sum_{s'} ((s'-s_0)P_{s'}(\log x)+P'_{s'}(\log x)) x^{s'},$$
so it is immediately clear that $(\partial -s_0) f \in \mathcal S_{(E^+\smallsetminus \{\sigma_0\},\emptyset)}(\Rplus)$, and the coefficients of its asymptotic expansion depend continuously on the coefficients of the asymptotic expansion of $f$. Moreover, for every $N$ and $M$ we have 
$$ \sup_{x\ge 1} \left| x^M\partial^N (\partial -s_0) f(x)\right| \le \sup_{x\ge 1} \left| x^M\partial^{N+1} f(x)\right| + |s_0|\sup_{x\ge 1} \left| x^M\partial^N  f(x)\right|,$$
and
$$ \sup_{x\le 1} \left| x^M\partial^N \left((\partial -s_0) f(x) - \sum_{s'} ((s'-s_0)P_{s'}(\log x)+P'_{s'}(\log x)) x^{s'}\right)\right| = $$
$$  \sup_{x\le 1} \left| x^M\partial^N (\partial -s_0) \left( f(x) - \sum_{s'} P_{s'}(\log x) x^{s'}\right)\right| \le  $$
$$\sup_{x\ge 1} \left| x^M\partial^{N+1} \left(f(x)- \sum_{s'} P_{s'}(\log x) x^{s'}\right)\right| + |s_0|\sup_{x\ge 1} \left| x^M\partial^N  \left(f(x)- \sum_{s'} P_{s'}(\log x) x^{s'}\right)\right|,$$
which shows that the map 
$$(\partial -s_0):  \mathcal S_{(E^+,\emptyset)}(\Rplus)  \to \mathcal S_{(E^+\smallsetminus \{\sigma_0\},\emptyset)}(\Rplus)$$
is continuous. Vice versa, starting from an element $F \in \mathcal S_{(E^+\smallsetminus \{\sigma_0\},\emptyset)}(\Rplus)$, the inverse map is
$$ (\partial -s_0)^{-1} F (x) = -x^{s_0} \int_x^\infty t^{-s_0} F(t) d^\times t,$$
and it is easy to repeat estimates as above to show continuity in the opposite direction. 
\end{proof}

Obviously, the analogous theorem will hold for $\mathcal S^+_{(\emptyset, E^-)}(\Rplus)$, except that Mellin transform converges, and Mellin inversion is given by a continuous integral, for $\sigma \gg 0$. To put both $E^+$ and $E^-$ together, we need a formalism to distinguish whether poles of the Mellin transform belong to $E^+$ or $E^-$.

\begin{definition}\label{charge-definition}
A \emph{charged Laurent expansion} at a point $s_0\in \CC$ is an expression of the form
$$L= (a_{-n}^+ + a_{-n}^-) (s-s_0)^{-n} + \dots + (a_{-1}^+ + a_{-1}^-) (s-s_0)^{-1} + \sum_{i=0}^\infty a_i (s-s_0)^i.$$

The positive and negative residues of a charged Laurent expansion at $s_0$ are defined to be $\Res_{s=s_0}^\pm L = a_{-1}^\pm$.
\end{definition}

In other words, it is a usual Laurent expansion, but with the polar part split into a ``positive'' and a ``negative'' part. Notice that even the zero function can have non-trivial charged Laurent expansions, with $a_{-i}^+ = - a_{-i}^-$.

Let $h$ be a meromorphic function on $\CC$, and $E=(E^+,E^-)$ two multisets of complex numbers. We will say that $h$ is endowed with charged Laurent expansions at $E$ if it comes equipped with charged Laurent expansions at the points of $E$, where the order of the $+$ and the $-$ polar parts is bounded by the corresponding multiplicities on $E^+, E^-$. (In particular, this places a restriction on the order of pole of $h$ at points of $E$.)

We let $\mathbb H^\PW_{(E^+, E^-)}$ be the Fr\'echet space of meromorphic functions on $\CC$, with poles and charged Laurent expansions determined by $(E^+,E^-)$ (in particular, no poles outside of $(E^+,E^-)$), and satisfying the same estimate \eqref{vertstrip} on bounded vertical strips away from neighborhoods of the poles. The implicit Fr\'echet space seminorms here include, separately, the absolute values of the coefficients $a_{-i}^\pm $. 

\underline{\emph{Notational conventions:}}

\begin{itemize}
\item We will be writing 
$$ \Laur_{s_0}^\pm(h)(s) = \sum_{i=-\infty}^{\infty} a_i^\pm (s-s_0)^i,$$
with notation as above, where $a_i^\pm = a_i$ when $i\ge 0$ and $a_i^\pm =0$ for $i\ll 0$. \emph{Notice that the Laurent expansion of $h$ at $s_0$ is not $\Laur_{s_0}^+(h) + \Laur_{s_0}^-(h)$, since the non-polar parts are repeated in both}. Nonetheless, this is a convenient notational convention.

\item For the function $h^\vee: s\mapsto h(-s)$, we switch the charges in the Laurent expansion, i.e.:
$$\Laur_{s_0}^\pm(h^\vee)(s) = \Laur_{-s_0}^\mp(h)(-s).$$
The idea is, as we shall see, that $\pm$ denotes on which side of the integration of Mellin inversion the pole should lie.

\item For a product $h_1\cdot h_2$ of two functions as above, we set
$$ \Laur_{s_0}^+(h_1\cdot h_2)(s) = \Laur_{s_0}^+(h_1)(s)\cdot \Laur_{s_0}^+(h_2)(s),$$
and similarly for negative charges.
\end{itemize}

In our application of this formalism, we will never allow $\Laur_{s_0}^+(h_1)$ to have a pole simultaneously with $\Laur_{s_0}^-(h_2)$ -- if $h_i = \check f_i$, this would be the case when the regularized convolution of $f_1$ and $f_2$ doesn't make sense. The reader can check the following:

\begin{lemma}
Under the assumption that $\Laur_{s_0}^+(h_i)$ does not have a pole if $\Laur_{s_0}^-(h_j)$ does ($i, j= 1,2$, $i\ne j$), the polar part of $\Laur_{s_0}^+(h_1\cdot h_2)+\Laur_{s_0}^-(h_1\cdot h_2)$ coincides with the polar part of $h_1\cdot h_2$.
\end{lemma}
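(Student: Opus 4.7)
The plan is to decompose each $h_i$ into charged polar parts plus a common holomorphic part and then directly compare $\Laur_{s_0}^+(h_1\cdot h_2)+\Laur_{s_0}^-(h_1\cdot h_2)$ with $h_1\cdot h_2$ as formal Laurent series at $s_0$. Concretely, I would write (formally, at $s_0$)
\[
 h_i = P_i^+ + P_i^- + R_i,
\]
where $P_i^\pm = \sum_{j<0} a_{i,j}^\pm (s-s_0)^j$ is the polar part of the charged Laurent expansion with charge $\pm$, and $R_i = \sum_{j\ge 0} a_{i,j}(s-s_0)^j$ is the common regular part. By definition of $\Laur_{s_0}^\pm$,
\[
 \Laur_{s_0}^\pm(h_i) = P_i^\pm + R_i.
\]

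Next, I would use the product convention to expand
\[
 \Laur_{s_0}^+(h_1 h_2)+\Laur_{s_0}^-(h_1 h_2) = (P_1^+ + R_1)(P_2^+ + R_2) + (P_1^- + R_1)(P_2^- + R_2),
\]
which equals
\[
 P_1^+ P_2^+ + P_1^- P_2^- + (P_1^+ + P_1^-)R_2 + R_1(P_2^+ + P_2^-) + 2R_1 R_2.
\]
On the other hand, expanding $h_1 h_2$ directly yields
\[
 h_1 h_2 = \sum_{\epsilon_1,\epsilon_2\in\{+,-\}} P_1^{\epsilon_1}P_2^{\epsilon_2} + (P_1^+ + P_1^-)R_2 + R_1(P_2^+ + P_2^-) + R_1 R_2.
\]

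The key step is to apply the hypothesis: since $P_1^+$ vanishes whenever $P_2^-$ is nonzero, and symmetrically $P_2^+$ vanishes whenever $P_1^-$ is nonzero, the cross products satisfy $P_1^+ P_2^- = 0$ and $P_1^- P_2^+ = 0$. Hence the double sum collapses to $P_1^+ P_2^+ + P_1^- P_2^-$, and the two expressions above differ only in the coefficient of $R_1 R_2$ (two versus one). Since $R_1$ and $R_2$ are holomorphic at $s_0$, their product is as well, and so contributes nothing to the polar part. Therefore the polar parts of the two expressions agree, which is exactly the claim.

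The only possible obstacle is a conceptual one, namely checking that the product conventions for $\Laur^\pm$ are compatible with the formal manipulations; but since $\Laur_{s_0}^\pm(h_i)$ is by definition a genuine Laurent series and the product $\Laur_{s_0}^\pm(h_1)\cdot\Laur_{s_0}^\pm(h_2)$ is taken in the ordinary sense, no subtlety arises. The argument is essentially bookkeeping once the vanishing of the cross-polar products $P_1^+P_2^-$ and $P_1^-P_2^+$ is noted.
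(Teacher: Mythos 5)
Your proof is correct. Note that the paper in fact does not supply a proof of this lemma at all --- it introduces the statement with ``The reader can check the following'' --- so there is nothing to compare against; you have simply carried out the exercise. Your decomposition $h_i = P_i^+ + P_i^- + R_i$ with $\Laur_{s_0}^\pm(h_i) = P_i^\pm + R_i$ is the natural reading of Definition \ref{charge-definition} and the surrounding notational conventions, the expansion and collection of terms is routine, and the hypothesis is used exactly where it must be: to kill the cross-polar products $P_1^+P_2^-$ and $P_1^-P_2^+$. Since the finite polar parts $P_i^\pm$ have bounded pole order, all the products are well-defined Laurent series, and the leftover discrepancy $R_1R_2$ is holomorphic at $s_0$, so it does not affect polar parts. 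The argument is complete.
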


Now let $(E^+, E^-)$ be multisets of exponents for $\Rplus$, where $E^+$ stands for exponents that will appear as $x\to 0$, and $E^-$ for $x\to \infty$. On elements of the space $\mathcal S_{(E^+,E^-)}(\Rplus)$, Mellin transform \eqref{Mellin} can then be understood as a regularized integral. 
\begin{lemma}
 The integral \eqref{Mellin} with $f\in \mathcal S_{(E^+,E^-)}(\Rplus)$, understood as a regularized integral, coincides with the meromorphic continuation of the convergent integral of Theorem \ref{theorem-Mellin-asfinite} when $f\in \mathcal S_{(E^+,\emptyset)}(\Rplus)$ (or $f\in \mathcal S_{(\emptyset,E^-)}(\Rplus)$).
\end{lemma}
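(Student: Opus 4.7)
The plan is to identify the regularized integral with the meromorphic continuation by exhibiting a common region of convergence and then invoking uniqueness of meromorphic continuation. First I would unwind the definition of the regularized integral in the style of \S\ref{ssregints}: for $f \in \mathcal{S}_{(E^+, E^-)}(\Rplus)$ the integral $\int_{\Rplus}^* f(x)\, x^{-s}\, d^\times x$ is the value at $t=0$ of the meromorphic continuation (in the deformation parameter $t$) of $\int_{\Rplus} f(x)\, x^{-s-t}\, d^\times x$. Equivalently, fixing a smooth cutoff $\chi$ on $\Rplus$ with $\chi \equiv 1$ near $0$ and $\chi \equiv 0$ near $\infty$, I would decompose $f = \chi f + (1-\chi) f$ and define the regularized Mellin integral as the sum of the meromorphic continuations of the two pieces.

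Second, I would exploit the hypothesis $E^- = \emptyset$: the piece $(1-\chi) f$ lies in the Schwartz space $\mathcal{S}(\Rplus)$, since $f$ is Schwartz near $\infty$ and $(1-\chi)$ vanishes near $0$, so its Mellin transform is entire by Theorem \ref{PW-torus1D}. The piece $\chi f$ lies in $\mathcal{S}_{(E^+,\emptyset)}(\Rplus)$ and is supported away from $\infty$, so its Mellin transform converges absolutely for $\Re(s) \ll 0$ and extends meromorphically with poles in $E^+$ by Theorem \ref{theorem-Mellin-asfinite}.

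Third, on the half-plane $\Re(s) < \min_{s' \in E^+} \Re(s')$, the convergent integral of Theorem \ref{theorem-Mellin-asfinite} applied to $f$ itself equals the sum $\int \chi f(x)\, x^{-s}\, d^\times x + \int (1-\chi) f(x)\, x^{-s}\, d^\times x$, each summand converging absolutely in that range. Hence the regularized integral and the convergent Mellin integral of Theorem \ref{theorem-Mellin-asfinite} agree on this half-plane, and by uniqueness of meromorphic continuation they coincide as meromorphic functions on $\CC$. Independence of the auxiliary cutoff $\chi$ is immediate: any two such cutoffs differ by a compactly supported smooth function on $\Rplus$, whose Mellin transform is entire, and the resulting shifts of the two summands cancel. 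The mirror case $E^+ = \emptyset$ is identical after exchanging the roles of $0$ and $\infty$ and working in the convergent half-plane $\Re(s) \gg 0$ instead.

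The only real subtlety is aligning conventions so that the \emph{a priori} ad hoc definition of the regularized integral on $\mathcal{S}_{(E^+, E^-)}(\Rplus)$ (via either holomorphic deformation of the exponent or via a cutoff decomposition) actually reduces, in a suitable convergence region, to the naive integral; once this bookkeeping is in place, the lemma follows essentially by the identity principle and costs no further analytic input beyond Theorems \ref{PW-torus1D} and \ref{theorem-Mellin-asfinite}.
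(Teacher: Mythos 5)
The paper's own proof of this lemma is literally the single sentence ``This is immediate from the definitions, and left to the reader,'' so there is nothing to compare against; your job was to supply the reasoning, and what you wrote is essentially right and organized the way one would want.

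One imprecision in the opening sentence is worth flagging: the parenthetical description of the regularized Mellin integral as the value at $t=0$ of the meromorphic continuation of $\int_{\Rplus} f(x)\,x^{-s-t}\,d^\times x$ is not a correct model for $\Rplus$ when both $E^+$ and $E^-$ are nonempty. Shifting by $x^{-t}$ uniformly improves convergence at one end while worsening it at the other, so if the original strip of convergence is empty the deformed integral never converges for \emph{any} $t$ and there is nothing to continue. The cutoff decomposition $f = \chi f + (1-\chi)f$ that you give immediately afterwards is the correct regularization (this is also what the deformation in \S\ref{ssregints} secretly does: the identification $(1-u) + u\,\Delta^t$ deforms only near the cusp, not globally). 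Since your second and third steps rely only on the cutoff decomposition and not on the $x^{-s-t}$ phrasing, the body of the argument is unaffected; you might simply delete or correct that parenthetical.

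Beyond that, the argument is sound: under the hypothesis $E^- = \emptyset$, the piece $(1-\chi)f$ lies in $\mathcal{S}(\Rplus)$ and has entire Mellin transform by Theorem \ref{PW-torus1D}, the piece $\chi f$ lies in $\mathcal{S}_{(E^+,\emptyset)}(\Rplus)$ and falls under Theorem \ref{theorem-Mellin-asfinite}, both summands converge absolutely on $\Re(s) < \min_{s'\in E^+} \Re(s')$, and there the sum visibly equals the naive Mellin integral of $f$; identity of the two meromorphic functions then follows. The cutoff-independence observation is correct and appropriately brief, and the mirror case is indeed symmetric.
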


This is immediate from the definitions, and left to the reader. As a corollary, for $f\in \mathcal S_{(E^+,E^-)}(\Rplus)$, Mellin transform $\check f$ is a well-defined meromorphic function, with poles determined by the multiset $E^+ \cup E^-$, where the union of multisets is here taken with the \emph{maximum} multiplicity. We ``embellish'' $\check f$ by assigning charges to its Laurent expansion: Write $f = f_+ + f_-$ with $f_+ \in \mathcal S_{(E^+,\emptyset)}(\Rplus)$ and $f_- \in \mathcal S_{(\emptyset, E^-)}(\Rplus)$, and let the ``$+$''-polar part of $\check f$ be the polar part of $\check f_+$, and the ``$-$''-polar part be the polar part of $\check f_-$. Clearly, these definitions do not depend on choices. \emph{From now on, the Mellin transform $\check f$ will be understood not just as a meromorphic function on $\CC$, but as a meromorphic function with charged Laurent expansions.}

\begin{theorem}\label{PWext-torus1D}
If $(E^+, E^-)$ are multisets of exponents for $\Rplus$ ($E^+$ as $x\to 0$, $E^-$ as $x\to \infty$), then Mellin transform as above, defines an isomorphism between $\mathcal S_{(E^+,E^-)}(\Rplus)$ and the Fr\'echet space $\mathbb H^\PW_{(E^+, E^-)}$ of meromorphic functions on $\CC$, with poles and \emph{charged Laurent expansions} determined by $(E^+,E^-)$, and satisfying the same estimate \eqref{vertstrip} on bounded vertical strips away from neighborhoods of the poles.
\end{theorem}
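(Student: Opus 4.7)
The plan is to bootstrap from the one-sided case already handled in Theorem \ref{theorem-Mellin-asfinite} (and its evident analogue for $\mathcal S^+_{(\emptyset, E^-)}(\Rplus)$). Fix once and for all a smooth cutoff $\chi\colon \Rplus \to [0,1]$ with $\chi \equiv 1$ near $0$ and $\chi \equiv 0$ near $\infty$. Given $f \in \mathcal S^+_{(E^+, E^-)}(\Rplus)$, the pair $(\chi f, (1-\chi) f)$ lies in $\mathcal S^+_{(E^+, \emptyset)}(\Rplus) \times \mathcal S^+_{(\emptyset, E^-)}(\Rplus)$, and the decomposition is continuous. Applying the one-sided theorem to each component, I get $\check{(\chi f)} \in \mathbb H^\PW_{(E^+, \emptyset)}$ and $\check{((1-\chi) f)} \in \mathbb H^\PW_{(\emptyset, E^-)}$, whose sum is precisely $\check f$ equipped with its charged Laurent expansions (the $+$-charges coming from the first summand, the $-$-charges from the second). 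Independence of the charges from the specific cutoff follows from the lemma preceding the theorem, since modifying $\chi$ alters $\chi f$ only by a Schwartz function, whose Mellin transform is entire. The vertical-strip estimate on $\check f$ is inherited from the two pieces, giving continuity of the forward map.

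For the inverse direction, the strategy is to subtract explicit meromorphic ``models'' of the polar parts and reduce to the entire case handled by Theorem \ref{PW-torus1D}. For each $s_0 \in E^+$ carrying $+$-polar part $\sum_{i=1}^{n_+} a_{-i}^+ (s-s_0)^{-i}$, a direct computation of $\int_0^\epsilon x^{s_0-s-1}(\log x)^{i-1}dx$ via $k$-fold differentiation in $s$ of $\int_0^\epsilon x^{u-1}dx = \epsilon^u/u$ shows that the element
$$\psi_{s_0}^+(x) \;:=\; -\chi(x)\, x^{s_0}\, \sum_{i=1}^{n_+} a_{-i}^+ \frac{(\log x)^{i-1}}{(i-1)!}$$
belongs to $\mathcal S^+_{(\{s_0\}^{n_+}, \emptyset)}(\Rplus)$, and that $\widecheck{\psi_{s_0}^+}$ has polar part at $s_0$ equal to the prescribed $+$-charged polar part of $h$, with no poles elsewhere. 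Analogously, using $(1-\chi)$ in place of $\chi$ and the opposite overall sign (dictated by the boundary integral at $\infty$ versus $0$), I would build $\psi_{s_0}^- \in \mathcal S^+_{(\emptyset, \{s_0\}^{n_-})}(\Rplus)$ whose Mellin transform reproduces the $-$-charged polar part at each $s_0 \in E^-$.

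Then
$$\tilde h \;:=\; h \;-\; \sum_{s_0 \in E^+} \widecheck{\psi_{s_0}^+} \;-\; \sum_{s_0 \in E^-} \widecheck{\psi_{s_0}^-}$$
has no remaining poles and still satisfies \eqref{vertstrip} on every bounded vertical strip, hence lies in $\mathbb H^\PW(\CC)$. By Theorem \ref{PW-torus1D} there is a unique $\phi \in \mathcal S(\Rplus)$ with $\check\phi = \tilde h$, and
$$f \;:=\; \phi \;+\; \sum_{s_0 \in E^+} \psi_{s_0}^+ \;+\; \sum_{s_0 \in E^-} \psi_{s_0}^-$$
is the desired preimage in $\mathcal S^+_{(E^+, E^-)}(\Rplus)$, with $\check f = h$ as charged meromorphic functions. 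Continuity in $h$ of this construction is inherited from the fact that extracting the charged polar-part coefficients is a finite-rank (hence continuous) projection, the assembly of the $\psi_{s_0}^\pm$ is algebraic in those coefficients, and the final inversion is continuous by Theorem \ref{PW-torus1D}. Injectivity is automatic: vanishing of the charged $\check f$ forces both $\check{(\chi f)}$ and $\check{((1-\chi)f)}$ to be entire, hence $\chi f$ and $(1-\chi)f$ are Schwartz, and standard Paley--Wiener forces $f = 0$.

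The main obstacle I anticipate is purely book-keeping: correctly tracking signs in the polar parts of $\widecheck{\psi_{s_0}^\pm}$, since the Mellin integrals at $0$ and at $\infty$ produce opposite-sign principal parts, and these signs must match the $\pm$ conventions so that subtraction really cancels the poles (and so that the charges of $\check f$ agree with those prescribed on $h$). Once the sign conventions are fixed, there is no further analytic difficulty beyond what is already built into Theorem \ref{theorem-Mellin-asfinite}.
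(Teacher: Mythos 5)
Your proof is correct, and the sign worry at the end resolves cleanly: with $\psi_{s_0}^+ = -\chi\, x^{s_0}\sum_i \tfrac{a_{-i}^+}{(i-1)!}(\log x)^{i-1}$ one computes (substituting $x = e^{-t}$ and splitting off an incomplete Gamma integral) that the polar part of $\widecheck{\psi_{s_0}^+}$ at $s_0$ is exactly $\sum_i a_{-i}^+(s-s_0)^{-i}$, and the analogous computation at $\infty$ with $+(1-\chi)$ gives $\sum_i a_{-i}^-(s-s_0)^{-i}$, so the subtraction really kills the poles with the prescribed charges. The paper's own proof is a two-line reduction: it asserts the quotient descriptions $\mathcal S_{(E^+,E^-)} = \bigl(\mathcal S_{(E^+,\emptyset)}\oplus\mathcal S_{(\emptyset,E^-)}\bigr)/\mathcal S(\Rplus)^\diag$ and $\mathbb H^\PW_{(E^+,E^-)} = \bigl(\mathbb H^\PW_{(E^+,\emptyset)}\oplus\mathbb H^\PW_{(\emptyset,E^-)}\bigr)/\mathbb H^{\PW,\diag}$ and then reduces both directions simultaneously to Theorem \ref{theorem-Mellin-asfinite}. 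Your forward direction is just an explicit realization of that quotient via the cutoff $f = \chi f + (1-\chi)f$, so it is essentially the same; where you diverge is the inverse, which you handle by subtracting explicit polar models $\psi_{s_0}^\pm$ to reduce to the pole-free Theorem \ref{PW-torus1D}, whereas the paper would split $h = h_+ + h_-$ in $\mathbb H^\PW$ and invoke the one-sided theorem on each piece. Your route is more computational but buys concreteness: it produces an explicit section of the Mellin transform and makes the sign conventions visible, which the paper leaves implicit; the paper's route is shorter but requires the reader to check the compatibility of the two quotient structures. Both are fine, and your continuity and injectivity arguments are adequate.
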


\begin{proof}
We have 
$$\mathcal S_{(E^+,E^-)}(\Rplus) = \left(\mathcal S_{(E^+,\emptyset)}(\Rplus) \oplus \mathcal S_{(\emptyset,E^-)}(\Rplus) \right) / \mathcal S(\Rplus)^\diag,$$
and 
$$ \mathbb H^\PW_{(E^+, E^-)} = \left( \mathbb H^\PW_{(E^+, \emptyset)} \oplus \mathbb H^\PW_{(\emptyset, E^-)}\right) / \mathbb H^{\PW,\diag},$$
which reduces us to the previous theorem.
\end{proof}

\begin{corollary}
For $f\in \mathcal S_{(E^+,E^-)}(\Rplus)$ we have the Mellin inversion formula:
\begin{equation}\label{Mellininv-ext} f(x) = \frac{1}{2\pi i} \int_{\sigma-i\infty}^{\sigma+i\infty} \check f(s) x^s ds - \sum_{\Re(s')<\sigma} x^{s'} \Res_{s=s'}^+ \check f(s) + \sum_{\Re(s')>\sigma} x^{s'} \Res_{s=s'}^- \check f(s),\end{equation} 
where $\sigma \in \RR$ is such that no element of $E^+, E^-$ has real part $\sigma$. 

If $\sigma$ is allowed to be arbitrary, the corresponding expression is:
\begin{eqnarray}\nonumber  f(x) &=& \frac{1}{2\pi i} PV\int_{\sigma-i\infty}^{\sigma+i\infty} \check f(s) x^s ds - \sum_{\Re(s')<\sigma} x^{s'} \Res_{s=s'}^+ \check f(s) + \sum_{\Re(s')>\sigma} x^{s'} \Res_{s=s'}^- \check f(s) \\ 
&+& \frac{1}{2}\sum_{\Re(s')=\sigma} x^{s'} (\Res_{s=s'}^- \check f(s)-\Res_{s=s'}^+ \check f(s)), \label{Mellininv-ext-PV}
\end{eqnarray} 
where $PV$ denotes the Cauchy principal value.

\end{corollary}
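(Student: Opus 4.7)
The plan is to reduce everything to the one-sided inversion of Theorem \ref{theorem-Mellin-asfinite} by splitting $f = f_+ + f_-$, and then contour shifting. Concretely, as in the proof of Theorem \ref{PWext-torus1D}, write $f = f_+ + f_-$ with $f_+ \in \mathcal{S}^+_{(E^+,\emptyset)}(\Rplus)$ and $f_- \in \mathcal{S}^+_{(\emptyset, E^-)}(\Rplus)$. Theorem \ref{theorem-Mellin-asfinite} gives
\[
f_+(x) = \frac{1}{2\pi i}\int_{\sigma_0 - i\infty}^{\sigma_0 + i\infty} \check{f}_+(s)\, x^s\, ds \qquad (\sigma_0 \ll 0),
\]
and symmetrically $f_-(x)$ is recovered by an integral along any vertical line $\Re(s) = \sigma_1 \gg 0$ to the right of all exponents in $E^-$.

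To prove \eqref{Mellininv-ext}, fix $\sigma$ avoiding the real parts of all exponents and move both contours to $\sigma$. The estimate \eqref{vertstrip} on bounded vertical strips away from poles shows that the horizontal pieces near $\pm i\infty$ contribute nothing, so the shift for $f_+$ picks up exactly $-\sum_{\Re(s')<\sigma} x^{s'}\,\Res^+_{s=s'}\check{f}$, and dually the shift for $f_-$ contributes $+\sum_{\Re(s')>\sigma} x^{s'}\,\Res^-_{s=s'}\check{f}$; summing and using $\check f_+ + \check f_- = \check f$ on $\Re(s)=\sigma$ yields \eqref{Mellininv-ext}.

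For the principal-value formula \eqref{Mellininv-ext-PV}, when $\sigma$ passes through poles, I would indent the shifted contour by small semicircles to the \emph{right} of each pole of $\check{f}_+$ on $\Re(s) = \sigma$ and to the \emph{left} of each pole of $\check{f}_-$, and then repeat the contour shift. The indented integral equals the principal value plus $i\pi \cdot \Res^+$ (resp.\ $-i\pi\cdot\Res^-$) per on-line pole, by the standard parameterization $s = s' + \epsilon e^{i\theta}$ of a small semicircle. Dividing by $2\pi i$ and summing produces the half-residue contributions $\frac{1}{2}(\Res^- - \Res^+)$ in \eqref{Mellininv-ext-PV}. The one real subtlety, which is where the charged-Laurent formalism earns its keep, is at points $s' \in E^+\cap E^-$: the polar data of the single meromorphic function $\check f$ does not canonically split into $\Laur^\pm$, but the decomposition $f = f_+ + f_-$ (well-defined up to an element of $\mathcal S(\Rplus)^\diag$, which contributes trivially on both sides) does, and this is exactly the data encoded by Definition \ref{charge-definition}, so the collected residues coincide with the charged residues appearing in the formula.
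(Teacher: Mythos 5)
Your argument is correct and is the natural route: since the paper states this corollary without proof (it follows directly from Theorem \ref{theorem-Mellin-asfinite}, its reflected version for $E^-$, and the decomposition $f=f_++f_-$ from the proof of Theorem \ref{PWext-torus1D}), your contour-shift derivation supplies exactly what the paper leaves implicit. The only point worth flagging is that the last step is slightly compressed: when the indent to the right of an on-line pole of $\check f_+$ is combined with the residue that pole contributes to the shift from $\sigma_0\ll 0$ (which now also encloses it), the two terms $+\tfrac12\Res^+$ and $-\Res^+$ combine to give the net $-\tfrac12\Res^+$ appearing in \eqref{Mellininv-ext-PV}; your wording states the semicircle contribution but leaves the cancellation implicit, so a careless reader might expect the opposite sign. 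The well-definedness of the charged residues, via the decomposition being unique up to $\mathcal S(\Rplus)^\diag$, is correctly invoked.
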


\begin{remark}
 It is easy to remember the formulas above, when $\sigma = 0$, as stating that the Plancherel decomposition includes the usual integral over the unitary line, plus discrete contributions (residues) for every exponent that is \emph{superunitary}; that is, the absolute value of the corresponding character is $>1$ in the direction where the exponent appears. If there are unitary exponents (= absolute value 1), they produce poles on the unitary line, and half of the corresponding residues are counted (for each direction).
\end{remark}

\subsection{Inner product of asymptotically finite functions on the torus}

Let $E_1, E_2$ be two exponent arrangements, where now $E_i$ stands for a pair $(E_i^+, E_i^-)$.  Let $f_i \in \mathcal S_{E_i}(\Rplus)$. Assume that no two exponents $s_1 \in E_1^+$, $s_2\in  E_2^+$ satisfy $s_1+s_2=0$, and similarly for $E_1^-, E_2^-$. Then the regularized ``inner product'' 
$$ \left< f_1, f_2\right>^* = \int_{\Rplus}^* f_1(x) f_2(x) d^\times x$$
is defined, in a completely analogous way as in \S \ref{ssregints}. (Again, we take all our pairings to be bilinear, but  by abuse of language we will be calling them ``inner products''.)

The condition on the exponents means the following: Whenever the $+$-part $\Laur^+_{s_0}(\check f_1(s))$ of the Laurent expansion of $\check f_1$ around $s_0$ has a pole, the $+$-part of $\check f_2$ at $-s_0$, $\Laur^+_{-s_0}(\check f_2(s))$, has no pole; and similarly by interchanging $f_1$ with $f_2$, and $+$ with $-$. Notice that in the conventions presented in the previous subsection, $\Laur^+_{-s_0}\check f_2$ corresponds to $\Laur^-_{s_0}$ of the function $s\mapsto \check f_2(-s)$.

\begin{theorem} \label{Plancherel-torus-s}
We have 
\begin{eqnarray} \nonumber
\left< f_1, f_2\right>^*  &=& \frac{1}{2\pi i}\int_{\sigma-i\infty}^{\sigma+i\infty} \check f_1(s) \check f_2(-s) ds \\ &-& \sum_{\Re(s')<\sigma} \Res_{s=s'}^+ \check f_1(s)\check f_2(-s) + \sum_{\Re(s')>\sigma} \Res_{s=s'}^- \check f_1(s) \check f_2(-s). \nonumber \\\label{spectral-torus}
\end{eqnarray}
Here $\sigma \in \RR$ is such that no element of $E_1, E_2$ has real part $\sigma$. If we want to allow such values of $\sigma$, the analogous expression to \eqref{Mellininv-ext-PV} holds.
\end{theorem}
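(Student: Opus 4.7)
The plan is to substitute the Mellin inversion formula \eqref{Mellininv-ext} for $f_1$ into the regularized pairing $\langle f_1, f_2\rangle^*$ and interchange the order of integration. Explicitly, writing
$$f_1(x) = \frac{1}{2\pi i}\int_{\sigma-i\infty}^{\sigma+i\infty} \check f_1(s) x^s \, ds - \sum_{\Re(s')<\sigma} x^{s'} \Res_{s=s'}^+ \check f_1(s) + \sum_{\Re(s')>\sigma} x^{s'} \Res_{s=s'}^- \check f_1(s),$$
I would pair each piece with $f_2$; the three resulting terms should match the three contributions on the right-hand side of \eqref{spectral-torus}.

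For the contour-integral piece, interchanging $\int^*_{\Rplus}(\cdot) f_2(x)\, d^\times x$ with the vertical $s$-integration produces the continuous term $\frac{1}{2\pi i}\int_{\sigma-i\infty}^{\sigma+i\infty} \check f_1(s) \check f_2(-s)\, ds$, using the identity $\int^*_{\Rplus} x^s f_2(x)\, d^\times x = \check f_2(-s)$ from the lemma preceding Theorem \ref{PWext-torus1D}. To justify the interchange, I would decompose $f_2 = f_2^{\mathrm{as}} + f_2^{\mathrm{Sch}}$, where $f_2^{\mathrm{Sch}} \in \mathcal S(\Rplus)$ and $f_2^{\mathrm{as}}$ is a finite sum of terms $u(x) x^{s_2}(\log x)^k$ with $u$ a smooth cutoff isolating a neighborhood of $0$ or of $\infty$. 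For $f_2^{\mathrm{Sch}}$ the exchange is classical Fubini, thanks to the rapid vertical decay \eqref{vertstrip} of $\check f_1(s)$; for each asymptotic piece the inner regularized integral in $x$ is an explicit meromorphic function of $s$ (a shifted, log-derivative of an incomplete-gamma-type integral) that is rapidly decreasing in $|\Im(s)|$ on any vertical line avoiding its finitely many poles, which are excluded by the condition on $\sigma$.

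For the discrete terms, pairing $x^{s'}\Res_{s=s'}^\pm \check f_1(s)$ with $f_2$ should yield $\Res_{s=s'}^\pm\bigl(\check f_1(s) \check f_2(-s)\bigr)$, with residues interpreted via the product rule for charged Laurent expansions $\Laur_{s_0}^\pm(h_1 h_2) = \Laur_{s_0}^\pm(h_1)\,\Laur_{s_0}^\pm(h_2)$. When $\check f_2(-s)$ is regular at $s = s'$ in the relevant charge, this is immediate from $\int^*_{\Rplus} x^{s'} f_2(x)\, d^\times x = \check f_2(-s')$. In the remaining case, where $s' \in E_1^+$ and $-s' \in E_2^-$ (or the symmetric situation with charges reversed), the individual pairing $\langle x^{s'}, f_2\rangle^*$ is not defined, because then $-s' \in E_2$ and the exponent-cancellation condition defining the regularized pairing fails. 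I would handle this by introducing a one-parameter deformation $f_1^{(t)}(x) := x^t f_1(x)$, whose Mellin transform is $\check f_1(s-t)$; for $t$ generic the exponents of $f_1^{(t)}$ are shifted off all collisions with $-E_2$, every individual pairing exists, and the identity \eqref{spectral-torus} is established by the naive argument of the preceding paragraph. Both sides are meromorphic in $t$ at $t=0$, so analytic continuation yields the identity in general, the charged-Laurent convention packaging the limit of the higher-order pole residues correctly.

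The main obstacle is precisely this ``coincident-pole'' case; the deformation argument reduces it to the generic case, where term-by-term pairing works, and verifying that the limit at $t=0$ recovers the charged residue $\Res_{s=s'}^\pm\bigl(\check f_1(s) \check f_2(-s)\bigr)$ is a direct residue computation using the definition $\Laur_{s_0}^+(h^\vee)(s) = \Laur_{-s_0}^-(h)(-s)$. The variant for $\sigma$ on the real part of a pole then follows by starting from the principal-value inversion \eqref{Mellininv-ext-PV} in place of \eqref{Mellininv-ext} and propagating the additional half-residue terms through the same substitution.
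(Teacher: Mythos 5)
Your substitute-and-interchange plan is in the spirit of the paper's first suggested route (the Remark immediately above the theorem), but as written there is a genuine gap: the right-hand side of \eqref{spectral-torus} carries charged residues at \emph{every} pole of the product $\check f_1(s)\check f_2(-s)$, including the poles contributed solely by the factor $\check f_2(-s)$ (the set $-E_2$), while your term-by-term expansion of $f_1$ through \eqref{Mellininv-ext} produces discrete corrections only at the poles of $\check f_1$ (the set $E_1$). A minimal example exhibits the omission: take $f_1 \in \mathcal S(\Rplus)$, so $\check f_1$ is entire and the discrete part of \eqref{Mellininv-ext} is empty, and $f_2 \in \mathcal S^+_{(\emptyset,\{-1\})}(\Rplus)$ with $f_2(x)\sim cx^{-1}$ as $x\to\infty$, so $\check f_2(-s)$ has a single $+$-pole at $s=1$ with $\Res^+_{s=1}\check f_2(-s)=-c$. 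Your expansion delivers $\left<f_1,f_2\right>^* = \frac{1}{2\pi i}\int_{\Re s=\sigma}\check f_1(s)\check f_2(-s)\,ds$ with no correction for any $\sigma$, whereas \eqref{spectral-torus} for $\sigma>1$ has the extra term $-\Res^+_{s=1}\check f_1(s)\check f_2(-s) = c\,\check f_1(1)$, nonzero in general. The culprit is the Fubini step: when $\Re s=\sigma$ lies outside the strip of absolute convergence of $s\mapsto\int x^s f_2(x)\,d^\times x$, the inner $x$-integral diverges, so the proposed interchange is an equality of meromorphic continuations rather than of integrals, and the discrepancy is precisely the residues you have dropped. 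Your deformation $f_1\mapsto x^t f_1$ cannot recover them, since it moves only the poles of $\check f_1$ and leaves those of $\check f_2(-s)$ fixed.

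The cleanest way to close the gap is what the paper's Remark points at: treat the two transforms symmetrically by forming the regularized Mellin convolution $f(x)=\int^*_{\Rplus}f_1(xy)f_2(y)\,d^\times y$, checking (using the non-criticality hypothesis on $(E_1,E_2)$) that $f$ is asymptotically finite with exponent arrangement the union of $E_1$ and the reflection of $E_2$, and noting that $\check f(s)=\check f_1(s)\check f_2(-s)$ and $f(1)=\left<f_1,f_2\right>^*$; then \eqref{spectral-torus} is literally \eqref{Mellininv-ext} for $f$ at $x=1$, and all poles of the product enter on the same footing. If you want to keep the substitution-Fubini route, you would need to carry it out for a $\sigma$ where $\int x^s f_2\,d^\times x$ converges absolutely and then verify separately that the right side of \eqref{spectral-torus} is independent of $\sigma$ (by the residue theorem together with the product rule for charges); that invariance is where the missing residues at $-E_2$ reappear, and it does not fall out of the term-by-term pairing alone.
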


\begin{remark}
This can be seen as the same statement as \eqref{Mellininv-ext}, applied to the regularized convolution $f(x) = \int_{\Rplus}^* f_1(xy) f_2(y) d^\times y$
of the functions $f_1$ and $x\mapsto f_2(x^{-1})$.
\end{remark}

\begin{proof}
There are various ways to prove this theorem, including:
\begin{enumerate}
\item by using the remark above;
\item by meromorphic continuation from subunitary characters (i.e., modifying the sets of exponents $E^\pm$);
\item by subtracting from each function its superunitary asymptotic part, thus reducing the problem to the case of $L^2$-functions. (This method does not generalize to tori of higher rank.) 
\end{enumerate}

\end{proof}

\subsubsection{Extension to functions that are not asymptotically finite} 

For our applications to the trace formula, we will need to extend the validity of the above inner product formulas. The reader can skip this technical subsection at first reading, and only return to it when needed. The issue in the case of automorphic functions is that we are going to have to apply our formulas to constant terms, which cannot be controlled very well away from neighborhoods of the cusps. (They involve poles of intertwining operators/Eisenstein series, i.e., zeroes of $L$-functions.) Thus, we need to weaken our assumptions on the functions, to include functions which in one direction are only asympotically finite ``up to an almost-$L^2$ smooth function''.

More specifically, one of our functions, say $f_1$, will indeed be asymptotically finite close to $\infty$, but of rapid decay close to zero. The other, $f_2$, will be asymptotically finite close to $\infty$, but only asympotically finite ``up to an almost-$L^2$ smooth function'' close to zero, by which we mean the properties stated in the proposition that follows: \marginpar{CONTINUE HERE}

\begin{theorem} \label{asfinitealmostL2}
 Assume that $f_2 \in C^\infty(\Rplus)$ is asymptotically finite close to $\infty$, and of moderate growth in a neighborhood of $0$. Define the Mellin transform $\check f_2(z) = \check f_{2,0}(z) + \check f_{2,\infty}(z)$, where $f_2 = f_{2,0}+f_{2,\infty}$ is a decomposition with $f_{2,\infty}\in\mathcal S^+_{\emptyset,E_2^-}(\Rplus)$ and $f_{2,0}$ of rapid decay close to infty, $\check f_{2,\infty}$ is the (meromorphically continued) Mellin transform of $f_{2,\infty}$ as before, and $\check f_{2,0}(z)$ is the Mellin transform of $f_{2,0}$, defined by a convergent integral for $\Re(z)\ll 0$. Assume the following about $\check f_2$:
 \begin{itemize}
  \item it admits meromorphic continuation, with a finite number of poles, to $\Re(z)\le 0$;

 \item it satisfies the same estimate \eqref{vertstrip} on any vertical strip $V$ of the form $a\le \Re(z) \le 0$, away from the poles. 
 \end{itemize}

 Then:
 \begin{enumerate}
  \item $f_2$ can be written as $f_2^{\operatorname{fin}} + f_2^{\operatorname{rest}}$, with $f_2^{\operatorname{fin}}$ an asymptotically finite smooth function and $f_2^{\operatorname{rest}}$ a smooth function which is of rapid decay (together with its derivatives) close to $\infty$, and lies in $L^2(\RR^\times)$, together with its multiplicative derivatives. 
  \item If $f_1 \in \mathcal S^+_{(\emptyset, E_1^-)}(\Rplus)$, and no two exponents $s_1 \in E_1^-$, $s_2\in E_2^-$ satisfy $s_1+s_2 = 0$, then the regularized inner product 
  $$ \int^* f_1 f_2 = \int f_1 f_{2,0} + \int^* f_1 f_{2,\infty} = \int^* f_1 f_2^{\operatorname{fin}} + \int f_1 f_2^{\operatorname{rest}}$$
  admits the same decomposition as \eqref{Plancherel-torus-s}, with any $\sigma \ge 0$ (and the appropriate modifications as in \eqref{Mellininv-ext-PV} if there are poles on $Re(z)=\sigma$). This includes any terms of the form 
  $$\Res^+_{s=s'} \check f_1(s) \check f_2(-s) = - \check f_1(s') \Res^-_{s=-s'} \check f_2 (s)$$ due to exponents $-s'$ of $f_2$ at $\infty$ with $\Re(s')<\sigma$; although $\check f_2(s)$ may not be defined for $\Re(s) >0$, its negatively charged residue is defined, as the negatively charged residue of $\check f_2^{\operatorname{fin}}(s)$ (equivalently, of $\check f_{2,\infty}$).
 \end{enumerate}

 \begin{remark}
  The estimate on vertical strips can be further relaxed to $$|\check f(\sigma+it)|\le \frac{C_m}{(1+|t|^m) |\sigma|^r}$$ plus $L^2$ on $\sigma=0$, with weaker conclusions on $f_2^{\operatorname{rest}}$, if one is interested in applying such considerations to Eisenstein series and ``Fourier--Eisenstein transforms'', cf.\ \cite[\S 14, Corollary]{CasPW}. However, the present version of the proposition will be enough for our purposes, and is much easier to prove.
 \end{remark}

\end{theorem}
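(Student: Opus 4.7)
My plan is to explicitly split $f_2$ into an asymptotically finite piece $f_2^{\operatorname{fin}}$ and an $L^2$-residual piece $f_2^{\operatorname{rest}}$, so that Theorem \ref{Plancherel-torus-s} handles $\int^* f_1 f_2^{\operatorname{fin}}$ while a standard Mellin--Parseval identity handles $\int f_1 f_2^{\operatorname{rest}}$; summing the two produces the required pairing formula.

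For part (1), I would start from the Mellin inversion \eqref{Mellininv} applied to $f_{2,0}$ on a vertical line $\sigma_0\ll 0$ inside the convergence region, and shift the contour to the imaginary axis past the finitely many poles of $\check f_{2,0}$ in $\Re(z)\le 0$. The vertical-strip bound \eqref{vertstrip} from the hypothesis makes the horizontal segments of the contour vanish, giving
\[
f_{2,0}(x) = \frac{1}{2\pi i}\int_{i\RR}\check f_{2,0}(z)\, x^z\, dz - g(x),
\]
where $g(x) = \sum_j \Res_{z=s_j}(\check f_{2,0}(z)\, x^z)$ is a finite linear combination of generalized powers $x^{s_j} P_j(\log x)$. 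Fix a smooth cutoff $\chi$ with $\chi = 1$ near $0$ and $\chi = 0$ near $\infty$, and set $f_2^{\operatorname{fin}} := f_{2,\infty} - \chi g$ and $f_2^{\operatorname{rest}} := f_{2,0} + \chi g$. Then $f_2 = f_2^{\operatorname{fin}} + f_2^{\operatorname{rest}}$; the first summand is asymptotically finite, with asymptotic $-g$ at $0$ and the original exponents $E_2^-$ at $\infty$, while the second coincides with $f_{2,0}$ near $\infty$ (hence has rapid decay) and near $0$ with the inverse-Mellin integral along $i\RR$, which lies in $L^2(d^\times x)$ by Plancherel. The $L^2$-property of the multiplicative derivatives is handled identically, since $\partial^k$ corresponds to multiplication of $\check f_{2,0}(z)$ by $z^k$, still of rapid decay on the imaginary axis. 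Poles on $i\RR$ are treated by indenting the contour and accounting for half-residues as in \eqref{Mellininv-ext-PV}.

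For part (2), I would write $\int^* f_1 f_2 = \int^* f_1 f_2^{\operatorname{fin}} + \int f_1 f_2^{\operatorname{rest}}$; the second integral is absolutely convergent since $f_1$ decays rapidly near $0$ while $f_2^{\operatorname{rest}}$ decays rapidly near $\infty$, and each factor is bounded on the complement of its "good" cusp. Theorem \ref{Plancherel-torus-s} yields the decomposition of $\int^* f_1 f_2^{\operatorname{fin}}$ along any $\sigma \ge 0$, producing $\frac{1}{2\pi i}\int_{\sigma+i\RR}\check f_1(s)\check f_2^{\operatorname{fin}}(-s)\, ds$ together with the charged residue sums. For the Parseval computation of $\int f_1 f_2^{\operatorname{rest}}$, I insert the Mellin inversion of $f_1$ along $\sigma + i\RR$ (collecting the residues of $\check f_1$ to the right of $\sigma$), swap orders of integration (justified by the rapid vertical decay of $\check f_1$ and the Sobolev boundedness of $f_2^{\operatorname{rest}}$ coming from the $L^2$-property of all multiplicative derivatives), and recognize the inner integral as $\check f_2^{\operatorname{rest}}(-s)$. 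On the contour, $\check f_2^{\operatorname{fin}}(-s) + \check f_2^{\operatorname{rest}}(-s) = \check f_2(-s)$, so summing the two formulas produces the main contour integral in the form required by \eqref{spectral-torus}.

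The main technical difficulty is matching the residue terms when $\check f_2(-s)$ is not defined. Because $\check f_2^{\operatorname{rest}}(z)$ is by construction holomorphic on $\Re(z)\le 0$, at every pole $s'$ with $\Re(-s')\le 0$ the residue of $\check f_1(s)\check f_2^{\operatorname{fin}}(-s)$ equals that of $\check f_1(s)\check f_2(-s)$, and the two formulas align. At the remaining poles, which come from exponents $-s' \in E_2^-$ with $\Re(-s') > 0$, the value $\check f_2(-s)$ is not defined, but the identity $\Res^+_{s=s'}\check f_1(s)\check f_2(-s) = -\check f_1(s')\Res^-_{s=-s'}\check f_2(s)$ in the statement interprets the residue as the negatively charged residue of $\check f_{2,\infty}$, which is exactly what our $\check f_2^{\operatorname{fin}}$ computation produces. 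The exponent condition $s_1+s_2\ne 0$ for $s_1\in E_1^-,s_2\in E_2^-$ ensures no offending pole coincidences, so the decomposition \eqref{spectral-torus} holds as asserted, with any $\sigma \ge 0$.
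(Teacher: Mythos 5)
Your proof is correct and follows essentially the same strategy as the paper's: a contour shift in the Mellin/Plancherel integral is the engine in both, and the residues collected along the way account for the asymptotically finite contributions while the contour at $\Re(s)=0$ produces the $L^2$ piece. The difference is one of bookkeeping. The paper first discards $f_{2,\infty}$ (by invoking Theorem \ref{Plancherel-torus-s}), then observes that $f_1|x|^{-\sigma}$ and $f_{2,0}|x|^\sigma$ are both $L^2$ for $\sigma\gg 0$, writes the Plancherel identity
\[
\int f_1 f_{2,0} = \frac{1}{2\pi i}\int_{\Re(s)=\sigma\gg 0}\check f_1(s)\check f_{2,0}(-s)\,ds,
\]
and shifts the contour down to any $\sigma\ge 0$; part (1) then falls out implicitly from the residues one encounters. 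You instead carry out the decomposition $f_2 = f_2^{\operatorname{fin}} + f_2^{\operatorname{rest}}$ explicitly, via the cutoff $\chi$, and then treat the two pieces with two separate spectral identities (Theorem \ref{Plancherel-torus-s} for the finite piece, Mellin--Parseval for the residual piece). This is somewhat longer but more modular, and it has the advantage of giving a clean, direct proof of part (1) as a first step rather than extracting it as a byproduct. On the substance you need exactly the same inputs: the vertical-strip estimate \eqref{vertstrip} on $\check f_2$ to justify the contour shift, the Sobolev-type boundedness of $f_2^{\operatorname{rest}}$ (coming from $L^2$-ness of all $\partial^k f_2^{\operatorname{rest}}$) to justify the Fubini-type swap at $\sigma=0$, and the exponent condition $s_1 + s_2 \ne 0$ to rule out coincident poles. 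Your matching of residues, including the interpretation of $\Res^-_{s=-s'}\check f_2(s)$ via $\check f_{2,\infty}$ (equivalently $\check f_2^{\operatorname{fin}}$) when $\check f_2(-s)$ itself is not meromorphically continued past $\Re(s)=0$, is also correct and is exactly the point the statement is emphasizing.
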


\begin{proof}
 By Theorem \ref{Plancherel-torus-s}, we may clearly assume that $f_{2,\infty}=0$. Then $f_1 |x|^{-\sigma}$ and $f_2 |x|^\sigma$ are both $L^2$, for $\sigma \gg 0$, and hence we can write the Plancherel formula for their product:
 $$ \int f_1 f_2 = \frac{1}{2\pi i} \int_{\Re(s) = \sigma \gg 0} \check f_1(s) \check f_2(-s) ds,$$
 or the Mellin inversion for $f_2$:
 $$ f_2(x) = \frac{1}{2\pi i} \int_{\Re(s) = \sigma \gg 0} \check f_2(-s) x^{-s} ds.$$
 Our assumptions on $\check f_2$ allow us to shift the contour to any $\sigma \ge 0$, picking up residues which (in the expression for $f_2$) are generalized eigenfunctions on $\Rplus$. If we take $\sigma =0$, the remaining integral represents an $L^2$-function, if there are no poles on $\Re(s)=0$. If there are poles on $\Re(s) = 0$, we will again have a principal value integral and $\frac{1}{2}$ of the corresponding residue; to see that this still admits the same description, subtract an element of $\mathcal S^+_{(E,\emptyset)}(\Rplus)$ for some multiset $E$, whose Mellin transform has the same poles on $\Re(s)=0$.

\end{proof}

\subsection{Preparation for $[PGL_2]$: Cuspidal and Eisenstein parts, pseudo-Eisenstein series.} \label{sspreparation}

We now return to $H=\PGL_2$.

We are assuming throughout invariant measures on the various spaces under consideration. Tamagawa measures are a good choice, usually, but the precise choices do not matter, as long as they are made compactibly. This means, mainly, two things:

\begin{enumerate}
\item The measures on $H(\adele)$, $[H]$ and $B(k)\backslash H(\adele)$ are chosen compatibly with the obvious local homeomorphisms between these spaces. The measure on $B(k)\backslash H(\adele)$ induces a measure on $[H]_B$ via the quotient map of \eqref{leftright-H}, by taking the measure of $[N]$ to be $1$.

\item We will fix the isomorphism $A\simeq \Rplus$ via the \emph{dominant} character $\delta^{\frac{1}{2}}$ (so that the cusp corresponds to $\infty$) -- this is \emph{opposite} to our description of the reductive Borel--Serre compactification in \S \ref{sscompactifications}, where we used anti-dominant characters, but corresponds to the classical coordinate $y^\frac{1}{2}$ on the upper half plane.

\item Recall that $\pi_s$ denotes the space $C^\infty(A\backslash [H]_B, \delta^{\frac{s}{2}})$, where the eigencharacter $\delta^\frac{s}{2}$ of $A$ corresponds to the normalized action \eqref{action-normalized-H}. Having identified $A$ with $\Rplus$, we have the usual multiplicative Haar measure $d^\times x = x^{-1} dx$ on $\Rplus$, and can define the Mellin transform of $f\in \mathcal S([H]_B)$:
\begin{equation}\label{Mellin-HB}
 \check f(s) (g)= \int_{A} f(ag) \delta^{-\frac{1+s}{2}}(a) da \in \pi_s.
\end{equation}
The morphism $\mathcal S([H]_B)\ni f\mapsto \check f(s)\in \pi_s$ is compatible with the Mellin transform along the group $A= \Rplus$, the way it was previously defined.

\item We define a duality
$$\left<\,\, , \,\,\right>_{\pi_s}: \pi_s \otimes \pi_{-s} \to \CC$$ 
in such a way that that, for $f\in \mathcal S([H]_B)$ we have:
$$\left< \check f(s),\check f(-s) \right>_{\pi_s} = \int_{[H]_B} f(g) \check f(s)(g) dg.$$
Equivalently, the choice of measures on $[H]_B$ and $A =\Rplus$ induces a measure on the appropriate line bundle over $A\backslash [H]_B$, and the product of $\check f(s)$ and $\check f(-s)$ is integrated against that measure.
\end{enumerate}

Now we recall some generalities about the Schwartz space of $[H]$. I refer the reader to Casselman's paper \cite{Cas-Schwartz} for proofs:

There is a decomposition
\begin{equation}
L^2([H]) = L^2([H])_{\cusp} \oplus L^2([H]_\Eis,
\end{equation}
where $L^2([H])_\cusp$ consists of functions whose constant term vanishes, and $L^2([H]_\Eis$ is, by definition, the orthogonal complement of this space. 

It is a slightly non-trivial fact that, if we set $\mathcal S([H])_\cusp = L^2([H])_{\cusp}  \cap \mathcal S([H])$ and $\mathcal S([H])_\Eis = L^2([H])_\Eis \cap \mathcal S([H])$ then we have:
\begin{equation}\label{cuspdecomp}
\mathcal S([H]) = \mathcal S([H])_\cusp \oplus \mathcal S([H])_\Eis.
\end{equation}
(See \cite[Proposition 3.13]{Cas-Schwartz}.)

Now consider the pseudo-Eisenstein series morphism 
$$ \Psi\Phi(g) = \sum_{\gamma\in B(k)\backslash H(k)} \Phi(\gamma g).$$

It is the analog of the morphism $\Sigma$ of Example \ref{HmodN}, except that our Schwartz function is not taken to be on $N\backslash H(\adele)$, but directly on $[H]_B$. It is usually easier to think of the space $N\backslash H(\adele)$, and we can reduce things to it via the following lemma.

\begin{lemma}\label{lemmaAk}
The morphism $\mathcal S(N\backslash H(\adele)) \to \mathcal S([H]_B)$ given by the sum
$$ \Phi\mapsto \sum_{\alpha\in A(k)} \Phi(ag) $$
is continuous and surjective.
\end{lemma}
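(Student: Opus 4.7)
The plan is to use an Iwasawa decomposition $H(\adele) = N(\adele) A(\adele) K$ with $K$ a good maximal compact subgroup. This realizes both $N(\adele)\backslash H(\adele)$ and $[H]_B$ as quotients of $A(\adele)\times K$ by the compact group $K_A := K \cap A(\adele)$; writing $A(\adele) = \Rplus \times A(\adele)^1$ via the idele norm, the passage from the first to the second amounts to dividing out by $A(k)$ acting on the $A(\adele)^1$-factor, producing the compact quotient $[A]^1 := A(\adele)^1/A(k)$. Since $A(k) \subset A(\adele)^1$, the action is trivial on the $\Rplus$-factor, so the summation map respects the direction in which Schwartz decay is required.

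Continuity then follows from the standard theta-series convergence argument (of the same flavor as those used in Examples \ref{HmodN}--\ref{torus}): a Schwartz function $\Phi$ on $A(\adele)\times K$, right-invariant under some open compact subgroup at the finite places, is locally constant with bounded support in each $A(\adele)^1$-fibre (for each fixed $K$-type) and of rapid decay in $\Rplus$. The sum $\sum_{\alpha\in A(k)} \Phi(\alpha g)$ converges absolutely and locally uniformly, and each Schwartz seminorm of the sum is dominated by a finite combination of Schwartz seminorms of $\Phi$; the rapid decay in the $\Rplus$-direction is preserved because $A(k)$ only moves points in the cocompact $[A]^1$-direction.

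For surjectivity, I will construct a continuous right inverse by a partition-of-unity trick. Since $A(k)$ is discrete and cocompact in $A(\adele)^1$, one can produce a compactly supported smooth $\chi_0$ on $A(\adele)^1$ (right-invariant under some open compact subgroup) with $\sum_{\alpha\in A(k)} \chi_0(\alpha y) = 1$ for all $y \in A(\adele)^1$; averaging over $K_A$ ensures the required equivariance. Extending $\chi_0$ by $1$ in the $\Rplus$-direction and pulling back via the Iwasawa projection gives a function $\chi$ on $N(\adele)\backslash H(\adele)$ of compact support in the $A(\adele)^1$-direction and constant in $\Rplus$. For $f \in \mathcal S([H]_B)$, its pullback $\tilde f$ to $N(\adele)\backslash H(\adele)$ is Schwartz in $\Rplus$ and merely bounded along $A(\adele)^1$; hence $\Phi := \chi \cdot \tilde f$ lies in $\mathcal S(N(\adele)\backslash H(\adele))$, and $\sum_\alpha \Phi(\alpha g) = \tilde f(g)\sum_\alpha \chi(\alpha g) = f(g)$ by construction.

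The main (mild) technical obstacle is producing the partition of unity $\chi_0$ with the correct smoothness/equivariance at the finite places; this reduces to the compactness of $A(\adele)^1/A(k)$ together with a standard averaging argument over a well-chosen open compact subgroup.
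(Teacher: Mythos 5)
Your argument follows essentially the same route as the paper's: reduce to the universal Cartan via the Iwasawa decomposition, split off the $\Rplus$-direction from $A(\adele)^1$, and handle the compact quotient $[\Gm]^1=A(\adele)^1/A(k)$ by a partition of unity (the paper phrases the reduction through the tensor decompositions $\mathcal S(A(\adele))=\mathcal S(A)\hat\otimes\mathcal S(A(\adele)^1)$, $\mathcal S([\Gm])=\mathcal S(A)\hat\otimes C^\infty([\Gm]^1)$, but appeals to the same surjectivity for $[\Gm]^1$). One small inaccuracy in your continuity discussion: for a general number field $k$ the group $A(\adele)^1$ is noncompact modulo an open compact (there is a Euclidean unit-lattice direction), so a Schwartz function has \emph{rapid decay} rather than bounded support in the $A(\adele)^1$-fibres; this does not affect convergence of the $A(k)$-sum (rapid decay against the lattice $\mathcal{O}_k^\times$ suffices), nor your surjectivity step, since the cutoff $\chi_0$ is compactly supported by construction.
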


\begin{proof}
Restricting to $[A]$-orbits on $[H]_B$, this easily becomes a statement about Schwartz functions on $\adele^\times$, and their sum over $k^\times$ to produce Schwartz functions on $[\Gm]$. Recalling that we denote by $A$ a subgroup of $A(\adele) = \adele^\times$ isomorphic to $\Rplus$, we have $A(\adele) = A\times A(\adele)^1$, hence $\mathcal S(A(\adele)) = \mathcal S(A) \hat\otimes \mathcal S(A(\adele)^1)$, $\mathcal S([\Gm]) = \mathcal S(A)\hat\otimes \mathcal S([\Gm]^1)$, and the statement follows easily from the surjectivity of the same map from $\mathcal S(A(\adele)^1)$ to $\mathcal S([\Gm]^1) = C^\infty([\Gm]^1)$.
\end{proof}

\begin{remark}\label{remarkclassical}
The space $\mathcal S(N\backslash H(\adele))$ is closer to the ``classical'', rather than the  adelic, formalism of Eisenstein series. Assume, for simplicity, that $k=\QQ$, so that $k_\infty=\RR$, and let us replace the group $\PGL_2$ by $\SL_2$, so that $N\backslash \SL_2 = \mathbb A^2\smallsetminus\{(0,0)\}$. Take $\Phi = \otimes_{p\le \infty} \Phi_p$ in this space, with $\Phi_p =  1_{N\backslash \SL_2(\mathbb Z_p)}$ when $p<\infty$. Then, the morphism $\Sigma$ of Example \ref{HmodN}, for $g\in \SL_2(\RR)$, is equal to the classical Eisenstein sum:
\begin{equation}\label{classsum} \Sigma\Phi(g) = \sum_{(m,n)\in \Z^2, (m,n)=1} \Phi_\infty((m,n)\cdot g).\end{equation}
\end{remark}

Composing the morphism of the previous lemma with the pseudo-Eisenstein morphism, and arguing as in Example \ref{HmodN}, we obtain:
\begin{proposition}
The pseudo-Eisenstein morphism is a continuous morphism
$$\Psi:\mathcal S([H]_B)\to \mathcal S([H]),$$
and its composition with the constant terms is given by the formula 
\begin{equation}\label{constantterm-Psi} (\Psi f)_B = f + Rf,\end{equation}
where $R$ is the Radon transform:
\begin{equation}\label{Radon} \mathcal S([H]_B)\ni f\mapsto \int_{N(\adele)} f(wng) dn \in C^\infty([H]_B),\end{equation}
where $w = \begin{pmatrix}
 & 1 \\
 -1 &
\end{pmatrix}.$
\end{proposition}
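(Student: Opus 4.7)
The plan is to reduce the continuity assertion to Example \ref{HmodN} via Lemma \ref{lemmaAk}, and to verify the constant term formula by a direct Bruhat-cell computation.

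First I would factor the pseudo-Eisenstein morphism through the series $\Sigma$ of Example \ref{HmodN}. Let $T: \mathcal{S}(N\backslash H(\adele)) \to \mathcal{S}([H]_B)$ be the continuous surjection $(T\Phi)(g) = \sum_{\alpha \in A(k)}\Phi(\alpha g)$ of Lemma \ref{lemmaAk}. Splitting the sum over $N(k)\backslash H(k)$ by the left action of $A(k) = N(k)\backslash B(k)$ gives, for any $\Phi \in \mathcal{S}(N\backslash H(\adele))$,
$$\Sigma\Phi(g) = \sum_{\gamma \in B(k)\backslash H(k)}\sum_{\alpha\in A(k)} \Phi(\alpha\gamma g) = \sum_{\gamma \in B(k)\backslash H(k)} (T\Phi)(\gamma g) = \Psi(T\Phi)(g),$$
so $\Sigma = \Psi \circ T$. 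Fix a compact open $K \subset H(\adele_f)$; on $K$-invariants all three spaces are nuclear Fr\'echet, so the open mapping theorem makes $T^K$ a topological quotient map. Combined with the continuity of $\Sigma$ established in Example \ref{HmodN} and the surjectivity of $T$, this shows that $\Psi$ lands in $\mathcal{S}([H])$ and is continuous at level $K$; since $K$ is arbitrary, $\Psi$ is continuous as a map of LF-spaces.

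Next I would compute the constant term by interchanging the sum defining $\Psi f$ with the integral over $[N]$, which is permissible since $\Psi f \in \mathcal{S}([H])$ by the first part, so the relevant double integral is absolutely convergent. The Bruhat decomposition $H(k) = B(k) \sqcup B(k)wN(k)$ furnishes coset representatives $\{1\} \cup \{wn : n \in N(k)\}$ for $B(k)\backslash H(k)$, and hence
$$(\Psi f)_B(g) = \int_{[N]} f(ng)\, dn + \int_{[N]} \sum_{n_1 \in N(k)} f(wn_1 n g)\, dn.$$
Since $f$ is left $N(\adele)$-invariant and $\vol([N]) = 1$, the first summand equals $f(g)$. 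Folding the $N(k)$-sum against the $N(k)\backslash N(\adele)$-integral collapses the second summand to $\int_{N(\adele)} f(wng)\, dn = Rf(g)$, yielding $(\Psi f)_B = f + Rf$.

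The main technical point is the first assertion, specifically deducing continuity of $\Psi$ from continuity of $\Sigma$ via the open mapping theorem; the subtlety is that $\mathcal{S}([H]_B)$ and $\mathcal{S}([H])$ are LF-spaces rather than Fr\'echet, but passing to fixed level $K$ reduces the argument to the standard Fr\'echet setting. The constant-term calculation is then entirely formal once absolute convergence is in hand.
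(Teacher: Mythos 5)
Your proof is correct and follows the route the paper indicates: factor $\Sigma = \Psi\circ T$ through Lemma \ref{lemmaAk}, transfer the continuity established in Example \ref{HmodN}, and compute the constant term via the Bruhat decomposition exactly as in that example. Your explicit appeal to the open mapping theorem on $K$-invariants supplies the step the paper leaves implicit in the phrase ``arguing as in Example \ref{HmodN}.''
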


The argument in the proof of Proposition \ref{HmodNprop} says that the image of the Radon transform consists of functions which are of rapid decay in a neighborhood of the cusp. Below we will give a bound for their growth in the opposite direction (the ``funnel'').

The pseudo-Eisenstein morphism is adjoint to the constant term (under the inner product pairings on $[H]$ and $[H]_B$), and therefore its image is contained in $\mathcal S([H])_\Eis$. The following holds:

\begin{proposition}[{\cite[Lemma 4.2]{Cas-Schwartz}}]\label{Psidense}
The image of $\Psi$ is dense in $\mathcal S([H])_\Eis$.
\end{proposition}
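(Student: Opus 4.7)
The approach is a duality/Hahn--Banach argument. Since $\mathcal S([H])_\Eis$ is closed in $\mathcal S([H])$ by the direct-sum decomposition \eqref{cuspdecomp}, it suffices to show that any continuous linear functional $\lambda \in \mathcal S([H])_\Eis^*$ satisfying $\lambda\circ\Psi=0$ must vanish. Using \eqref{cuspdecomp}, I would extend $\lambda$ to a tempered distribution $\tilde\lambda\in\mathcal S([H])^*$ by declaring it $0$ on the cuspidal summand.

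The stated adjointness $\langle\Psi f,\varphi\rangle_{[H]} = \langle f,\varphi_B\rangle_{[H]_B}$ gives, upon transposition, that $\lambda\circ\Psi=0$ is equivalent to the vanishing of the distributional constant term of $\tilde\lambda$ as an element of $\mathcal S([H]_B)^*$. Thus $\tilde\lambda$ has two features: (i) it annihilates $\mathcal S([H])_\cusp$, and (ii) its distributional constant term vanishes.

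To conclude $\tilde\lambda=0$ I would appeal to Langlands' spectral decomposition, extended from $L^2([H])$ to tempered distributions. The spectrum splits as cuspidal plus Eisenstein (continuous and residual). Feature (ii) eliminates the Eisenstein contribution: in the wave-packet description $\varphi(g)=\tfrac{1}{2\pi i}\int_{\Re s=0} E(s,\phi(s))(g)\,ds$ of $\mathcal S([H])_\Eis$, the constant term acts through the ``scattering'' map $\phi(s)\mapsto \phi(s)+M(s)\phi(-s)$, which is injective on the spectral support, so a tempered distribution whose constant term vanishes has trivial Eisenstein component. Feature (i) then eliminates the cuspidal component, since cuspidal automorphic forms are of rapid decay by Gelfand--Piatetski-Shapiro and hence already lie inside $\mathcal S([H])_\cusp$; this makes $\mathcal S([H])_\cusp$ dense in the $L^2$-cuspidal spectrum, so vanishing on it forces the cuspidal part of $\tilde\lambda$ to vanish as well.

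The main obstacle is promoting Langlands' spectral decomposition to tempered distributions and verifying precisely that constant terms faithfully detect the Eisenstein part in this setting. This is an automorphic Paley--Wiener statement---the analog for $[H]$ of Theorems \ref{theorem-Mellin-asfinite} and \ref{PWext-torus1D} on $\Rplus$---and it is essentially the heart of the Casselman paper cited in the statement. Everything else in the plan reduces to formal duality and to the adjointness between $\Psi$ and the constant term.
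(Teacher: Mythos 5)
The paper gives no proof of this proposition: it is cited verbatim from \cite[Lemma 4.2]{Cas-Schwartz}, so there is nothing internal to compare against. Evaluating your proposal on its own, the Hahn--Banach setup and the duality observation are both correct and useful: using the topological splitting \eqref{cuspdecomp}, a functional $\lambda$ on $\mathcal S([H])_\Eis$ extends by zero to $\tilde\lambda$ on $\mathcal S([H])$, and adjointness of $\Psi$ and the constant term translates ``$\lambda\circ\Psi=0$'' into ``the distributional constant term $(\tilde\lambda)_B$ vanishes.''

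The gap is that the remaining step is essentially a restatement of the claim, not a proof of it. You need: every $\tilde\lambda$ that annihilates $\mathcal S([H])_\cusp$ and has $(\tilde\lambda)_B = 0$ is zero. But $(\tilde\lambda)_B = 0$ is, by the very adjointness you invoked, \emph{equivalent} to ``$\tilde\lambda$ vanishes on the image of $\Psi$,'' so injectivity of the constant-term map on $\mathcal S([H])_\Eis^*$ is dual to the density you are trying to prove, and the argument has not advanced. Your appeal to the wave-packet description of $\mathcal S([H])_\Eis$ and injectivity of $\phi(s)\mapsto\phi(s)+M(s)\phi(-s)$ on the spectral support is exactly the non-trivial Paley--Wiener/spectral characterization of the Eisenstein Schwartz space that would close the loop --- but this is the content of the cited Casselman lemma, not a lemma already available in the paper. (You flag this yourself, which is to your credit.) There is also a secondary issue in the assertion that ``the scattering map is injective on the spectral support'': as stated it is false, since $\phi(s)=v(s)$, $\phi(-s)=-M(-s)v(s)$ is in the kernel; what one wants is injectivity after quotienting by the Weyl involution $\phi(s)\mapsto M(-s)\phi(-s)$, which is again part of the spectral description that needs to be established. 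So the proposal correctly identifies the skeleton of a duality proof, but the actual mathematical content --- promoting Langlands' $L^2$-decomposition to the Schwartz/tempered setting and verifying the spectral picture rigorously --- is precisely what \cite[Lemma 4.2]{Cas-Schwartz} supplies and what the proposal leaves unproven.
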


This statement is not needed for the $L^2$-decomposition (the density in $L^2([H])_\Eis$ is all that is needed, and easier to establish), but it will be useful for our extension of the Plancherel formula to asymptotically finite functions. Before we extend it to those, let us also recall the notion of Eisenstein series, which are the same as pseudo-Eisenstein series when instead of Schwartz functions on $[H]_B$ we consider functions induced from idele class characters $\chi$, i.e., elements of the principal series 
$$I(\chi) := \Ind_{B(\adele)}^{H(\adele)} (\chi\delta^\frac{1}{2}) \subset C^\infty([H]_B).$$

Elements of $I(\chi)$ are obtained when we decompose the space $\mathcal S([H]_B)$ in terms of characters of $[A]$. We will instead, for notational and conceptual simplicity, decompose in terms of characters of $A\simeq\Rplus$, and denote (as before) the induced space by $\pi_s$:
$$ \pi_s = C^\infty(A\backslash [H]_B, \delta^\frac{s}{2}).$$

Recall that $\delta^\frac{s}{s}$ refers to the \emph{normalized} action of $A$, \eqref{action-normalized-H}. The $J$-invariants of the spaces $\pi_s$, for any compact open subgroup $J$ of the finite adeles of $H$, can be thought of as a holomorphic bundle of Fr\'echet spaces over $\CC$. (For example, choose a suitable compact subgroup $K$ of $H(\adele)$ to identify all of them through their restriction to the compact set $[A]^1\cdot K\subset [H]_B$ -- but the structure of Fr\'echet bundle does not depend on this choice.) Thus, it makes sense to talk about holomorphic sections, (strongly) meromorphic functionals and morphisms, etc. When not fixing the subgroup $J$, these notions will refer to the corresponding inductive limit -- on these spaces -- or projective limit -- on their duals. For example, a strongly meromorphic family of functionals on the $\pi_s$ will be a family of functionals which is strongly meromorphic when restricted to the $J$-invariants, for each $J$.
 
By abuse of language, we will refer as ``Eisenstein series'' to the strongly meromorphic family of morphisms (s.\ Theorem \ref{meromcont} below):
$$ \mathcal E: \pi_s \to C^\infty([H]),$$
which for $\Re(s)>1$ are given by the convergent series
\begin{equation}\label{Eisseries} f\mapsto \sum_{\gamma\in B\backslash H(k)} f(\gamma g).\end{equation}

The Eisenstein series can be thought of as the spectral components of pseudo-Eisenstein series, and, similarly, the spectral components of Radon transforms are the \emph{standard intertwining operators} 
$$M(s): \pi_s \to \pi_{-s},$$
given, when $\Re(s)>1$, by the convergent integral:
\begin{equation}\label{Ms}
M(s) F_s(g) = \int_{N(\adele)} F_s(wng) dn.
\end{equation}

The analog of \eqref{constantterm-Psi} is the formula for the constant term of Eisenstein series: 
\begin{equation}\label{constantterm-Eisenstein}
 \mathcal E(f_s)_B = f_s + M(s) f_{-s}.\end{equation}

The basic properties of meromorphic continuation of the Eisenstein series and their constant terms are summarized in the following; some of them (such as the simplicity of poles) are established in the process of the $L^2$-decomposition, that we will present and generalize in the next subsection. However, it is not among the goals of the present paper to revisit the proofs of these properties, and therefore we will take them for granted:

\begin{theorem}\label{meromcont}
The Eisenstein series and the standard intertwining operator, given by the convergent expressions \eqref{Eisseries}, resp.\ \eqref{Ms}, for $\Re(s)>1$, admit strongly meromorphic continuation to $\CC$, with no poles on $\Re(s)=0$ and only a finite number of simple poles for $\Re(s)>0$. Moreover, the intertwining operators satisfy 
\begin{equation}\label{FE}M(s)M(-s)=\Id.\end{equation}
\end{theorem}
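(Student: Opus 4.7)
The plan is to reduce the theorem to the analogous statements for the standard intertwining operator $M(s)$, and then to recover the Eisenstein series from $M(s)$ by Fourier expansion along $[N]$. Concretely, the Bruhat decomposition $H(k) = B(k) \sqcup B(k) w N(k)$ gives, for $\Re(s) > 1$,
\[
\mathcal{E}(f_s)(g) = f_s(g) + \sum_{n \in N(k)} f_s(w n g),
\]
and Fourier-expanding the second sum as a function of $n \in [N]$ separates off its $N$-invariant component (which is precisely $M(s) f_s$, matching \eqref{constantterm-Eisenstein}) from a series of Whittaker-type Fourier coefficients attached to the nontrivial characters of $[N]$. Each such Whittaker coefficient is an Eulerian integral that, after normalization by an appropriate ratio of local $L$-factors, is entire in $s$ (classical computations at unramified, archimedean, and ramified non-archimedean places). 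Thus meromorphic continuation, control of poles, and the functional equation for $\mathcal{E}$ all follow from the corresponding statements for $M(s)$.

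For $M(s)$ itself, I would factorize along places as a restricted tensor product $M(s) = \bigotimes_v M_v(s)$ on flat sections. At an unramified non-archimedean place $v$, the Gindikin--Karpelevich formula computes the eigenvalue of $M_v(s)$ on the spherical vector as $\zeta_v(s)/\zeta_v(s+1)$ in the paper's normalization. Dividing by this ratio at every place yields normalized local operators $M_v^\sharp(s)$ which are the identity on the spherical vector almost everywhere and which, at each individual place, extend to holomorphic (or at worst locally-finite meromorphic) families in $s$ by explicit local analysis. Globally one obtains
\[
M(s) = \frac{\xi_k(s)}{\xi_k(s+1)} \cdot \bigotimes_v M_v^\sharp(s),
\]
where $\xi_k$ is the completed Dedekind zeta function, and meromorphic continuation of $\xi_k$ gives the meromorphic continuation of $M(s)$. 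The only possible pole of the global ratio in $\Re(s) \ge 0$ is the simple pole of $\xi_k(s)$ at $s = 1$, while the absence of poles on $\Re(s) = 0$ follows from the classical non-vanishing of $\zeta_k$ on the line $\Re(z) = 1$; finiteness of poles in $\Re(s) > 0$ reduces, on any fixed $K$-invariant subspace, to the fact that only finitely many factors $M_v^\sharp$ are nontrivial there and each has finitely many poles in that region.

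For the functional equation $M(s) M(-s) = \Id$, the key observation is that the symmetry $\xi_k(z) = \xi_k(1 - z)$ yields the cancellation
\[
\frac{\xi_k(s)}{\xi_k(s+1)} \cdot \frac{\xi_k(-s)}{\xi_k(1-s)} = 1,
\]
and the parallel identity for each $M_v^\sharp$ then gives $M(s) M(-s) = \Id$ on the spherical line. The extension to arbitrary flat sections is standard: $\pi_s$ is irreducible on a dense open subset of $\CC$, so Schur's lemma forces $M(s) M(-s)$ to act as a scalar there, computation on the spherical vector pins that scalar down to $1$, and the identity propagates globally by meromorphic continuation. The main technical obstacle to writing a fully self-contained proof is not any individual analytic step, but rather the careful bookkeeping of normalizations at ramified and archimedean places; this is why the author has chosen to import the theorem from the classical literature rather than reproduce the argument here.
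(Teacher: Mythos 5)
Your argument is a sketch of the Fourier--Whittaker-expansion route to meromorphic continuation (Maa{\ss} and Zagier for $\SL_2(\Z)$, globalized in the Langlands--Shahidi method): expand $\mathcal E(f_s)$ along $[N]$, isolate the constant term $f_s + M(s)f_s$, show the remaining Whittaker coefficients are entire after normalization, and thereby reduce everything to the meromorphy and functional equation of the completed $L$-function. The paper takes a genuinely different route: it reduces $\pi_s^J$ to a \emph{finite} direct sum of principal series $I(\chi)$ induced from idele class characters $\chi$ of $[A]$, and simply cites \cite[Chapter IV]{MW}, where the statement for $I(\chi)$ is proved by the Langlands--Selberg machinery (Maa\ss--Selberg relations, resolvent bounds, Bernstein's continuation principle) rather than by unfolding. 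Your approach buys explicitness --- the poles are tied directly to $\zeta_k$ and the Gindikin--Karpelevich formula --- whereas the paper's citation buys generality, since the MW argument does not depend on the abelian Levi or the degree-one Eulerian structure that makes the Whittaker unfolding for $\PGL_2$ so clean.

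Two points in your sketch are more delicate than you suggest. First, you assert that pole control for $\mathcal E$ ``all follows from the corresponding statements for $M(s)$,'' but the non-constant Whittaker sum, after normalization, still contributes a global $1/\zeta_k(s+1)$-type factor, and establishing its holomorphy for $\Re(s)\ge 0$ requires the non-vanishing of $\zeta_k$ on $\Re = 1$ applied to the Whittaker part as well, together with a locally uniform convergence argument (in $s$) for the sum over $k^\times$ --- the latter is not addressed at all and is the real analytic content behind ``entire after normalization.'' Second, you carry out the Gindikin--Karpelevich factorization only with $\xi_k$; to treat all of $\pi_s$ one must first decompose it into the $I(\chi\delta^{s/2})$, which is precisely the observation the paper makes and you omit, and then run the Euler-product argument for each $\chi$ with $L(s,\chi)$ in place of $\zeta_k(s)$. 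The simple pole at $s=1$ then appears only for the trivial $\chi$, while the finiteness of poles on a fixed level $J$ rests both on the finiteness of the relevant $\chi$'s and on the local finiteness of the normalized $M_v^\sharp$; your last sentence correctly flags the normalization bookkeeping as a nuisance, but the uniform-in-$\chi$ bookkeeping is what actually carries most of the weight.
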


We will refer to the poles of the Eisenstein morphism for $\Re(s)>0$ as ``the Eisenstein poles''.  The last identity of the theorem is only relevant for us on the line $\Re(s)=0$, since we do not use at any point the meromorphic continuation to $\Re(s)<0$. The only exception, where the meromorphic continuation to $\Re(s)<0$ will be used, will be to obtain an explicit formula in Theorem \ref{explicit-rankone}, which is not used in the rest of the paper.

\begin{proof}
This can be reconstructed for the analogous statement for Eisenstein series applied to principal series $I(\chi)$, s.\ \cite[Chapter IV]{MW}. Notice that for any fixed exponent $s$ of $A$, and any compact open subgroup $J$ of the finite ideles $A(\adele_f)$, there is only a finite number of $J$-invariant characters $\chi$ of $[A]$ which coincide with the given one on the image of the real torus $A$. Thus, the $J$-invariants of $\pi_s$ decompose into a finite sum of principal series induced from characters of $[A]$.
\end{proof}

\begin{corollary}\label{corRadonH}
The constant term morphism:
$$ \mathcal S([H])\to C^\infty([H]_B)$$
and the 
Radon transform \eqref{Radon}
$$ R: \mathcal S([H]_B) \to C^\infty([H]_B)$$
have image in the space of smooth functions $f$ which close to the cusp are of rapid decay, and close to the funnel are bounded, together with their derivatives $Xf$ with $X\in U(\mathfrak h)$, by elements of $\pi_{-1-\epsilon}$, for any $\epsilon>0$.

Moreover, if $F$ denotes either the constant term $\varphi_B$ of some $\varphi\in\mathcal S([H])$ or the Radon transform $Rf$ of some $f \in \mathcal S([H]_B)$, the Mellin transform of $F$, defined as in \eqref{Mellin-HB}:
\begin{equation}\label{Mellin-constant}  \check F(s) (g)= \int_{A} F(ag) \delta^{-\frac{1+s}{2}}(a) da \in \pi_s\end{equation}
converges absolutely for $\Re(s)<-1$, and has meromorphic continuation to $\Re(s)\le 0$, with no poles on $\Re(s)=0$ and at most a finite number of simple poles in the region $\Re(s)<0$.
\end{corollary}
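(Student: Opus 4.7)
The strategy is the same for both morphisms: compute the Mellin transform $\check F(s)$ explicitly, derive its meromorphic continuation and pole structure from Theorem~\ref{meromcont}, and then recover rapid decay at the cusp by direct inspection and the growth bound at the funnel by Mellin inversion.

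For the Radon transform $Rf$ of $f \in \mathcal S([H]_B)$, I would first rewrite the defining integral by the change of variables $n \mapsto ana^{-1}$, using that the Jacobian of conjugation by $a$ on $N$ is $\delta(a)$ and that $wa = a^{-1}w$, and then substitute $a \mapsto a^{-1}$. This yields
\begin{equation*}
 Rf(ag) = \delta(a) \int_{N(\adele)} f(a^{-1}wng)\,dn,
\end{equation*}
which makes rapid decay at the cusp visible: for $\delta(a) \to \infty$ the argument $a^{-1}wng$ runs into the funnel region of $[H]_B$, where $f$ is of rapid decay, and a routine estimate separating the $a$- and $n$-directions dominates the $\delta(a)$ prefactor. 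Inserting this identity into the Mellin transform and simplifying gives, for $\Re(s) < -1$,
\begin{equation*}
 \check{Rf}(s)(g) = \int_{N(\adele)} \check f(-s)(wng)\,dn = M(-s)\check f(-s)(g).
\end{equation*}
Since $\check f$ is entire (as $f$ is Schwartz), the meromorphic structure of $\check{Rf}$ on $\Re(s) \le 0$ is inherited from that of $M(-s)$, which by Theorem~\ref{meromcont} has no poles on $\Re(s) = 0$ and at most finitely many simple poles for $\Re(s) < 0$.

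For the constant term of $\varphi \in \mathcal S([H])$, rapid decay of $\varphi_B$ at the cusp is immediate from Theorem~\ref{approxconstantterm-H} together with the Schwartz property of $\varphi$. The meromorphic structure of $\check{\varphi_B}(s)$ I would obtain by adjointness with the Eisenstein series: for $F \in \pi_{-s}$ with $\Re(-s) > 1$, unfolding gives
\begin{equation*}
 \int_{[H]} \varphi(g)\,\mathcal E(F)(g)\,dg = \int_{B(k)\backslash H(\adele)} \varphi F\,dg = \int_{[H]_B} \varphi_B(g) F(g)\,dg,
\end{equation*}
and then factoring the right-hand side through the $A$-direction identifies it, with the normalizations of \S~\ref{sspreparation}, as the pairing $\langle \check{\varphi_B}(s), F\rangle_{\pi_s}$. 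The meromorphic continuation of $\mathcal E$ supplied by Theorem~\ref{meromcont} therefore transfers to $\check{\varphi_B}(s)$, giving the claimed pole structure on $\Re(s) \le 0$.

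Finally, the funnel growth bound follows from Mellin inversion applied to $F$ equal to either $\varphi_B$ or $Rf$. Starting from $F(ag) = \frac{1}{2\pi i}\int_{\Re(s)=\sigma} \check F(s)(g)\,\delta^{(1+s)/2}(a)\,ds$, valid for $\sigma \ll 0$ in the convergence region, I would shift the contour to $\sigma = -1-\epsilon$. The poles of $\check F$ in $\Re(s) \le 0$ lie strictly to the right of this contour (for $[\PGL_2]$ the only such pole is at $s=-1$), so no residues are crossed, and the remaining vertical integral is bounded in absolute value by $\delta^{-\epsilon/2}(a)\cdot\int_\RR|\check F(-1-\epsilon+it)(g)|\,dt$, which realises the claimed domination by an element of $\pi_{-1-\epsilon}$. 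Since in the $g$-variable the Mellin transform commutes with the action of $U(\mathfrak h)$, the same argument applied to $X\check F$ in place of $\check F$ bounds $XF$ for every $X \in U(\mathfrak h)$. The main obstacle, and the only non-trivial analytic input, is the polynomial bound on $\check F$ on bounded vertical strips needed to justify the contour shift: for $Rf$ this is Harish-Chandra's polynomial bound on intertwining operators, discussed in the introduction, and for $\varphi_B$ it is the analogous estimate on the Fourier--Eisenstein transform of $\varphi$.
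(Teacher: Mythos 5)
Your treatment of the rapid decay at the cusp, the identification of the Mellin transforms as $\widecheck{\varphi_B}(s)=\mathcal E_{-s}^*\varphi$ and $\widecheck{Rf}(s)=M(-s)\check f(-s)$, and the appeal to Theorem~\ref{meromcont} for the meromorphic continuation all match the paper's argument. The difference lies in how you obtain the bound at the funnel (equivalently, the absolute convergence of the Mellin transform for $\Re(s)<-1$). You derive it from Mellin inversion followed by a contour move; the paper instead defers to Proposition~\ref{Radon-s}, where the bound is established by a direct computation reducing to a Tate integral, which also locates precisely the reason why the boundary is at $\Re(s)=-1$ (divergence of the Euler product, not of any local factor).

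There are two problems with your route. First, a circularity: identifying $\widecheck{Rf}(s)$ with $M(-s)\check f(-s)$ already requires a Fubini-type interchange in $\int_A\int_N f(wnag)\,dn\,\delta^{-(1+s)/2}(a)\,da$, whose absolute convergence is precisely the funnel bound you are trying to prove. So you cannot simply ``start from'' Mellin inversion. One could patch this with an a priori moderate-growth bound on $Rf$, but that is exactly the step the paper supplies directly, and once one has it the detour through Mellin inversion is no longer buying anything. Second, the analytic input you name as the ``main obstacle'' is the wrong one: your contour shift goes from $\sigma\ll 0$ to $\sigma=-1-\epsilon$, which stays entirely inside the convergence region $\Re(-s)>1$ of the intertwining operator, where $M(-s)$ is given by an absolutely convergent integral with elementary bounds. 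Harish-Chandra's polynomial estimate (which concerns the harder strip $\Re\ge 0$) is not needed here, and the ``analogous estimate on the Fourier--Eisenstein transform'' is something the paper goes out of its way to avoid (see the discussion in the Bounds-on-vertical-strips subsection); invoking it would be both unnecessary and at odds with the paper's stated strategy.
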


\begin{remark}
If we fix an Iwasawa decomposition $H(\adele) = B(\adele) K$, the bound on the funnel means that the $Rf$ satisfies $|Rf(bk)|\ll \delta^{-1-\frac{\epsilon}{2}}(b)$.  
\end{remark}

\begin{proof}
The rapid decay the constant term at the cusp follows from the approximation theorem \ref{approxconstantterm-H}.
The rapid decay of $Rf$ at the cusp is clear from the definition: when $g$ in \eqref{Radon} approaches the cusp, $wng$ approaches the funnel, where $f$ is of rapid decay. (The details are left to the reader.) 

The bound by $\pi_{-1-\epsilon}$ is equivalent to the convergence of Eisenstein series, resp.\ the intertwining operator $M(s)$, for $\Re(s)>1$. We will revisit the proof of this bound in the proof of Proposition \ref{Radon-s}.

For the Mellin transforms, immediately from the definitions we have the formulas
\begin{equation}
 \widecheck{\varphi_B}(s) = \mathcal E_{-s}^* \varphi,
\end{equation}
where $\mathcal E_{-s}^*$ denotes the adjoint of the Eisenstein morphism $\pi_s\to C^\infty([H])$, and
\begin{equation}
 \widecheck{Rf}(s) = M(-s) \check f(-s),
\end{equation}
so the last statement from the properties of Eisenstein series and intertwining operators, as in the preceding theorem.
\end{proof}

\subsection{Bounds on vertical strips}

We will also need a bound for the growth of the intertwining operators in vertical strips, in order to be able to shift contours of integration. This, of course, is an issue pertinent to number fields only, as for function fields ``vertical strips'' are compact. The present subsection could be part of the previous one, but I am isolating it because it is the part of the argument for which in higher rank the required results are not available yet -- and because of its technical nature, due to which the reader may wish to skip it at first reading, especially if they are primarily interested in function fields.

The technicalities involved in shifting contours are responsible for a lot of the complications that arise in the spectral decomposition and the trace formula. There are two issues here: the growth of intertwining operators, and the growth of Eisenstein series in vertical strips. The former is part of the latter (since Eisenstein series are approximated by their constant terms in a neighborhood of the cusp), but the latter also involves estimates on Eisenstein series away from the cusps, e.g., on truncated Eisenstein series. In rank one, both can be obtained by existing methods (at least for $K_\infty$-finite functions, in the case of Eisenstein series).
However, in view of the situation in higher rank, it is preferable to avoid using bounds for the Eisenstein series themselves, which are much harder to obtain than bounds for the intertwining operators. (The standard argument for bounds on Eisenstein series involves an application of the Maa\ss--Selberg relations which requires estimates on \emph{derivatives} of intertwining operators, see \cite{Mueller,FLM}. Those derivatives are very sensitive to possible zeroes of $L$-functions near the edge of their criitcal strip, making it hard to control them with purely harmonic-analytic arguments.) Therefore, I only formulate a statement on bounds for intertwining operators:

\begin{theorem}
Choose an Iwasawa decomposition $H(\adele)=B(\adele) K$, to identify all topological spaces $\pi_s$ with $C^\infty(K\cap B(\adele)\backslash K)$. The intertwining operator $M(s): \pi_s \to \pi_{-s}$ is of polynomial growth (away from its poles), on any vertical strip of the form $0\le \Re(s)\le a$; that is, for every open compact subgroup $J$ of $H(\adele_f)$ and any seminorm $\rho$ on $C^\infty(K\cap B(\adele)\backslash K)^J$ there is a finite number of seminorms $\rho_i$ on the same space, and polynomials $P_i$ on $\CC$, such that
\begin{equation}
 \rho(M(s) f) \le \sum_i |P_i(s)| \rho_i(f),
\end{equation}
for all $f\in C^\infty(K\cap B(\adele)\backslash K)^J$ (thought of as an element of $\pi_s$).
\end{theorem}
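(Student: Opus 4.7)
The plan is to factor the global intertwining operator into its local components and bound each factor separately, with the main analytic input being Harish--Chandra's polynomial growth theorem at the Archimedean places. Concretely, fix $J$, let $S \supset S_\infty$ be the finite set of places where $J_v$ is not hyperspecial (so for $v \notin S$, $J_v = K_v$ and $\pi_{s,v}^{J_v}$ is one-dimensional), and factor
$$M(s) = \prod_v M_v(s),$$
compatibly with the restricted tensor product structure on $\pi_s^J$.

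At each place $v \notin S$, the standard Gindikin--Karpelevich formula gives, on the spherical vector $f_v^\circ$,
$$M_v(s) f_v^\circ = \frac{1 - q_v^{-1-s}}{1 - q_v^{-s}} f_{-s,v}^\circ = \frac{L_v(s,\mathbf{1})}{L_v(s+1,\mathbf{1})} f_{-s,v}^\circ,$$
so the product of these factors is $\zeta^S(s)/\zeta^S(s+1)$. I would complete this to $\Lambda(s)/\Lambda(s+1)$ by multiplying and dividing by the missing local factors (the Archimedean $\Gamma$-factors and the local $L$-factors at the finite $v \in S$), writing $M(s) = \frac{\Lambda(s)}{\Lambda(s+1)} \cdot R(s)$ with $R(s)$ a normalized operator. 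Each of these pieces I would bound separately: on $0 \le \Re(s) \le a$, $\Lambda(s+1)$ is holomorphic and bounded away from $0$ (since $\zeta(\sigma+it)\ne 0$ for $\sigma \ge 1$ and the $\Gamma$-factors are non-vanishing), $\zeta(s)$ has polynomial growth in $|\Im(s)|$ by the convexity bound (with only the allowed simple pole at $s=1$), and the ratio of Archimedean $\Gamma$-factors is polynomial in $|\Im(s)|$ by Stirling's formula. For $R(s)$, at the non-Archimedean $v \in S$ the operator acts on the finite-dimensional space $\pi_{s,v}^{J_v}$ by a matrix of rational functions in $q_v^{-s}$, hence is trivially of polynomial growth on vertical strips away from finitely many poles; at the Archimedean factor, I invoke Harish--Chandra's theorem on the polynomial growth of normalized intertwining operators in any bounded vertical strip. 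Taking the product of polynomial bounds gives the global estimate.

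The main obstacle---and the one Harish--Chandra's theorem directly addresses---is the Archimedean factor. It is precisely the technical bottleneck mentioned in the introduction: every other ingredient (Gindikin--Karpelevich, Stirling, convexity bound for $\zeta$, finite-dimensional estimates on $K_v$-types at ramified places) is elementary or classical, but the Archimedean polynomial growth bound rests on Harish--Chandra's nontrivial analysis of $c$-functions for real reductive groups. In rank one, this is well-established; it is its higher-rank analogue whose unavailability in full generality the author flags as the obstacle to extending the present method beyond $\PGL_2$.
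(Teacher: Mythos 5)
Your route---factoring $M(s)$ into local components and controlling the completed $L$-function ratio $\Lambda(s)/\Lambda(s+1)$ via analytic number theory---is genuinely different from the paper's. The paper proves the bound \emph{globally}: it applies the Maa\ss--Selberg relation to the inner product of a truncated Eisenstein series with its complex conjugate, and uses the positivity of that inner product to solve a quadratic inequality for $\Vert M(s)f_s\Vert$, arriving at the explicit $L^2$-estimate $\Vert M(s)f\Vert\le e^{2T\sigma}(1+2|\sigma/t|)\Vert f\Vert$. The ``result of Harish--Chandra'' being cited and reproved is Lemma~101 of \cite{HC}---a global statement about $M(s)$, obtained exactly this way---not Harish--Chandra's local theory of $c$-functions, so your reading of the introduction's remark about the obstacle to higher rank misses the target: local archimedean polynomial bounds on intertwining operators \emph{do} exist in higher rank; what the author worries about generalizing is the Maa\ss--Selberg positivity argument. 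The whole virtue of that argument, as the author stresses, is that it requires no $L$-function input beyond the meromorphic continuation of $M(s)$ and the identity $M(s)M(-s)=\Id$; your argument does the opposite, routing all the work through precisely the ``hard analysis'' the author wants to avoid.

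There is also a concrete error. The claim that $\Lambda(s+1)$ is ``bounded away from $0$'' on $0\le\Re(s)\le a$ is false: $\Gamma_\RR(1+it)$ decays exponentially as $|t|\to\infty$, so $\Lambda(1+it)\to 0$ exponentially (non-vanishing on a non-compact set does not imply bounded below), and $1/\Lambda(s+1)$ grows exponentially. To salvage this you must pair the archimedean factors inside the ratio, writing $\Lambda(s)/\Lambda(s+1)=(\Gamma_\RR(s)/\Gamma_\RR(s+1))\cdot(\zeta(s)/\zeta(s+1))$---which you separately gesture at via Stirling---and then you still need a quantitative lower bound for $|\zeta(1+it)|$, namely $1/\zeta(1+it)=O(\log|t|)$ from the classical zero-free region; ``$\zeta(\sigma+it)\ne 0$ for $\sigma\ge 1$'' alone does not give this. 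Once you tally all the inputs your route actually requires (convexity bound, zero-free region, Stirling, and an archimedean polynomial-growth estimate made uniform over $K_\infty$-types so that it applies to smooth rather than $K_\infty$-finite vectors), the paper's short Maa\ss--Selberg computation is both more elementary and more robust, which is precisely why the author chose it.
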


As we shall see in the proof, one can get much more precise estimates about specific seminorms.

\begin{proof}
 A complete system of seminorms on $C^\infty(K\cap B(\adele)\backslash K)^J$ is given by the seminorms $\rho(f) = \Vert Xf \Vert_{L^2(K\cap B(\adele)\backslash K)}$, for $X \in U(\mathfrak h)$. Harish-Chandra has proven \cite[Chapter IV, \S 8]{HC} that, for $s=\sigma + it$, $\sigma \ge 0$,
 \begin{equation}\label{HCestimate}\Vert M(s) f\Vert \le e^{2T\sigma} (1+ 2 \left| \frac{\sigma}{t}\right|) \Vert f \Vert\end{equation}
in the $L^2$-norm, for some positive number $T$. Replacing $f$ by any $Xf$, this is easily seen to imply the claim of the theorem.
  
Inequality \eqref{HCestimate} is stated for $K_\infty$-invariant functions at the end of the proof of Lemma 101 in \cite{HC}, but the proof applies without the assumption of $K_\infty$-finiteness. For the convenience of the reader, I briefly outline the main steps: 

Let us fix a Siegel neighborhood of the cusp of the form $U = U_T = \{g = bk \in [H]_B| \delta^{\frac{1}{2}}(g) > e^T\}$, for large enough $T$; here, $g=bk$ refers to the Iwasawa decomposition fixed in the theorem. Let $v= v_T$ be its indicator function (on $[H]_B$), and denote by $\tilde v$ the indicator function of the corresponding neighborhood in $[H]$.

Consider the truncated Eisenstein series $\wedge^T \mathcal E(f_s) = \mathcal E(f_s) - \Psi(v\cdot f_s ) - \Psi(v\cdot M(s) f_s)$. In the convergent region $\Re(s)>1$ for the Eisenstein sum, this can be written as a convergent series:
$$ \wedge^T\mathcal E(f_s) = \Psi(F_s),$$
where $F_s = f_s \cdot (1-v) - M(s) f_s\cdot v$. From this, one deduces the Maa\ss--Selberg relations about the inner product of two truncated Eisenstein series; in our bilinear notation:

\begin{eqnarray}\label{MaassSelberg}
 \left < \wedge^T\mathcal E(f_{s_1}) , \wedge^T\mathcal E(f_{s_2}) \right >_{[H]} = \frac{e^{T(s_1+s_2)}}{s_1+s_2} \left<f_{s_1}, f_{s_2}\right>  + \frac{e^{T(-s_1+s_2)}}{-s_1+s_2} \left<M(s_1) f_{s_1}, f_{s_2}\right> + \nonumber \\ + \frac{e^{T(s_1-s_2)}}{s_1-s_2} \left<f_{s_1}, M(s_2) f_{s_2}\right> + \frac{e^{T(-s_1-s_2)}}{-s_1-s_2} \left<M(s_1) f_{s_1}, M(s_2) f_{s_2}\right>
\end{eqnarray}

Taking $s_1=s =\sigma + it \ne 0, s_2 = \bar s$, $f_{\bar s} = \overline{f_s}$, the left hand side is $\ge 0$, which gives:
$$ \frac{e^{2T \sigma}}{2\sigma} \left<f_s, \overline{f_s}\right>  + 2 \Re\left( \frac{e^{2iTt}}{2it} \left<f_s, \overline{M(s) f_s}\right> \right)- \frac{e^{-2T\sigma}}{2\sigma} \left<M(s) f_s, \overline{M(s) f_s}\right> \ge 0.$$

Hence, if $\Vert f_s \Vert = 1$ in the norm of $L^2(K\cap B(\adele)\backslash K)$, we get:
$$ \frac{e^{2T \sigma} - e^{-2T\sigma} \Vert M(s) f_s\Vert^2}{2\sigma} +  \frac{\Vert M(s) f_s \Vert}{|t|}  \ge 0,$$
which implies that $\Vert M(s) f_s \Vert \le e^{2T\sigma} \left( \frac{\sigma}{|t|} + \sqrt{1+ \frac{\sigma^2}{t^2}}\right)$ for $\sigma\ge 0$, whence \eqref{HCestimate}.

\end{proof}

Combining this with Corollary \ref{corRadonH}, we get: 

\begin{corollary}\label{corrapiddecay}
 Let $f\in \mathcal S([H]_B)$, then $\widecheck{Rf}(s) = M(-s)\check f(-s)$ is of rapid decay on any vertical strip of the form $a\le \Re(s)\le 0$, away from the negatives of the Eisenstein poles.
\end{corollary}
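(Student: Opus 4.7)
The plan is to combine two facts that are already available: the Paley--Wiener decay of $\check f$ along bounded vertical strips, and the polynomial-growth bound for $M(-s)$ on such strips established in the preceding theorem.

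First I would reduce to a fixed level. Fix an open compact subgroup $J\subset H(\adele_f)$, and use the Iwasawa decomposition $H(\adele)=B(\adele)K$ from the preceding theorem to identify every $\pi_\sigma^J$ with the fixed Fr\'echet space $W=C^\infty(K\cap B(\adele)\backslash K)^J$. Under this identification, formula \eqref{Mellin-HB} exhibits $\check f(s)$ as the ordinary $\Rplus$-Mellin transform of the $W$-valued function $a\mapsto f(a\,\cdot\,)$ on $A\simeq\Rplus$; since $f\in\mathcal S([H]_B)^J$ is of rapid decay in both the cusp and the funnel direction, that $W$-valued function is Schwartz. Applying the proof of Theorem \ref{PW-torus1D} seminorm by seminorm yields: for every continuous seminorm $\rho$ on $W$, every bounded vertical strip $V_0$, and every $N$,
\begin{equation*}
 \sup_{s\in V_0}(1+|\Im s|)^N\,\rho(\check f(-s))<\infty.
\end{equation*}

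Next, by the preceding theorem, $M(-s)$ satisfies, for $s$ in the strip $a\le\Re(s)\le 0$ (so $\Re(-s)$ lies in $[0,-a]$) and away from its poles,
\begin{equation*}
 \rho(M(-s)F)\le\sum_i |P_i(-s)|\,\rho_i(F)
\end{equation*}
for some finite collection of continuous seminorms $\rho_i$ on $W$ and polynomials $P_i$ depending on $\rho$. The poles of $M(-s)$ in this strip are, by Theorem \ref{meromcont}, precisely the negatives of the Eisenstein poles. Substituting $F=\check f(-s)$ and combining the two estimates gives, for every $N$,
\begin{equation*}
 \rho(\widecheck{Rf}(s))=\rho(M(-s)\check f(-s))\le \sum_i |P_i(-s)|\,\rho_i(\check f(-s))=O_N((1+|\Im s|)^{-N}),
\end{equation*}
uniformly on any closed subset of $\{a\le\Re(s)\le 0\}$ avoiding the finitely many poles. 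Since $\rho$ was arbitrary, this is the claimed rapid decay.

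The one point that requires care is that Theorem \ref{PW-torus1D} is stated for scalar Schwartz functions, whereas here one needs the Fr\'echet-valued variant; but the integral inequalities in its proof apply verbatim after replacing the absolute value by any continuous seminorm on $W$, so this is no real obstacle. The corollary is therefore a bookkeeping exercise combining the Paley--Wiener bound with Harish-Chandra's estimate.
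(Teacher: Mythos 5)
Your proof is correct and takes essentially the same approach as the paper, which simply combines the polynomial-growth bound on $M(-s)$ from the preceding theorem with the rapid decay of $\check f(-s)$; you have merely spelled out the routine Fr\'echet-valued bookkeeping that the paper's one-line proof leaves implicit.
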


\begin{proof}
 This is a combination of the polynomial growth of $M(-s)$ with the rapid decay of $\check f(-s)$.
\end{proof}

\subsection{Decomposition of the space of asymptotically finite functions}

We now move to asymptotically finite functions, i.e.\ a space of the form  $\mathcal S^+_{s} ([H])$. Notice that it makes sense to say that an element of $\mathcal S^+_{s} ([H])$ is ``orthogonal'' to a given subspace of $\mathcal S([H])$, since the former are of polynomial growth and the latter are of rapid decay (so their ``inner product'' converges). We let 
$$ \mathcal S^+_{s} ([H])_\Eis = \mathcal S^+_{s} ([H]) \cap \mathcal S([H])_\cusp^\perp.$$

Consider the space $\mathcal S^+_{s} ([H]_B)$ of asymptotically finite functions on $[H]_B$, where the exponent $s$ is understood to be appearing in the neighborhood of the cusp, as in Definition \ref{asymptfinite-H}. 
(In particular, elements of $\mathcal S^+_{s} ([H]_B)$ are by definition of rapid decay in the direction of the ``funnel''.) Extending the results of \S \ref{sspreparation}, we have:

\begin{proposition}\label{Psiprop-s}
We have a decomposition
\begin{equation}\label{cuspdecomp-s}\mathcal S^+_{s} ([H]) = \mathcal S([H])_\cusp \oplus \mathcal S^+_{s} ([H])_\Eis.\end{equation}
The pseudo-Eisenstein sum $\Psi f(g) = \sum_{\gamma\in B\backslash H(k)} f(\gamma g)$ is convergent on $\mathcal S^+_{s} ([H]_B)$, for any $s\in \CC$, and maps continuously, with dense image, into $\mathcal S^+_{s} ([H])_\Eis$. Its composition with the constant term is given by the same formula \eqref{constantterm-Psi}.
\end{proposition}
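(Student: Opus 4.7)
My plan is to reduce each of the three assertions to its Schwartz analog (Casselman's decomposition \eqref{cuspdecomp} and density Proposition \ref{Psidense}) by using the pseudo-Eisenstein map $\Psi$, applied to a cutoff of the asymptotic part, to peel off the ``non-Schwartz part'' of a given function. The three assertions are handled in order: convergence of $\Psi f$ and the constant-term formula, then membership $\Psi f \in \mathcal{S}^+_s([H])_\Eis$, and finally the decomposition and density.

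For $f \in \mathcal{S}^+_s([H]_B)$, absolute convergence of $\Psi f(g) = \sum_{\gamma \in B(k)\backslash H(k)} f(\gamma g)$ (after pullback of $f$ along $\pi_B$ to an $A(k)$-invariant function on $B(k)\backslash H(\adele)$) holds for every $s \in \CC$ by standard reduction theory: for $g$ in a Siegel set, only finitely many $\gamma$ send $\gamma g$ into a fixed Siegel neighborhood of the cusp, where $|f| \ll \delta^{(1+\Re(s))/2}$; the remaining $\gamma$ push $\gamma g$ into the funnel direction of $[H]_B$, where $f$ is Schwartz, and the lattice count $\#\{\gamma: \delta(\gamma g) \in [T,2T]\} \ll T^{-2}$ combined with the rapid decay of $f$ yields uniform convergence on compacts. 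The constant-term formula $(\Psi f)_B = f + Rf$ then follows by unfolding as in Example \ref{HmodN}: the Bruhat decomposition $B(k)\backslash H(k) = \{[e]\} \sqcup \{[wn] : n \in N(k)\}$ combined with $N(\adele)$-invariance of $f$ gives $f(g)$ from the closed cell and $\int_{N(\adele)} f(wng)\,dn = Rf(g)$ from the open cell.

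By the same argument as in Example \ref{HmodN}, $Rf$ is of rapid decay in a Siegel neighborhood of the cusp (as $wN(\adele)g$ sweeps the funnel, where $f$ is Schwartz), so $(\Psi f)_B \in \mathcal{S}^+_s([H]_B)$ with the same asymptotic part $f^\dagger$ as $f$. The approximation-by-constant-term Theorem \ref{approxconstantterm-H}, applied after observing that $\Psi f$ is of polynomial growth (bounded on the cusp side by a standard Eisenstein series and elsewhere by a Schwartz sum via Lemma \ref{lemmaAk}), then yields $\Psi f - (\Psi f)_B \in \mathcal{S}(\mathscr{S}_B)$, hence $\Psi f \in \mathcal{S}^+_s([H])$ with $(\Psi f)^\dagger = f^\dagger$ and continuous dependence on $f$. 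Adjointness of $\Psi$ with the constant term against a cuspidal Schwartz function $\phi$, namely $\langle \Psi f, \phi\rangle_{[H]} = \langle f, \phi_B\rangle_{[H]_B} = 0$ (the pairing converges absolutely since $\phi$ is Schwartz), places $\Psi f$ in $\mathcal{S}^+_s([H])_\Eis$.

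For the decomposition, fix a smooth cutoff $u$ on $[H]_B$ that equals $1$ on a Siegel neighborhood of the cusp and vanishes off a slightly larger one, and set $f := u \cdot \varphi^\dagger \in \mathcal{S}^+_s([H]_B)$. Then $\Psi f \in \mathcal{S}^+_s([H])_\Eis$ has asymptotic $\varphi^\dagger$, so $\varphi - \Psi f \in \mathcal{S}([H])$; applying \eqref{cuspdecomp} to this Schwartz remainder produces the splitting \eqref{cuspdecomp-s}, with uniqueness immediate from $\mathcal{S}([H])_\cusp \cap \mathcal{S}([H])_\cusp^\perp = 0$. Density is handled similarly: given $\varphi_\Eis \in \mathcal{S}^+_s([H])_\Eis$, the same construction produces $f$ with $\varphi_\Eis - \Psi f \in \mathcal{S}([H])_\Eis$; Casselman's Proposition \ref{Psidense} then approximates this Schwartz remainder by $\Psi g_n$ with $g_n \in \mathcal{S}([H]_B)$, and since the $\pi_s$-quotient has already been matched exactly by $f$, we obtain $\Psi(f+g_n) \to \varphi_\Eis$ in the Fr\'echet topology of $\mathcal{S}^+_s([H])$. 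I expect the main obstacle to be the bookkeeping of Fr\'echet topologies in the second step, specifically verifying continuity of $\Psi$ uniformly across the three-part seminorm structure of $\mathcal{S}^+_s([H])$; this reduces to a uniform version of Theorem \ref{approxconstantterm-H}, which is afforded by the explicit seminorm estimates in its proof.
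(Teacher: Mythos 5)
Your proposal is correct in its overall strategy, which is the same as the paper's: establish convergence of $\Psi$ on $\mathcal S^+_s([H]_B)$ together with the constant-term identity $(\Psi f)_B = f + Rf$, deduce that $\Psi f$ lies in $\mathcal S^+_s([H])_\Eis$ with the same asymptotic part as $f$, and then reduce the decomposition and density claims to Casselman's Schwartz-space results \eqref{cuspdecomp} and Proposition \ref{Psidense} by subtracting a pseudo-Eisenstein series whose asymptotic matches that of the given $\varphi$. Your choice $f = u\cdot\varphi^\dagger$ is a concrete instance of what the paper phrases abstractly via Lemma \ref{germsinfty}: that the induced endomorphism of $\pi_s$ is the identity, so one may always match germs at the cusp. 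You do, in effect, prove that lemma — your assertion $(\Psi f)^\dagger = f^\dagger$ is exactly its content — and use it as the paper does.

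Where you genuinely diverge is the technique for proving convergence. The paper lifts $f \in \mathcal S^+_s([H]_B)$, via an analog of Lemma \ref{lemmaAk}, to a function $\Phi$ in the auxiliary restricted-tensor space $\mathcal S'_s(N\backslash H(\adele))$, and observes that such $\Phi$ are of rapid decay outside a compact subset of the adelic points of the affine closure $\Spec k[H]^N$ of $N\backslash H$. That single observation simultaneously yields absolute convergence of the Eisenstein sum, absolute convergence of the Radon transform (since $N$-orbits are closed in the affine closure), and the constant-term formula. You instead argue directly on $[H]_B$ by reduction theory and a lattice count of $B(k)\backslash H(k)$-cosets against the rapid decay of $f$ in the funnel direction. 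This is more elementary and avoids introducing the auxiliary classical-style space, at the cost of requiring a separate (and in your write-up somewhat implicit) argument that the Radon integral $\int_{N(\adele)} f(wng)\,dn$ converges and is of rapid decay near the cusp — the paper gets this for free from the affine-closure picture. Two small imprecisions worth flagging but not fatal: the lattice-count exponent $T^{-2}$ is not tight (it is $\asymp T^{-1}$ for $T$ small), though any polynomial bound suffices since $f$ is Schwartz in the funnel; and your uniqueness argument via $\mathcal S([H])_\cusp \cap \mathcal S([H])_\cusp^\perp = 0$ relies implicitly on $\mathcal S([H])_\cusp$ being stable under complex conjugation, since the paper's pairing is bilinear rather than Hermitian — this is true and should be said.
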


\begin{proof}

Convergence of the Eisenstein sum is easier to see by an analog of Lemma \ref{lemmaAk}, using an auxilliary space 
$$\mathcal S'_{s}(N\backslash H(\adele)):=\hat\bigotimes_{v\ne v_0}'\mathcal S(N\backslash H(k_v))\hat\otimes \mathcal S^+_s(N\backslash H(k_{v_0})),$$
where $v_0$ is  a fixed Archimedean place, and $\mathcal S^+_s(N\backslash H(k_{v_0}))$ denotes smooth functions on $N\backslash H(k_{v_0})$ which are asymptotically $\delta^{\frac{s}{2}}$-eigenfunctions close to the cusp (and of rapid decay in the funnel).

It can be proven exactly as in Lemma \ref{lemmaAk} that the morphism 
$$\mathcal S'_{s}(N\backslash H(\adele)) \to \mathcal S^+_s([H]_B)$$
given by summation over $A(k)$-orbits is (continuous and) surjective.\footnote{As in Remark \ref{remarkclassical}, the auxilliary space $\mathcal S'_{s}(N\backslash H(\adele))$ is of ``classical'', rather than adelic, nature; it corresponds to the sum \eqref{classsum}, but with $\Phi_\infty$ a function which is asymptotically finite close to the origin (and, again, of rapid decay away from it).}  But the elements of $\mathcal S'_{s}(N\backslash H(\adele))$ are of rapid decay outside of a compact set in the adelic points of the affine closure $\spec k[H]^N$ of $N\backslash H$, which proves convergence of the sum. 

This argument also proves absolute convergence of the Radon transform \eqref{Radon} (because $N$-orbits on the affine closure of $N\backslash H$ are closed, hence the restrictions of elements of $\mathcal S'_{s}(N\backslash H(\adele))$), and the formula for the constant term \eqref{constantterm-Psi} of the pseudo-Eisenstein sum. As before, the Radon transform of an element of $\mathcal S'_{s}(N\backslash H(\adele))$ is of rapid decay close to the cusp, and therefore, by \eqref{constantterm-Psi}, the image of $\Psi$ lies in $\mathcal S^+_{s} ([H])$. By the definitions, it is contained in $\mathcal S^+_s([H])_\Eis$.

This argument (more precisely, formula \eqref{constantterm-Psi}) actually proves a stronger statement, which will be crucial at various points, so we formulate it as a lemma:
\begin{lemma}\label{germsinfty}
Consider the short exact sequences:
$$  0 \to \mathcal S([H]_B) \to \mathcal S^+_s([H]_B) \to \pi_s \to 0,$$
$$ 0 \to \mathcal S([H]) \to \mathcal S^+_s([H]) \to \pi_s \to 0. $$
Since $\Psi$ preserves Schwartz spaces, it induces an endomorphism:
$$ \pi_s \to \pi_s$$
by passing to the quotients of the above sequences. This map is the identity.
\end{lemma}

This proves the remaining assertions, since now for every element  $\varphi\in \mathcal S^+_s([H])$ we can subtract $\Psi f$ for some $f\in \mathcal S^+_s([H]_B)$ with the same image in $\pi_s$, and reduce ourselves to the case of $\mathcal S([H])$ (cf.\ \eqref{cuspdecomp} and Proposition \ref{Psidense}).
\end{proof}

We now study the image of Radon transform.

\begin{proposition}\label{Radon-s}
Radon transform \eqref{Radon}, applied to elements of $\mathcal S^+_{s_0}([H]_B)$, 
has image in the space of smooth functions which close to the cusp are of rapid decay, and close to the funnel are bounded (together with their derivatives under $U(\mathfrak h)$) by elements of $\pi_{-\sigma-\epsilon}$, for any $\epsilon>0$, where $\sigma = \max\{\Re(s_0), 1\}$.

Therefore, if $f\in \mathcal S^+_{s_0}([H]_B)$, the Mellin transform of its Radon transform, defined as in \eqref{Mellin-constant}:
\begin{equation}\label{Mellin-Rf}\widecheck{(Rf)}(s) (g)= \int_{A} Rf(ag) \delta^{-\frac{1+s}{2}}(a) da \in \pi_s\end{equation}
converges for $\Re(s)<-\sigma$. This Mellin transform admits meromorphic continuation to the region $\Re(s)\le 0$, with poles only the opposites of the (simple) poles of Eisenstein series for $\Re(s)>0$, and the pole $-s_0$, and uniform rapid decay away from the poles in vertical strips of the form $a\le \Re(s) \le 0$. (If the pole $-s_0$ coincides with the negative of a pole of Eisenstein series, it may appear with multiplicity $2$.)
\end{proposition}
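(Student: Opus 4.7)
The plan is to reduce the claim to the Schwartz case (Corollary \ref{corRadonH}) together with an explicit computation of the Radon transform on the ``pure'' asymptotic piece. Pick a smooth cutoff $v$ on $[H]_B$ depending only on $\delta(g)$, equal to $1$ in a compact neighborhood of the cusp and vanishing outside a slightly larger one; write $v(g)=V(\delta(g))$ with $V:\Rplus\to\RR$ supported near $\infty$ and equal to $1$ sufficiently far out. Using \eqref{short-SsHB}, decompose $f=f_{\mathrm{sch}}+vf^\dagger$ with $f_{\mathrm{sch}}\in\mathcal S([H]_B)$ and $f^\dagger\in\pi_{s_0}$, and analyze $Rf_{\mathrm{sch}}$ and $R(vf^\dagger)$ separately.

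The Schwartz piece is controlled by Corollary \ref{corRadonH}: $Rf_{\mathrm{sch}}$ is of rapid decay at the cusp and bounded by $\pi_{-1-\epsilon}$ at the funnel. For the asymptotic piece, rapid decay of $R(vf^\dagger)$ at the cusp follows from the Bruhat identity $wn=\bar n w$ (with $\bar n\in N^-$) together with the fact that the opposite-cell orbit $N^-\cdot wg$ lies in the funnel region of $[H]_B$ when $g$ is deep in the cusp, so that $v(wng)=0$ there. For the funnel, a direct computation using $f^\dagger(ag)=\delta^{(1+s_0)/2}(a)f^\dagger(g)$ and the successive substitutions $t=\delta(a)$, $u=t\delta(g)$ yields
\[
\widecheck{(vf^\dagger)}(s)(g)\;=\;f^\dagger(g)\,\delta(g)^{(s-s_0)/2}\,C(s),
\qquad
C(s)\;=\;\int_{\Rplus}V(u)\,u^{(s_0-s)/2}\,d^\times u,
\]
and $A$-equivariance confirms this lies in $\pi_s$. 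The scalar $C(s)$ is meromorphic with a single simple pole at $s=s_0$ and of rapid decay in vertical strips (Paley--Wiener for $V$).

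The key formula then comes from a Fubini interchange in the region of joint absolute convergence, using $wa=a^{-1}w$ and $n\mapsto a^{-1}na$ (which multiplies $dn$ by $\delta(a)$):
\[
\widecheck{(Rf)}(s)\;=\;M(-s)\,\check f(-s),
\qquad
\check f(-s)\;=\;\widecheck{f_{\mathrm{sch}}}(-s)\,+\,C(-s)\cdot f^\dagger\,\delta^{(-s-s_0)/2}.
\]
The right-hand side admits meromorphic continuation to $\Re(s)\le 0$: $\widecheck{f_{\mathrm{sch}}}$ is entire of rapid decay on strips, $C(-s)$ contributes a single simple pole at $s=-s_0$, and by Theorem \ref{meromcont} $M(-s)$ contributes at most finitely many simple poles at the negatives of the Eisenstein poles. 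Uniform rapid decay on any vertical strip $a\le\Re(s)\le 0$ away from these poles follows by combining the polynomial-growth estimate on $M(-s)$ (from the previous subsection) with the rapid decay of $\widecheck{f_{\mathrm{sch}}}$ and $C(-s)$. The funnel bound on $Rf$ itself then follows via Mellin inversion by shifting the contour from $\Re(s)\ll 0$ to $\Re(s)=-\sigma-\epsilon$: each residue picked up (at $-s_0$ or at a negative of an Eisenstein pole) lies in $\pi_{s'}$ with $\Re(s')\ge -\sigma-\epsilon$ and is therefore dominated by a $\pi_{-\sigma-\epsilon}$-element near the funnel, while the shifted contour integral is itself dominated by such an element; absolute convergence of \eqref{Mellin-Rf} on $\Re(s)<-\sigma$ is then immediate from this funnel bound and the cusp decay.

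The main technical point is the possible coincidence of $-s_0$ with a negative of an Eisenstein pole. There the simple poles of $C(-s)$ and $M(-s)$ combine into a genuine double pole -- not higher -- and the strip estimate persists locally after multiplying through by $(s+s_0)^2$ to clear the pole; this is routine Laurent-expansion bookkeeping given the simplicity of each factor's pole individually.
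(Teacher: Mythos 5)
Your proof is correct in substance and takes a genuinely different route from the paper's. The paper's argument works directly with the adelic integral defining the Radon transform: it lifts $f$ through the auxiliary space $\mathcal S'_{s_0}(N\backslash H(\adele))$, rewrites the Mellin transform as an Eulerian Tate-type integral over $\adele^\times$, bounds the local factors, and reduces the general exponent $s_0$ to the Schwartz case by the operator trick $\bigl((a\cdot)-\delta^{s_0/2}(a)\bigr)$, which both kills the asymptote and produces the extra pole at $-s_0$. Your approach instead decomposes $f = f_{\mathrm{sch}} + vf^\dagger$ via \eqref{short-SsHB}, computes the Mellin transform of the asymptotic piece in closed form as $f^\dagger\,\delta^{(s-s_0)/2}\,C(s)$ with $C$ a one-variable Mellin transform having a single simple pole at $s_0$, and then reads off everything from the functional equation $\widecheck{Rf}(s)=M(-s)\check f(-s)$ combined with the polynomial-growth bound on $M$. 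Both routes work; yours makes the location and simplicity of the pole at $-s_0$ completely transparent and cleanly isolates the role of the intertwining operator, while the paper's Eulerian calculation makes explicit that the obstruction to convergence past $\Re(s)=-1$ is a global (not local) phenomenon, coming from the divergence of the Euler product.

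One inaccuracy in the final step: when you shift the inversion contour from $\Re(s)\ll 0$ to $\Re(s)=-\sigma-\epsilon$, you claim residues are picked up at $-s_0$ and at negatives of Eisenstein poles. In fact, all of those poles lie in the closed strip $-\sigma\le\Re(s)\le 0$, which is to the \emph{right} of the target contour $\Re(s)=-\sigma-\epsilon$; the entire shift stays inside the region of holomorphy of $\widecheck{Rf}(s)$, so no residues are crossed. (You even implicitly notice this, since you assert the residue locations satisfy $\Re(s')\ge-\sigma-\epsilon$, which is incompatible with their having been swept up by a rightward shift from $\Re(s)\ll 0$ to $\Re(s)=-\sigma-\epsilon$.) The correct statement is simpler than what you wrote: Mellin inversion on the single line $\Re(s)=-\sigma-\epsilon$, together with the established rapid decay of $\widecheck{Rf}$ on that line, directly yields the pointwise bound $|Rf(ag)|\le C(g)\,\delta^{(1-\sigma-\epsilon)/2}(a)$ near the funnel, and the same with derivatives by equivariance. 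No residue bookkeeping is needed at this stage, and the conclusion stands.
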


This is an important proposition, and although very classical in one form or another, I present its proof carefully (modulo some uniformity of the estimates over compact sets, which is easy and left to the reader). The proof includes the proof of convergence of intertwining operators for $\Re(s)>1$, which we have stated and used in Corollary \ref{corRadonH}. For notational convenience, I will present the proof for $\SL_2$ (also denoted by $H$), instead of $\PGL_2$, because the affine closure of $N\backslash \SL_2$ is $\mathbb A^2$, so functions on $N\backslash \SL_2$ can be represented as functions of two variables $(x,y)$. 

\begin{proof}
The rapid decay at the cusp of the image of Radon transform is proven as for rapidly decaying functions. 

To prove the stated growth in the funnel, it suffices to show that the Mellin transform $\widecheck{Rf}(s)$ converges for $\Re(s)<-\sigma$. 

We may again use the auxilliary space $\mathcal S'_{s_0}(N\backslash H(\adele))$, as in the previous proposition, and assume that $f$ is the image of an element $\Phi$ in this space. Moreover, at this point it is more convenient to perform Mellin transform \eqref{Mellin-Rf} over all of $[A]$, not just the real torus $A$. (Obviously, they converge simultaneously, because $[A]=A\times [A]^1$ and $[A]^1$ is compact.) In terms of $\Phi$, that Mellin transform becomes Eulerian:
$$ \int_{[A]} \int_{N(\adele)} f(wna) dn \delta^{-\frac{1+s}{2}}(a) da = \int_{A(\adele)} \int_{N(\adele)} \Phi(wna) dn \delta^{-\frac{1+s}{2}}(a) da =$$
\begin{equation}\label{adelicint}
= \int_{\adele^\times} \int_{\adele} \Phi(-t, t^{-1}x) dx |t|^{-(1+s)} d^\times t,
\end{equation}
where we have made the substitutions $a = \begin{pmatrix} t \\ & t^{-1}\end{pmatrix}$, $w=\begin{pmatrix}
& 1\\ -1\end{pmatrix}$, and have represented $\Phi$ as a function on $\adele^2$.

I will first explain the case $s_0= -1$ (so that the elements of $\mathcal S^+_{s_0}(N\backslash H(k_{v_0}))$ are asymptotically constant close to the cusp); this, as we will see, is essentially the analysis that one would do for the Radon transform on $\mathcal S([H]_B)$. The modifications for the general case will be presented afterwards.

In that case, the result follows \emph{a fortiori} if we show it when the auxilliary space $\mathcal S'_{s_0}(N\backslash H(\adele))$ is replaced by $\mathcal S(\adele^2)$. Then one easily checks that we have
\begin{equation}\label{adelicNint} \int_{N(\adele)} \Phi(wn a) dn= \int_{\adele} \Phi(-t, t^{-1}x)  dx  \ll |t|= \delta^\frac{1}{2}(a)\end{equation}
as $|t|=\delta^\frac{1}{2}(a)\le 1$. Moreover, for almost every place $v$, the corresponding local integral is zero if $|a_v|>1$.

Therefore, the integral \eqref{adelicint} is bounded by a Tate integral of a Schwartz function on $\adele$ against the measure $|t|^{-s} d^\times t$, and hence converges when $-s >1$. This proves the desired estimate when $s_0 = -1$. Notice that the divergence of the integral when $(\sigma+\epsilon)=1$ is of global nature, i.e.\ is due to the divergence of the Euler product, not of its local factors.

In the general case, whenever $\Re(s_0)\le -1$ we obviously have the same estimate \eqref{adelicNint}, while for $\Re(s_0)>-1$ the estimate \eqref{adelicNint} remains the same for all places $v\ne v_0$. For $v=v_0$ it becomes (exercise!) a bound of the form $\ll \max\{|t|, |t|^{1-\Re(s_0)}\}$. Thus, the integral \eqref{adelicint} converges as before, except for the fact that for the local factor at $v_0$ to converge we need the extra condition $-\Re(s_0+s)>0$.

A better argument, without explicit calculation, which will reduce the general case to the case of $\mathcal S(N\backslash H(\adele))$, and moreover will prove the statement about poles of the Mellin transform, is to observe that for every $a\in A(k_{v_0})$, and $\Phi \in \mathcal S'_{s_0}(N\backslash H(\adele))$, the function 
$$ ((a\cdot) - \delta^{\frac{s_0}{2}}(a)) \Phi \in \mathcal S(N\backslash H(\adele)),$$
where $(a\cdot)$ denotes the operator of the action of $a$, normalized as in \eqref{action-normalized-H}. The Radon transform 
$$R: \Phi\mapsto \int_{N(\adele)} \Phi(wn\bullet) dn$$
is anti-equivariant with respect to the normalized action of $A(\adele)$, thus 
$$ ((a\cdot) - \delta^{-\frac{s_0}{2}}(a)) R\Phi \in R\left(\mathcal S(N\backslash H(\adele))\right).$$
Taking Mellin transforms, this gives that:
$$ (\delta^{\frac{s}{2}}(a) - \delta^{-\frac{s_0}{2}}(a)) \widecheck{R\Phi}(s) $$ 
is the Mellin transform of the Radon transform of an element of $\mathcal S(N\backslash H(\adele))$, which reduces us to the previous case, with an extra pole at $s=s_0$. (Of course, the ``first'' pole of the Mellin transform as we let $\Re(s)$ grow also determines its domain of convergence.)

Finally, the statements about meromorphic continuation follow from the identity $\widecheck{Rf} (s)= M(-s) \check f(-s)$, and the analogous properties of intertwining operators (Theorem \ref{meromcont} and Corollary \ref{corrapiddecay}).
\end{proof}

\begin{remark}\label{remarkRadon}
The meromorphic continuation of $\widecheck{Rf}(s)$  to $\Re(s)\le 0$ with only a finite number of poles, together with the decay on vertical strips, imply, by Theorem  \ref{asfinitealmostL2}, that $Rf$ is the sum of an asymptotically finite function with an $L^2$ function (both supported away from the cusp, up to a function of rapid decay). 
\end{remark}

This implies the following on constant terms:
\begin{corollary}\label{constantterm-s}
The constant term morphism: 
$$ \mathcal S^+_{s_0}([H])\ni \varphi \mapsto \varphi_B\in C^\infty([H]_B)$$
has image in the space of smooth functions which close to the cusp are asymptotically finite with exponent $s_0$, and close to the funnel are bounded (together with their derivatives under $U(\mathfrak h)$) by elements of $\pi_{-\sigma-\epsilon}$, for any $\epsilon>0$, where $\sigma = \max\{\Re(s_0), 1\}$.

Moreover, if $\varphi_B$ is written as $\varphi_{B,0} + \varphi_{B,\infty}$, where $\varphi_{B,0}\in \mathcal S^+_{s_0}([H]_B)$ and $\varphi_{B,\infty}$ is of rapid decay in a neighborhood of the cusp, and we define the Mellin transform:
$$ \widecheck{\varphi_B} (s) := \widecheck{\varphi_{B,0}}(s) + \widecheck{\varphi_{B,\infty}}(s) \in \pi_s,$$
where the second summand is given by the convergent integral \eqref{Mellin-constant} for $\Re(s)<-\sigma$ and the first is understood as the regularization of the corresponding integral, then $\widecheck{\varphi_B} (s)$ admits meromorphic continuation to the region $\Re(s)\le 0$, with poles only the opposites of the (simple) poles of Eisenstein series for $\Re(s)>0$, and the pole $-s_0$. (If the latter coincides with the opposite of a pole of Eisenstein series, it may appear with multiplicity $2$.)
\end{corollary}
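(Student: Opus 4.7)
The plan is to reduce the corollary to its Schwartz counterpart (Corollary~\ref{corRadonH}) and to the Radon-transform analog (Proposition~\ref{Radon-s}), by means of the decomposition supplied by Proposition~\ref{Psiprop-s}.

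By Proposition~\ref{Psiprop-s} together with Lemma~\ref{germsinfty}, any $\varphi \in \mathcal S^+_{s_0}([H])$ admits a decomposition $\varphi = \Psi f + \varphi'$ with $f \in \mathcal S^+_{s_0}([H]_B)$ sharing the same image as $\varphi$ in $\pi_{s_0}$, and $\varphi' \in \mathcal S([H])$. Applying \eqref{constantterm-Psi} to the pseudo-Eisenstein part then yields the clean identity
\[
\varphi_B \;=\; f + Rf + \varphi'_B.
\]
The qualitative properties of $\varphi_B$ follow by inspecting each summand. Near the cusp, both $Rf$ (Proposition~\ref{Radon-s}) and $\varphi'_B$ (Theorem~\ref{approxconstantterm-H}) are of rapid decay, so $f$ alone supplies the asymptotically finite behavior with exponent $s_0$. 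Near the funnel, $f$ is of rapid decay by definition, $Rf$ is bounded (together with its derivatives) by elements of $\pi_{-\sigma-\epsilon}$ (Proposition~\ref{Radon-s}), and $\varphi'_B$ by elements of $\pi_{-1-\epsilon}$ (Corollary~\ref{corRadonH}); since $\sigma = \max(\Re(s_0), 1) \ge 1$, the overall funnel bound is dominated by $\pi_{-\sigma-\epsilon}$.

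For the Mellin transform, take the natural decomposition $\varphi_{B,0} := f$ (already in $\mathcal S^+_{s_0}([H]_B)$) and $\varphi_{B,\infty} := Rf + \varphi'_B$ (of rapid decay near the cusp by the analysis above, and bounded by $\pi_{-\sigma-\epsilon}$ near the funnel). Then $\widecheck{\varphi_{B,\infty}}(s)$ is given by an absolutely convergent integral for $\Re(s) < -\sigma$, and its meromorphic continuation to $\Re(s) \le 0$ is the sum of the two continuations provided by Proposition~\ref{Radon-s} (for $\widecheck{Rf}(s)$) and Corollary~\ref{corRadonH} (for $\widecheck{\varphi'_B}(s)$): the former has simple poles at $-s_0$ and at the opposites of the Eisenstein poles, the latter only at the opposites of the Eisenstein poles, and the sum inherits these poles with the multiplicity-$2$ behavior at $-s_0$ described in the statement.

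There is no real obstacle: the proof is essentially a bookkeeping exercise built on Proposition~\ref{Psiprop-s}, the constant-term formula \eqref{constantterm-Psi} for pseudo-Eisenstein series, and the previous analyses of $Rf$ and of $\varphi'_B$. The only point that requires verification is that the two summands of $\varphi_{B,\infty}$ both satisfy the hypotheses needed to invoke Proposition~\ref{Radon-s} and Corollary~\ref{corRadonH}—rapid decay at the cusp and the claimed funnel bound—which is precisely what those earlier statements have been designed to furnish.
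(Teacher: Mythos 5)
Your proof is correct and follows the same route as the paper: decompose $\varphi = \Psi f + \varphi'$ via Proposition~\ref{Psiprop-s} and Lemma~\ref{germsinfty}, use the constant-term formula \eqref{constantterm-Psi}, and invoke Proposition~\ref{Radon-s} for $Rf$ and Corollary~\ref{corRadonH} for $\varphi'_B$. The paper states this more tersely, but your elaboration of which summand contributes what (asymptotics from $f$, funnel bounds from $Rf$ and $\varphi'_B$, poles from $\widecheck{Rf}$ and $\widecheck{\varphi'_B}$) is exactly the intended bookkeeping.
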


\begin{proof}
For $\varphi$ of the form $\Psi f$, with $f\in \mathcal S^+_{s_0}([H]_B)$, this follows from the formula \eqref{constantterm-Psi} for the constant term of pseudo-Eisenstein series, together with Proposition \ref{Radon-s}. By Lemma \ref{germsinfty}, we can subtract from every $\varphi$ an element of the form $\Psi f$ and reduce ourselves to the case of $\mathcal S([H])$, which was addressed in Corollary \ref{corRadonH}.
\end{proof}

\begin{remark}
 It is clear that the image space described in the corollary (as well as in Corollary \ref{corRadonH} and Proposition \ref{Radon-s}) have natural topologies as countable strict limits of Fr\'echet spaces, and it is clear from the proofs that the morphisms appearing are continuous.
\end{remark}

\begin{remark}
 One can prove enough about the decay of Eisenstein series in vertical strips in order to deduce the analog of Remark \ref{remarkRadon} for constant terms of general elements of $\mathcal S^+_{s_0}([H])$; however, we do not need this and will not state it; s.\ \cite{CasPW}.
\end{remark}

\subsection{Plancherel decomposition for asymptotically finite functions on $[PGL_2]$.} \label{ssPlrankone}

We will need to reproduce the argument of Selberg (and Langlands, in  general rank) for the spectral decomposition of $L^2([H])$, in order to extend it to asymptotically finite functions. It will eventually boil down to regularized inner products on $[H]_B$, which can be analyzed using the model of $\Rplus$.  

Let $\varphi_i\in \mathcal S^+_{s_i}([H]))$, where $i=1,2$ and $s_i\in \CC$ with $s_1+s_2\ne 0$. Hence, the regularized ``inner product''
\begin{equation}\label{ip-H}\left< \varphi_1 , \varphi_2\right>_{[H]}^*= \int_{[H]}^* \varphi_1(g) \varphi_2(g) dg\end{equation}
is defined.

Here is the strategy for developing the ``Plancherel decomposition'' of the ``inner product'' \eqref{ip-H}: First, we assume that $\varphi_i = \Psi f_i$, for some $f_i\in \mathcal S^+_{s_i}([H]_B)$ ($i=1,2$). We develop a spectral expansion for that case, which \emph{should depend only on the $\varphi_i$'s, not on the choice of $f_i$}. (The map $\Psi$ has non-trivial kernel.) The expansion will be an integral of bilinear pairings which are continuous on $\mathcal S_{s_1}^+([H]) \otimes \mathcal S_{s_2}^+([H])$; we will prove that this expression remains convergent for arbitrary elements of $\mathcal S_{s_1}^+([H]) \otimes \mathcal S_{s_2}^+([H])$ and hence, by the density of pseudo-Eisenstein series (Proposition \ref{Psiprop-s}), decomposes the regularized inner product on the whole space.

We start with the following:

\begin{proposition}
If $\varphi_1 = \Psi f_1$ for some $f_1\in \mathcal S^+_{s_1}([H]_B)$, we have:
\begin{eqnarray}\label{decomprankone}
\left< \varphi_1 , \varphi_2\right>_{[H]}^* = \frac{1}{2\pi i}\int_{\sigma-i\infty}^{\sigma+i\infty} \left< \check f_1(s), \widecheck{\varphi_{2,B}}(-s)\right>_{\pi_s} ds \nonumber \\
- \sum_{\Re(s')<\sigma} \Res_{s=s'}^+ \left< \check f_1(s), \widecheck{\varphi_{2,B}}(-s)\right>_{\pi_s} + \sum_{\Re(s')>\sigma} \Res_{s=s'}^- \left< \check f_1(s), \widecheck{\varphi_{2,B}}(-s)\right>_{\pi_s}, \label{Plancherel-offaxis}
\end{eqnarray}
for any $\sigma\gg 0$ not coinciding with the real part of a pole of $ \check f_1(s)$ or $\widecheck{\varphi_{2,B}}(-s)$. (If it coincides with the real part of the pole, use the same modification as in \eqref{Mellininv-ext-PV}.)
\end{proposition}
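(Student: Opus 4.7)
The plan has three main steps plus an identification of the residues.

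\medskip

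\emph{Step 1 (Unfolding by adjunction).} The first move is to reduce the claim to a computation of a regularized pairing on $[H]_B$. Since $\Psi$ is adjoint to the constant term morphism on Schwartz spaces (as noted right before Proposition \ref{Psidense}), and since both sides depend meromorphically on a deformation parameter inserted into the exponent $s_2$ (so that for $\Re(s_2)\ll 0$ everything converges absolutely and the sum defining $\Psi$ can be unfolded into an integral over $B(k)\backslash H(\adele)$, then over $[N]$), I claim
$$\left<\Psi f_1,\varphi_2\right>^{*}_{[H]} \;=\; \left<f_1,\varphi_{2,B}\right>^{*}_{[H]_B}.$$
The product on the right is well-defined as a regularized integral: at the cusp the product has exponent $s_1+s_2\ne 0$, and at the funnel $f_1$ is of rapid decay, so it kills the controlled growth of $\varphi_{2,B}$ described by Corollary \ref{constantterm-s}.

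\medskip

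\emph{Step 2 (Reduction to $\Rplus$).} The next step is to recognize the right-hand pairing as a (regularized) pairing on the real torus $A\simeq\Rplus$, valued in the principal series. Using the Iwasawa decomposition $H(\adele)=B(\adele)K$ and the identification $A\simeq\Rplus$ via $\delta^{1/2}$, the space $[H]_B$ is fibered over $A\backslash[H]_B$ with fiber $A$, and the bilinear pairing $\left<\,\cdot\,,\,\cdot\,\right>_{\pi_s}$ defined in \S\ref{sspreparation} is precisely the fiberwise integration normalized so that, for Schwartz inputs,
$$\int_{[H]_B} f_1(g)\varphi_{2,B}(g)\,dg \;=\; \frac{1}{2\pi i}\int_{i\RR}\left<\check f_1(s),\widecheck{\varphi_{2,B}}(-s)\right>_{\pi_s}\,ds.$$
This is just the fiberwise application of Plancherel on $\Rplus$ to the ``vector-valued'' pair $(\check f_1,\widecheck{\varphi_{2,B}})$.

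\medskip

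\emph{Step 3 (Extension to asymptotically finite inputs and contour shift).} Now I apply the one-dimensional Plancherel formulas of Theorems \ref{Plancherel-torus-s} and, crucially, Theorem \ref{asfinitealmostL2} fiberwise. The function $f_1$ is honestly asymptotically finite along $\Rplus$ with a single exponent (coming from $s_1$) at the cusp and rapid decay at the funnel. The constant term $\varphi_{2,B}$, however, is only \emph{asymptotically finite up to an almost-$L^2$ smooth function} at the funnel, as summarized in the remark following Proposition \ref{Radon-s}: its Mellin transform $\widecheck{\varphi_{2,B}}(s)$ admits meromorphic continuation to $\Re(s)\le 0$ with only finitely many poles (coming from the Eisenstein poles via $\widecheck{Rf}(s)=M(-s)\check f(-s)$ together with the pole at $-s_2$), and by Corollary \ref{corrapiddecay}, together with the Harish-Chandra polynomial bound on $M(s)$ in vertical strips, it decays rapidly on any vertical strip $a\le\Re(s)\le 0$ away from the poles. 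These are exactly the hypotheses of Theorem \ref{asfinitealmostL2}, whose conclusion produces formula \eqref{Plancherel-offaxis} with the charged-residue bookkeeping.

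\medskip

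The residues identify as follows: the positively charged poles of $\check f_1(s)$ at the cusp-exponent $s_1$, and the positively charged poles of $\widecheck{\varphi_{2,B}}(-s)$ (i.e.\ the sign-flipped, negatively charged poles of $\widecheck{\varphi_{2,B}}$) at $-s_2$ and at negatives of Eisenstein poles, all lie to the left of a contour $\sigma\gg 0$ and contribute as $-\Res^{+}$ terms; the $\Res^{-}$ sum is empty for $\sigma$ sufficiently large. The main obstacle in this proof is Step 3: even though $\varphi_{2,B}$ is not a bona fide asymptotically finite function on $[H]_B$, one has to control its behavior at the funnel quantitatively enough to apply Theorem \ref{asfinitealmostL2}; this is precisely where the Harish-Chandra bound on intertwining operators (and \emph{only} that, not a bound on Eisenstein series) enters. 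Everything else is formal.
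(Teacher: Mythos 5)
Your three-step plan matches the paper's own (very terse) proof: adjunction between $\Psi$ and the constant term, fiberwise reduction to the torus $A\simeq\Rplus$, and then the Plancherel machinery for asymptotically finite functions. So the approach is the right one. However, Step~3 as written contains a gap that you should be aware of.

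You assert that $\widecheck{\varphi_{2,B}}(s)$ ``decays rapidly on any vertical strip $a\le\Re(s)\le 0$ away from the poles'' by invoking Corollary~\ref{corrapiddecay} together with the Harish--Chandra bound, and then claim that ``these are exactly the hypotheses of Theorem~\ref{asfinitealmostL2}.'' This is not established for a general $\varphi_2\in\mathcal S^+_{s_2}([H])$. Corollary~\ref{corrapiddecay} controls $\widecheck{Rf}(s)=M(-s)\check f(-s)$ only for $f\in\mathcal S([H]_B)$, i.e.\ it controls the Radon-transform piece of a constant term. By the proof of Corollary~\ref{constantterm-s}, a general $\varphi_2$ decomposes as $\Psi f + \psi$ with $\psi\in\mathcal S([H])$, and the Mellin transform $\widecheck{\psi_B}(s)=\mathcal E^*_{-s}\psi$ of the remaining piece would require bounds on the \emph{Eisenstein series} themselves in vertical strips, not merely on intertwining operators. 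The paper deliberately refuses to use such bounds. This is exactly why the proposition is stated only for $\sigma\gg 0$, and why the subsequent corollary, which moves the contour down to $\sigma\ge 0$, is proven only after the \emph{additional} hypothesis $\varphi_2=\Psi f_2$ is imposed --- only then does Corollary~\ref{corrapiddecay} cover the whole constant term.

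For the statement you are actually asked to prove ($\sigma\gg 0$), none of these vertical-strip estimates is needed: with $\sigma$ larger than $\max\{\Re(s_1),\Re(s_2),1\}$, the Mellin transform $\widecheck{\varphi_{2,B}}(-s)$ on $\Re(s)=\sigma$ is given by the absolutely convergent integral of Corollary~\ref{constantterm-s}, and the decay in $\Im(s)$ follows from the elementary Paley--Wiener-type estimate (integration by parts under $\partial=x\,d/dx$). The regularized pairing $\left<f_1,\varphi_{2,B}\right>^*_{[H]_B}$ then decomposes via the convergent case of Theorem~\ref{asfinitealmostL2} (or a direct splitting $f_1=f_{1,0}+f_{1,\infty}$, $\varphi_{2,B}=\varphi_{2,B}^{\operatorname{fin}}+\varphi_{2,B}^{\operatorname{rest}}$) with no contour shift, and the only residues that appear are the charged ones you correctly identify. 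Your closing remark that ``only the Harish--Chandra bound on intertwining operators, and not a bound on Eisenstein series, enters'' is therefore misleading for this proposition: neither bound enters at $\sigma\gg 0$, and for general $\varphi_2$ at small $\sigma$ the latter would.
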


Here $\widecheck{(\varphi_{2,B})}(s)$ denotes the Mellin transform of the constant term $\varphi_{2,B}$, as described in Corollary \ref{constantterm-s}.
The charges of the Laurent series are taken as $-$ for exponents at the cusp, and as $+$ for exponents at the funnel. More precisely:

\begin{itemize}
\item The Mellin transform $\check f_1(s)$ only has a \emph{negative, simple} pole at $s=s_1$. 

\item The Mellin transform $\widecheck{\varphi_{2,B}}(s)$ has only one negative, simple pole at $s=s_2$. The rest of its poles are \emph{positive}, and for $\Re(s)\le 0$ they consist of the negatives of the Eisenstein poles (which do not lie on the line $\Re(s)=0$), and the pole $-s_2$ (if it has real part $\le 0$). Notice that if $\Re(s_2) < 0$, then it may coincide with the negative of an Eisenstein pole, in which case we will have a non-trivial decomposition of the Laurent expansion at $s_2$ into positive and negative ``charges''. On the other hand, if $\Re(s_2)>0$, then $-s_2$ may coincide with the negative of an Eisenstein pole, in which case $\widecheck{\varphi_{2,B}}(s)$ may have a double pole at that point. All other poles are simple. Poles with $\Re(s)>0$, other than the possible ``negative'' pole at $s_2$, will not be of interest to us.

By our conventions, the function $s\mapsto \widecheck{\varphi_{2,B}}(-s)$ will have a \emph{positive} pole at $-s_2$, and all other poles \emph{negative}.

\item The poles of $\widecheck{\varphi_{2,B}}(s)$ for $\Re(s)>0$ (if we assume its meromorphic continuation, which we haven't stated and are not using) are irrelevant. One could take their charges to be negative, but it will play no role. Notice, however, that, as in Theorem \ref{asfinitealmostL2}, $\Res_{s=s_2}^- \widecheck{\varphi_{2,B}}(s_2)$ makes sense, even if $\Re(s_2)>0$. Indeed, by definition, it only depends on the behavior of $\varphi_{2,B}$ (equivalently: of $\varphi$) close to the cusp, more precisely on the image in $\pi_{s_2}$ under the equivalent maps of Lemma \ref{germsinfty}.
\end{itemize}

Thus, the sum $\sum_{\Re(s')<\sigma}  \Res_{s=s'}^+ \left< \check f_1(s), \widecheck{\varphi_{2,B}}(-s)\right>_{\pi_s}$ in the above formula can only contain the term $$\left<\check f_1(-s_2), \Res_{s=-s_2}^+ \widecheck{\varphi_{2,B}}(-s)\right> = - \left<\check f_1(-s_2), \Res_{s=s_2}^- \widecheck{\varphi_{2,B}}(s)\right> ,$$
and only if $\Re(-s_2)<\sigma$. On the other hand, the sum of ``negative'' residues can contain many terms, corresponding to poles of Eisenstein series (with $\Re(s)>\sigma\ge 0$) and the pole $s_1$ of $\check f_1$ (which may coincide with a pole of an Eisenstein series).

\begin{proof}
By adjunction between the pseudo-Eisenstein morphism and the constant term (which continues to hold for the regularized inner products that we are considering), we have:
$$\left< \Psi f_1 , \varphi_2\right>_{[H]}^* = \left< f_1, \varphi_{2,B} \right>_{[H]_B}^*.$$

The constant term $\varphi_{2,B}$ has been described in Corollary \ref{constantterm-s}. Although it is not asymptotically finite in both directions, it is easy to see that the Plancherel decomposition of Theorem \ref{Plancherel-torus-s} continues to hold as long as we take $\Re(\sigma)\ge 0$; this amounts to the statement of the proposition.

\end{proof}

The formula \eqref{Plancherel-offaxis} is not yet our ``Plancherel'' formula, not even in the case of the actual $L^2([H])$ decomposition (e.g., when the $\varphi_i$'s belong to $\mathcal S([H])$), because its integrand depends on $f_1$ and cannot be understood in terms of $\varphi_1$ alone. To see what must be done, recall the expression \eqref{constantterm-Psi} for $\varphi_{1,B} = (\Psi f_1)_B$:
$$ (\Psi f_1)_B = f_1 + R f_1.$$

Decomposing its Mellin transform correspondingly, we get (at least for $\Re(s)\le 0$, cf.\ Corollary \ref{constantterm-s}):
\begin{equation}\label{decompct}
\widecheck{(\Psi f_1)_B}(s)  = \check f_1(s) + M(-s) \check f_1(-s),
\end{equation}
where $M(s)$ denotes the standard intertwining operator \eqref{Ms}. 
Thus, to have an expression that only depends on $\varphi_1$, the integrand should (generically) depend on $\check f_1(s) + M(-s) \check f_1(-s)$, not just $\check f(s)$. This is why we have to move the contour of integration in \eqref{Plancherel-offaxis} to the line $\sigma=0$. To avoid using estimates for Eisenstein series on vertical strips, now we assume that $\varphi_2 = \Psi(f_2)$, as well (with $f_2\in \mathcal S^+_{s_2}([H]_B)$. Thus, we have $\varphi_{2,B} = f_2  + Rf_2$, and $\widecheck{\varphi_{2,B}}(-s) = \check f_2(-s) + M(s) \check f_2(s)$.  
What follows is written under the assumption that $\Re(s_1)\ne 0 \ne \Re(s_2)$; if that is not the case, one needs to adjust with principal value integrals as in \eqref{Mellininv-ext-PV}.

The following is a corollary of Theorem \ref{asfinitealmostL2} and Corollary \ref{corrapiddecay}.
\begin{corollary}
 For $\varphi_2 = \Psi(f_2)$, the formula \eqref{decomprankone} continues to hold for all $\sigma\ge 0$ (with the modifications as in \eqref{Mellininv-ext-PV} when poles lie on the line of integration).
\end{corollary}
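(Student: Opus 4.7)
The plan is to obtain the formula for $\sigma \geq 0$ by a contour shift applied to the formula at $\sigma \gg 0$ established in the preceding proposition. Concretely, fix some $\sigma \gg 0$ at which \eqref{decomprankone} holds, pick any $\sigma_0 \in [0,\sigma)$, and move the vertical contour from $\Re(s) = \sigma$ to $\Re(s) = \sigma_0$, accounting for the residues at the (finitely many) poles $s'$ with $\sigma_0 < \Re(s') < \sigma$ that are crossed along the way.

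The key analytic ingredient justifying the shift is uniform rapid decay in $\Im(s)$ of the integrand $\left<\check f_1(s), \widecheck{\varphi_{2,B}}(-s)\right>_{\pi_s}$ on bounded vertical strips in $\{\Re(s)\geq 0\}$, away from its poles. Using the hypothesis $\varphi_2 = \Psi f_2$, formula \eqref{decompct} gives
$$\widecheck{\varphi_{2,B}}(-s) = \check f_2(-s) + M(s)\check f_2(s).$$
The Paley--Wiener theorem for asymptotically finite functions (Theorem \ref{theorem-Mellin-asfinite}) supplies rapid decay of $\check f_1(s)$ and $\check f_2(-s)$ on vertical strips. For the remaining summand, observe that $M(s)\check f_2(s) = \widecheck{Rf_2}(-s)$, whose rapid decay on vertical strips in $\{\Re(s)\geq 0\}$ is exactly the content of Corollary \ref{corrapiddecay}. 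Pairing with $\check f_1(s)$ via $\left<\cdot,\cdot\right>_{\pi_s}$ preserves this decay, so the horizontal segments of the rectangular contour contribute nothing in the limit.

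Once the shift is justified, the residue bookkeeping follows automatically from the charge conventions of Definition \ref{charge-definition}. Each pole $s'$ crossed by the contour transfers from the term $-\Res_{s=s'}^+$ appearing in the sum indexed by $\Re(s')<\sigma$ in the $\sigma$-formula to the term $+\Res_{s=s'}^-$ in the sum indexed by $\Re(s')>\sigma_0$ in the $\sigma_0$-formula. The net difference between the two formulas at such a pole is $\Res_{s=s'}^+ + \Res_{s=s'}^-$, which by construction equals the ordinary Laurent residue at $s'$ and therefore matches exactly the Cauchy contribution of the shift. When $\sigma_0 = \Re(s')$ for some pole, the principal-value-plus-half-residue modification of \eqref{Mellininv-ext-PV} encodes precisely the same consistency.

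The main obstacle is the vertical-strip estimate underlying Corollary \ref{corrapiddecay}, which rests on Harish-Chandra's polynomial bound \eqref{HCestimate} for the intertwining operator; as the introduction emphasizes, this is exactly the ingredient whose generalization to higher rank represents the chief technical bottleneck of this approach. All remaining steps are formal manipulations with meromorphic families on $\CC$ valued in the Fr\'echet bundle of $\pi_s$'s, together with the adjunction between $\Psi$ and the constant term that was already used in the proof of the preceding proposition.
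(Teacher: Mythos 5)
Your proof is correct and is essentially the paper's argument with the abstraction unwound: the paper invokes Theorem \ref{asfinitealmostL2} (itself proved by the same contour shift) together with Corollary \ref{corrapiddecay}, whereas you carry out the shift from $\sigma\gg 0$ to $\sigma_0\geq 0$ directly, using the same Harish-Chandra vertical-strip bound on $M(s)$ to control $\widecheck{Rf_2}$ and the same charge conventions to track the crossed residues. One tiny remark: since $f_2\in\mathcal S^+_{s_2}([H]_B)$ rather than $\mathcal S([H]_B)$, the decay statement you want is literally the one in Proposition \ref{Radon-s} rather than Corollary \ref{corrapiddecay}, though the content is identical.
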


Now we use the functional equation \eqref{FE} of intertwining operators, as well as the (easy) adjunction formula:
\begin{equation}\label{Msadjunction} 
\left< M(-s) F_{1,-s}, F_{2,-s}\right>_{\pi_s} = \left<F_{1,-s}, M(-s) F_{2,-s}\right>_{\pi_{-s}}.
\end{equation}

This gives:
\begin{proposition}\label{constantterm-symmetry}
For $\varphi\in \mathcal S^+_{s_0}([H])$, the constant term $\widecheck{\varphi_B}(s)$ satisfies:
\begin{equation}\label{constantterm-symmetry-eq}
\widecheck{\varphi_B}(s) = M(-s) \widecheck{\varphi_B}(-s).
\end{equation}
\end{proposition}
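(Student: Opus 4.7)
The plan is to first prove the identity for pseudo-Eisenstein series $\varphi = \Psi f$ with $f \in \mathcal{S}^+_{s_0}([H]_B)$ by direct computation, then extend by density and linearity.

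For the pseudo-Eisenstein case, the formula \eqref{constantterm-Psi} gives $\varphi_B = f + Rf$. Taking Mellin transforms and using the identity $\widecheck{Rf}(s) = M(-s)\check{f}(-s)$ established in Corollary \ref{corRadonH} (and generalized in Proposition \ref{Radon-s}), we obtain
\begin{equation*}
\widecheck{\varphi_B}(s) = \check{f}(s) + M(-s)\check{f}(-s).
\end{equation*}
Applying $M(-s)$ to $\widecheck{\varphi_B}(-s) = \check{f}(-s) + M(s)\check{f}(s)$ and using the functional equation $M(-s)M(s) = \operatorname{Id}$ from Theorem \ref{meromcont}, I get
\begin{equation*}
M(-s)\widecheck{\varphi_B}(-s) = M(-s)\check{f}(-s) + \check{f}(s) = \widecheck{\varphi_B}(s),
\end{equation*}
which is the claim for pseudo-Eisenstein input.

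To pass to arbitrary $\varphi \in \mathcal{S}^+_{s_0}([H])$, I would use the decomposition \eqref{cuspdecomp-s}, namely $\mathcal{S}^+_{s_0}([H]) = \mathcal{S}([H])_{\cusp} \oplus \mathcal{S}^+_{s_0}([H])_\Eis$. For a cuspidal $\varphi$, the constant term $\varphi_B$ vanishes identically, so both sides of \eqref{constantterm-symmetry-eq} are zero and the identity is trivial. For $\varphi \in \mathcal{S}^+_{s_0}([H])_\Eis$, Proposition \ref{Psiprop-s} asserts that the pseudo-Eisenstein morphism $\Psi: \mathcal{S}^+_{s_0}([H]_B) \to \mathcal{S}^+_{s_0}([H])_\Eis$ has dense image. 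Both sides of \eqref{constantterm-symmetry-eq} are meromorphic families of functionals in $s$ that depend continuously on $\varphi$ (through the constant term map and its Mellin transform as described in Corollary \ref{constantterm-s}), so the identity, being valid on a dense subspace, extends by continuity to all of $\mathcal{S}^+_{s_0}([H])_\Eis$.

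The main subtlety lies not in the algebraic manipulation but in verifying that the equality is preserved under the density argument: $\widecheck{\varphi_B}(s)$ is defined as a meromorphic continuation, and one must ensure that for a sequence $\Psi f_n \to \varphi$ in the topology of $\mathcal{S}^+_{s_0}([H])$, the corresponding meromorphic continuations converge (as strongly meromorphic sections of the bundle $\{\pi_s\}_s$) away from the finitely many poles described in Corollary \ref{constantterm-s}. This is ultimately a matter of continuity of the morphism $\varphi \mapsto \widecheck{\varphi_B}(\cdot)$ into the appropriate space of meromorphic families, which follows from the continuity statement implicit in Corollary \ref{constantterm-s} (and made explicit in the remark following it). Once this continuity is in hand, the identity on the dense subspace of pseudo-Eisenstein series propagates to the whole space.
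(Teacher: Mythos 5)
Your proof is correct and takes essentially the same approach as the paper's, which also establishes the identity on pseudo-Eisenstein series via \eqref{decompct} and \eqref{FE} and then extends by continuity/density. You are in fact slightly more careful than the paper's one-line proof (which loosely says the identity ``extends to the whole space $\mathcal S([H])$''), since you explicitly invoke the decomposition \eqref{cuspdecomp-s} and observe that the cuspidal piece contributes trivially, which is the correct way to pass from density of $\Psi$-images in $\mathcal S^+_{s_0}([H])_\Eis$ to the full space $\mathcal S^+_{s_0}([H])$.
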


\begin{remark}
We will only use this relation on the line $\Re(s)=0$, so the continuation of $\widecheck{\varphi_B}(s)$ to $\Re(s)>0$ is not important.
\end{remark}

\begin{proof}
This follows from \eqref{decompct} and \eqref{FE} when $\varphi = \Psi f$. By continuity of the map $\varphi\mapsto \widecheck{\varphi_B}(s)$ (equivalently, of the Eisenstein series), and of the intertwining operators $M(s)$, it extends to the whole space $\mathcal S([H])$.
\end{proof}

We will actually only need this statement for pseudo-Eisenstein series. Applying it to $\varphi = \varphi_2 = \Psi f_2$,  
writing 
$\widecheck{\varphi_{2,B}}(s)$ as $$\frac{1}{2}\left(\widecheck{\varphi_{2,B}}(s) +  M(-s) \widecheck{\varphi_{2,B}}(-s))\right),$$ 
combining the terms $\pm s$ in the integral, and using the adjunction \eqref{Msadjunction}, we get:
\begin{eqnarray}
\left< \varphi_1 , \varphi_2\right>_{[H]}^* =  \frac{1}{2\pi i}\int_{0}^{+i\infty} \left< \check f_1(s)+ M(-s) \check f_1(-s), \widecheck{\varphi_{2,B}}(-s)\right>_{\pi_s} ds \nonumber \\
+ \sum_{\Re(s_2)>0}  \left(\left<\check f_1(-s_2), \Res_{s=s_2}^- \widecheck{\varphi_{2,B}}(s)\right>_{\pi_{-s_2}} + \Res^-_{s=s_2} \left< \check f_1(s), \widecheck{\varphi_{2,B}}(-s)\right>_{\pi_s}\right) \nonumber \\ 
+ \sum_{\Re(s')>0, s'\ne s_2} \Res_{s=s'}^- \left< \check f_1(s), \widecheck{\varphi_{2,B}}(-s)\right>_{\pi_s}. \label{Plancherel-onaxis-prelim}
\end{eqnarray}

The notation $\sum_{\Re(s_2)>0}$ means, of course, that this term appears only if $\Re(s_2)>0$. (Again, if $\Re(s_2)=0$ one needs to modify as in \eqref{Mellininv-ext-PV}.) Notice that the pole in the second term on the second line comes from both $\check f_1(s)$ and $\widecheck{\varphi_{2,B}}(-s)$ if $s_1=s_2$; otherwise, the term can be written $\left< \check f_1(s_2), \Res^-_{s=s_2} \widecheck{\varphi_{2,B}}(-s)\right>_{\pi_{s_2}}$.
Now we claim:

\begin{theorem}\label{Plancherelrankone}
All integrands in \eqref{Plancherel-onaxis-prelim}, including the discrete summands,\footnote{This refers to the summands as they appear in the equation, e.g., the term $\left(\left<\check f_1(-s_2), \Res_{s=s_2}^- \widecheck{\varphi_{2,B}}(s)\right>_{\pi_{-s_2}} + \Res^-_{s=s_2} \left< \check f_1(s), \widecheck{\varphi_{2,B}}(-s)\right>_{\pi_s}\right)$ as a whole.} depend on $\varphi_1$ and $\varphi_2$ alone, not $f_1$ or $f_2$, and extend to continuous functionals on $\mathcal S^+_{s_1}([H])\hat\otimes \mathcal S^+_{s_2}([H])$. 

For each discrete summand $s'$ (or $s_2$) on the right hand side of \eqref{Plancherel-onaxis-prelim}, let $J_{s'}(\varphi_1,\varphi_2)$ denote the corresponding bilinear form. Notice that $s'$ runs over the subset of $\{s_1, s_2, \mbox{(poles of Eisenstein series)}\}$ with $\Re(s)\ge 0$. The expression
\begin{eqnarray}
\frac{1}{2\pi i}\int_{0}^{+i\infty} \left<\widecheck{\varphi_{2,B}}(s), \widecheck{\varphi_{2,B}}(-s)\right>_{\pi_s} ds + \sum_{\Re(s')>0} J_{s'}(\varphi_1,\varphi_2). \label{Plancherel-onaxis}
\end{eqnarray}
converges absolutely for all $\varphi_1\otimes \varphi_2\in \mathcal S^+_{s_1}([H])\hat\otimes \mathcal S^+_{s_2}([H])$,\footnote{We use $\varphi_1, \varphi_2$ for notational simplicity, but the expression and statement of the theorem is understood to hold for arbitrary elements of the completed tensor product.} and is equal to the regularized inner product $\left< \varphi_1 , \varphi_2\right>_{[H]}^*$. (If if either of $\Re(s_i)$ happens to be zero, one needs to introduce the appropriate modifications as in \eqref{Mellininv-ext-PV}.)
\end{theorem}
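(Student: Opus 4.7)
The plan is to rewrite every summand in \eqref{Plancherel-onaxis-prelim} so that it manifestly depends only on $\varphi_1,\varphi_2$ (and not on the auxiliary $f_i$), to establish continuity of each such functional on $\mathcal S^+_{s_1}([H])\hat\otimes\mathcal S^+_{s_2}([H])$, and finally to extend the identity from pseudo-Eisenstein series to the full tensor product using the density of Proposition \ref{Psiprop-s} together with the orthogonal decomposition \eqref{cuspdecomp-s}.

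For the integrand, formula \eqref{decompct} gives $\check f_1(s) + M(-s)\check f_1(-s) = \widecheck{\varphi_{1,B}}(s)$, so it equals $\left<\widecheck{\varphi_{1,B}}(s),\widecheck{\varphi_{2,B}}(-s)\right>_{\pi_s}$, which depends only on $\varphi_1,\varphi_2$. For a discrete summand at a pole $s'$ with $\Re(s')>0$ and $s'\notin\{s_1,s_2\}$, Proposition \ref{constantterm-symmetry} gives $\Res^-_{s=s'}\widecheck{\varphi_{2,B}}(-s) = -\mathcal M_{s'}\widecheck{\varphi_{2,B}}(s')$, where $\mathcal M_{s'}=\Res_{s=s'}M(s)\colon \pi_{s'}\to\pi_{-s'}$, so the summand becomes $-\left<\check f_1(s'),\mathcal M_{s'}\widecheck{\varphi_{2,B}}(s')\right>_{\pi_{s'}}$. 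The key algebraic input is the identity $M(-s')\circ\mathcal M_{s'}=0$ on $\pi_{s'}$, extracted by taking the residue at $s'$ of the functional equation $M(-s)M(s)=\Id$; combined with the adjunction \eqref{Msadjunction}, it shows that the contribution of $M(-s')\check f_1(-s')$ to $\widecheck{\varphi_{1,B}}(s')$ is annihilated upon pairing with $\mathcal M_{s'}\widecheck{\varphi_{2,B}}(s')$, whence
$$\left<\check f_1(s'),\mathcal M_{s'}\widecheck{\varphi_{2,B}}(s')\right>_{\pi_{s'}} = \left<\widecheck{\varphi_{1,B}}(s'),\mathcal M_{s'}\widecheck{\varphi_{2,B}}(s')\right>_{\pi_{s'}}.$$
The mixed summands at $s'\in\{s_1,s_2\}$, which may be of higher order when the exponent coincides with an Eisenstein pole, are handled by the same vanishing argument applied coefficient by coefficient in the Laurent expansion.

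Continuity of each integrand (for fixed $s\in i\mathbb R$) and of each discrete functional $J_{s'}$ on $\mathcal S^+_{s_1}([H])\hat\otimes\mathcal S^+_{s_2}([H])$ then follows from Corollary \ref{constantterm-s} (continuity of $\varphi\mapsto\widecheck{\varphi_B}$), the continuity of the pairings $\left<\,,\,\right>_{\pi_s}$, and the continuity of the residue operators $\mathcal M_{s'}$. Absolute convergence of the vertical integral for pseudo-Eisenstein series $\varphi_i=\Psi f_i$ follows from Corollary \ref{corrapiddecay} and Paley--Wiener (Theorem \ref{theorem-Mellin-asfinite}), which together give rapid decay of $\widecheck{\varphi_{i,B}}(it) = \check f_i(it) + M(-it)\check f_i(-it)$ in $|t|$; this, combined with \eqref{Plancherel-onaxis-prelim}, establishes the theorem on the dense subspace of pseudo-Eisenstein series.

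The main obstacle will be extending absolute convergence of the vertical integral from pseudo-Eisenstein series to arbitrary $\varphi_i\in\mathcal S^+_{s_i}([H])_\Eis$. I would argue by dominated convergence along an approximating sequence $\Psi f_i^{(n)}\to\varphi_i^\Eis$ supplied by Proposition \ref{Psiprop-s}: the rapid decay of each $\widecheck{(\Psi f_i^{(n)})_B}(it)$ in $|t|$ is controlled by Paley--Wiener seminorms of $\check f_i^{(n)}$ and by Harish-Chandra's polynomial bound on $M(-it)$ on vertical strips, both of which are continuous in $f$, yielding a uniform $L^1$-envelope along the approximating sequence. Dominated convergence then transfers both absolute convergence and the identity with $\left<\varphi_1,\varphi_2\right>^*_{[H]}$ to the Eisenstein limit; the cuspidal parts from the decomposition \eqref{cuspdecomp-s} have vanishing constant terms and contribute trivially to the right-hand side, with their $L^2$-pairing furnishing the cuspidal contribution to $\left<\varphi_1,\varphi_2\right>^*_{[H]}$.
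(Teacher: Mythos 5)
Your first two steps are on target, and your use of $\Res_{s=s'}\bigl(M(-s)M(s)\bigr) = M(-s')\mathcal M_{s'} = 0$ together with the adjunction \eqref{Msadjunction} to replace $\check f_1$ by $\widecheck{\varphi_{1,B}}$ inside the discrete summands is a valid and pleasant observation; it essentially recovers special cases of the explicit identities the paper later proves in Theorem \ref{explicit-rankone}. However, the paper's own proof of Theorem \ref{Plancherelrankone} does not need these explicit rewritings at all, and your route has two genuine gaps.

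The serious gap is in the last paragraph. Your dominated-convergence argument supplies no dominating function. You claim a ``uniform $L^1$-envelope'' controlled by the Paley--Wiener seminorms of $\check f_i^{(n)}$, but $\Psi$ has a large kernel and is not an open map, so $\Psi f_i^{(n)}\to\varphi_i$ in $\mathcal S^+_{s_i}([H])$ gives you no boundedness whatsoever of the $f_i^{(n)}$ in $\mathcal S^+_{s_i}([H]_B)$; the Paley--Wiener seminorms of $\check f_i^{(n)}$ can blow up freely. Nor can you get uniform rapid decay of $\widecheck{\varphi_{i,B}}(it)$ directly in terms of $\varphi_i$ without estimates on Eisenstein series in vertical strips, which the paper is deliberately avoiding. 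The paper's proof uses a completely different device: pick an Archimedean place $v$, work with the ring $\mathfrak z_0(\mathfrak h_v)$ in the center of the enveloping algebra, and choose $Z_i\in\mathfrak z_0(\mathfrak h_v)$ with $\check Z_i(s)=s^2-s_i^2$, so that $Z_i\varphi_i\in\mathcal S([H])$ by \eqref{short-SsH}. Since $\check Z_i$ is bounded below away from $s_i$, absolute convergence of the integral for $\varphi_i$ reduces to the Schwartz case, which is handled by the positivity of the hermitian forms in $\left<Z\varphi,\overline{Z\varphi}\right>=\frac{1}{2\pi i}\int|\check Z(s)|^2\left<\widecheck{\varphi_B}(s),\overline{\widecheck{\varphi_B}(s)}\right>_{\pi_s}\,ds$ after killing the discrete part with a suitable $Z$. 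That is what delivers the dominating bound you are missing.

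The second gap is the sentence that the mixed summands at $s'\in\{s_1,s_2\}$ coinciding with Eisenstein poles ``are handled by the same vanishing argument applied coefficient by coefficient in the Laurent expansion.'' These cases are precisely where the poles may be of order two and the charged-residue bookkeeping becomes delicate; the paper shows in Theorem \ref{explicit-rankone} that the answer there involves $M'(\chi')$ and is far from a mechanical coefficient-wise repetition of the simple-pole case. The paper avoids having to explicate any of this in the proof of Theorem \ref{Plancherelrankone}: it first shows that the total sum $L$ of discrete terms is a continuous functional of $\varphi_1\otimes\varphi_2$ (left-hand side minus continuous integral), and then separates the individual $J_{s'}$ using that their annihilator ideals $I_i$ in $\mathfrak z_0(\mathfrak h_v)$ are pairwise relatively prime, choosing $Z_j\in\bigcap_{l\ne i}I_l$ to isolate each $L_i$. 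This avoids any explicit Laurent-expansion analysis of higher-order coincidences.

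In short: the integrand manipulations are fine, but the convergence argument is unsound as stated, and the treatment of the degenerate discrete terms is hand-waved. Replacing both with the Harish-Chandra-center argument is not cosmetic; it is what makes the theorem provable without Eisenstein-series estimates.
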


\begin{proof}
The integrand in the continuous integral can be written $$\left< \widecheck{\varphi_{1,B}}(s), \widecheck{\varphi_{2,B}}(-s)\right>_{\pi_s},$$ and hence is clearly a continuous function of $\varphi_1$, $\varphi_2$, by continuity of the map $\varphi\mapsto \widecheck{\varphi_{B}}(s)$ (or, equivalently, of the Eisenstein morphism). We first prove the absolute convergence of this integral for an arbitrary element of $\mathcal S^+_{s_1}([H])\hat\otimes \mathcal S^+_{s_2}([H])$. For notational simplicity, we still assume that the element is of the form $\varphi_1 \otimes \varphi_2$, but the reader can easily reformulate for an arbitrary element of the completed tensor product.

Fix an Archimedean place $v$. (In the function field case, one can choose a place $v$ corresponding to a point that is defined over the finite base field, and work with the Bernstein center of $H(k_v)$.) The Harish-Chandra isomorphism identifies the center $\mathfrak z(\mathfrak h_v)$ of the complexified Lie algebra with the polynomial ring $\CC[\mathfrak t_\CC]^W$ (we will denote the image of $Z$ by $\check Z$), where $T$ is the restriction of scalars to $\RR$ of the universal Cartan of $H(k_v)$ and $\mathfrak t_\CC$ is the complexification of its Lie algebra. The character $\delta^\frac{1}{2}: T\to \Rplus \simeq A$ gives rise to a map $\mathfrak t_\CC \to \mathfrak a_\CC$, and hence a pull-back map
$$\CC[\mathfrak a_\CC]^W \to \CC[\mathfrak t_\CC]^W.$$
Denote by $\mathfrak z_0(\mathfrak h_v)$ the subring corresponding to the image of this map. Any $Z\in \mathfrak z_0(\mathfrak h_v)$ acts on $\pi_s$ by the scalar $\check Z(s)$, where we have used the identification $\mathfrak a_\CC \simeq \CC$. It acts on $\pi_{-s}$ by the same scalar, since $W$ acts on $\mathfrak a_\CC$ by $s\mapsto -s$.

Let $Z_i\in \mathfrak z_0(\mathfrak h_v)$ be the element with Harish-Chandra transform $\check Z_i(s) = (s^2-s_i^2)$ ($i=1,2$). Then $Z_i$ acts as zero on $\pi_{s_i}$, and hence by \eqref{short-SsH} we have $Z_i\varphi_i \in \mathcal S([H])$. On the other hand, $|\check Z_i|$ is bounded below away from $s_i$, and we have:
$$ \left< \widecheck{(Z_1\varphi_{1})_B}(s), \widecheck{(Z_2\varphi_{2})_B}(-s)\right>_{\pi_s} = \check Z_1(s)\check Z_2(-s) \left< \widecheck{\varphi_{1,B}}(s), \widecheck{\varphi_{2,B}}(-s)\right>_{\pi_s},$$
so the statement of convergence reduces to the case $\varphi_i\in \mathcal S([H])$. In this setting it is of course known, but I remind that the argument uses a non-trivial element $Z \in \mathfrak z_0(\mathfrak h_v)$ that annihilates the discrete summands on the right hand side of \eqref{Plancherel-onaxis-prelim}, to arrive at
$$ \left<Z\varphi, \overline{Z\varphi}\right> = \frac{1}{2\pi i}\int_{0}^{+i\infty} |\check Z(s)|^2 \left< \widecheck{\varphi_B}(s), \overline{\widecheck{\varphi_B}(s)}\right>_{\pi_s} ds.$$
Together with the positivity of the hermitian forms in the integrand, this proves absolute convergence.

Now, the left hand side of the equality is also a continuous function of $\varphi_1\otimes \varphi_2$. Subtracting the continuous integral, we deduce that the same is true for the sum of the discrete terms -- call this sum $L(f_1, f_2)$. 

Notice that each of the discrete summands of \eqref{Plancherel-onaxis-prelim} -- denote them by $L_1, L_2, \dots$ -- is supported on a different set of points of the spectrum of $\mathfrak z_0(\mathfrak h_v)$; that is, if we let $\mathfrak z_0(\mathfrak h_v)$ act on the space of functionals by $(ZL_i)(f_1, f_2) = L_i (Zf_1, f_2)$, the annihilators $I_i$ of the $L_i$'s are relatively prime. On the other hand, for fixed $f_1, f_2$, it is clear from the expressions of these functionals that the map
$$ Z\mapsto L_i(Zf_1, f_2)$$ is a linear combination of the first few derivatives of $\check Z$ at the corresponding point $s'$ where the residue in the expression of $L_i$ is taken, with coefficients which a priori depend on $f_1, f_2$. Taking suitable elements $Z_j$ in $\bigcap_{l\ne i} I_l$, we can express these coefficients as a linear combination of the terms $L_i(Z_j f_1, f_2)$ which, by our choice of $Z_j$, is equal to $L(Z_j f_1, f_2)$. Thus, these coefficients are continuous functionals of $\varphi_1\otimes \varphi_2$; taking now $\check Z=1$, we obtain the same conclusion for the functionals $L_i$.

\end{proof}

\begin{example}\label{torus-RTF}
 Applying this theorem to the inner product of two asymptotically finite functions of the form $\Sigma\Phi$ as in Example \ref{torus}, we obtain the spectral decomposition of the relative trace formula of \cite[Theorem 5.3.1]{SaBE2}.
\end{example}

The bilinear forms $J_{s'}$ can be explicated; the reader can skip this part, which is (quite surprisingly!) involved, and not used elsewhere. Of course, if $s'\ne \pm s_1, \pm s_2$, they are equal to the (bilinear) ``inner product'' of the (regularized) projections of $\varphi_1, \varphi_2$ to the discrete automorphic representation that is spanned by residues of Eisenstein series at those points. To see that $J_{s'}$ factors through these projections is easy, by the equality 
$$ \Res_{s=s'}^- \left< \check f_1(s), \widecheck{\varphi_{2,B}}(-s)\right>_{\pi_s} = \left<\Res_{s=s'} \mathcal E f_1(s), \varphi_2\right>^*.$$
To show that it is equal to the inner product of these projections requires an inductive argument as in the proof of the theorem, s.\ the proof of \cite[V.3.2.(3).(ii)]{MW}. Since we are not developing $L^2$-theory here, we will avoid talking about orthogonal projections, and define a bilinear pairing on the residual representation as follows:

\begin{definition}\label{defdiscrete}
 Let $s'$ be a pole for $M(s)$ (equivalently, for the Eisenstein operator at $\pi_s$). Let $\pi_{s'}^\disc$ be the image of $\Res_{s=s'} M(s)$, so we have maps
 \begin{equation}\label{discretequot}
  \pi_{s'} \overset{\Res_{s=s'} M(s)}\twoheadrightarrow \pi_{s'}^\disc \hookrightarrow \pi_{-s'}.
 \end{equation}
 We define:
 \begin{itemize}
  \item A non-degenerate pairing $\pi_{s'}^\disc \hat\otimes \pi_{s'}^\disc \to \CC$ by 
  \begin{equation}\label{pairingdiscrete}\left<v_1, v_2\right>_{\pi_{s'}^\disc} = \left<\tilde v_1, v_2\right>_{\pi_{s'}} = \left<v_1, \tilde v_2\right>_{\pi_{-s'}},
  \end{equation}
  were $\tilde v_i$ is a preimage of $v_i$ under \eqref{discretequot}.
  \item When $\Phi\in \mathcal S(H(\adele))$, an identification of the operator $\pi_{s'}^\disc(\Phi dg)$ as an element of $\pi_{s'}^\disc\hat\otimes \pi_{s'}^\disc$ by considering the image of $\pi_{s'}(\Phi dg) \in \pi_{s'}\hat\otimes \pi_{-s'}$ under $\Res_{s=s'} M_1(s)$ or, equivalently, the image of $\pi_{-s'}(\Phi dg) \in \pi_{-s'}\hat\otimes \pi_{s'}$ under $\Res_{s=s'} M_2(s)$.
 \end{itemize}
\end{definition}

It turns out that explicating the forms $J_{s'}$ in terms of $\varphi_1, \varphi_2$ is not quite straightforward, and does not admit a uniform, general answer, because of possible coincidences between the exponents $s_1, s_2$ and the Eisenstein poles (or their opposites). The following theorem describes the answer, but before we formulate it, we must decompose the representations $\pi_s$ in terms of idele class characters $\chi$ of $[A]$. All our notation will be adjusted by replacing $s$ by $\chi$, and referring to the corresponding eigenspaces of $\pi_s$. The forms $J_s$ decompose accordingly. Residues are taken with respect to the same variable, i.e., $\Res_{\chi = \chi'}$ means the residue at $s=0$ as $\chi$ varies in the family $\chi'\delta^{\frac{s}{2}}$.

Here is the only point where we will use the meromorphic continuation of Mellin transforms of constant terms to $\Re(s)>0$; the result will not be used anywhere else. Moreover, the case where $\Re(s_1)=0$ or $\Re(s_2)=0$ is left to the reader. I remind that $\varphi\mapsto \varphi^\dagger$ denotes the asymptotics morphism of \eqref{short-SsH}.

\begin{theorem}\label{explicit-rankone}
 \begin{enumerate}
  \item Assume that $\chi_1\ne \chi_2$ and $\chi_1\ne$ a positive Eisenstein pole, $\Re(\chi_1)>0$. In that case, for $\chi' = \chi_1$ we have 
  \begin{equation}  J_{\chi'} (\varphi_1, \varphi_2) = \left< \mathcal E(\varphi_1^\dagger), \varphi_2 \right>.\end{equation} 
  The analogous formula holds for $\chi'=\chi_2$, if we interchange the indices $1$ and $2$ in both the hypotheses and the conclusion.
  \item Assume that $\chi_1 = \chi_2$, but $\ne$ a positive Eisenstein pole, with $\Re(\chi_i)>0$. Then the contribution of $\chi' = \chi_1 = \chi_2$ reads
  \begin{equation}
    J_{\chi'} (\varphi_1, \varphi_2) =  \Res_{\chi = \chi'} \left< \widecheck{\varphi_{1,B}}(\chi), \widecheck{\varphi_{2,B}}(\chi^{-1})\right>,
  \end{equation}
unless $\chi'^{-1}$ is a (negative) Eisenstein pole, in which case the intertwining operator $M(\chi)$ vanishes at $\chi=\chi'$, and, considering $M'(\chi') =$ the derivative of $M(\chi)$ at $\chi'$, we have
  \begin{eqnarray} J_{\chi'} (\varphi_1, \varphi_2) =  \Res_{\chi = \chi'} \left< \widecheck{\varphi_{1,B}}(\chi), \widecheck{\varphi_{2,B}}(\chi^{-1})\right> \nonumber \\ -\left<\Res^+_{\chi = \chi'} \widecheck{\varphi_{1,B}}(\chi), M'(\chi') \Res^+_{\chi = \chi'} \widecheck{\varphi_{2,B}}(\chi)\right>.
  \end{eqnarray}
 \item Assume that $\chi'=$ a positive Eisenstein pole, $\chi' \ne \chi_1, \chi_2$. Then 
   \begin{equation}\label{ipdisc}   J_{\chi'} (\varphi_1, \varphi_2) = \left<\Res^-_{\chi = \chi'} \widecheck{\varphi_{1,B}}(\chi^{-1}), \Res^-_{\chi = \chi'} \widecheck{\varphi_{2,B}}(\chi^{-1})\right>_{\pi'},
   \end{equation}
   where $\pi'\subset I(\chi'^{-1})$ is the image of $\Res_{\chi = \chi'}M(\chi)$, and $\left<\,\, , \,\, \right>_{\pi'}$ is the ``inner product'' on $\pi'$ given by Definition \ref{defdiscrete}. 
 \item Finally, assume that $\chi' = \chi_1$ or $\chi' = \chi_2$ is a positive Eisenstein pole. Then 
 \begin{equation}
  J_\chi' (\varphi_1, \varphi_2) = \Res_{\chi = \chi'} \left< \widecheck{\varphi_{1,B}}(\chi), \widecheck{\varphi_{2,B}}(\chi^{-1})\right>.
 \end{equation}
 \end{enumerate}

\end{theorem}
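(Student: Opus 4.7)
The plan is to reduce to the case of pseudo-Eisenstein series $\varphi_i = \Psi f_i$ with $f_i \in \mathcal{S}^+_{\chi_i}([H]_B)$. By Theorem \ref{Plancherelrankone}, each $J_{\chi'}$ is a continuous bilinear form on $\mathcal{S}^+_{\chi_1}([H]) \hat\otimes \mathcal{S}^+_{\chi_2}([H])$, and by Proposition \ref{Psiprop-s} the pseudo-Eisenstein series are dense, so it suffices to verify (1)--(4) on this subspace. There, the identity $\widecheck{\varphi_{i,B}}(\chi) = \check f_i(\chi) + M(\chi^{-1})\check f_i(\chi^{-1})$ from \eqref{decompct} makes the pole structure of the integrand in \eqref{Plancherel-onaxis-prelim} completely explicit, and the a priori fact that the answer depends only on the $\varphi_i$ serves as a consistency check throughout.

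For case (1), $\chi' = \chi_1$ is not an Eisenstein pole and $\chi_1 \neq \chi_2$, so the only pole of $\langle \check f_1(\chi), \widecheck{\varphi_{2,B}}(\chi^{-1})\rangle_{\pi_\chi}$ at $\chi'$ is the simple, negatively charged pole of $\check f_1$, while $\widecheck{\varphi_{2,B}}(\chi'^{-1})$ is regular there. Taking the residue yields $\langle f_1^\dagger, \widecheck{\varphi_{2,B}}(\chi'^{-1})\rangle_{\pi_{\chi'}}$, and the (regularized) adjunction between the Eisenstein operator $\mathcal E$ and the constant term, dual to \eqref{constantterm-Eisenstein}, identifies this with $\langle \mathcal E(\varphi_1^\dagger), \varphi_2\rangle$.

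For cases (3) and (4), the negatively charged polar part of $\widecheck{\varphi_{i,B}}(\chi)$ at an Eisenstein pole $\chi'$ arises only from the term $M(\chi^{-1})\check f_i(\chi^{-1})$; by Proposition \ref{constantterm-symmetry} its residue lies in $\pi_{\chi'}^{\disc} \hookrightarrow \pi_{\chi'^{-1}}$ and coincides with the image of $\widecheck{\varphi_{i,B}}(\chi'^{-1})$ under the projection $\pi_{\chi'} \twoheadrightarrow \pi_{\chi'}^{\disc}$ of Definition \ref{defdiscrete}. Pairing the two residues via the pairing \eqref{pairingdiscrete} yields \eqref{ipdisc}. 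Case (4) is the degenerate coincidence in which $\chi'$ is simultaneously an exponent and an Eisenstein pole: the resulting double pole packages both contributions into the single expression $\operatorname{Res}_{\chi = \chi'}\langle\widecheck{\varphi_{1,B}}(\chi), \widecheck{\varphi_{2,B}}(\chi^{-1})\rangle$.

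Case (2) will be the main obstacle. Here $\chi' = \chi_1 = \chi_2$ is not an Eisenstein pole, so the two simple, negatively charged poles of the integrand at $\chi'$ (one from $\check f_1(\chi)$ and, after substitution, one from $\check f_2(\chi^{-1})$) combine into a simple residue of the product. When additionally $\chi'^{-1}$ is a negative Eisenstein pole, the functional equation $M(\chi)M(\chi^{-1}) = \operatorname{Id}$ forces $M(\chi)$ to vanish at $\chi'$, and the factor $M(\chi^{-1})\check f_i(\chi^{-1})$ appearing in $\widecheck{\varphi_{i,B}}(\chi^{-1})$ must be Taylor-expanded to first order at $\chi'$; the linear term produces the operator $M'(\chi')$ and accounts for the correction summand in the stated formula. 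The combinatorial bookkeeping of the $+$- and $-$-charges under Proposition \ref{constantterm-symmetry} is delicate but mechanical, and the continuity of $J_{\chi'}$ from Theorem \ref{Plancherelrankone} together with density then extend the formulas to arbitrary $\varphi_i \in \mathcal{S}^+_{\chi_i}([H])$.
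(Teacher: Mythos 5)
Your overall strategy---reduce to pseudo-Eisenstein series by density and continuity, substitute $\widecheck{\varphi_{i,B}}(\chi)=\check f_i(\chi)+M(\chi^{-1})\check f_i(\chi^{-1})$, and read off the residues case by case---is exactly the one the paper takes, and your treatment of cases (1), (3), (4) is essentially right in substance (modulo a few $\chi\leftrightarrow\chi^{-1}$ slips: the factor $M(\chi^{-1})\check f_i(\chi^{-1})$ lives in $\widecheck{\varphi_{i,B}}(\chi)$, not in $\widecheck{\varphi_{i,B}}(\chi^{-1})=\check f_i(\chi^{-1})+M(\chi)\check f_i(\chi)$, and the appeal to Proposition \ref{constantterm-symmetry} in case (3) is not actually needed---the fact that $(\Res_{\chi=\chi'}M(\chi))\check f_2(\chi')$ lies in $\pi_{\chi'}^{\disc}$ is immediate from Definition \ref{defdiscrete}).

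The genuine gap is in case (2). The paper's computation rests on a step you never take: expanding $\left<\widecheck{\varphi_{1,B}}(\chi),\widecheck{\varphi_{2,B}}(\chi^{-1})\right>$ into four cross terms
\[
\text{(I)}=\left<\check f_1(\chi),\check f_2(\chi^{-1})\right>,\quad
\text{(II)}=\left<M(\chi^{-1})\check f_1(\chi^{-1}),\check f_2(\chi^{-1})\right>,
\]
\[
\text{(III)}=\left<\check f_1(\chi),M(\chi)\check f_2(\chi)\right>,\quad
\text{(IV)}=\left<M(\chi^{-1})\check f_1(\chi^{-1}),M(\chi)\check f_2(\chi)\right>,
\]
and observing that the discrete summand $J_{\chi'}$ in \eqref{Plancherel-onaxis-prelim} is the residue at $\chi'$ of the \emph{partial} sum (I)+(III)+(IV)---not of all four. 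Thus
\[
J_{\chi'}(\varphi_1,\varphi_2)=\Res_{\chi=\chi'}\left<\widecheck{\varphi_{1,B}}(\chi),\widecheck{\varphi_{2,B}}(\chi^{-1})\right>-\Res_{\chi=\chi'}\text{(II)},
\]
and the whole content of case (2) is to recognize the correction term as $\Res_{\chi=\chi'}\text{(II)}$, which is nonzero precisely when $M(\chi^{-1})$ has a pole at $\chi'$, i.e.\ when $\chi'^{-1}$ is a negative Eisenstein pole. The operator $M'(\chi')$ then enters not through any Taylor expansion you perform by hand, but through the identity $M'(\chi')\,\Res_{\chi=\chi'}M(\chi^{-1})=\Id$ (differentiate $M(\chi)M(\chi^{-1})=\Id$), which lets one rewrite $\Res\,\text{(II)}=\left<(\Res M(\chi^{-1}))\check f_1(\chi'^{-1}),\check f_2(\chi'^{-1})\right>$ in the symmetric form $\left<\Res^+_{\chi=\chi'}\widecheck{\varphi_{1,B}}(\chi),M'(\chi')\Res^+_{\chi=\chi'}\widecheck{\varphi_{2,B}}(\chi)\right>$.

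Your proposed mechanism---``Taylor-expand $M(\chi^{-1})\check f_i(\chi^{-1})$ appearing in $\widecheck{\varphi_{i,B}}(\chi^{-1})$''---is both misattributed (that factor is not in $\widecheck{\varphi_{i,B}}(\chi^{-1})$) and, more fundamentally, does not produce the subtraction structure of the answer. The Taylor expansion of the regular product $M(\chi)\check f_2(\chi)$ is indeed present in the residue computation, but it appears with the same weight in both $J_{\chi'}$ and the total residue, so it is not what generates the correction. Without the add-and-subtract-(II) step, the closing remark that the bookkeeping is ``delicate but mechanical'' is not a proof; it is precisely the place where the argument must be supplied.
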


\begin{proof}
It is enough to establish these claims for $\varphi_1$, $\varphi_2$ in the dense subspace spanned by pseudo-Eisenstein series, so let $\varphi_i = \Psi f_i$, $i=1,2$. 

First, it will be helpful to compute the expression 
$$ \Res_{\chi = \chi'} \left< \widecheck{\varphi_{1,B}}(\chi), \widecheck{\varphi_{2,B}}(\chi^{-1})\right>.$$ 
By \eqref{constantterm-Psi}, it is equal to 
$$ \Res_{\chi = \chi'} \left(\left< \check f_1(\chi), \check f_2(\chi^{-1})\right> + \left<M(\chi^{-1})\check f_1(\chi^{-1}), \check f_2(\chi^{-1})\right> + \right.$$
$$ \left. + \left< \check f_1(\chi), M(\chi) \check f_2(\chi)\right> + \left< M(\chi^{-1})\check f_1(\chi^{-1}), M(\chi)\check f_2(\chi)\right>     \right).$$

We number the terms, as they appear, by (I)--(IV). 

Assume first that $\chi'=\chi_1 \ne \chi_2$, $\chi_1>0$ and $\ne$ to a positive Eisenstein pole. Then the third line of \eqref{Plancherel-onaxis-prelim} is the residue of the sum of terms (I) \& (III), and can be written as $\left<\varphi_1^\dagger, \widecheck{\varphi_{2, B}}(\chi_1^{-1})\right> = \left<\mathcal E\varphi_1^\dagger, \varphi_2\right>$. By symmetry (or direct reasoning, using the second line of \eqref{Plancherel-onaxis-prelim}), the analogous result holds if the indices $1$ and $2$ are interchanged.

Next, assume that $\chi'=\chi_1=\chi_2 >0$ and $\ne$ to a positive Eisenstein pole. Then the second line of \eqref{Plancherel-onaxis-prelim} is the residue of the sum of terms (I), (III), and (IV). We will express the residue of the second term in terms of $\varphi_1$ and $\varphi_2$: we claim that it is equal to 
$$\left<\Res^+_{\chi = \chi_2} \widecheck{\varphi_{1,B}}(\chi), M'(\chi_2)\Res^+_{\chi = \chi_2} \widecheck{\varphi_{2,B}}(\chi)\right>$$
(an expression which is symmetric in $\varphi_1$ and $\varphi_2$). Notice, first of all, that the term (II) is holomorphic at $\chi = \chi_2$, unless $\chi_2^{-1}$ is a (negative) pole of the intertwining operator (equivalently, the Eisenstein series). We can write its residue as 
$$\left<(\Res_{\chi = \chi_2} M(\chi^{-1}))\check f_1(\chi_2^{-1}), \check f_2(\chi_2^{-1})\right>  = $$
$$ = \left<(\Res_{\chi = \chi_2} M(\chi^{-1}))\check f_1(\chi_2^{-1}), M'(\chi_2) (\Res_{\chi = \chi_2} M(\chi^{-1}))\check f_2(\chi_2^{-1})\right>,$$
and the claim follows.

Next, assume that $\chi'=$ a positive Eisenstein pole, $\chi'\ne \chi_1, \chi_2$. 
Then the third line of \eqref{Plancherel-onaxis-prelim} is equal to the residue of the term (III), and can also be written as 
$$\left< \check f_1(\chi'), (\Res_{\chi = \chi'} M(\chi)) \check f_2(\chi')\right>.$$
The second term in this bilinear pairing is equal to $\Res^-_{\chi = \chi'} \widecheck{\varphi_{2,B}}(\chi^{-1})$, and the pairing depends only on the image of $\check f_1(\chi')$ via $(\Res_{\chi = \chi'} M(\chi))$, which is equal to $\Res^-_{\chi = \chi'} \widecheck{\varphi_{1,B}}(\chi^{-1})$. It can thus be written as 
$$ \left<\Res^-_{\chi = \chi'} \widecheck{\varphi_{1,B}}(\chi^{-1}), \Res^-_{\chi = \chi'} \widecheck{\varphi_{2,B}}(\chi^{-1})\right>_{\pi'},$$
where $\pi'$ is the image of $\Res^-_{\chi = \chi'}M(\chi)$.

Next, assume that $\chi' =\chi_1 \ne \chi_2$ is a positive Eisenstein pole. Then $\chi_2 \ne \chi'^{-1}$ (because $\chi_2 \ne \chi_1^{-1}$, by assumption), and 
the third line of \eqref{Plancherel-onaxis-prelim} is equal to the residue of the sum of terms (I) and (III). On the other hand, 
 since $\chi'$ is a pole of the Eisenstein series (and the intertwining operator), $\chi'^{-1}$ is not. Moreover, since $\chi' \ne \chi_1^{-1}, \chi_2, \chi_2^{-1}$, the terms (II), (IV) are holomorphic at $\chi'$, thus the contribution of $\chi'$ is equal to $\Res_{\chi = \chi'} \left< \widecheck{\varphi_{1,B}}(\chi), \widecheck{\varphi_{2,B}}(\chi^{-1})\right>$. By symmetry (or direct reasoning, using the second line of \eqref{Plancherel-onaxis-prelim}), the same holds if $\chi'=\chi_2 \ne \chi_1$ is a positive Eisenstein pole.
 
Finally, consider the case $\chi' = \chi_1 = \chi_2$, a positive Eisenstein pole. Then the second line of \eqref{Plancherel-onaxis-prelim} is equal to the residue of the sum of the terms (I), (III), and (IV), but the term (II) has no pole, so again it can be written in the same way.

\end{proof}

\section{Rank two}

\subsection{Kernel functions and their constant terms}

Recall that by ``asympotically finite functions'' on $[G]$ we mean asymptotically finite with respect to the ``diagonal'' partial compatification $\overline{[G]}^D$, i.e.\ elements of the space $\mathcal S^+_s([G])$ of Definition \ref{asymptfinite-G} -- and similarly for $[G]_B$. The theory can be developed more generally, but we will not need it for the purposes of the trace formula. 

Recall that we have a short exact sequence
$$ 0 \to \mathcal S([G]) \to \mathcal S^+_s([G]) \to \Pi_s \to 0.$$
where $\Pi_s = \mathcal S(A^\diag\backslash [G]_B,\delta^{\frac{s}{2}})$.

Let now $G=\PGL_2\times \PGL_2$, acting by left and right multiplication on $H=\PGL_2$. For $\Phi\in\mathcal S(H(\adele))$ we set:
$$ K_{\Phi}(g_1,g_2):= \Sigma\Phi(g_1,g_2) := \sum_{\gamma\in H(k)} \Phi(g_1^{-1}\gamma g_2) \in C^\infty([G]).$$
This is of course the kernel of the convolution operator defined by $\Phi$ on automorphic forms; however, since we will not directly compute/regularize its trace, we prefer to think of it as a theta series, an automorphic function on $[G]$, and try to compute/regularize the inner product of two such (which of course is the trace of the convolution of two operators), 

\begin{proposition}\label{propasfinite}
The above map $\Phi\mapsto K_\Phi$ defines a continuous map
$$\mathcal S(H(\adele)) \to \mathcal S^+_0([G]),$$
where on the right we have the space of asymptotically finite functions with respect to the ``diagonal'' embedding $\overline{[G]}^D$ introduced in \S \ref{ssSchwartz}, with trivial (normalized) exponent for the $A^\diag$-action.
\end{proposition}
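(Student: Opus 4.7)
The strategy parallels Examples \ref{HmodN} and \ref{torus}: $\Sigma:\Phi\mapsto K_\Phi$ is the theta-series map attached to the $G$-variety $X=H=H^\diag\backslash G$, and continuity into $C^\infty([G])$ of moderate growth is standard (cf.\ \cite{BeKr}). To upgrade this to $\mathcal S^+_0([G])$, I would apply the approximation-by-constant-term theorem (Theorem \ref{approxconstantterm}) for $G$, reducing the problem to computing the constant terms $K_{\Phi,P}$ along the three proper parabolic classes $P\in\{B\times H,\,H\times B,\,B\times B\}$ and showing that on a Siegel set $\mathscr S$ of the corner $\infty_B\times\infty_B$, the alternating combination
\[
 K_{\Phi,B\times H}+K_{\Phi,H\times B}-K_{\Phi,B\times B}
\]
equals some $K_\Phi^\dagger\in\Pi_0=\mathcal S(A^\diag\backslash[G]_B)$ modulo a Schwartz remainder. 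Away from the corner, Schwartz behavior of $K_\Phi$ itself is a standard consequence of the discreteness of $H(k)$ combined with Schwartzness of $\Phi$: as $g_1\to\infty_B$ with $g_2$ in a compact set (or vice versa), the lattice points $\gamma\in H(k)$ contributing to $\Phi(g_1^{-1}\gamma g_2)$ must lie in a translate of $\operatorname{supp}\Phi$ which a direct Iwasawa estimate shows is eventually disjoint from $H(k)$, giving rapid decay on the edges.

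The constant terms are computed via the Bruhat decomposition $H(k)=B(k)\sqcup B(k)wB(k)$, unfolding the sums over $N(k)$ in the Levi decompositions $B=AN$ and $BwB=NwAN$ against the $[N]$-integrals defining the constant terms. For $K_{\Phi,B\times B}$ one finds a small-cell piece
\[
 I_{\mathrm{small}}(g_1,g_2)=\sum_{\alpha\in A(k)}\int_{N(\adele)}\Phi(g_1^{-1}\alpha n g_2)\,dn
\]
and a big-cell piece
\[
 I_{\mathrm{big}}(g_1,g_2)=\sum_{\alpha\in A(k)}\int_{N(\adele)^2}\Phi(g_1^{-1}n_1 w\alpha n_2 g_2)\,dn_1\,dn_2.
\]
A direct change of variables---using the product-formula identity $|\delta(\alpha)|_\adele=1$ for $\alpha\in k^\times$ together with the Jacobian $\delta(a)$ of $N$-conjugation by $a\in A$---gives $I_{\mathrm{small}}(ag_1,ag_2)=\delta(a)I_{\mathrm{small}}(g_1,g_2)$, showing $I_{\mathrm{small}}$ is invariant under the normalized $A^\diag$-action. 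Analogous unfolding for the constant terms along $B\times H$ and $H\times B$ yields small-cell pieces that, after a further application of the same $|\delta(\alpha)|_\adele=1$ substitution, both coincide with $I_{\mathrm{small}}$; hence the alternating sum of small-cell contributions telescopes to $I_{\mathrm{small}}$, and this serves as $K_\Phi^\dagger$.

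The remaining step is to verify the Schwartz bounds, in the spirit of Examples \ref{HmodN} and \ref{torus}. For $I_{\mathrm{small}}\in\Pi_0$, one descends to the affine quotient $\overline{H/N}$---two-dimensional, with $A$ acting by opposite weights on the two ``column'' coordinates: the inner $N(\adele)$-integral of $\Phi$ is Schwartz on $\overline{H/N}(\adele)$, and the theta-style sum over $\alpha\in A(k)\simeq k^\times$ against this $A$-action gives rapid decay in the antidiagonal-$A$ and funnel directions of $A^\diag\backslash[G]_B$. The big-cell contributions are handled via the GIT quotient $H\sslash(N\times N)\simeq\mathbb A^1$ (generated by a suitable $N\times N$-invariant built from the $(2,1)$-entry): the double $N$-integral of $\Phi$ is Schwartz on $\mathbb A^1(\adele)$, and the $A(k)$-sum of its values at nonzero rational points (where $A\times A$ acts with positive weight) yields Schwartz functions on the respective boundary degenerations; the alternating sum of the three big-cell pieces is then also Schwartz on the Siegel neighborhood. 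The main obstacle I foresee is this Schwartz bookkeeping in several $A$-directions simultaneously---identifying the GIT quotients, their torus weights, and the image of $\operatorname{supp}\Phi$ under the various affine quotient maps---but each ingredient is a direct analog of the arguments in Examples \ref{HmodN} and \ref{torus}; continuity in $\Phi$ then follows from the continuity of push-forward, theta summation, Bruhat unfolding, and the approximation-by-constant-term theorem.
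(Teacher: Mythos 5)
Your overall strategy coincides with the paper's: compute the constant terms of $K_\Phi$ along the three proper parabolics via the Bruhat decomposition, identify the asymptote $K_\Phi^\dagger = K_{\Phi,B}\in\Pi_0$ from the small Bruhat cell, verify rapid decay of the remaining pieces, and conclude via Theorem \ref{approxconstantterm}. Your computation of $I_{\mathrm{small}}$ agrees (up to the usual substitution $n\alpha\leftrightarrow\alpha n$, Jacobian trivial since $\alpha$ is rational) with the paper's $K_{\Phi,B}$ in Proposition \ref{constantterm-kernel}, and the GIT-quotient analysis of big-cell decay mirrors Examples \ref{HmodN} and \ref{torus} and the Radon-transform analysis in Propositions \ref{Radon-s} and \ref{constantterm-kernel}.

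There are two organizational differences worth flagging, both minor but one of them sitting near a genuine pitfall. First, instead of verifying the alternating sum $\varphi_{B\times H}+\varphi_{H\times B}-\varphi_{B\times B}$ and arguing a telescope of small cells, the paper treats each parabolic separately: it observes directly that $\varphi_{B\times H}$ and $\varphi_{H\times B}$ are of rapid decay as the first (resp.\ second) variable approaches $\infty_B$, that $\varphi_{B\times B}=K_{\Phi,B}+R_{1/2}K_{\Phi,B}$ with $R_{1/2}K_{\Phi,B}$ rapidly decaying near $\infty_B\times\infty_B$, and then lets Theorem \ref{approxconstantterm} (with the picture of $U_A,U_B,U_C$ from the Remark) do the bookkeeping. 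Your telescoping argument is correct but reconstructs part of what the theorem already encodes. Second, your ``direct Iwasawa estimate'' for Schwartz decay of $K_\Phi$ on the edges (first variable to the cusp, second bounded) is more fragile than you suggest: the phrase ``eventually disjoint from $H(k)$'' is false at the Archimedean places, where $\mathrm{supp}\,\Phi$ is not compact, and even at finite places the point is rapid decay rather than eventual vanishing. The clean way — and the paper's way — to control the edges is precisely to use the constant terms along $B\times H$ and $H\times B$: on the edge Siegel region, Theorem \ref{approxconstantterm} says $K_\Phi$ is approximated by $\varphi_{B\times H}$ (resp.\ $\varphi_{H\times B}$), whose rapid decay is then established via the theta series on $N\backslash H$ in one variable. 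Note also that once you know $\varphi_{B\times H}+\varphi_{H\times B}-\varphi_{B\times B}=K_{\Phi,B}+$ (Schwartz on $\mathscr S$), the edge decay follows automatically since $K_{\Phi,B}\in\Pi_0$ already decays transversely to $A^\diag$; so your separate direct estimate is both unneeded and the weakest step. Finally, your remark that the big-cell pieces of the $B\times H$ and $H\times B$ constant terms are handled ``via the GIT quotient $H\sslash(N\times N)$'' needs adjusting, since those pieces involve a single $N(\adele)$-integral with a residual $N(k)\times A(k)$ sum, so the relevant quotient there is $H\sslash(N\times 1)$ (or the affine closure of $N\backslash H$); the Schwartz estimate still goes through for the same reasons, but the invariant-theory bookkeeping is different from the $B\times B$ case.
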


\begin{proof}
We compute constant terms. Consider a proper parabolic $P=P_1\times P_2$ of $G$. Of course, each of $P_1$ and $P_2$ is either $H$, or the Borel $B$ of $H$. 
Consider first the case $P=B\times H$. Then, for $a\in B/N$,
$$ (\Sigma\Phi)_P(a,g) = \sum_{\gamma\in N\backslash H(k)} \varphi(a,\gamma g),$$
where $\varphi(a, g) = \int_{N(\adele)} \Phi(a^{-1}ng) dn$. 
When $a$ approaches the cusp, i.e., $\delta(a)\to \infty$, this is of rapid decay. 

Proposition \ref{constantterm-kernel}, that follows, calculates the constant term of $K_\Phi$ with respect to $P=B\times B$, as a sum of two terms $K_{\Phi,B}$ and $R_{1/2}K_{\Phi,B}$, with $K_{\Phi,B}\in \Pi_0=\mathcal S(A^\diag\backslash [G], \delta^0)$.  In a neighborhood of the $\infty_B\times \infty_B$-cusp, the term $R_{1/2}K_{\Phi,B}$ is of rapid decay. By Theorem \ref{approxconstantterm}, the stated description of $K_\Phi$ follows.
\end{proof}

Before we state the description of the $B\times B$-constant term that was used in the last proof, we need fix our notation for Mellin transforms on $[G]_B$. We have an action of $A\times A$ on $C^\infty([G]_B)$ (normalized as in \eqref{action-normalized-H}), and its eigenspaces are the spaces $\pi_{s_1} \hat\otimes \pi_{s_2}$, where $\pi_s = C^\infty(A\backslash[H],\delta^\frac{s}{2})$, as before. Identifying $A\simeq \RR^\times$ as in rank one (namely, through the character $\delta^\frac{1}{2}$), the parameters $s_1, s_2$ live in $\mathfrak a^*_\CC = \CC$. The Mellin transform of an element $f\in \mathcal S([G]_B)$ is the entire section $(s_1,s_2)\mapsto \check f(s_1,s_2) \in \pi_{s_1} \hat\otimes \pi_{s_2}$, given by 
\begin{equation}\label{Mellin-GB}
 \check f(s_1,s_2) (g)= \int_{A\times A} f((a_1,a_2)g) \delta^{-\frac{1+s_1}{2}}(a_1) \delta^{-\frac{1+s_2}{2}}(a_2) d(a_1,a_2).
\end{equation}

We will also apply the transform to functions that are invariant under $A^\diag$ or $A^\adiag$ (under the normalized action). In those cases, it obviously doesn't make sense as a function of two variables $(s_1,s_2)$, but it makes sense as a section over $(\mathfrak a^\diag)^\perp_\CC = \mathfrak a_\CC^{*,\adiag}$, resp.\ $(\mathfrak a^\adiag)^\perp_\CC = \mathfrak a_\CC^{*,\diag}$; for example, if the function $f$ is invariant under $A^\adiag$, we will have
\begin{equation}\label{Mellin-GB-adiag}
 \check f(s) (g)= \int_{A} f((a,1)g) \delta^{-\frac{1+s}{2}}(a) da.
\end{equation}
Strictly speaking, this should be interpreted (and denoted) as a generalized function on $\mathfrak a^*_\CC\times \mathfrak a^*_\CC$, supported on the diagonal; however, it will always be clear from the context whether we are considering the Mellin transform of a general function on $[G]_B$ (which is a function on $\mathfrak a^*_\CC\times \mathfrak a^*_\CC$) or of an $A^\adiag$- (or $A^\diag$-)invariant function (in which case it will be denoted as a function of one variable).

Finally, we fix identifications $(\mathfrak a^\diag)^\perp_\CC = \mathfrak a_\CC^*$ and $(\mathfrak a^\diag)^\perp_\CC = \mathfrak a_\CC^*$. For the former, there is a natural choice, namely the diagonal embedding $\mathfrak a_\CC^* = \mathfrak a^{*,\diag}_\CC = (\mathfrak a^\adiag)^\perp_\CC \subset \mathfrak a_\CC^* \times \mathfrak a_\CC^*$. For the latter, we use the restriction of a character to the \emph{first} copy in order to identify with $\mathfrak a^*_\CC$, that is: 
$\mathfrak a_\CC^* \ni s \mapsto (s, -s) \in \mathfrak a^{*,\adiag}_\CC = (\mathfrak a^\adiag)^\perp_\CC \subset \mathfrak a_\CC^* \times \mathfrak a_\CC^*.$
Hence, for a function $F$ on $[G]_B$ which is invariant under $A^\adiag$, its Mellin transform $\check F(s)$ belongs to $\pi_s \hat\otimes \pi_s$, while if the function is invariant under (the normalized action of) $A^\diag$, its Mellin transform $\check F(s)$ belongs to $\pi_s \hat\otimes \pi_{-s}$.

\begin{proposition}\label{constantterm-kernel}
For the constant term of $K_\Phi$ we have
\begin{equation}\label{diagplusadiag}
(K_\Phi)_B = K_{\Phi,B} + R_{1/2}K_{\Phi,B},
\end{equation}
where 
$$ K_{\Phi,B}(g_1,g_2)  = \sum_{\alpha\in A(k)} \int_{N(\adele)} \Phi(g_1^{-1} n \alpha g_2 ) dn$$
is the kernel for the convolution action of $\Phi dh$ on $L^2([H]_B)$, while $R_{1/2} K_{\Phi,B}$ is its Radon transform \eqref{Radon} \emph{in either of the two variables}:
$$ R_{1/2}K_{\Phi,B}(g_1,g_2) = \sum_{\alpha\in A(k)} \int_{N^2(\adele)} \Phi(g_1^{-1} n_1^{-1} w \alpha n_2 g_2) d(n_1,n_2).$$
The map $\Phi\mapsto K_{\Phi,B}$ represents a continuous morphism:
$$\mathcal S(H(\adele))\to \Pi_0=\mathcal S(A^\diag\backslash [G], \delta^0),$$
and the map $\Phi\mapsto R_{1/2}K_{\Phi,B}$ represents a continuous morphism into the subspace of 
$$ C^\infty(A^\adiag\backslash [G])$$
consisting of those functions which in a neighborhood of the cusp in $A^\adiag\backslash [G]$ are of rapid decay (together with their derivatives), and are bounded by elements of $\pi_{-1-\epsilon}\otimes\pi_{-1-\epsilon}$, for any $\epsilon>0$ (endowed with its natural topology as a countable strict limit of Fr\'echet spaces).

Therefore, the Mellin transform of $K_{\Phi,B}$ is an entire section over $(\mathfrak a^\diag)^\perp_\CC = \mathfrak a_\CC^{*,\adiag}$, while the Mellin transform of $R_{1/2}K_{\Phi,B}$, as a section over $(\mathfrak a^\adiag)^\perp_\CC = \mathfrak a_\CC^{*,\diag}$,  is defined by the convergent integral \eqref{Mellin-GB-adiag} (and is holomorphic) when $\Re(s)<-1$ ($s\in \mathfrak a_\CC^{*,\diag}$). The latter admits meromorphic continuation to the region $\Re(s)\le 0$, with poles only the opposites of the (simple) poles of Eisenstein series for $\Re(s)>0$.

Finally, the Mellin transforms of the above functions have the following invariant properties with respect to intertwining operators:
\begin{equation}\label{diagtodiag}
\widecheck{K_{\Phi,B}}(s) = M_1(-s)M_2(s) \widecheck{K_{\Phi,B}}(-s),
\end{equation}
\begin{equation}\label{adiagtoadiag}
\widecheck{R_{1/2}K_{\Phi,B}}(s) = M_1(-s)M_2(-s) \widecheck{R_{1/2}K_{\Phi,B}}(-s),
\end{equation}
and
\begin{equation}\label{diagtoadiag}
\widecheck{R_{1/2}K_{\Phi,B}}(s) = M_1(-s) \widecheck{K_{\Phi,B}}(-s) = M_2(-s) \widecheck{K_{\Phi,B}}(s),
\end{equation}
where $M_1$ and $M_2$ denote the standard intertwining operators in the first, resp.\ second variable.
\end{proposition}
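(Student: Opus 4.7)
The plan is to prove the decomposition $(K_\Phi)_B = K_{\Phi,B} + R_{1/2}K_{\Phi,B}$ by direct unfolding, then verify the equivariance, continuity, and intertwining properties in separate steps. First, I write
\[
(K_\Phi)_B(g_1,g_2) = \int_{[N]\times[N]} \sum_{\gamma\in H(k)} \Phi(g_1^{-1}n_1^{-1}\gamma n_2 g_2)\, d(n_1,n_2),
\]
and split $H(k)$ into $N(k)\times N(k)$-orbits under $(n_1,n_2)\cdot\gamma = n_1^{-1}\gamma n_2$. By the Bruhat decomposition, the orbits are parameterized by $A(k)$ in each of the two Bruhat cells: the small cells $N(k)\alpha N(k)$ (equal to $N(k)\alpha$, with stabilizer the diagonally embedded $N(k)$), and the big cells $N(k)w\alpha N(k)$ (with trivial stabilizer). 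In the small-cell contribution, unfolding one copy of $[N]$ against the sum produces the adelic integral $\int_{N(\adele)}$, and the remaining $[N]$-integration trivializes because $\alpha$ normalizes $N$; this yields $K_{\Phi,B}(g_1,g_2)$. In the big-cell contribution, both $[N]$-integrations unfold against the double sum, yielding $R_{1/2}K_{\Phi,B}(g_1,g_2)$.

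Next I verify the equivariance. For $K_{\Phi,B}$, I substitute $n \mapsto a^{-1}na$ in its defining integral for $a\in A$; using that $A$ is abelian (so $\alpha$ commutes with $a$) and that $dn$ scales by $\delta(a)$, I find $K_{\Phi,B}(ag_1,ag_2) = \delta(a)K_{\Phi,B}(g_1,g_2)$, which is exactly the $\delta^0$-eigenfunction property under the normalized $A^\diag$-action. For $R_{1/2}K_{\Phi,B}$, I substitute $n_1\mapsto an_1a^{-1}$ and $n_2\mapsto a^{-1}n_2a$ and use the key relation $a^{-1}w = wa$ (which encodes $w^{-1}aw = a^{-1}$ in the universal Cartan); the two Jacobians $\delta(a)^{\pm 1}$ cancel, establishing the $A^\adiag$-invariance. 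Thereafter, I verify that the same adelic integral $R_{1/2}K_{\Phi,B}$ equals $R_2 K_{\Phi,B}$, the rank-one Radon transform applied in the second variable; this is done by taking $R_2 K_{\Phi,B}$, using $\alpha w = w\alpha^{-1}$, and substituting $\alpha\mapsto\alpha^{-1}$ in the sum over $A(k)$ (and similarly for $R_1$).

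Once $R_{1/2} = R_2 = R_1$ is established, the continuity of $\Phi\mapsto K_{\Phi,B}$ into $\Pi_0$ follows from rapid decay of $\Phi$ (both factors of $g_1,g_2$ are Schwartz in $A^\diag$-invariant directions after stripping the $A^\diag$-eigencharacter), while the statements about $R_{1/2}K_{\Phi,B}$—rapid decay near the cusp, boundedness by $\pi_{-1-\epsilon}\otimes\pi_{-1-\epsilon}$ near the funnel, and convergence/meromorphic continuation/pole structure of its Mellin transform—reduce to the rank-one Proposition \ref{Radon-s} applied in one variable while the other is held as a parameter. The identity $\widecheck{R_{1/2}K_{\Phi,B}}(s) = M_2(-s)\widecheck{K_{\Phi,B}}(s)$ (equation \eqref{diagtoadiag}) is exactly the two-variable Mellin version of the rank-one formula $\widecheck{Rf}(s) = M(-s)\check f(-s)$ applied to $R_2$, and the alternative $\widecheck{R_{1/2}K_{\Phi,B}}(s) = M_1(-s)\widecheck{K_{\Phi,B}}(-s)$ follows the same way from the $R_1$ description, taking into account the convention that identifies $\mathfrak a^{*,\adiag}_\CC$ with $\mathfrak a^*_\CC$ via the first factor (so that the Mellin-parameter of the first variable changes sign relative to the second).

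Finally, \eqref{diagtodiag} and \eqref{adiagtoadiag} are formal consequences of the two forms of \eqref{diagtoadiag} together with the functional equation $M(s)M(-s)=\Id$ of Theorem \ref{meromcont}: equating the two expressions for $\widecheck{R_{1/2}K_{\Phi,B}}(s)$ gives $M_1(-s)\widecheck{K_{\Phi,B}}(-s) = M_2(-s)\widecheck{K_{\Phi,B}}(s)$, and applying $M_2(s)$ on the left (and using the commutativity of $M_1$ and $M_2$ since they act on different factors) yields \eqref{diagtodiag}; \eqref{adiagtoadiag} follows by applying \eqref{diagtoadiag} both at $s$ and at $-s$ and then combining. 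The main obstacle is bookkeeping: $K_{\Phi,B}$ is $A^\diag$-invariant so its Mellin transform lives over $\mathfrak a_\CC^{*,\adiag}$, while $R_{1/2}K_{\Phi,B}$ is $A^\adiag$-invariant and lives over $\mathfrak a_\CC^{*,\diag}$, so one must carefully match the two parameterizations for the intertwining relations to make sense.
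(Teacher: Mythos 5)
Your proof is correct and follows essentially the same approach as the paper: Bruhat decomposition of the $H(k)$-sum, identification of the $A^\diag$- and $A^\adiag$-invariant pieces, observation that $R_{1/2}K_{\Phi,B} = R_1 K_{\Phi,B} = R_2 K_{\Phi,B}$, and reduction of the analytic statements to the rank-one Proposition~\ref{Radon-s}. The only point where you genuinely diverge from the paper is in the last step: for \eqref{diagtodiag} the paper invokes the operator-theoretic identification $\widecheck{K_{\Phi,B}}(s) = \pi_s(\Phi\,dg)$ of \eqref{Mellinisconv} together with the equivariance $\pi_s(\Phi\,dg)\circ M(-s) = M(-s)\circ\pi_{-s}(\Phi\,dg)$ and the functional equation \eqref{FE}, and then derives \eqref{adiagtoadiag} from \eqref{diagtodiag} and \eqref{diagtoadiag}; you instead obtain both \eqref{diagtodiag} and \eqref{adiagtoadiag} formally by equating the two forms of \eqref{diagtoadiag} and using \eqref{FE}. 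Both routes are valid and short; the paper's is arguably more conceptual (it exhibits \eqref{diagtodiag} as the equivariance of the convolution operator under intertwining), while yours is self-contained once \eqref{diagtoadiag} is established from the two Radon descriptions. One small imprecision in your unfolding step: the stabilizer of $\alpha\in A(k)$ in $N(k)\times N(k)$ under $(n_1,n_2)\cdot\gamma = n_1^{-1}\gamma n_2$ is not the diagonal but the twisted diagonal $\{(n,\alpha^{-1}n\alpha)\}$; this does not affect the argument since the twist is absorbed by a measure-preserving substitution, but the phrasing should be corrected.
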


\begin{remark}
Although $\delta^0$ is the trivial character, the notation $ \mathcal S(A^\diag\backslash [G], \delta^0)$ is meant to remind that we are talking about $A^\diag$-invariant functions under the \emph{normalized} action \eqref{action-normalized-H}; under the unnormalized action, those would be eigenfunctions with eigencharacter $\delta$. (On the other hand, for $A^\adiag$ the normalized and unnormalized actions coincide, so we can unambiguously write $C^\infty(A^\adiag\backslash [G])$.)
\end{remark}

\begin{remark}
The embedding \begin{equation}\label{HSsubspace} \pi_{s} \hat\otimes \pi_{-s}\to \End(\pi_s)\end{equation}
identifies 
\begin{equation}\label{Mellinisconv}
\widecheck{K_{\Phi,B}}(s) = \pi_s(\Phi dg).
\end{equation}
\end{remark}

The proposition above should be compared with Proposition \ref{Radon-s} and Corollary \ref{constantterm-s}. In the next susbsections we will generalize some of its statements to arbitrary asymptotically finite functions on $[G]$.

\begin{proof}

By the Bruhat decomposition, we have:
\begin{equation} (\Sigma\Phi)_P(a_1,a_2) = \sum_{w\in W} \sum_{\alpha\in A(k)} \int_{N\cap {^wN}\backslash N\times N (\adele)} \Phi(a_1^{-1} n_1^{-1} w \alpha n_2 a_2) d(n_1,n_2),\end{equation}
where $W$ denotes the Weyl group of $H$.

The summand corresponding to $w=1$ varies under $\delta(a)$ with respect to the unnormalized diagonal action of $A$; with respect to the normalized action, this is the trivial character. Otherwise, it is of rapid decay on $[A^\diag\backslash A\times A]$. The summand corresponding to $w=$the non-trivial element of the Weyl group is invariant under ${^{(w,1)}A^\diag}=$ the anti-diagonal action of $A$ (the normalized and unnormalized action coincide for this group), and as $\delta(a)\to \infty$ they it is of rapid decay on $(a,1)$ (or, equivalently, on $(1,a)$).

The rapid decay of $K_{\Phi,B}$ and $R_{1/2}K_{\Phi,B}$ in the stated directions is immediate. Notice that 
\begin{equation}\label{Radon2var}R_{1/2}K_{\Phi,B} = R_1 K_{\Phi,B} = R_2 K_{\Phi,B},\end{equation} where $R_1, R_2$ denote Radon transform in the first, resp.\ second variable. Also, each $H(\adele)\times\{1\}$ and $\{1\}\times H(\adele)$-orbit on $A^\diag\backslash[G]_B$ is isomorphic to $[H]_B$, so that by choosing a ``semi-algebraic'' section of the map $[H]_B\to A\backslash[H]_B$ (as in Observation \ref{observation-Schwartz}), we can think of elements of $\mathcal S(A^\diag\backslash[G]_B)$ as smooth functions on $A\backslash [H]_B$ valued in $\mathcal S([H]_B)$. The properties of $R_{1/2}K_{\Phi,B}$ (and its Mellin transform) now follow from Proposition \ref{Radon-s}. The relation \eqref{diagtoadiag} follows from \eqref{Radon2var}, while \eqref{diagtodiag}, when $\widecheck{K_{\Phi,B}}(s)$ is interpreted as $\pi_s(\Phi dg)$ (s.\ \eqref{Mellinisconv}), is just the statement that $\pi_s(\Phi dg) \circ M(-s) = M(-s) \circ \pi_{-s}(\Phi dg)$, together with the Maa\ss--Selberg relation \eqref{FE}. Finally, \eqref{adiagtoadiag} follows from the other two.
\end{proof}

\subsection{Pseudo-Eisenstein series of asymptotically finite functions}

Let $\mathcal S([G])_{\npr}$ denote the subspace of elements of $\mathcal S([G])$ where some non-trivial constant term (with respect to $B\times H$ or $H\times B$) vanishes, and $\mathcal S^+_s([G])_\pr =  \mathcal S^+_s([G])\cap \mathcal S([G])_{\npr}^\perp$. The index ``$\pr$'' stands for ``principal Eisenstein'', and the index ``$\npr$'' for ``non-principal''.

Notice that, as in rank one, due to Theorem \ref{approxconstantterm} on approximation by the constant term, the integral of an element of $\mathcal S^+_s([G])$ against an element of $\mathcal S([G])_{\npr}$ is absolutely convergent, it makes sense to say that an element of $\mathcal S^+_s([G])$ is orthogonal to $\mathcal S([G])_{\npr}$.

In analogy to Proposition \ref{Psiprop-s} and Lemma \ref{germsinfty}, we have:

\begin{proposition}\label{Psiprop-sG}
 For every $s\in \CC$, there is a direct sum decomposition:
\begin{equation}\label{cuspdecomp-sG}\mathcal S^+_{s}([G]) = \mathcal S^+_s([G])_\pr \oplus \mathcal S([G])_{\npr}.\end{equation}
The pseudo-Eisenstein sum $\Psi f(g) = \sum_{\gamma\in (B\times B)\backslash G(k)} f(\gamma g)$ is convergent on $\mathcal S^+_{s} ([G]_B)$, for any $s\in \CC$, and maps continuously, with dense image, into $\mathcal S^+_{s} ([G])_\pr$. Its composition with the constant term is given by the formula:
\begin{equation}\label{constantterm-Psi-G}
 (\Psi f)_B = f+ R_1 f+ R_2 f + R_1 R_2 f,
\end{equation}
where, again, $R_1$ and $R_2$ denote Radon transforms in the two variables.

Consider the short exact sequences:
$$  0 \to \mathcal S([G]_B) \to \mathcal S^+_s([G]_B) \to \Pi_s \to 0,$$
$$ 0 \to \mathcal S([G]) \to \mathcal S^+_s([G]) \to \Pi_s \to 0. $$
Since $\Psi$ preserves Schwartz spaces, it induces an endomorphism:
$$ \Pi_s \to \Pi_s$$
by passing to the quotients of the above sequences. This map is the identity.
\end{proposition}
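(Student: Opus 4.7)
The plan is to adapt the rank-one proof of Proposition \ref{Psiprop-s} to the two-variable setting, with the formula \eqref{constantterm-Psi-G} playing the role of \eqref{constantterm-Psi}. To establish convergence and continuity of $\Psi:\mathcal S^+_s([G]_B)\to C^\infty([G])$, I would follow the argument for Proposition \ref{Psiprop-s} and introduce an auxiliary ``classical'' space $\mathcal S'_s((N\times N)\backslash G(\adele))$, namely a restricted tensor product over places in which the Archimedean factor at one chosen place realizes the asymptotic $A^\diag$-finiteness, and show via a two-variable version of Lemma \ref{lemmaAk} that summation over $A(k)\times A(k)$ surjects continuously onto $\mathcal S^+_s([G]_B)$. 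Convergence of $\Psi f$ for $f$ in the image then follows from absolute convergence of a sum over $(N\times N)(k)\backslash G(k)$ of a function on the adelic points of the affine closure $\Spec k[G]^{N\times N}$, since the relevant orbits on this affine closure are locally closed.

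The constant term formula \eqref{constantterm-Psi-G} is obtained by unfolding
$$(\Psi f)_B(g) = \int_{[N\times N]} \sum_{\gamma\in (B\times B)(k)\backslash G(k)} f(\gamma u g)\,du$$
using the Bruhat decomposition $(B\times B)\backslash G/(B\times B) = W\times W = \{(1,1),(w,1),(1,w),(w,w)\}$: the four double cosets produce, respectively, the summands $f$, $R_1 f$, $R_2 f$, and $R_1 R_2 f$, with absolute convergence of each Radon transform verified by applying (the proof of) Proposition \ref{Radon-s} one variable at a time. The same Bruhat unfolding, performed instead for the unipotent radicals of $H\times B$ and $B\times H$, shows that the corresponding constant terms of $\Psi f$ are of rapid decay in the direction of the $A_P$-exponent (by an argument analogous to Example \ref{HmodN}); combined with Theorem \ref{approxconstantterm}, this places $\Psi f$ in $\mathcal S^+_s([G])_\pr$.

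The identity-on-$\Pi_s$ statement, which is the key new input in rank two, rests on showing that each Radon-transformed term $R_1 f$, $R_2 f$, $R_1 R_2 f$ in \eqref{constantterm-Psi-G} is Schwartz on $U\cap [G]_B$ for a small enough Siegel neighborhood $U$ of $\infty_B\times\infty_B$; this is the main technical point. For $R_1 f(x_1,x_2) = \int_{N(\adele)} f(wnx_1, x_2)\,dn$, the first component $wnx_1$ moves into the funnel of the first factor as $x_1\to \infty_B$, so for $(x_1,x_2)\in U\cap[G]_B$ the integrand evaluates $f$ at points of ``funnel $\times$ cusp'', which lies in $[G]_B\smallsetminus\mathring U$; there $f$ is Schwartz by part~(2) of Definition \ref{asymptfinite-G}, and the rank-one analysis of Proposition \ref{Radon-s} applied to one variable at a time yields rapid decay in both $x_1$ and $x_2$ (together with their multiplicative derivatives). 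The analogous arguments, using $(wn_1 x_1, wn_2 x_2)\in$ ``funnel $\times$ funnel'', handle $R_2 f$ and $R_1 R_2 f$. Consequently $(\Psi f)_B - f \in \mathcal S(U\cap [G]_B)$, and by Theorem \ref{approxconstantterm} the class of $\Psi f$ in $\Pi_s$ coincides with that of $f$, proving that the induced endomorphism of $\Pi_s$ is the identity.

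The decomposition \eqref{cuspdecomp-sG} and the density statement then follow formally. Given $\varphi\in\mathcal S^+_s([G])$, surjectivity of the quotient map $\mathcal S^+_s([G]_B)\to \Pi_s$ provides an $f$ with the same image as $\varphi$, and the identity-on-$\Pi_s$ property gives $\varphi-\Psi f \in \mathcal S([G])$; the rank-two Schwartz-space decomposition $\mathcal S([G]) = \mathcal S([G])_\pr \oplus \mathcal S([G])_\npr$, which follows from \eqref{cuspdecomp} by tensor product using $\mathcal S([G]) = \mathcal S([H])\hat\otimes\mathcal S([H])$, then completes the decomposition of $\varphi$. Density of $\Psi$ in $\mathcal S^+_s([G])_\pr$ reduces analogously to density of the rank-one $\Psi$ in $\mathcal S([H])_\Eis$, i.e., to Proposition \ref{Psidense}, applied to each tensor factor. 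The principal technical obstacle is the uniform control of the two-variable Radon integrals needed to establish the Schwartz property of $R_1 f$ and friends near $\infty_B\times\infty_B$, but this reduces, via the product structure $[G]_B = [H]_B\times [H]_B$, to the rank-one estimates of \S\ref{sspreparation}.
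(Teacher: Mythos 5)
Your proposal is correct and takes essentially the same approach as the paper, which remarks only that ``the proof is completely analogous to that of Proposition \ref{Psiprop-s}, and will be omitted,'' so what you have done is fill in the details of that analogy faithfully. In particular, your central observation for the identity-on-$\Pi_s$ statement --- that for $(x_1,x_2)$ in a Siegel neighborhood of $\infty_B\times\infty_B$ the argument $(wnx_1,x_2)$ of the Radon integral $R_1 f$ is always pushed outside that neighborhood (the height $\delta^{1/2}$ of $wnx_1$ is bounded above by the reciprocal of the cusp threshold, uniformly in $n$), so that only the region where $f$ is Schwartz contributes --- is exactly the rank-two avatar of the rank-one argument behind Lemma \ref{germsinfty}, and the remaining pieces (auxiliary ``classical'' space, Bruhat unfolding over $W\times W$, reduction of the density and direct-sum statements to the rank-one Proposition \ref{Psidense} via $\mathcal S([G]) = \mathcal S([H])\hat\otimes\mathcal S([H])$) are the expected ones.
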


The proof is completely analogous to that of Proposition \ref{Psiprop-s}, and will be omitted.

Now we examine constant terms of pseudo-Eisenstein series, and more generally of asymptotically finite functions on $[G]$. We have the following analog of  
Corollary \ref{constantterm-s}:

\begin{corollary}\label{constantterm-s-G}
The constant term morphism:
$$ \mathcal S^+_{s_0}([G])\to C^\infty ([G]_B)$$
has image in the space of smooth functions $f$ which have the following properties:

\begin{itemize}
 \item In a neighborhood of the cusp $\infty_B\times\infty_B$, they concide with elements of $\mathcal S^+_{s_0}([G]_B)$.
 \item Away from the cusp, they are bounded (together with their derivatives under $U(\mathfrak g)$) by elements of $\pi_{-\sigma-\epsilon}\otimes \pi_{-\sigma-\epsilon}$, for any $\epsilon>0$, where $\sigma = \max\{\Re(s_0), 1\}$; in particular, their Mellin transform \eqref{Mellin-GB} makes sense as a regularized integral, for $\Re(s_1), \Re(s_2)<-\sigma$.
 \item Their Mellin transform has meromorphic continuation to 
 $$\{(s_1,s_2)| \Re(s_1), \Re(s_2)\le 0\},$$ with simple poles of the form $s_i = -s'$ ($i=1,2$), where $s'$ is an Eisenstein pole for $\PGL_2$, as well as simple poles on the divisors $\{(s_1, s_2)| \pm s_1 \pm s_2 = s_0\}$ (for any combination of the signs).
\end{itemize}
\end{corollary}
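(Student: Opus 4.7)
The argument parallels the proof of Corollary \ref{constantterm-s} in rank one, with the four-term constant-term formula \eqref{constantterm-Psi-G} replacing the two-term formula. Using Proposition \ref{Psiprop-sG} (in particular, the fact that $\Psi$ induces the identity on the quotient $\Pi_{s_0}$), any $\varphi\in\mathcal{S}^+_{s_0}([G])$ can be written as $\Psi f + \varphi'$ with $f\in\mathcal{S}^+_{s_0}([G]_B)$ having the same image in $\Pi_{s_0}$ as $\varphi$, and $\varphi'\in\mathcal{S}([G])$; by linearity of the constant term it suffices to treat the two summands separately. For $\varphi'\in\mathcal{S}([G])$, Theorem \ref{approxconstantterm} gives rapid decay of $\varphi'_B$ near $\infty_B\times\infty_B$, and the Mellin transform $\widecheck{\varphi'_B}(s_1,s_2)=\mathcal{E}^*_{-s_1,-s_2}\varphi'$ is the adjoint of the two-variable Eisenstein morphism, so its meromorphic continuation with simple poles on the hyperplanes $s_i=-s'$ follows by applying Theorem \ref{meromcont} factor-wise.

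For $\varphi=\Psi f$, apply \eqref{constantterm-Psi-G}:
\begin{equation*}
(\Psi f)_B = f + R_1 f + R_2 f + R_1 R_2 f.
\end{equation*}
The first summand lies in $\mathcal{S}^+_{s_0}([G]_B)$ by hypothesis. For the remaining three, compute Mellin transforms by applying the rank-one identity $\widecheck{Rg}(s)=M(-s)\check g(-s)$ in the relevant factor(s):
\begin{equation*}
\widecheck{R_i f}(s_1,s_2) = M_i(-s_i)\,\check f(\ldots,-s_i,\ldots),\qquad \widecheck{R_1 R_2 f}(s_1,s_2) = M_1(-s_1)M_2(-s_2)\,\check f(-s_1,-s_2).
\end{equation*}
To extract rapid decay at the cusp in each individual $H$-variable and the polynomial bound by $\pi_{-\sigma-\epsilon}\otimes\pi_{-\sigma-\epsilon}$, use Observation \ref{observation-Schwartz} to view $f$ as a family of rank-one situations parametrized by the other variable, and invoke Proposition \ref{Radon-s} with estimates that are uniform in the parameter.

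The pole structure is then transparent. The exponent $s_0$ of $f$ in the $A^\diag$-direction produces a single simple pole of $\check f(s_1,s_2)$ along the divisor $s_1+s_2=s_0$ (the $A^\diag$-integration in the definition of the Mellin transform of $f^\dagger\in\Pi_{s_0}$ is exactly a one-dimensional Tate-type integral along this direction, giving a simple pole at the matching character). The sign flips $s_i\mapsto -s_i$ induced by the Radon operations send this pole to the four divisors $\pm s_1\pm s_2=s_0$ in the respective terms, while each factor $M_i(-s_i)$ contributes simple poles at $s_i=-s'$ by Theorem \ref{meromcont}. The main technical obstacle is the uniformity in the ``other'' variable of the rank-one estimates on both the Radon transform and its Mellin transform; this is precisely what the semi-algebraic formalism of Observation \ref{observation-Schwartz} is designed to handle, ensuring that slice-wise bounds assemble into global bounds on $[G]_B$.
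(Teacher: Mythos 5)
Your proof is correct and follows essentially the same route the paper sketches: reduce via Proposition \ref{Psiprop-sG} to a Schwartz part plus a pseudo-Eisenstein series, apply the four-term constant-term formula \eqref{constantterm-Psi-G}, establish a two-variable analog of Proposition \ref{Radon-s} by slicing with Observation \ref{observation-Schwartz}, and track the poles arising from the $A^\diag$-exponent $s_0$ (transformed to the four divisors $\pm s_1\pm s_2=s_0$ by the Radon sign-flips) and from the intertwining operators $M_i(-s_i)$. The paper leaves the details to the reader and notes only the same two sources of poles you identify.
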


\begin{proof}
 The proof is completely analogous to that of Corollary \ref{constantterm-s}, including an analog of Proposition \ref{Radon-s}, and is left to the reader. I only note that the pole on the divisor $\{(s_1, s_2)| s_1 + s_2 = s_0\}$ appears already for the Mellin transform of an element of $\mathcal S^+_{s_0}([G]_B)$, and the other poles will appear as one calculates the Mellin transform for the constant term of a pseudo-Eisenstein series, starting from \eqref{constantterm-Psi-G}.
\end{proof}

\begin{remark}
 Again, the target space of this corollary has a natural topology, and the proof shows that the constant term map is continuous with respect to that topology.
\end{remark}

Note that, as in rank one or even our model space $\Rplus$, the Mellin transform, thought of as a meromorphic function of two variables, does not completely determine the constant term; one needs a theory of ``charged Laurent expansions'' to invert the transform. There is an elegant way to do this, but it will be postponed to future work that will treat the case of arbitrary rank. For now, suffice it to observe that the Mellin transform of a non-trivial constant term may be zero -- this, in fact, will be the case for kernel functions. All the information for those functions is contained in their ``charged Laurent expansions'' along the singular hyperplanes, of which we will not make any explicit mention.

\subsection{Variation of the inner product with the exponent}\label{ssvariation}

The trace formula, as a distribution on $H(\adele)\times H(\adele)$, should be a regularized inner product
\begin{equation}\label{TF-ideal} \left< K_{\Phi_1}, K_{\Phi_2}\right>_{[G]}^*=\int^*_{[G]} K_{\Phi_1}(g_1,g_2) K_{\Phi_2}(g_1,g_2) d(g_1,g_2)
\end{equation}
between two kernel functions. However, because these are asymptotically finite functions with \emph{trivial} regularized exponents (cf.\ Proposition \ref{propasfinite}), the inner product does not exist, not even in the regularized sense of \S \ref{ssregints}. Therefore, what the trace formula really computes is a Laurent expansion, as the exponent of one of the asymptotically finite functions above varies. 

More precisely, we will replace $K_{\Phi_2}$ by a section $s\mapsto \varphi_s \in \mathcal S^+_s([G])$ with $\varphi = \varphi_0= K_{\Phi_2}$. The regularized inner product will then be defined for $s\ne 0$ and will have a simple pole at $s=0$:
$$\left< K_{\Phi_1}, \varphi_s\right>_{[G]}^* = \frac{a_{-1}}{s} + a_0 + \dots.$$
The coefficient $a_{-1}$ is an invariant bilinear form on $(\Phi_1, \Phi_2)$, but it is not very interesting: spectrally, in only depends on the traces $\tr(\pi_s(\Phi_i dg))$ of the convolution operators defined by the $\Phi_i$'s on parabolically induced automorphic representations, s.\ Theorem \ref{STF-1}. The traces on cuspidal representations are ``contained'' in the coefficient $a_0$, which however is not an invariant distribution, and depends not only on $\Phi_1, \Phi_2$, but also on the derivative of the section $s\mapsto \varphi_s$ at $s=0$.

Since that derivative lives in an infinite-dimensional space, to reduce the number of variables \textbf{we now make a choice:} We choose a  maximal compact subgroup $K\subset H(\adele)$, and a Borel subgroup $B$, so that $H(\adele) = B(\adele) K$. These choices are \emph{standard} in the theory of the trace formula, and the non-invariant trace formula, thought of as a distribution on $(H\times H)(\adele)$, depends on them. On the other hand, it will not depend on other choices made below, such as that of a neighborhood of the cusp.

We let $\Delta$ denote the function on $[H]_B$, or on $B(k)\backslash H(\adele)$, which pulls back to the function 
$$ b k \mapsto \delta^\frac{1}{2}(b) \,\, \, (b\in B(\adele), k\in K)$$
on $H(\adele)$. Since $B(k)\backslash H(\adele)\to [H]$ is an isomorphism in a neighborhood $U$ of the cusp, we will denote by the same letter the restriction of $\Delta$ to such a neighborhood $U$, considered as a subset of $[H]$.

We impose another condition on the pair $(B,K)$, that will only be used to obtain convenient geometric expressions for orbital integrals in Proposition \ref{reghyper}: 
\begin{equation}\label{condBK}
 \mbox{For every $n\in N$, $w\in W$, we have $\Delta(w^{-1} n)\le 1$.}
\end{equation}
Here, $N$ is the unipotent radical of $B$, and $w$ any element in the normalizer of a Cartan subgroup of $B$. (It obviously does not matter which.) The standard choices $B= $ upper triangular matrices, $K = $ the product of $H(\mathfrak o_v)$ at non-Archimedean places and of the image of the orthogonal/unitary group for the quadratic/hermitian form represented by the identity matrix, will do.

\begin{lemma}\label{constantcoeffs}
\begin{itemize}
\item The map $f\mapsto f_s(g_1,g_2):= f(g_1,g_2)\cdot \Delta^s(g_2)$ is a continuous isomorphism from $\mathcal S^+_0([G]_B)$ to $\mathcal S^+_s([G]_B)$, and hence can be used to trivialize the family of those Fr\'echet spaces over $\CC$ (the parameter space for $s$), in order to talk about holomorphic/meromorphic sections. These notions of holomorphic/meromorphic sections do not depend on the choice of $K$.
\item For $f$, $f_s$ as above, and any $F\in C^\infty([G]_B)$ which is of moderate growth and coincides with an element of $\mathcal S^+_0([G]_B)$ in a neighborhood of the cusp, the regularized integral 
$$\int^*_{[G]_B} F\cdot f_s$$
represents a continuous morphism from $\mathcal S^+_0([G]_B)$ to the space of meromorphic functions on $\CC$ with at most a simple pole at $0$.
\item The same statement is true if $f_s$ is replaced by $f_{U,s}$, the function which is equal to $f$ away from $[H]\times U$, where $U$ is a neighborhood of the cusp in $[H]$, and equal to $f_s$ in $[H]\times U$. Moreover, 
the class 
$$\left[ \int^*_{[G]_B} F\cdot f_s \right]$$ of the regularized integral modulo holomorphic functions which vanish at $0$ (equivalently, the $-1$- and $0$-terms of its Laurent expansion) does not change if we replace $f_s$ by $f_{U,s}$; in particular, it does not depend on the choice of $U$. It represents a continuous map from $\mathcal S^+_0([G]_B)$ to the $2$-dimensional space of expressions of the form $\frac{a_{-1}}{s}+a_0$, where $a_i\in\CC$.
\item The same statements are true when $F, \varphi\in \mathcal S^+_0([G])$, for the regularized integral 
$$\int^*_{[G]} F\cdot \varphi_{U,s}.$$
By abuse of notation, we will denote the corresponding terms in its Laurent expansion by 
$$ \left[ \int^*_{[G]} F\cdot \varphi_s \right],$$
although the function $\varphi_s$ is not defined in this case. (Only $\varphi_{U,s}$ is, for a choice of $U$.)
\item Assume that $\left[ \int^*_{[G]} F\cdot \varphi_s \right]= \frac{a_{-1}}{s}+ a_0$. Then these coefficients can also be recovered by ``truncation'', as follows: for every $T\gg 0$, let $U_T$ denote the neighborhood of the cusp in $[H]$ which is the image of the subset $\{b k| \delta^\frac{1}{2}(b)\ge e^T\}\subset H(\adele)$. Then
\begin{equation}\label{reltotruncation}
\int_{[G]\smallsetminus [H]\times U_T} F\cdot \varphi = - a_{-1} T + a_0 + o(e^{-NT}),
\end{equation}
for every $N>0$, where $o(e^{-NT})$ denotes a function whose quotient by $e^{-NT}$ tends to zero, as $T\to \infty$.
\end{itemize}
\end{lemma}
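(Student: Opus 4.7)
For item 1, I verify that multiplication by $\Delta^s(g_2)$ sends $\mathcal S^+_0([G]_B)$ into $\mathcal S^+_s([G]_B)$: since $\Delta(ag_2) = \delta^{1/2}(a)\Delta(g_2)$ for $a \in A$, the product $f\Delta^s$ transforms under the normalized $A^\diag$-action by $\delta^{s/2}$, so its asymptotic part $f^\dagger\Delta^s$ lies in $\Pi_s$. Continuity, invertibility (via $\Delta^{-s}$), and holomorphic dependence on $s$ are immediate from polynomial bounds on $\Delta^{\Re(s)}$. Two choices of $(B,K)$ give $\Delta, \Delta'$ whose ratio is a smooth function bounded above and below on $[H]_B$ (by comparing Iwasawa decompositions on the compact set $A\backslash [H]_B$), so $(\Delta/\Delta')^s$ is a continuous automorphism of $\mathcal S^+_s$, preserving the Fr\'echet structure and the notion of holomorphic sections.

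\textbf{Items 2--4.} For items 2--4, I apply the regularized-integral proposition of \S\ref{ssregints} to the family $t \mapsto F\cdot f\cdot\Delta^t$. For $\Re(t)\ll 0$ this integral converges absolutely; the proposition, applied to the factorization $F\cdot(f\Delta^t)$ with $F$ asymptotically in $\mathcal S^+_0([G]_B)$ near the product cusp and $f\Delta^t\in \mathcal S^+_t([G]_B)$ by item 1, gives a meromorphic continuation in $t$ with a simple pole at $t=0$. Specializing at $t = s$ proves item 2. Item 4 is identical, with $f\Delta^t$ replaced by $\varphi_{U,t}$, which lies in $\mathcal S^+_t([G])$ by the same argument as item 1 (the transformation law being enforced on $[H]\times U$, where $\Delta$ is defined). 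For item 3, the difference $f_s - f_{U,s}$ is supported on $[G]_B\smallsetminus [H]\times U$, where $g_2$ is away from the cusp in $[H]$; here $f$ is Schwartz (we are outside every Siegel neighborhood of $\infty_B\times\infty_B$), and $\Delta^s-1 = s\log\Delta + O(s^2)$ vanishes holomorphically at $s=0$. Hence $\int F(f_s-f_{U,s})$ is an ordinary integral of a holomorphic family of Schwartz functions, holomorphic in $s$ and vanishing at $s=0$, so it affects neither $a_{-1}$ nor $a_0$.

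\textbf{Item 5.} For item 5, I specialize the cutoff to $U = U_T$. Splitting
\[
\int^*_{[G]} F\,\varphi_{U_T,s} \;=\; \int_{[G]\smallsetminus [H]\times U_T} F\,\varphi \;+\; \int^*_{[H]\times U_T} F\,\varphi\,\Delta^s(g_2),
\]
the first summand is the quantity to be computed and is independent of $s$. For the second summand, I replace $F,\varphi$ by their asymptotic parts $F^\dagger,\varphi^\dagger\in \Pi_0$ inside the product-cusp Siegel neighborhood; the Schwartz error, integrated against $\Delta^s$ over $U_T = \{\Delta \ge e^T\}$, contributes $o(e^{-NT})$ for every $N$. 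On the main term, the $A^\diag$-equivariance of $F^\dagger\varphi^\dagger$ allows one to integrate out the $A^\adiag$- and compact directions to produce a constant $C$ independent of $T$ and $s$, while the remaining integral over the diagonal $A$-direction is the Tate integral $\int_{a\ge e^T} a^s\,d^\times a = -e^{Ts}/s$, with Laurent expansion $-\tfrac{1}{s} - T + O(s)$. Combining, the second summand expands as $-\tfrac{C}{s} - CT + O(s) + o(e^{-NT})$; matching with $\tfrac{a_{-1}}{s}+a_0+O(s)$ on the left-hand side yields $a_{-1} = -C$ and $\int_{[G]\smallsetminus [H]\times U_T} F\varphi = -a_{-1}T + a_0 + o(e^{-NT})$.

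\textbf{The main obstacle} will be controlling the ``transversal'' contributions to $\int^*_{[H]\times U_T}$, coming from the region where $g_1$ remains in a compact part of $[H]$ while $g_2\in U_T$ is deep in the cusp. Such points lie outside every Siegel neighborhood of $\infty_B\times\infty_B$, so both $F$ and $\varphi$ are Schwartz there by Definition \ref{asymptfinite-G}; their Schwartz decay in the $g_2$-direction over $U_T$ then supplies the $o(e^{-NT})$ uniformly in $N$. A parallel argument handles the Schwartz error between $F\varphi$ and $F^\dagger\varphi^\dagger$ inside the cusp neighborhood, again via the same Tate-type integral of a Schwartz function against $\Delta^s$.
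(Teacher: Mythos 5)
Your proof is correct, and for the one part the paper actually argues (the truncation identity \eqref{reltotruncation}) the essential ingredients coincide with the paper's: approximation of $F\varphi$ by $F^\dagger\varphi^\dagger$ near the product cusp up to rapidly decaying error, the $A^\diag$-invariance of $F^\dagger\varphi^\dagger$ reducing the cusp contribution to a one-dimensional Tate integral $-e^{Ts}/s$ in the diagonal direction, and Schwartz decay in the transversal and anti-diagonal directions controlling all remainders. The only organizational difference is that the paper fixes an auxiliary $T_0$, first proves the affine-linearity in $T$ of the truncated integral (directly from the $A^\diag$-invariance, with $A_{-1}=\int_{A^\diag\backslash[G]_B}F^\dagger\varphi^\dagger$), and then separately evaluates the Laurent expansion for $U=U_{T_0}$; you instead let $U=U_T$ vary with $T$ from the start and read off both $a_{-1}$ and the truncation formula from a single Laurent expansion. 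That is a small economy -- it folds the paper's two computations into one and dispenses with $T_0$ -- but not a different method. Your treatment of items 1--4, which the paper labels ``straightforward'' without proof, is sound: in particular the compactness of $A\backslash[H]_B$ correctly controls the ratio of two choices of $\Delta$, and for item 3 you correctly observe that $f_s-f_{U,s}$ is supported away from the product cusp, where $f$ is Schwartz and $\Delta^s-1$ vanishes holomorphically at $s=0$.
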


\begin{proof}
The proof is straightforward. I only outline the proof of \eqref{reltotruncation}: For a fixed $T_0$, let
$$A_0 = \int_{[G]\smallsetminus [H]\times U_T} F\cdot \varphi + \int_{B(k)\backslash H(\adele)\times U_{T_0}} (F\cdot \varphi-F^\dagger\cdot \varphi^\dagger),$$
where $F^\dagger, \varphi^\dagger \in \Pi_0=\mathcal S(A^\diag\backslash [G], \delta^0)$ denote the unique elements which are asymptotically equal to $F$, resp.\ $\varphi$, and for the second integral $U_{T_0}$ is considered as a subset of $B(k)\backslash H(\adele)$. It is clear from the rapid decay of the difference $F\varphi-F^\dagger\varphi^\dagger$ that the second summand is of order $ o(e^{-NT_0})$, so for any $T\ge T_0$ we have:
$$ \int_{[G]\smallsetminus [H]\times U_T} F\cdot \varphi  = A_0 + \int_{[H]\times(U_{T_0}\smallsetminus U_T)} F^\dagger \cdot \varphi^\dagger + o(e^{-NT}).$$
The action of $e^{T-T_0}\in A^\diag$ maps $[H]\times U_{T_0}$ bijectively onto $[H]\times U_T$, and since $F^\dagger, \varphi^\dagger$ are invariant with respect to the normalized action of $A^\diag$ -- which means that their inner product is invariant under $A^\diag$-translations -- we have $\int_{[H]\times (U_{T_0}\smallsetminus U_T)} F^\dagger \cdot \varphi^\dagger = A_{-1} (T-T_0)$, where
\begin{equation}\label{Aminusone}
A_{-1} = \int_{A^\diag\backslash [G]_B} F^\dagger\cdot \varphi^\dagger.
\end{equation}
Hence
$$ \int_{[G]\smallsetminus [H]\times U_T} F\cdot \varphi  = A_0 + A_{-1} (T-T_0) + o(e^{-NT_0}).$$

Consider now the integral over $[G]$ when $\varphi$ gets replaced by $\varphi_{U,s}$, where $U=U_{T_0}$. Then $\varphi^\dagger$ gets replaced by $\varphi^\dagger\cdot \Delta^s$, and similarly we have:
$$ \int_{[G]} F\cdot \varphi_{U,s} = A_0 + \int_{[H]\times U_{T_0}} F^\dagger \cdot \varphi^\dagger\Delta^s + O(s),$$
where $O(s)$ refers to the limit as $s\to 0$.
It is easy to see that $\int_{[H]\times U_{T_0}} F^\dagger \cdot \varphi^\dagger\Delta^s = -\frac{A_{-1}}{s}e^{sT_0}$ for the \emph{same} constant $A_{-1}$.

Thus, 
$$ \left[ \int^*_{[G]} F\cdot \varphi_s \right] = -\frac{A_{-1}}{s} + A_0 - A_{-1} T_0,$$
confirming the last claim, with $a_{-1}=-A_{-1}$ and $a_0= A_0-A_{-1}T_0$.
\end{proof}

\begin{remark}
Since we are only modifying the function $\varphi$ in the second variable, the pairing 
$$(F, \varphi)\mapsto \left[ \int^*_{[G]} F\cdot \varphi_s \right]$$
is invariant under the action of $H(\adele)$ on $F$ and $\varphi$ by left multiplication. 

In particular, if we apply this observation to $F=K_{\Phi_1}$, $\varphi = K_{\Phi_2}$, and we write $\Phi_1 = \Phi_1\star \delta_1 = L(\Phi_1 dg) \delta_1$, where $L$ denotes the action by left multiplication and $\delta_1$ is the delta function at $1\in H(\adele)$, we see that 
$$\left[ \int^*_{[G]} F\cdot \varphi_s \right] = \left[\int^*_{[H]} K_\Phi(h,h) \Delta_U^s(h) dh\right].$$ 
Here $\Delta_U$ is the function which is equal to $\Delta$ in the Siegel neighborhood $U=U_{T_0}$, and $1$ otherwise, and $\Phi = \Phi_1^\vee \star \Phi_2$, where $\Phi_1^\vee (g) = \Phi_1 (g^{-1})$. The last statement in the above lemma, then, connects this expression to the coefficients of $T$ for the truncated integral 
$$ \int^*_{[H]\smallsetminus U_T} K_\Phi(h,h) dh.$$
\end{remark}

\begin{definition}
We define a distribution on $H(\adele)\times H(\adele)$, valued in the two-dimensional space of expressions of the form $\frac{a_{-1}}{s}+a_0$, by
\begin{equation}\label{Laur-goal}
\TF(\Phi_1\otimes\Phi_2)=\left[\left< K_{\Phi_1}, (K_{\Phi_2})_s\right>_{[G]}^*\right]=\left[\int^*_{[G]} K_{\Phi_1} (K_{\Phi_2})_s\right],
\end{equation}
in the notation of the above lemma. We will write $a_{-1}=\TF_{-1}(\Phi_1\otimes\Phi_2)$ and $a_0=\TF_0(\Phi_1\otimes\Phi_2)$ for the corresponding coefficients of the Laurent expansion. The functional $\TF_0$ is (the functional of) the \emph{non-invariant Selberg trace formula}.
\end{definition}
Notice that we use the notation $\TF(\Phi_1\otimes\Phi_2)$ for the two-dimensional germ of the Laurent expansion, despite the fact that ``Selberg trace formula'' (to stick with tradition) refers only to the constant coefficient.

The coefficient $\TF_{-1}$ is evidently invariant, and can immediately be computed spectrally:

\begin{theorem}\label{STF-1}
We have 
\begin{equation}
\TF_{-1} (\Phi_1\otimes\Phi_2) = - \frac{1}{2\pi i}\int_{\sigma-i\infty}^{\sigma+i\infty} \left<\pi_s(\Phi_1 dg) , \pi_{-s}(\Phi_2 dg)\right> ds
\end{equation}
for any $\sigma \in \RR$, where $\left<\,\, , \,\,\right>$ here denotes the Hilbert--Schmidt inner product of operators.
\end{theorem}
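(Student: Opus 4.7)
The plan is to apply Lemma \ref{constantcoeffs} in order to reduce $\TF_{-1}$ to an integral over $A^\diag\backslash[G]_B$ of the diagonal constant-term components of the two kernel functions, and then to evaluate that integral by a one-dimensional Paley--Wiener/Plancherel identity along the surviving anti-diagonal torus.

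First I would invoke Lemma \ref{constantcoeffs} with $F=K_{\Phi_1}$ and $\varphi=K_{\Phi_2}$: by Proposition \ref{propasfinite} both lie in $\mathcal S^+_0([G])$, and Proposition \ref{constantterm-kernel} identifies their $A^\diag$--invariant asymptotic component. Indeed, in the decomposition $(K_\Phi)_B=K_{\Phi,B}+R_{1/2}K_{\Phi,B}$ of \eqref{diagplusadiag}, only the first summand contributes to $F^\dagger,\varphi^\dagger\in\Pi_0$, while $R_{1/2}K_{\Phi,B}$ is $A^\adiag$--invariant and hence of rapid decay in the $\infty_D$--direction. From \eqref{Aminusone} together with the identification $a_{-1}=-A_{-1}$ established in the proof of the lemma, I obtain
$$\TF_{-1}(\Phi_1\otimes\Phi_2) \;=\; -\int_{A^\diag\backslash[G]_B} K_{\Phi_1,B}(g)\, K_{\Phi_2,B}(g)\, dg.$$

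Next, I compute this integral by Mellin inversion along $A^\adiag\simeq\Rplus$. After trivializing as in Observation \ref{observation-Schwartz}, both $K_{\Phi_i,B}$ lie in the genuine Schwartz space $\Pi_0$, so their fiberwise Mellin transforms $\widecheck{K_{\Phi_i,B}}(s)\in\pi_s\hat\otimes\pi_{-s}$ are entire and of rapid decay on vertical strips (Theorem \ref{PW-torus1D}). The one-dimensional Plancherel identity (Theorem \ref{Plancherel-torus-s}, specialized to Schwartz input so that no residues appear) then yields, for every $\sigma\in\RR$,
$$\int_{A^\diag\backslash[G]_B} K_{\Phi_1,B}\cdot K_{\Phi_2,B} \;=\; \frac{1}{2\pi i}\int_{\sigma-i\infty}^{\sigma+i\infty} \left<\widecheck{K_{\Phi_1,B}}(s),\,\widecheck{K_{\Phi_2,B}}(-s)\right>_{\pi_s\hat\otimes\pi_{-s}}\, ds,$$
where the inner pairing is the one induced by the tensor square of the canonical duality $\pi_s\times\pi_{-s}\to\CC$ fixed in \S\ref{sspreparation}.

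Finally, using \eqref{Mellinisconv} I identify $\widecheck{K_{\Phi_i,B}}(s)=\pi_s(\Phi_i\, dg)$ under the embedding \eqref{HSsubspace} of $\pi_s\hat\otimes\pi_{-s}$ into $\End(\pi_s)$ as Hilbert--Schmidt operators. On rank-one tensors the tensor-duality pairing on $(\pi_s\hat\otimes\pi_{-s})\otimes(\pi_{-s}\hat\otimes\pi_s)$ computes precisely the (bilinear) trace of the composition of the associated operators, so the integrand becomes $\left<\pi_s(\Phi_1\, dg),\pi_{-s}(\Phi_2\, dg)\right>$ and the claimed formula drops out. The main technical point is this matching of pairings at the end---verifying that our chosen duality on $\pi_{\pm s}$ induces, through \eqref{HSsubspace}, the bilinear Hilbert--Schmidt pairing of operators---but it is a routine formal check; all the real content is packed into Lemma \ref{constantcoeffs} and Proposition \ref{constantterm-kernel}, and the $\sigma$--independence of the answer simply reflects the entireness of $s\mapsto\widecheck{K_{\Phi_i,B}}(s)$.
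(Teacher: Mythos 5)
Your proof is correct and takes essentially the same route as the paper: it is a fleshed-out version of the paper's two-sentence argument (``This follows directly from Mellin decomposition of the pairing \eqref{Aminusone}\dots\ $F^\dagger=K_{\Phi_1,B}$ and $f^\dagger=K_{\Phi_2,B}$, by Proposition \ref{constantterm-kernel}''), with Lemma \ref{constantcoeffs}, Proposition \ref{constantterm-kernel}, Theorem \ref{PW-torus1D} and \eqref{Mellinisconv} playing exactly the same roles. The only slightly loose phrase is the ``hence'' in ``$R_{1/2}K_{\Phi,B}$ is $A^\adiag$-invariant and hence of rapid decay in the $\infty_D$-direction''---that rapid decay near the cusp is an explicit output of Proposition \ref{constantterm-kernel}, not a formal consequence of $A^\adiag$-invariance alone---but this does not affect the argument.
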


The Hilbert--Schmidt inner product is understood here as a bilinear pairing between operators on $\pi_s$ and operators on $\pi_{-s}$. Equivalently, recalling \eqref{Mellinisconv}, it is the contraction pairing between $\widecheck{K_{\Phi_1,B}}(s)\in \pi_s\hat\otimes\pi_{-s}$, and $\widecheck{K_{\Phi_2,B}}(-s)\in \pi_{-s}\hat\otimes\pi_s$.

\begin{proof}
This follows directly from Mellin decomposition of the pairing \eqref{Aminusone}. Notice that for $F=K_{\Phi_1}$, $f=K_{\Phi_2}$ we have, in the notation of \eqref{Aminusone}, $F^\dagger=K_{\Phi_1,B}$ and $f^\dagger=K_{\Phi_2,B}$, by Proposition \ref{constantterm-kernel}.
\end{proof}

The strategy for developing the trace formula, now, is the following:

\begin{itemize}
\item On the geometric side, we will compute the geometric expansion of the Laurent coefficients of the expression \eqref{TF-ideal}, perturbed by the parameter $s$, i.e., of \eqref{Laur-goal}.

\item On the spectral side, there remains to compute a spectral expansion for the constant coefficient of the Laurent expansion \eqref{Laur-goal}. We will do this by first replacing $K_{\Phi_2}$ by $\varphi=\Psi(f)$, a pseudo-Eisenstein series. We will eventually arrive at an integral of terms depending only on $\varphi$, not on $f$ itself. By continuity, we will be able to apply this expression to an arbitrary element of $\mathcal S^+_{0}([G])$ -- in particular, to $K_{\Phi_2}$.
\end{itemize}

The equality of the resulting expansions for \eqref{Laur-goal} will then be the non-invariant trace formula. (Strictly speaking, it is the equality of the constant coefficients.) Later, we will also discuss the invariant trace formula.

Hence, our first goal is to compute:
\begin{equation}\label{goalwithpseudo}
\left[\int_{[G]}^* K_{\Phi_1} \Psi(f)_s \right]
\end{equation}

Note that, although it is possible, we will not replace $K_{\Phi_1}$ by an arbitrary element of $\mathcal S^+_0([G])$, thus exploiting some features of its constant term, as analyzed in Proposition \ref{constantterm-kernel}, that will shorten the analysis.

The following is an important starting point, extending the various equivalent descriptions of two-term Laurent expansions in Lemma \ref{constantcoeffs}:
\begin{lemma}\label{differentvariations}
We have
$$\left[\int_{[G]}^* K_{\Phi_1} \Psi(f)_s \right] = \left[\int_{[G]}^* K_{\Phi_1} \Psi(f_s) \right],$$
where $f_s (g_1,g_2) =\Delta^s(g_2) f(g_1,g_2)$, as in Lemma \ref{constantcoeffs}.
\end{lemma}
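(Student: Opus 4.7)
The plan is to show that the two holomorphic sections $s\mapsto \Psi(f_s)$ and $s\mapsto \Psi(f)_{U,s}$ of $\mathcal{S}^+_s([G])$ agree at $s=0$ and have the same image in the quotient $\Pi_s$ of \eqref{short-SsG}. From this it will follow that their difference is a holomorphic family in $\mathcal{S}([G])$ vanishing at $s=0$; pairing against $K_{\Phi_1}$, which is of moderate growth, then produces an absolutely convergent, holomorphic-in-$s$ integral vanishing at $s=0$, whose two-term Laurent bracket is zero, yielding the claimed equality.

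I first check that both sections are holomorphic families in $\mathcal{S}^+_s([G])$, via the trivialization by $\Delta^s$ introduced in Lemma \ref{constantcoeffs}. For $s\mapsto\Psi(f_s)$, this follows from the corresponding statement for $s\mapsto f_s\in\mathcal{S}^+_s([G]_B)$ together with the continuity of $\Psi$ (Proposition \ref{Psiprop-sG}); for $s\mapsto \Psi(f)_{U,s}$, it is built into the construction in Lemma \ref{constantcoeffs}. Both specialize to $\Psi(f)$ at $s=0$, since $\Delta^0=1$.

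Next, to compare images in $\Pi_s$: by Proposition \ref{Psiprop-sG}, $\Psi$ induces the identity on the asymptotics quotient, so the image of $\Psi(f_s)$ in $\Pi_s$ equals the image of $f_s$, namely $f^\dagger\cdot\Delta^s(g_2)$, where $f^\dagger$ denotes the image of $f$ in $\Pi_0$; the $\delta^{s/2}$-equivariance of $f^\dagger\cdot\Delta^s(g_2)$ under the normalized $A^\diag$-action is a direct consequence of the $\delta^{1/2}$-equivariance of $\Delta$. For $\Psi(f)_{U,s}$, on a Siegel neighborhood of $\infty_B\times\infty_B$ where the cutoff is trivial, one has $\Psi(f)_{U,s}=\Psi(f)\cdot\Delta^s(g_2)=(f^\dagger+\text{Schwartz})\cdot\Delta^s(g_2)$, whose image in $\Pi_s$ is again $f^\dagger\cdot\Delta^s(g_2)$. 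Consequently the difference $\Psi(f_s)-\Psi(f)_{U,s}$ lies in the kernel $\mathcal{S}([G])$ of \eqref{short-SsG}, and vanishes at $s=0$.

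The only real technicality, which I expect to be the main step, is the verification that multiplication by $\Delta^s(g_2)$ preserves the Schwartz class on the region $[H]\times U$ away from $\infty_B\times\infty_B$ (so that $\Psi(f)_{U,s}$ really lies in $\mathcal{S}^+_s([G])$, with $\Pi_s$-image as described). This rests on the fact that $\Delta^s$ is of polynomial growth, while Schwartz functions decay faster than any polynomial together with all their derivatives. Once this routine check is granted, the plan reduces to the formal observation that a holomorphic-in-$s$ Schwartz family on $[G]$ vanishing at $s=0$ has trivial Laurent bracket when paired against $K_{\Phi_1}$.
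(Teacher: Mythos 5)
Your proof is correct, but takes a genuinely different route from the paper's. The paper unfolds the pseudo-Eisenstein series (passing from $\int_{[G]}K_{\Phi_1}\Psi(f)\Delta_U^s$ to an integral over $[G]_B$), splits the resulting integral into a piece over $[H]\times U$ and its complement, observes the latter is absolutely convergent in $s$ and hence contributes nothing to the variation of the bracket, invokes the already-established part of Lemma \ref{constantcoeffs} for $[G]_B$ to replace $f_{U,s}$ by $f_s$, and then re-folds by adjunction. You instead never leave $[G]$: you compare the two global integrands $\Psi(f_s)$ and $\Psi(f)_{U,s}$, use Proposition \ref{Psiprop-sG} (that $\Psi$ induces the identity on the quotient $\Pi_s$ of \eqref{short-SsG}) to see they have the same image in $\Pi_s$, conclude the difference is a family in $\mathcal{S}([G])$ vanishing at $s=0$, and pair against $K_{\Phi_1}$. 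Your route is conceptually cleaner and localizes all the work in the single structural fact from Proposition \ref{Psiprop-sG}; what it buys is avoiding the explicit splitting of the integral. What the paper's route buys is that it never needs the (plausible but not explicitly stated in the paper) assertion that $s\mapsto\Psi(f_s)-\Psi(f)_{U,s}$ is a holomorphic family into the \emph{fixed} space $\mathcal{S}([G])$ — the paper replaces this with the more pedestrian observation that the integral over the complement of $[H]\times U$ is absolutely and locally uniformly convergent. If you wish to make your argument fully self-contained, you should add a sentence justifying the holomorphicity of the Schwartz-valued difference (locally uniform convergence of the pseudo-Eisenstein sums with respect to the Fréchet seminorms, plus the fact that multiplication by $\Delta_U^s$ trivializes the subbundle $\mathcal{S}([G])\subset\mathcal{S}^+_s([G])$).
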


\begin{proof}
By definition, to compute the left hand side one needs to replace $\Psi(f)$, in a fixed neighborhood $[H]\times U$ of the cusp, by $\Psi(f)(g_1,g_2)\cdot \Delta^s(g_2)$. Let us denote by $\Delta_U$ the function on $[G]$ which is equal to $\Delta(g_2)$ on $[H]\times U$, and equal to $1$ otherwise.
 Consider the map
$$\pi_G: (B\times B)(k)\backslash G(\adele) \to [G].$$

Unravelling the definition of $\Psi$, we see that 
$$\left[\int_{[G]}^* K_{\Phi_1} \Psi(f)_s \right] = \left[\int_{[G]}^* K_{\Phi_1} \Psi(f) \Delta_U^s \right] =$$
$$ = \left[\int_{[G]_B}^* K_{\Phi_1} f \pi_G^* \Delta_U^s \right].$$

The last integral can be split into two parts: one in $[H]\times U$ (where $U$ is now considered as a neighborhood of the cusp in $[H]$), and one over the complement. The latter:
$$\int_{[G]_B\smallsetminus [H]\times U}^* (K_{\Phi_1})_B f \pi_G^* \Delta_U^s $$
is absolutely convergent for every $s$ because of the rapid decay of $f$, and hence 
$$\left[\int_{[G]_B\smallsetminus [H]\times U}^* (K_{\Phi_1})_B f \pi_G^* \Delta_U^s \right]= \int_{[G]_B\smallsetminus [H]\times U} (K_{\Phi_1})_B f .$$

We arrive at 
$$ \left[\int_{[G]}^* K_{\Phi_1} \Psi(f)_s \right]  = \left[\int_{[G]_B}^* (K_{\Phi_1})_B f_{U,s}  \right]$$
in the notation of Lemma \ref{constantcoeffs}, which by that lemma is equal to 
$$ \left[\int_{[G]_B}^* (K_{\Phi_1})_B f_s  \right].$$
By adjunction between constant term and pseudo-Eisenstein series, this proves the claim.
\end{proof}

Thus, \eqref{goalwithpseudo} is equal to
$$\left[ \int_{[G]_B}^* (K_{\Phi_1})_B f_s\right].$$
From Proposition \ref{constantterm-kernel} we know that the constant term $(K_{\Phi_1})_B$ is the sum of two functions $K_{\Phi_1,B}$ and $R_{1/2}K_{\Phi_1,B}$, where $K_{\Phi_1,B}$ is invariant under the normalized action of $A^\diag$ and $R_{1/2}K_{\Phi_1,B}$ is invariant under $A^\adiag$. We analyze the regularized inner product of each of them with $f_s$ separately. For notational convenience, from now on $\Phi=\Phi_1$.

\subsection{Inner product with the anti-diagonal term $R_{1/2}K_{\Phi,B}$.}

Here, the regularized inner product is actually a convergent integral:
$$ \int_{[G]_B} R_{1/2}K_{\Phi,B} \cdot f_s$$
for all $s$, therefore we can immediately take $s=0$, if we wish -- however, to fix ideas it is simpler to keep working with general $s$ for a while, since $s=0$ falls on the unitary spectrum.

Since $R_{1/2}K_{\Phi,B}$ is invariant under $A^\adiag$, we can write it as:
$$ \int_{A^\adiag\backslash [G]_B} R_{1/2}K_{\Phi,B} (g) \cdot \left(\int_{A^\adiag}f_s(ag) da\right) dg.$$

Identifying $(A\times A)/A^\adiag$ with $A$ via either of the two factors, and acting by it on $A^\adiag\backslash[G]$, we recall that 
the function $R_{1/2}K_{\Phi,B}$ is of rapid decay as $\delta(a)\to \infty$ (i.e., towards the cusp), and of moderate growth in the opposite direction, while the function in the inner integral is 
of rapid decay as $\delta(a)\to 0$ and of moderate growth towards the cusp. We can thus apply 
 Theorem \ref{asfinitealmostL2} to spectrally decompose this inner product. Notice that the Mellin transform of the function in the inner integral, under the $(A\times A)/A^\adiag$-action, is
$$ z\mapsto \int_A  \delta^{-\frac{1+z}{2}}(a') \int_{A}f_s((a, a'a^{-1}) g) da da' = $$
$$ = \int_{A\times A}  \delta^{-\frac{1+z}{2}}(a_1 a_2) \int_{A^\adiag}f_s((a_1, a_2) g) da_1 da_2 = \check f_s(z,z).$$
Hence we get a spectral decomposition away from the unitary line:
$$\int_{[G]_B} R_{1/2}K_{\Phi,B} \cdot f_s = \frac{1}{2\pi i}\int_{\sigma - i\infty}^{\sigma+i\infty} \left<\widecheck{R_{1/2}K_{\Phi,B}}(-z), \check f_s(z,z)\right>dz,$$
with $\sigma \gg 0$. (Recall from the comments following \eqref{Mellin-GB-adiag} that the Mellin transform of $f_s$ is understood as a function of two variables, while the Mellin transform of $R_{1/2}K_{\Phi,B}$ is a function of a single variable, living in $(\mathfrak a^*_\CC)^\diag$.) 

Notice that $\check f_s(z,z) \in \pi_z \otimes \pi_z$. Using multiplication by $\Delta^s$ to identify $\pi_z$ with $\pi_{z+s}$, we can also write:
$$\check f_s(z,z) = \check f(z, z-s).$$

Shifting the contour of integration to $\sigma=0$,  we pick up the residues of $\widecheck{R_{1/2}K_{\Phi,B}}$ corresponding to residual automorphic representations (s.\ Proposition \ref{constantterm-kernel}), and the residue corresponding to the pole of the function $(z_1,z_2)\mapsto \check f(z_1, z_2-s)$ along the plane $s+\mathfrak a_\CC^{*,\adiag}$, when $\Re(s)>0$. In the case $s=0$, which is the one of interest to us, we need to use a principal value integral as in \eqref{Mellininv-ext-PV}; this gives:

\begin{proposition}\label{propPhiadiag}
We have 
\begin{eqnarray} \int_{[G]_B} R_{1/2}K_{\Phi,B} \cdot f = \frac{1}{2\pi i} PV\int_{- i\infty}^{+i\infty} \left<\widecheck{R_{1/2}K_{\Phi,B}}(-z), \check f(z,z)\right>dz +\nonumber \\
+ \frac{1}{2} \left<\widecheck{R_{1/2}K_{\Phi,B}}(0), \Res_{z=0} \check f(z,z)\right> - \sum_{\Re(z')>0}   \left< \Res_{z=z'}\widecheck{R_{1/2}K_{\Phi,B}}(-z), \check f(z',z')\right>.\label{decompPhiadiag} \end{eqnarray}
\end{proposition}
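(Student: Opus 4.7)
The plan is to carry out the spectral decomposition strategy already sketched in the paragraph preceding the proposition. After reducing the integral over $[G]_B$ to a one-variable integral over $(A\times A)/A^\adiag \simeq A$ acting on $A^\adiag \backslash [G]_B$ (via either projection factor), the integrand is $R_{1/2}K_{\Phi,B}$ (which by Proposition \ref{constantterm-kernel} is rapidly decaying at the cusp and bounded by $\pi_{-1-\epsilon}\otimes\pi_{-1-\epsilon}$ in the funnel direction) paired against the $A^\adiag$-average of $f_s$ (which is rapidly decaying at the funnel, since $f\in\mathcal S([G]_B)$). This is exactly the input hypothesis of Theorem \ref{asfinitealmostL2}, applied fiberwise over $(A\times A)/A^\adiag$-orbits. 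The Mellin transform of the inner integral has already been identified with $\check f_s(z,z) = \check f(z,z-s)$.

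The first step is to write, for $\Re(s) \gg 0$ and for some $\sigma \gg 0$,
\begin{equation*}
\int_{[G]_B} R_{1/2}K_{\Phi,B} \cdot f_s = \frac{1}{2\pi i}\int_{\sigma - i\infty}^{\sigma+i\infty} \left<\widecheck{R_{1/2}K_{\Phi,B}}(-z), \check f_s(z,z)\right> dz,
\end{equation*}
which is legitimate as a convergent Plancherel decomposition once both sides have been shown to match on a region where the Mellin transforms are holomorphic. The second step is to shift the contour from $\Re(z) = \sigma \gg 0$ to $\Re(z) = 0$, collecting residues as we cross. The poles to the right of $\Re(z) = 0$ that enter the picture are: (i) the finite set of simple poles of $\widecheck{R_{1/2}K_{\Phi,B}}(-z) = M_1(z)\widecheck{K_{\Phi,B}}(z)$ at Eisenstein poles $z = z'$ with $\Re(z') > 0$, as described in Proposition \ref{constantterm-kernel}; and (ii) the pole of $\check f(z,z-s)$ at $z=s$, which drifts onto the line $\Re(z)=0$ as $s\to 0$.

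The contour shift is justified by combining the polynomial growth of the intertwining operator $M_1(z)$ on vertical strips $0 \le \Re(z) \le \sigma$ (established in the subsection on bounds on vertical strips) with the rapid decay of $\widecheck{K_{\Phi,B}}(z) = \pi_z(\Phi dg)$ and of $\check f(z_1,z_2)$ on vertical strips (which is a Paley--Wiener estimate for Schwartz functions on $[G]_B$; the slice $z \mapsto \check f(z,z-s)$ inherits the vertical-strip decay uniformly for $s$ in a neighborhood of $0$). Once the contour is on $\Re(z)=0$, the remaining task is to take the limit $s \to 0$. Because the pole of $\check f(z,z-s)$ at $z=s$ lands on the contour, one must apply the principal-value prescription of \eqref{Mellininv-ext-PV}: the integral becomes a Cauchy principal value, and the residue at the on-axis pole contributes with a factor of $\frac{1}{2}$, producing the term $\frac{1}{2}\langle \widecheck{R_{1/2}K_{\Phi,B}}(0), \Res_{z=0}\check f(z,z)\rangle$.

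The main technical obstacle is the justification of the contour shift and of the $s\to 0$ limit \emph{under} the integral on the shifted contour, which requires uniform bounds in $s$ near $0$. This is manageable because everything reduces to the intertwining operator bound on $0\le\Re(z)\le\sigma$ paired against the rapid decay of $\check f$, and the single residue at $z=s$ is handled by the charged Laurent formalism of \eqref{Mellininv-ext-PV}. Note that no contribution from the pole of $\check f$ at $z = -s$ (on the diagonal divisor $z_1 + z_2 = 0$) appears, since in the slice $z\mapsto \check f(z,z-s)$ this pole sits at $z=s/2$ which does not cross the contour; only the pole from the antidiagonal divisor $z_1 - z_2 = s$ matters for this computation, consistently with the $A^\adiag$-invariance of $R_{1/2}K_{\Phi,B}$.
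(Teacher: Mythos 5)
Your overall strategy matches the paper's: write the inner product as a Mellin integral over $(A\times A)/A^\adiag\simeq A$ acting on $A^\adiag\backslash[G]_B$, invoke Theorem \ref{asfinitealmostL2}, shift the contour from $\sigma\gg 0$ to the unitary line collecting Eisenstein-pole residues, and handle the $s\to 0$ limit via the principal-value prescription of \eqref{Mellininv-ext-PV}. The Eisenstein-pole residues $z=z'$, $\Re(z')>0$, of $\widecheck{R_{1/2}K_{\Phi,B}}(-z)=M_1(z)\widecheck{K_{\Phi,B}}(z)$ are also correctly identified, and the vertical-strip estimates you cite are the right ones.

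However, your analysis of the moving pole is wrong. Since $f\in\mathcal S^+_0([G]_B)$, the Mellin transform $\check f(z_1,z_2)$ has a single simple pole divisor, namely the diagonal hyperplane $z_1+z_2=0$, coming from the $A^\diag$-asymptotics of $f$; there is \emph{no} pole on any hyperplane of the form $z_1-z_2=\mathrm{const}$, so the ``antidiagonal divisor $z_1-z_2=s$'' you invoke carries no pole at all (and in any case the slice $z\mapsto(z,z-s)$ lies entirely inside that hyperplane, so it could not produce a residue under a contour shift). After the shift $\check f_s(z_1,z_2)=\check f(z_1,z_2-s)$, the pole divisor moves to $z_1+z_2=s$ — this is precisely the plane $s+\mathfrak a^{*,\adiag}_\CC$ (parallel to the antidiagonal direction, not an ``antidiagonal pole divisor''). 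Restricting to $z_1=z_2=z$ puts the pole at $z=s/2$, not $z=s$, and contrary to your assertion it \emph{does} cross the contour when $\Re(s)>0$, tending to $z=0$ as $s\to 0$; this pole, not a nonexistent one at $z=s$, is the source of the principal-value integral and of the $\tfrac{1}{2}\bigl\langle\widecheck{R_{1/2}K_{\Phi,B}}(0),\Res_{z=0}\check f(z,z)\bigr\rangle$ term. You land on the correct formula only because $z=s$ and $z=s/2$ happen to have the same limit; the pole geometry must be fixed for the argument to be sound and to generalize.
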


To complete the analysis of the inner product with the anti-diagonal term, we explicate the discrete terms; we will also ``symmetrize'' the continuous contribution with respect to the intertwining operators. 

\begin{proposition}\label{discreteterms-explication}
Let $\varphi=\Psi(f)$. The discrete terms in \eqref{decompPhiadiag} can be expressed as follows:
\begin{itemize}
\item \begin{equation}\frac{1}{2} \left<\widecheck{R_{1/2}K_{\Phi,B}}(0), \Res_{z=0} \check f(z,z)\right> = \frac{1}{4}\left< M(0) \pi_0(\Phi dg), \check \varphi^\dagger(0)\right>,\end{equation}
where $\varphi^\dagger=f^\dagger\in \Pi_0= \mathcal S(A\backslash [G], \delta^0)$ is the asymptote of $\varphi$ (or of $f$), according to \eqref{short-SsG}, its Mellin transform is considered as a Hilbert--Schmidt operator by \eqref{HSsubspace}, and the pairing on the right is the pairing of Hilbert--Schmidt operators on $\pi_0$;
\item  \begin{equation}\label{discretecontr} -  \left< \Res_{z=z'}\widecheck{R_{1/2}K_{\Phi,B}}(-z), \check f(z',z')\right>= \left< \pi_{z'}^\disc(\Phi dg), \widecheck{\varphi_B}^\disc(z',z') \right>_{\pi_{z'}^\disc},\end{equation}
where $\varphi=\Psi(f)$, and $\pi_{z'}^\disc$ is the discrete automorphic representation obtained as the residue of Eisenstein series $\mathcal E: \pi_z \to C^\infty([H])$ at $z=z'$, the operator $\pi_{z'}^\disc(\Phi dg)$, here, is identified as an element of $\pi_{z'}^\disc \hat\otimes \pi_{z'}^\disc$ as in Definition \ref{defdiscrete}, and $\widecheck{\varphi_B}^\disc(z',z')$ is the image of $\widecheck{\varphi_B}^\disc(z',z') \in \pi_{z'}\hat\otimes\pi_{z'}$ in $\pi_{z'}^\disc\hat\otimes\pi_{z'}^\disc$ under \eqref{discretequot}.
\end{itemize}

The continuous term can also be written:
$$\frac{1}{2\pi i} \int_{0}^{+i\infty} \left<\widecheck{K_{\Phi,B}}(-z), M_1(-z) \check f(-z,-z) + M_2(z) \check f(z, z)\right>dz.$$

\end{proposition}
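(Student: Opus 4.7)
\emph{Proof plan.} My approach is to treat the three assertions independently, combining the Mellin-transform identities of Proposition~\ref{constantterm-kernel} with the adjunction \eqref{Msadjunction} for intertwining operators and an elementary residue computation on the asymptote of $f$.

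\emph{(a) Discrete term at the origin.} Setting $s=0$ in \eqref{diagtoadiag} and invoking \eqref{Mellinisconv} gives $\widecheck{R_{1/2}K_{\Phi,B}}(0) = M(0)\pi_0(\Phi\,dg)$, with $M(0)$ acting in either variable (both identifications coincide since the two expressions in \eqref{diagtoadiag} agree at $s=0$). The residue $\Res_{z=0}\check f(z,z)$ is the subtler input. By (the $[G]_B$-analog of) Corollary~\ref{constantterm-s-G} applied to $f\in\mathcal S^+_0([G]_B)$, the Mellin transform $\check f(z_1,z_2)$ has a simple pole along the hyperplane $z_1+z_2=0$, with residue equal to the $A^\adiag$-Mellin transform $\check\varphi^\dagger$ of the asymptote $\varphi^\dagger=f^\dagger\in\Pi_0$ (under the measure normalization fixed after \eqref{Mellin-GB-adiag}). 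Restricting to the diagonal $z_1=z_2=z$, the defining function $z_1+z_2$ becomes $2z$, producing a Jacobian factor of $\tfrac12$: $\Res_{z=0}\check f(z,z)=\tfrac12\,\check\varphi^\dagger(0)$. Combined with the prefactor $\tfrac12$ and the Hilbert--Schmidt identification \eqref{HSsubspace}, this yields the stated $\tfrac14$.

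\emph{(b) Residual terms.} For each Eisenstein pole $z'$ with $\Re(z')>0$, write $\widecheck{R_{1/2}K_{\Phi,B}}(-z) = M_1(z)\widecheck{K_{\Phi,B}}(z) = M_1(z)\pi_z(\Phi\,dg)$ via \eqref{diagtoadiag}. Since $z\mapsto\pi_z(\Phi\,dg)$ is holomorphic,
\[
\Res_{z=z'}\widecheck{R_{1/2}K_{\Phi,B}}(-z)\;=\;\bigl(\Res_{z=z'}M_1(z)\bigr)\pi_{z'}(\Phi\,dg),
\]
which by Definition~\ref{defdiscrete} is precisely $\pi_{z'}^\disc(\Phi\,dg)$ (viewed in $\pi_{z'}^\disc\hat\otimes\pi_{-z'}$ via $\pi_{z'}^\disc\hookrightarrow\pi_{-z'}$). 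Next, expand $\widecheck{\varphi_B}(z',z')$ for $\varphi=\Psi(f)$ via \eqref{constantterm-Psi-G}: of the four terms, the three containing at least one factor $M_i(-z')$ are annihilated after projection to $\pi_{z'}^\disc\hat\otimes\pi_{z'}^\disc$, because the functional equation \eqref{FE} makes $M(z)M(-z)=\Id$ holomorphic, forcing $\Res_{z=z'}M(z)\circ M(-z')=0$. Thus $\widecheck{\varphi_B}^\disc(z',z')$ is simply the image of $\check f(z',z')$ under the double residue projection, and the equality of pairings follows from the definition \eqref{pairingdiscrete} of $\langle\,,\,\rangle_{\pi_{z'}^\disc}$; the overall sign change from $-\langle\,,\,\rangle$ to $+\langle\,,\,\rangle_{\pi_{z'}^\disc}$ is absorbed by that definition, which computes the pairing using a preimage in $\pi_{z'}$ rather than the image in $\pi_{-z'}$.

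\emph{(c) Continuous term.} Apply the elementary identity $PV\!\int_{-i\infty}^{+i\infty}F(z)\,dz=\int_0^{+i\infty}[F(z)+F(-z)]\,dz$ to $F(z)=\langle\widecheck{R_{1/2}K_{\Phi,B}}(-z),\check f(z,z)\rangle$. Use $\widecheck{R_{1/2}K_{\Phi,B}}(-z)=M_2(z)\widecheck{K_{\Phi,B}}(-z)$ from \eqref{diagtoadiag} to rewrite $F(z)$, and $\widecheck{R_{1/2}K_{\Phi,B}}(z)=M_1(-z)\widecheck{K_{\Phi,B}}(-z)$ (the same identity at $s=z$) to rewrite $F(-z)$. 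Transfer $M_2(z)$, resp.\ $M_1(-z)$, onto the $\check f$-factor by adjunction \eqref{Msadjunction}; the two pieces recombine into $\langle\widecheck{K_{\Phi,B}}(-z),\,M_1(-z)\check f(-z,-z)+M_2(z)\check f(z,z)\rangle$, matching the claimed symmetrized integrand.

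\emph{Main obstacle.} The delicate bookkeeping is in (a): the factor $\tfrac12$ reflects the Jacobian along the singular divisor $z_1+z_2=0$ when restricted to the diagonal $z_1=z_2$, and it depends on the identifications of $\mathfrak a_\CC^{*,\diag}$ and $\mathfrak a_\CC^{*,\adiag}$ with $\mathfrak a_\CC^*$ made just before Proposition~\ref{constantterm-kernel}, together with the compatible normalization of measure on the anti-diagonal. A parallel but lower-stakes issue arises in (b), where one must reconcile the two equivalent descriptions of $\pi_{z'}^\disc(\Phi\,dg)$ (through $\Res M_1$ versus $\Res M_2$) in order to interpret the pairing \eqref{pairingdiscrete} on the tensor product $\pi_{z'}^\disc\hat\otimes\pi_{z'}^\disc$.
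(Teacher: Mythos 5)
Your proof follows essentially the same route as the paper's.

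In part (a), the paper computes $\Res_{z=0}\check f(z,z) = \tfrac12\check f^\dagger(0)$ directly by a short Mellin calculation on $\Rplus$ (showing $\lim_{z\to 0} z\,\check f(-z'+z,z') = \check f^\dagger(z')$ and then restricting to the diagonal). Your derivation via the simple pole of $\check f(z_1,z_2)$ along $\{z_1+z_2=0\}$ and the Jacobian of restricting to $z_1=z_2$ is equivalent and fine; but note that the substantive input — that the residue along that hyperplane is precisely $\check\varphi^\dagger$ — is exactly what the paper's direct computation supplies, whereas the ``$[G]_B$-analogue of Corollary~\ref{constantterm-s-G}'' you cite is only sketched in the paper, so you should really perform the one-line Mellin calculation rather than lean on it. Parts (b) and (c) reproduce the paper's argument: the use of \eqref{diagtoadiag}, \eqref{Mellinisconv}, Definition~\ref{defdiscrete}, the observation that the other three summands of \eqref{constantterm-Psi-G} die after the double-residue projection because they involve $M(z)M(-z)=\Id$, and the symmetrization of the principal-value integral by adjunction.

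One caveat in (b): your direct computation gives $\Res_{z=z'}\widecheck{R_{1/2}K_{\Phi,B}}(-z) = \pi_{z'}^\disc(\Phi\,dg)$ with no minus sign, whereas the paper's proof asserts the identity with a minus sign; and your explanation that the resulting discrepancy is ``absorbed by that definition'' of $\left<\cdot,\cdot\right>_{\pi_{z'}^\disc}$ does not hold up: unfolding \eqref{pairingdiscrete} factor-by-factor shows that $\left<\pi_{z'}^\disc(\Phi\,dg),\widecheck{\varphi_B}^\disc(z',z')\right>_{\pi_{z'}^\disc}$ is \emph{equal to}, not the negative of, the natural pairing of $\pi_{z'}^\disc(\Phi\,dg)\in\pi_{-z'}\hat\otimes\pi_{-z'}$ with the preimage $\check f(z',z')\in\pi_{z'}\hat\otimes\pi_{z'}$ — the preimage-vs-image device in the definition introduces no sign. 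There is a genuine sign to track here that neither your writeup nor the paper's proof explains cleanly; it deserves a careful accounting rather than an appeal to the definition to make it vanish. Also, a small slip: $\pi_{z'}^\disc(\Phi\,dg)$ lives in $\pi_{z'}^\disc\hat\otimes\pi_{z'}^\disc\subset\pi_{-z'}\hat\otimes\pi_{-z'}$, not $\pi_{z'}^\disc\hat\otimes\pi_{-z'}$ as your parenthetical suggests.
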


\begin{proof}
A simple analysis of Mellin transform on $\Rplus$ shows that 
$$ \lim_{z\to 0} z \int_{A} f((a,a)g) \delta^{-1-\frac{z}{2}}(a) da = f^\dagger(g),$$
and by integrating this relation over the action of $\{1\}\times A$ against a character, one obtains:
$$ \lim_{z\to 0} z \check f(-z'+z, z') = \lim_{z\to 0} z \int_A \int_A  f((a,a a')g) \delta^{-1-\frac{z}{2}}(a) da \delta^{-\frac{1+z'}{2}}(a') da'  = $$
$$ = \int_A f^\dagger(a') \delta^{-\frac{1+z'}{2}}(a') da' = \check f(z').$$
Thus, $\Res_{z=0} \check f(z,z) = \frac{1}{2} \check f(0)$, therefore
$$\frac{1}{2} \left<\widecheck{R_{1/2}K_{\Phi,B}}(0), \Res_{z=0} \check f(z,z)\right> = \frac{1}{4} \left<\widecheck{R_{1/2}K_{\Phi,B}}(0),\check f^\dagger(0)\right>.$$

Recalling from \eqref{diagtoadiag} and from \eqref{Mellinisconv} that $\widecheck{R_{1/2}K_{\Phi,B}}(0) = M(0)\widecheck{K_{\Phi,B}}(0) = M(0) \pi_0(\Phi dg)$, this can also be written as $\frac{1}{4}\left< M(0) \pi_0(\Phi dg), \check f^\dagger(0)\right>$; this proves the first claim.

For the second claim, using again the relation \eqref{diagtoadiag}, one sees that the element  $ - \Res_{z=z'}\widecheck{R_{1/2}K_{\Phi,B}}(-z) \in \pi_{-z'}\hat\otimes \pi_{-z'}$ lies in the subspace $\pi_{z'}^\disc\hat\otimes \pi_{z'}^\disc$ and is equal to $\pi_{z'}^\disc(\Phi dg)$. Thus, its pairing with $\check f(z',z')$ depends only on the image of the latter via the quotient $\Res_{z=z'} M_1(z) \cdot \Res_{z=z'} M_2(z) : \pi_{z'} \hat\otimes\pi_{z'} \to \pi_{z'}^\disc\hat\otimes\pi_{z'}^\disc$, and by Definition \ref{defdiscrete}, this is indeed the pairing
$$\left< \pi_{z'}^\disc(\Phi dg), \check f^\disc(z',z') \right>_{\pi_{z'}^\disc}.$$

Replacing $f$ by $\varphi_B$, the image of $\check f^\disc(z',z')$  in $\pi_{z'}^\disc\hat\otimes \pi_{z'}^\disc$ does not change, since the other summands of \eqref{constantterm-Psi-G} will have zero contribution to this residue, since they involve residues of the expression $M(z)M(-z) = \Id$. 

The statement about the continuous term follows from \eqref{diagtoadiag}.

\end{proof}

\subsection{Inner product with the diagonal term $K_{\Phi,B}$.} \label{ssdiagonal}

Now we return to \eqref{Laur-goal}, and analyze the contribution of the summand $K_{\Phi,B}$ of $(K_\Phi)_B$, according to Proposition \ref{constantterm-kernel}. Since both $K_{\Phi,B}$ and $f_s$ have asymptotically finite behavior with respect to the $A^\diag$ action in the direction of the cusp, with $K_{\Phi,B}$ being of moderate growth and $f_s$ being of rapid decay in all other directions, and since $K_{\Phi,B}$ is $A^\diag$-invariant (for the normalized action), we can write, for $s\ne 0$:

$$ \int^*_{[G]_B} K_{\Phi,B}(g) f_s(g) dg = \int_{A^\diag\backslash[G]_B} K_{\Phi,B}(g) \int_{A}^* f_s((a,a)g) \delta^{-1}(a) da dg= $$
\begin{equation}\label{F1} = \frac{1}{2\pi i}\int_{\sigma - i\infty}^{\sigma+ i \infty} \left<\widecheck{K_{\Phi,B}}(-z), \check f_s(-z,z)\right> dz,
\end{equation}
where $\sigma$ is arbitrary. (We will take $\sigma=0$.)

Notice that, for $s\ne 0$, $\check f_s$ is entire when restricted to $(\mathfrak a_\CC^*)^\adiag$ (which, we recall, we identify with $\mathfrak a_\CC^*$ by restriction of the characters to the first factor of $A$). But, of course, as $s\to 0$ its polar hyperplane tends to coincide with $(\mathfrak a_\CC^*)^\adiag$. This becomes clearer if we again identify $\check f_s(z_1,z_2)$ with $\check f(z_1,z_2-s)$, identifying the spaces $\pi_{z}$ and $\pi_{z+s}$ via multiplication by $\Delta^s$. This identification will become important here, since we are studying non-invariant terms, and we will take it for granted from now on. In particular, the derivative $M'(s)$ of $M(s)$ makes sense as a (non-equivariant) linear map from $\pi_s$ to $\pi_{-s}$.

The residue of the Laurent expansion of \eqref{F1} at $s=0$ was computed in Theorem \ref{STF-1}. Now we compute the constant coefficient. It can be written:
$$ \left[\int^*_{[G]_B} K_{\Phi,B}(g) f_s(g) dg \right]_0 = \frac{1}{2\pi i}\int_{- i\infty}^{i \infty} \left<\widecheck{K_{\Phi,B}}(-z), \left.\frac{d}{ds}\right|_{s=0} s \cdot \check f(z,z-s)\right> dz.$$

This last expression depends on $f$, not just on $\varphi= \Psi(f)$. To transform it into one that depends only on $\varphi$, we are going to use relation \eqref{diagtodiag}, to write it as 
$$ \frac{1}{2\pi i} \int_0^{i\infty} \left< \widecheck{K_{\Phi,B}}(-z) , \left.\frac{d}{ds}\right|_{s=0} s\cdot \check f(z,-z-s) + M_1(-z)M_2(z)\left.\frac{d}{ds}\right|_{s=0} s\cdot \check f(-z,z-s)\right> dz.$$

In the last term, we will replace $z-s$ by $z+s$, since in any case the expression is computing the constant coefficient of its Laurent expansion. Thus, the last term inside the inner product will become
$$M_1(-z)M_2(z)\left.\frac{d}{ds}\right|_{s=0} s\cdot \check f(-z,z+s).$$
Notice that this picks out the constant coefficient of $\check f(-z,z+s)$ at $s=0$ and then applies the intertwining operator $ M_1(-z)M_2(z)$. This is different from taking the constant coefficient of $M_1(-z)M_2(z+s)\check f(-z,z+s)$, and to account for that difference we add and subtract the appropriate term, to obtain:
$$ \frac{1}{2\pi i} \int_0^{i\infty} \left(\left< \widecheck{K_{\Phi,B}}(-z) , \left.\frac{d}{ds}\right|_{s=0} s\cdot \check f(z,-z-s) + \left.\frac{d}{ds}\right|_{s=0} s\cdot  M_1(-z)M_2(z+s)\check f(-z,z+s)\right> \right.$$
$$\left. - \left< \widecheck{K_{\Phi,B}}(-z) , \left(\left.\frac{d}{ds}\right|_{s=0} M_1(-z)M_2(z+s)\right) \lim_{s\to 0}s\check f(-z,z+s)\right>\right) dz.$$

The limit $\lim_{s\to 0}s\check f(-z,z+s)$ is simply $ \check f^\dagger (-z) = \check\varphi^\dagger(-z)$, where $f^\dagger=\varphi^\dagger\in \mathcal S(A^\diag\backslash [G],\delta^0)$ is the asymptotic of $f$ (or of $\varphi=\Psi(f)$). Thus we obtain:

\begin{proposition}\label{propPhidiag}
We have
$$ \left[\int^*_{[G]_B} K_{\Phi,B}(g) f_s(g) dg \right]_0  =  $$
$$ = \frac{1}{2\pi i} \int_0^{i\infty} \left( \left< \widecheck{K_{\Phi,B}}(-z) , \left.\frac{d}{ds}\right|_{s=0} s\left(\check f(z,-z-s) +  M_1(-z)M_2(z+s)\check f(-z,z+s)\right)\right> \right. $$
\begin{equation}\left. - \left< \widecheck{K_{\Phi,B}}(-z) , M_1(-z)M_2'(z) \check\varphi^\dagger(-z)\right> \right)dz\end{equation}

\end{proposition}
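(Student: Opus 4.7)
The plan is to carry out explicitly the sequence of manipulations sketched in the text just before the proposition. Starting from \eqref{F1} at $\sigma = 0$ (which is legitimate once $s$ is kept nonzero, so that $\check f_s(-z,z) = \check f(-z, z-s)$ has no pole on the integration contour), I would split $\int_{-i\infty}^{i\infty}$ into $\int_{-i\infty}^{0}$ and $\int_{0}^{i\infty}$, substitute $z \mapsto -z$ in the first half to fold it onto $\int_0^{i\infty}$, and then apply the functional equation \eqref{diagtodiag} together with the adjunction \eqref{Msadjunction} to transfer the intertwining operators. This yields the symmetrized representation
\begin{equation*}
\int^*_{[G]_B} K_{\Phi,B}\cdot f_s = \frac{1}{2\pi i}\int_{0}^{i\infty}\left<\widecheck{K_{\Phi,B}}(-z),\,\check f(z,-z-s) + M_1(-z)M_2(z)\check f(-z,z-s)\right>dz.
\end{equation*}

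Next, I would extract the constant coefficient of the Laurent expansion at $s=0$ by applying the operator $\frac{d}{ds}\big|_{s=0}(s\cdot)$ to the integrand; in a simple pole setting, this operator commutes with the reflection $s\mapsto -s$ (both choices pick out the $s^0$ coefficient), so one can legitimately replace $\check f(-z,z-s)$ by $\check f(-z,z+s)$ inside the derivative. This gives the intermediate expression
$$\tfrac{1}{2\pi i}\int_0^{i\infty}\Big\langle \widecheck{K_{\Phi,B}}(-z),\, \tfrac{d}{ds}\big|_{s=0} s\,\check f(z,-z-s) + M_1(-z)M_2(z)\tfrac{d}{ds}\big|_{s=0}s\,\check f(-z,z+s)\Big\rangle dz.$$

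The final algebraic step is to absorb the $M_2(z)$ into the $s$-derivative so that the answer depends only on $\varphi=\Psi(f)$ (via its constant term), and not on the choice of $f$ itself. Writing
$$M_1(-z)M_2(z) = M_1(-z)M_2(z+s) - M_1(-z)\bigl(M_2(z+s) - M_2(z)\bigr),$$
the first summand produces the desired term $\frac{d}{ds}|_{s=0} s\,M_1(-z)M_2(z+s)\check f(-z,z+s)$, while for the second summand one uses $M_2(z+s)-M_2(z) = sM_2'(z) + O(s^2)$ combined with $\lim_{s\to 0}s\,\check f(-z,z+s) = \check\varphi^\dagger(-z)$ (which is the content of the identification of the polar part of $\check f$ on the singular hyperplane with the Mellin transform of the asymptotic term) to produce exactly the correction $-\left<\widecheck{K_{\Phi,B}}(-z),\,M_1(-z)M_2'(z)\check\varphi^\dagger(-z)\right>$.

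The one place where care is required is the justification of the contour at $\sigma = 0$ and of the pointwise symmetrization: this needs the meromorphic continuation and vertical-strip decay of $\check f(z_1,z_2)$ together with the polynomial growth of $M(z)$, which are precisely what Corollary \ref{constantterm-s-G} (applied to $f$ via Lemma \ref{lemmaAk} and Corollary \ref{corrapiddecay}) and the Harish-Chandra bound \eqref{HCestimate} provide. Once the analytic justification is in place, the rest is the formal Laurent-expansion bookkeeping described above, and I do not anticipate further obstacles beyond verifying that the pairings with $\widecheck{K_{\Phi,B}}(-z)$ remain absolutely integrable in $z$ after the symmetrization.
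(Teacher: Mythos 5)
Your proof follows the paper's argument step by step: take \eqref{F1} at $\sigma=0$ (valid since $s\neq 0$ keeps the pole off the contour), fold the contour onto $[0,i\infty)$ via \eqref{diagtodiag} and the adjunction \eqref{Msadjunction}, reflect $s\mapsto-s$ in one term using the invariance of the $\frac{d}{ds}|_{s=0}(s\cdot)$ operator under this reflection, and add-and-subtract $M_1(-z)M_2(z+s)$ to absorb the intertwining operator into the $s$-derivative, producing the $M'_2(z)\check\varphi^\dagger(-z)$ correction from the leading Laurent coefficient. The only difference from the paper is that you fold the integral before extracting the constant Laurent coefficient rather than after, which is an innocuous reordering.
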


\subsection{Spectral side of the Selberg trace formula}

Combining Propositions \ref{propPhiadiag}, \ref{discreteterms-explication}, and \ref{propPhidiag}, we get:
$$\left[\int^*_{[G]} K_\Phi (g) \Psi(f_s) (g) dg \right]_0  = \frac{1}{4}\left< M(0) \pi_0(\Phi dg), \check \varphi^\dagger(0)\right> +
 \sum_{\Re(z')>0} \left< \pi_{z'}^\disc(\Phi dg), \pi_{z'}^\disc(\varphi dg) \right>$$
$$+ \left(\frac{1}{2\pi i} \int_{0}^{+i\infty} \left<\widecheck{K_{\Phi,B}}(-z), M_1(-z) \check f(-z,-z) + M_2(z) \check f(z, z)\right>dz \right.$$
$$ + \frac{1}{2\pi i} \int_0^{i\infty} \left< \widecheck{K_{\Phi,B}}(-z) , \left.\frac{d}{ds}\right|_{s=0} s\cdot \left(\check f(z,-z-s) +  M_1(-z)M_2(z+s)\check f(-z,z+s )\right)\right> $$
$$- \left. \left< \widecheck{K_{\Phi,B}}(-z) , M_1(-z)M_2'(z) \check\varphi^\dagger(-z)\right> dz\right).$$

We can combine the integrands in the second and third lines into
$$ \left< \widecheck{K_{\Phi,B}}(-z) , 
\left.\frac{d}{ds}\right|_{s=0} s\cdot \left(\check f(z,-z-s) + 
 M_1(-z) \check f(-z,-z-s) + \right.\right.$$
$$ \left.\left.+ M_2(z+s) \check f(z, z+s) + M_1(-z)M_2(z+s)\check f(-z,z+s) \right)\right> = $$
$$ = \left< \widecheck{K_{\Phi,B}}(-z) , 
\left.\frac{d}{ds}\right|_{s=0} s \cdot \widecheck{\varphi_B}(z,-z-s) \right>.$$

Now all the integrands are continuous functions of $\varphi$. This implies:

\begin{theorem}\label{spectral-general-ranktwo}
For $\Phi\in \mathcal S(H(\adele))$ and $\varphi\in \mathcal S^+_0([G])_\pr$ with asymptote $\varphi^\dagger\in \mathcal S(A^\diag\backslash [G],\delta^0)$, we have a decomposition:
$$\left[\left<K_\Phi, \varphi_s \right>\right]_0  = \frac{1}{4}\left< M(0) \pi_0(\Phi dg), \check \varphi^\dagger(0)\right> + \sum_{\Re(z')>0} \left< \pi_{z'}^\disc(\Phi dg), \pi_{z'}^\disc(\varphi dg) \right>$$
\begin{eqnarray}
+\frac{1}{2\pi i} \int_0^{i\infty} \left(\left< \widecheck{K_{\Phi,B}}(-z) , 
\left.\frac{d}{ds}\right|_{s=0} s \cdot \widecheck{\varphi_B}(z,-z-s) \right> \right. \nonumber \\ \left.- \left< \widecheck{K_{\Phi,B}}(-z) , M_1(-z)M_2'(z) \check\varphi^\dagger(-z)\right> \right)dz.
\end{eqnarray}
\end{theorem}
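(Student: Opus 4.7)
The plan is to prove the formula first when $\varphi = \Psi(f)$ for some $f \in \mathcal{S}^+_0([G]_B)$, and then extend by continuity and density, using Proposition \ref{Psiprop-sG} which says that pseudo-Eisenstein series are dense in $\mathcal{S}^+_0([G])_\pr$. The pseudo-Eisenstein case is essentially already assembled in the preceding discussion: Lemma \ref{differentvariations} converts $[\int^*_{[G]} K_\Phi \cdot \Psi(f)_s]$ into $[\int^*_{[G]_B} (K_\Phi)_B \cdot f_s]$ via adjunction; Proposition \ref{constantterm-kernel} decomposes $(K_\Phi)_B = K_{\Phi,B} + R_{1/2}K_{\Phi,B}$; Propositions \ref{propPhiadiag}, \ref{discreteterms-explication}, and \ref{propPhidiag} handle each piece. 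So the first step is simply to record this combination and carry out the algebraic simplification indicated in the paragraph just before the theorem, fusing the four integrands on the unitary axis into the single expression $\left.\frac{d}{ds}\right|_{s=0} s\cdot \widecheck{\varphi_B}(z,-z-s)$ using the formula \eqref{constantterm-Psi-G} for the constant term of a pseudo-Eisenstein series.

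Second, I would verify that every term on the right-hand side depends only on $\varphi$, not on the choice of $f$ with $\Psi(f)=\varphi$. The discrete terms $\frac{1}{4}\langle M(0)\pi_0(\Phi dg), \check\varphi^\dagger(0)\rangle$ and $\langle \pi_{z'}^\disc(\Phi dg), \pi_{z'}^\disc(\varphi dg)\rangle$ are clearly intrinsic to $\varphi$: the asymptote $\varphi^\dagger$ is defined by the short exact sequence \eqref{short-SsG}, and the discrete projection $\pi_{z'}^\disc(\varphi dg)$ is defined independently of $f$ by Definition \ref{defdiscrete}. The continuous integrand is expressed through $\widecheck{\varphi_B}(z,-z-s)$ and $\check\varphi^\dagger(-z)$, which by Corollary \ref{constantterm-s-G} and its meromorphy statement depend only on $\varphi$, not on $f$.

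Third, I would establish that both sides extend to continuous functionals on $\mathcal{S}^+_0([G])_\pr$. For the left side, this is Lemma \ref{constantcoeffs} together with continuity of the map $\Phi \mapsto K_\Phi$ (Proposition \ref{propasfinite}). For the right side, the discrete terms are continuous in $\varphi$ by continuity of the asymptote map and of the discrete spectral projections (the latter being finite-dimensional spectral data). The main obstacle will be the continuous integral: one needs absolute convergence and continuity in $\varphi$ uniformly on the unitary axis $\Re(z)=0$. Here, $\widecheck{K_{\Phi,B}}(-z) = \pi_{-z}(\Phi dg)$ is a Hilbert--Schmidt operator whose norm decays rapidly in $|\Im(z)|$ (by Mellin--Schwartz estimates applied to the function $K_{\Phi,B} \in \Pi_0$, Proposition \ref{constantterm-kernel}); the intertwining operators $M_1(-z), M_2(z), M_2'(z)$ are of polynomial growth in bounded vertical strips on $\Re(z)\ge 0$ (by the Harish-Chandra estimate \eqref{HCestimate}); and $\widecheck{\varphi_B}$ together with its Laurent coefficients at the pole $s=0$ are of polynomial growth by Corollary \ref{constantterm-s-G}, continuously in $\varphi$. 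Pairing rapid decay against polynomial growth yields absolute convergence and continuity.

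Finally, the theorem follows: the formula holds on the dense subspace of pseudo-Eisenstein series by the explicit computation, and both sides are continuous functionals of $\varphi \in \mathcal{S}^+_0([G])_\pr$, so the formula holds for all such $\varphi$. The trickiest technical point, and the one I would expect to take most care, is justifying the contour-shift and residue-collection bookkeeping in the derivation for pseudo-Eisenstein series in a way compatible with the continuity extension---in particular, confirming that the rearrangement that converts $\check f$-expressions into $\widecheck{\varphi_B}$-expressions is valid not just formally on Laurent germs at $s=0$, but as an equality of the two corresponding continuous functionals of $\varphi$.
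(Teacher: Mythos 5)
Your approach — establish the formula for pseudo-Eisenstein series, then argue both sides are continuous functionals on $\mathcal S^+_0([G])_\pr$ and invoke density — differs substantially from the paper's proof, and the place where you yourself flag the hard point (\emph{continuity of the continuous integral for general $\varphi$}) is exactly where the argument has a genuine gap.

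Your step 3 asserts that $\widecheck{\varphi_B}$ ``together with its Laurent coefficients at the pole $s=0$ are of polynomial growth by Corollary \ref{constantterm-s-G}, continuously in $\varphi$.'' Corollary \ref{constantterm-s-G} gives the pole structure and the location of singularities of the Mellin transform of $\varphi_B$, but it says nothing about growth along the imaginary direction of the unitary axis. Since $\widecheck{\varphi_B}(s) = \mathcal E^*_{-s}\varphi$ (the adjoint of the Eisenstein morphism applied to $\varphi$), a uniform vertical-strip estimate on $\widecheck{\varphi_B}$ is equivalent to an estimate on Eisenstein series in vertical strips — and the paper deliberately avoids proving such bounds, as explained in the subsection on bounds on vertical strips: only the Harish-Chandra bound \eqref{HCestimate} on the intertwining operators is available, not bounds on the full constant-term map. (In rank one, Theorem \ref{Plancherelrankone} gets around this by a positivity argument and the Casimir trick, which does give absolute convergence of the continuous integral for general $\varphi$; but that trick exploits the hermitian form $\left<\widecheck{\varphi_B}(s),\overline{\widecheck{\varphi_B}(s)}\right>$, which has no obvious analogue for the rank-two integrand $\left< \widecheck{K_{\Phi,B}}(-z), \frac{d}{ds}\big|_{s=0} s\cdot\widecheck{\varphi_B}(z,-z-s)\right>$ because of the derivative in $s$.)

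The paper's actual proof sidesteps all of this with a short reduction. By Proposition \ref{Psiprop-sG}, any $\varphi\in\mathcal S^+_0([G])_\pr$ can be written as $\Psi(f) + (\varphi-\Psi(f))$ with $\varphi-\Psi(f)\in\mathcal S([G])$. The pseudo-Eisenstein case is already done. For the Schwartz correction, the key observation is
$$\left< K_\Phi, \varphi_1 \otimes \varphi_2\right>_{[G]} = \left<\varphi_1, \Phi\star \varphi_2\right>_{[H]},$$
so the rank-two spectral identity for Schwartz $\varphi$ is nothing other than the rank-one Plancherel decomposition of Theorem \ref{Plancherelrankone}, which is already proved. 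This reduction is what makes the proof go through without any estimates on Eisenstein series. It is worth internalizing: in general the strategy you propose (\emph{prove on a dense subspace and argue by continuity}) is the natural one, and the paper does use it in the rank-one case — but there the continuity is established by the Bernstein-center/positivity argument inside the proof of Theorem \ref{Plancherelrankone}, and that argument does not transplant directly to the rank-two integrand. The paper avoids redoing it by reducing the Schwartz part to rank one.

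Your step 2 (independence of $f$) and the first paragraph's algebraic assembly are fine and match the paper's preparations; the defect is only in the method of extension from pseudo-Eisenstein series to the whole of $\mathcal S^+_0([G])_\pr$.
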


\begin{proof}
 When $\varphi = \Psi(f)$ is a pseudo-Eisenstein series, this has already been established. In the general case, by Proposition \ref{Psiprop-sG}, we may always subtract a pseudo-Eisenstein series so that the difference is a Schwartz function. This reduces us to the case $\varphi\in \mathcal S([G]) = \mathcal S([H])\hat\otimes\mathcal S([H])$. But this is the case of the inner product in rank one, since for $\varphi_1, \varphi_2\in \mathcal S([H])$ we have
 $$\left< K_\Phi, \varphi_1 \otimes \varphi_2\right>_{[G]} = \left<\varphi_1, \Phi\star \varphi_2\right>_{[H]}.$$
 Thus, the result is a special case of Theorem \ref{Plancherelrankone}.
 
\end{proof}

Finally, specializing to $\Phi = \Phi_1$, $\varphi = (K_{\Phi_2})_\pr$, where we have written $$K_{\Phi_2} = (K_{\Phi_2})_\pr \oplus (K_{\Phi_2})_\npr$$
according to Proposition \ref{Psiprop-sG}, the last term disappears. Indeed:
\begin{lemma}
 When $\varphi = (K_{\Phi_2})_\pr$, we have $\widecheck{(\varphi)_B}(z,-z-s) = 0$ for generic values of $s$.
\end{lemma}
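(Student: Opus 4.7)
The plan is to reduce the claim to a support statement on the Mellin transform of the $B\times B$-constant term. First I would show that any $\psi\in \mathcal S([G])_\npr$ has $\psi_B = 0$: such a $\psi$ can be decomposed (by definition of $\npr$) as $\psi = \psi_1 + \psi_2$ with $(\psi_1)_{B\times H} = 0$ and $(\psi_2)_{H\times B} = 0$, and since the $B\times B$-constant term can be computed by integrating out $N$ in either order and hence factors through either maximal-parabolic constant term, $(\psi_i)_B = 0$ for each $i$; by linearity, $\psi_B = 0$. Applied to the decomposition $K_{\Phi_2} = (K_{\Phi_2})_\pr + (K_{\Phi_2})_\npr$ of Proposition \ref{Psiprop-sG}, this yields $((K_{\Phi_2})_\pr)_B = (K_{\Phi_2})_B$.

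Next, by Proposition \ref{constantterm-kernel}, the right-hand side equals $K_{\Phi_2,B} + R_{1/2}K_{\Phi_2,B}$, where the first summand is $A^\diag$-invariant and the second is $A^\adiag$-invariant under the normalized actions. With the conventions fixed just before that proposition, the Mellin transforms of these two pieces are honest smooth one-variable sections, living only over the character slices $\mathfrak a_\CC^{*,\adiag} = \{s_1+s_2 = 0\}$ and $\mathfrak a_\CC^{*,\diag} = \{s_1 = s_2\}$ respectively, and extended by zero to the ambient parameter space $\mathfrak a_\CC^*\times\mathfrak a_\CC^*$.

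I would then restrict to the curve $(s_1,s_2) = (z,-z-s)$. On this curve $s_1+s_2 = -s$ is nonzero for every $s\ne 0$, so $\widecheck{K_{\Phi_2,B}}(z,-z-s) \equiv 0$ in $z$. The condition $s_1 = s_2$ reduces to the single isolated value $z = -s/2$; for generic $s$ (in particular any $s\notin i\RR$) this point lies off the integration contour $i\RR_{\ge 0}$ appearing in Theorem \ref{spectral-general-ranktwo}, so $\widecheck{R_{1/2}K_{\Phi_2,B}}(z,-z-s)$ vanishes there as well. Summing gives $\widecheck{\varphi_B}(z,-z-s) = 0$ for generic $s$, as claimed.

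The one point that requires care -- and the main (mild) obstacle -- is treating the Mellin transforms of the two constant-term pieces as genuine smooth sections on one-dimensional slices rather than as distributions on the ambient parameter space with hidden ``delta-like'' contributions off those slices. This is legitimate because of the explicit identifications in Proposition \ref{constantterm-kernel}, and because $(K_{\Phi_2})_\npr$, being a Schwartz function on $[G]$, cannot introduce any extra distributional contribution to the $B\times B$-constant term.
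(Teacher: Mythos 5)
Your opening reduction is correct and matches the logic the paper needs: since the $B\times B$-constant term factors through either maximal-parabolic constant term, $\psi_B = 0$ for $\psi\in\mathcal S([G])_\npr$, hence $((K_{\Phi_2})_\pr)_B = (K_{\Phi_2})_B = K_{\Phi_2,B}+R_{1/2}K_{\Phi_2,B}$. But from there your argument takes a ``support'' route that is genuinely different from the paper's, and the crucial step in that route does not hold up.

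The gap is the assertion that the Mellin transforms of $K_{\Phi_2,B}$ and $R_{1/2}K_{\Phi_2,B}$ are ``honest smooth one-variable sections \ldots extended by zero to the ambient parameter space.'' The Mellin transform of a function invariant under a one-parameter subgroup is not a two-variable function that vanishes off a slice; it is a distribution of delta type supported on that slice (the integral along the invariant direction is divergent, not zero). Evaluating such an object at $(z,-z-s)$ with $s\ne 0$ is not well-defined, and ``extension by zero'' is precisely the operation one is not entitled to. What $\widecheck{\varphi_B}(z,-z-s)$ actually means for $\varphi = (K_{\Phi_2})_\pr$ is the value of the continuous functional defined via pseudo-Eisenstein series in Theorem \ref{spectral-general-ranktwo} -- i.e., the limit of $\check f_n(z,-z-s) + M_1(-z)\check f_n(-z,-z-s)+M_2(z+s)\check f_n(z,z+s)+M_1(-z)M_2(z+s)\check f_n(-z,z+s)$ as $\Psi f_n\to\varphi$ -- and this limit does not obviously split into separate ``Mellin transform of $K_{\Phi_2,B}$'' and ``Mellin transform of $R_{1/2}K_{\Phi_2,B}$'' contributions, because the four-term pseudo-Eisenstein decomposition of the constant term does not converge termwise to the two-term kernel decomposition. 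Your closing sentence acknowledges the delta-distribution issue but the justification offered (the explicit form of the constant term, and $(K_{\Phi_2})_\npr$ being Schwartz) does not touch it; it only re-establishes the identification of $\varphi_B$, not the legitimacy of ``extension by zero.''

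The paper's own proof avoids all of this with a one-line representation-theoretic observation: $\Phi_2\mapsto\widecheck{\varphi_B}(z,-z-s)$ is a continuous $G$-equivariant morphism $\mathcal S(H(\adele))\to\pi_z\hat\otimes\pi_{-z-s}$, and since the Schwartz space of the group $H$ only admits nonzero equivariant maps onto tensor products of the form $\pi\hat\otimes\tilde\pi$, the map vanishes whenever there is no $H$-invariant pairing between $\pi_{-z}$ and $\pi_{z+s}$, which is the case for generic $s$ (since $\pi_{z+s}\not\simeq\pi_z$ unless $s=0$ or $s=-2z$). You should replace your support argument with this: it is shorter, it never needs to make sense of the Mellin transform of a homogeneous function at an off-slice point, and it explains \emph{why} the answer is zero rather than verifying it by a computation that requires an unjustified regularization.
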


\begin{proof}
 Indeed, the map $\Phi_2 \mapsto \widecheck{K_{\Phi_2,B}}(z,-z-s)$ is a morphism $\mathcal S(H(\adele))\to \pi_z\hat\otimes \pi_{-z-s}$, which has to be zero for generic $s$ because there is no non-zero invariant pairing between (the duals) $\pi_{-z}$ and $\pi_{z+s}$.
\end{proof}

The term $\widecheck{\varphi_B}^\disc(z',z')$ of \eqref{discretecontr} can be identified with $\pi_{z'}^\disc(\Phi_2 dg)$ under Definition \ref{defdiscrete}. 

Moreover, under the map $\pi_z\hat\otimes \pi_{-z}\to \End(\pi_z)$, and given that the adjunction formula \eqref{Msadjunction} extends to $M'(z)$, the element 
$$M_1(-z)M_2'(z) \widecheck{(K_{\Phi_2})_B^\dagger}(-z) = M_1(-z)M_2'(z)\widecheck{K_{\Phi_2,B}}(-z) \in \pi_z\hat\otimes \pi_{-z}$$ becomes the operator $$M(-z) \pi_{-z}(\Phi_2 dg) M'(z) = \pi_z(\Phi_2 dg) M(-z) M'(z) \in \End(\pi_z).$$
Finally, one easily sees that
$$\left< (K_{\Phi_1})_\npr, (K_{\Phi_2})_\npr\right>_{[G]} = \sum_{\pi \in \hat H^\Aut_\cusp}
 \left< \pi(\Phi_1 dg), \pi(\Phi_2 dg)\right>.$$
Thus, we obtain the spectral side of the non-invariant trace formula:
\begin{theorem}\label{spectral-final}
For $\Phi_1, \Phi_2\in \mathcal S(H(\adele))$ we have a decomposition:
$$\left[\left<K_{\Phi_1}, (K_{\Phi_2})_s\right>\right]_0  =  \sum_{\pi \in \hat H^\Aut_\cusp}
 \left< \pi(\Phi_1 dg), \pi(\Phi_2 dg)\right>+ $$
$$ +  \sum_{\Re(z')>0} \left< \pi_{z'}^\disc(\Phi_1 dg), \pi_{z'}^\disc(\Phi_2 dg) \right>+ \frac{1}{4}\left< M(0) \pi_0(\Phi_1 dg), \pi_0(\Phi_2 dg) \right> $$
\begin{equation}\label{spectral-STF}
- \frac{1}{2\pi i} \int_0^{i\infty} \left< \pi_{-z}(\Phi_1 dg),  \pi_z(\Phi_2 dg) M(-z) M'(z)\right> dz
\end{equation}
\end{theorem}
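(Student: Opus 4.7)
The plan is to reduce Theorem \ref{spectral-final} to the already-established Theorem \ref{spectral-general-ranktwo} by choosing $\varphi = (K_{\Phi_2})_{\pr}$ in its principal Eisenstein part. First I would use the decomposition of Proposition \ref{Psiprop-sG} to write
$$K_{\Phi_2} = (K_{\Phi_2})_{\pr} \oplus (K_{\Phi_2})_{\npr},$$
noting that $(K_{\Phi_2})_{\npr} \in \mathcal S([G])_{\npr}$ (so its inner product with anything in $\mathcal S^+_0([G])$ is absolutely convergent, with no Laurent expansion to worry about), while $(K_{\Phi_2})_{\pr} \in \mathcal S^+_0([G])_{\pr}$ carries all the asymptotic data $\varphi^\dagger$ of $K_{\Phi_2}$. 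Linearity of the germ $[\cdot]_0$ will let me treat the two parts separately.

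For the non-principal part, since $(K_{\Phi_2})_{\npr}$ is Schwartz in both variables, the pairing factors into a rank-one inner product: writing $\varphi_1 \otimes \varphi_2 \in \mathcal S([H])\hat\otimes\mathcal S([H])$, we have $\left<K_{\Phi_1}, \varphi_1\otimes\varphi_2\right>_{[G]} = \left<\varphi_1, \Phi_1 \star \varphi_2\right>_{[H]}$. Applying the $L^2$-decomposition to the cuspidal part of both functions, and using that only the cuspidal summand survives (since the non-principal Eisenstein contributions on both sides are paired against the cuspidal spectrum by orthogonality), I get the first term
$$\sum_{\pi \in \hat H^\Aut_\cusp}\left<\pi(\Phi_1 dg), \pi(\Phi_2 dg)\right>.$$

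For the principal part, I would plug $\varphi = (K_{\Phi_2})_{\pr}$ directly into Theorem \ref{spectral-general-ranktwo}. The lemma stated just before Theorem \ref{spectral-final} shows that $\widecheck{\varphi_B}(z,-z-s) = 0$ for generic $s$, because any morphism $\mathcal S(H(\adele))\to \pi_z\hat\otimes\pi_{-z-s}$ must vanish generically (no nonzero invariant pairing between $\pi_{-z}$ and $\pi_{z+s}$). This kills the $\left.\frac{d}{ds}\right|_{s=0} s \cdot \widecheck{\varphi_B}(z,-z-s)$ term entirely, leaving only the contribution involving $M_1(-z)M_2'(z)\check\varphi^\dagger(-z)$.

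The remaining work is the identification of terms. Via Proposition \ref{constantterm-kernel} and the embedding \eqref{HSsubspace}, $\widecheck{K_{\Phi_2,B}}(-z) = \pi_{-z}(\Phi_2 dg)$ as an element of $\pi_{-z}\hat\otimes\pi_z$, so the integrand becomes the pairing of $\pi_{-z}(\Phi_1 dg)$ with the composition $\pi_z(\Phi_2 dg)M(-z)M'(z)$ (using the adjunction \eqref{Msadjunction} extended to derivatives, together with $\pi_{-z}(\Phi_2 dg) M(z) = M(z) \pi_z(\Phi_2 dg)$). Similarly, the discrete residue terms $\widecheck{\varphi_B}^\disc(z',z')$ get identified with $\pi_{z'}^\disc(\Phi_2 dg)$ under Definition \ref{defdiscrete}, and $\check\varphi^\dagger(0) = \pi_0(\Phi_2 dg)$ on the nose (using that $\varphi^\dagger = K_{\Phi_2,B}$ per Proposition \ref{constantterm-kernel} and \eqref{Mellinisconv}). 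The main obstacle I anticipate is purely bookkeeping: carefully matching the intertwining-operator conventions and the orientation of the Hilbert--Schmidt pairings so that the sign in front of the contour integral and the factor $\tfrac14$ on the $M(0)$-term come out correctly; once these identifications are made, the theorem follows by direct substitution.
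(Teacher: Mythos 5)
Your proposal is correct and follows essentially the same route as the paper's own proof: decompose $K_{\Phi_2}$ into its principal and non-principal parts via Proposition \ref{Psiprop-sG}, feed the principal part into Theorem \ref{spectral-general-ranktwo} where the generic-$s$ vanishing lemma kills the $\widecheck{\varphi_B}(z,-z-s)$ term, identify $\check\varphi^\dagger$, the residue terms, and the $M'$-term with the corresponding operators via \eqref{Mellinisconv}, Definition \ref{defdiscrete} and the adjunction \eqref{Msadjunction}, and account for the non-principal part by a rank-one reduction yielding the cuspidal sum. The one place where your justification is slightly imprecise is the claim that only the cuspidal summand survives in the pairing of the non-principal parts: the correct reason is that the $\cusp\otimes\Eis$ and $\Eis\otimes\cusp$ components of $K_{\Phi_i}$ \emph{vanish} (because the convolution operator $\pi(\Phi_i\,dg)$ preserves the cuspidal and Eisenstein subspaces), not that they pair to zero by orthogonality; but the paper itself dispatches this with ``one easily sees'', so this is a matter of wording rather than a gap.
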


Of course, this is the same as \eqref{Selberg-spectral} if we set 
$$\Phi(g)= \Phi_1^\vee\star \Phi_2 (g)= \int_{H(\adele)}  \Phi_1(h) \Phi_2(hg) dh.$$

\section{The geometric side}

For the geometric side I have essentially nothing new to add, so I will just give a brief overview. Once one tries to compute \eqref{Laur-goal}:
$$\TF(\Phi_1\otimes\Phi_2)=\left[\left< K_{\Phi_1}, (K_{\Phi_2})_s\right>_{[G]}^*\right]=\left[\int^*_{[G]} K_{\Phi_1} (K_{\Phi_2})_s\right]$$
by expanding the definitions of $K_{\Phi_1}$ and $K_{\Phi_2}$, one arrives at the following expression, where we have set $\Phi(g)= \Phi_1^\vee\star \Phi_2 (g)$:

\begin{equation}\label{geom}
\left[\int_{[H]}^* \sum_{\gamma\in H(k)} {\Phi}(g^{-1}\gamma g) \Delta_U^s(g) dg\right] = \sum_{\mathfrak o} \left[\int_{[H]}^* \sum_{\gamma\in \mathfrak o} {\Phi}(g^{-1}\gamma g) \Delta_U^s(g) dg\right],
\end{equation}
where $\Delta_U$ is the function which is equal to $\Delta$ in a Siegel neighborhood  $U$ of the cusp in $[H]$, and $1$ otherwise, and $\mathfrak o$ runs over all equivalence classes in $H(k)$, where two elements are equivalent if their Jordan decompositions have conjugate semisimple parts. Equivalently (for the group $\PGL_2$), each $\mathfrak o$ corresponds to a $k$-rational point on $\Dfrac{H}{H} = \spec k[H]^{H-\operatorname{conj}}$ -- except for the traceless classes, which all correspond to the same point of $\Dfrac{H}{H}$.

The two-step Laurent expansion represented by the brackets $[\, \, ]$ depends continuously on the integrand without the $\Delta_U^s$ factor (which is an element of $\mathcal S^+_{\frac{1}{2}}([H])$), which in turn is of rapid decay in the parameter $\mathfrak o \in \Dfrac{H}{H}$ (because $\Phi$ is of rapid decay), allowing us to put the brackets inside of the $\mathfrak o$-sum. 
More precisely, the $\mathfrak o$-summand of the integrand is of rapid decay if $\mathfrak o$ is regular elliptic, and asymptotically finite with exponent $1$ (i.e., normalized character $\delta^\frac{1}{2}$ -- unnormalized $\delta^1$) if $\mathfrak o$ is hyperbolic, or the equivalence class of the identity. 

An elliptic class $\mathfrak o$ contributes its orbital integral, multiplied by the volume of stabilizers:
$$ \left[\int_{[H]}^* \sum_{\gamma\in \mathfrak o} {\Phi}(g^{-1}\gamma g) \Delta_U^s(g) dg\right] =  \int_{[H]} \sum_{\gamma\in \mathfrak o} {\Phi}(g^{-1}\gamma g) dg = $$
$$ =\Vol([H_\xi]) \int_{H_\xi(\adele)\backslash H(\adele)} \Phi(g^{-1}\xi g) dg,$$
where $\xi\in \mathfrak o$ is a representative.

Now we analyze the other classes. 
 It is easier on the geometric side to work with truncation, so we use Lemma \ref{constantcoeffs} which here translates to the statement that the coefficients $a_{-1}$ and $a_0$ of the Laurent expansion of the $\mathfrak o$-summand in \eqref{geom} are such that the integral
\begin{equation}
 \int_{[H]\smallsetminus U_T} \sum_{\gamma\in \mathfrak o} {\Phi}(g^{-1}\gamma g)  dg
\end{equation}
is equal to 
\begin{equation}\label{expansion}-a_{-1} T + a_0 + o(e^{-NT})
 \end{equation}
 for every $N\gg 0$, as $T\to \infty$, where $U_T$ is the neighborhood of infinity used in \eqref{reltotruncation}.

Let us study the case of $\mathfrak o=$ strongly regular hyperbolic (i.e., centralizers are split tori) and choose a point $\gamma\in \mathfrak o(k)$ with $M:=H_\gamma \simeq \Gm$, so that its orbit is $\mathfrak o = M\backslash H(k)$. The basic goal is to obtain an expression for the coefficients of the asymptotic expansion \eqref{expansion} which is \emph{local}.

For notational simplicity, but also to avoid confusion, we let $f$ be the restriction of $\Phi$ to $\mathfrak o$, denoted as a function on $M\backslash H(\adele)$; so, we need to evaluate the asymptotic expansion \eqref{expansion} of 
 \begin{equation}\label{geom-truncation}
 \int_{[H]\smallsetminus U_T} \sum_{\gamma\in M\backslash H(k)} f(\gamma g)  dg.
\end{equation}

We described above the asymptotic behavior of the integrand, but we will study it again using its constant term. As usual, we decompose the corresponding calculation into a sum over $B$-orbits:
$$ \left(\sum_{\gamma\in M\backslash H(k)} f(\gamma g)\right)_B = \sum_{\xi \in M(k)\backslash H(k)/B(k)} \sum_{\alpha\in A(k)/A_\xi (k)} \int_{N_\xi\backslash N(\adele)} f(\xi n \alpha g) dn,$$
where $A_\xi$ is the image of $\xi M \xi^{-1} \cap B$ in $A=B/N$.

The first sum is here finite -- in higher rank it is infinite. In any case, it can easily be seen that the summands are of rapid decay, unless $M\xi$ belongs to $B$. (Compare with Example \ref{torus}). Thus:
$$ \left(\sum_{\gamma\in \mathfrak o} f(\gamma g)\right)_B  \sim \sum_{w}  \int_{N(\adele)} f(w  n g) dn,$$
where again $\sim$ means that their difference is a rapidly decaying function in a neighborhood of the cusp in $[H]_B$, and the sum on the right is over double $M\backslash H/B$-cosets (over $k$) with $w^{-1}Mw\subset B$ -- they form a torsor for the Weyl group of $M$, hence the notation. 

Thus, the coefficients of the asymptotic expansion \eqref{expansion} will be the same if instead of truncating we subtract the above asymptotic expansion, restricted to the neighborhood $U_T'$ of the cusp in $B(k)\backslash H(\adele)$ which corresponds to (and maps isomorphically to) $U_T\subset [H]$:
\begin{equation}\label{geom-removeasympt}
 \int_{[H]} \left(\sum_{\gamma\in M\backslash H(k)} f(\gamma g)  -1_{U_T'}(g)\cdot  \sum_{w}  \int_{N(\adele)} f(w  n g) dn\right ) dg.
\end{equation}
We are slightly abusing notation here, since the function $g\mapsto \int_{N(\adele)} f(w  n g) dn$ lives on $[H]_B$ (and hence also on $B(k)\backslash H(\adele)$), not on $[H]$. However, identifying ${U_T}$ and $U_T'$, the restriction of this function to $U_T'$ makes sense as a function on $[H]$.

We now partition $M(k)\backslash H(\adele)$ into three sets, two of them denoted by $V^w_T$, where $w$ runs over $M(k)\backslash H(k)/B(k)$-cosets with $w^{-1}Mw\subset B$, as in the above sum, and their complement by $V_T^\rest$. The sets $V^w_T$ are defined in terms of the maps $M(k)\backslash H(\adele)\to B(k)\backslash H(\adele)$ defined by $w$: 
$$M(k)\backslash H(\adele) \ni x \mapsto w^{-1}x \in B(k)\backslash H(\adele),$$
and are defined as the preimages of $U_T'$ under these maps. 

\begin{lemma}
 For $T\gg 0$, the sets $V^w_T$, $w\in W$, are disjoint. The set $V^\rest_T$ has compact fibers under the map $M(k)\backslash H(\adele)\to M\backslash H(\adele)$.
\end{lemma}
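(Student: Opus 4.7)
The plan is to derive both claims from a quantitative version of the observation that the two distinguished cosets $w_1 B,w_2 B$ are exactly the $M$-fixed points on the flag variety $H/B$, together with an Iwasawa estimate furnished by \eqref{condBK}. The estimate in question is the inequality
\[\Delta(n\beta)\le \Delta(\beta)^{-1}\quad\text{for every }\beta\in B(\adele)\text{ and }n\text{ a Weyl representative,}\]
which follows by writing $\beta=au$ with $a$ diagonal and $u$ unipotent, using $nan^{-1}=a^{-1}$ to rewrite $n\beta=a^{-1}(nu)$, and invoking $\Delta(nu)\le 1$ --- precisely condition \eqref{condBK}, since $n=n^{-1}$ in $W$ for $\PGL_2$.

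Step 1 (Disjointness). Suppose $x\in V^{w_1}_T\cap V^{w_2}_T$, and pick a single lift $\tilde x\in H(\adele)$. By definition of $V^{w_i}_T$, there exist $\gamma_i\in B(k)$, $b_i\in B(\adele)$ with $\Delta(b_i)>e^T$, and $k_i\in K$ such that $w_i^{-1}\tilde x=\gamma_i b_i k_i$. Setting $n:=w_1^{-1}w_2$ and eliminating $\tilde x$ gives
\[\gamma_1^{-1}n\gamma_2 b_2 = b_1(k_1 k_2^{-1})\quad\text{in }H(\adele).\]
Apply $\Delta$. On the right, $\Delta(b_1(k_1 k_2^{-1}))=\Delta(b_1)$ by right-$K$-invariance. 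On the left, left-$B(\adele)$-equivariance together with $\Delta(\gamma)=1$ for $\gamma\in B(k)$ (by the product formula applied to $\delta^{1/2}(\gamma)\in k^\times$) gives $\Delta(\gamma_1^{-1}n\gamma_2 b_2)=\Delta(n\gamma_2 b_2)$; the preliminary estimate then yields $\Delta(n\gamma_2 b_2)\le \Delta(\gamma_2 b_2)^{-1}=\Delta(b_2)^{-1}$. Combining, $\Delta(b_1)\Delta(b_2)\le 1$, contradicting $\Delta(b_i)>e^T$ for any $T>0$.

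Step 2 (Compact fibers). Fix $[\tilde x]\in M\backslash H(\adele)$, and a lift $\tilde x\in H(\adele)$. The fiber through $x=[M(k)\tilde x]$ is parametrized by $m\mapsto m\tilde x$, $m\in[M]$. Decompose $[M]=[M]^1\times A_\infty$, with $[M]^1$ the compact group of norm-one idele classes and $A_\infty\simeq \Rplus$ given by the idele norm $|m|$. I claim that $m\tilde x\in V^{w_1}_T$ for $|m|$ sufficiently large (depending on $T$ and on the compact data), and symmetrically $m\tilde x\in V^{w_2}_T$ for $|m|$ sufficiently small. Direct computation of the Iwasawa decomposition of $w_1^{-1}m\tilde x$, after absorbing $w_1^{-1}M(k)w_1\subset B(k)$ into the $B(k)$-reduction, shows that the $\Delta$-size of the resulting Iwasawa-$B$-part grows like $|m|^\kappa$ times a quantity bounded uniformly on compact subsets, where $\kappa>0$ is the unique positive $B$-weight of the torus $w_1^{-1}Mw_1$. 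The $w_2$ case is symmetric with weight $-\kappa$. Therefore $V^\rest_T$ intersected with the $[M]$-fiber is contained in $[M]^1\times [C_T^{-1},C_T]$, and so is compact.

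The main obstacle is making the growth rate in Step 2 uniform as $[\tilde x]$ varies: one needs a lower bound, uniform on compact subsets of $M\backslash H(\adele)$, for the ``distance'' of the $B$-coset $\tilde x B$ from the two fixed cosets $w_i B$, since this controls the proportionality constant in the $|m|^\kappa$-asymptotic. This uniformity is automatic on compacta by continuity and the fact that no point of the compact set projects to a fixed coset. Step 1, by contrast, is a clean formal consequence of \eqref{condBK} and the product formula, and in fact yields disjointness for all $T>0$, not only $T\gg 0$.
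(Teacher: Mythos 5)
The paper states this lemma without proof --- it is treated as an elementary observation --- so there is no argument of the author's to compare against; the question is only whether yours is sound, and in substance it is, with two points worth tightening.

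In Step 1, the element $n=w_1^{-1}w_2$ need not itself be a Weyl representative: the double cosets $MwB$ determine $w_1,w_2$ only up to $M(k)$ on the left and $B(k)$ on the right, so a priori $n\in B(k)\, s\, B(k)$ where $s$ is a fixed representative of the nontrivial Weyl element. This does not break the argument --- you may normalize $w_2=w_1 s$ (the sets $V^w_T$ depend only on the double coset), or just note that the estimate $\Delta(n\beta)\le\Delta(\beta)^{-1}$ extends from $n=s$ to all of $B(k)\, s\, B(k)$ because the extra $B(k)$-factors contribute trivially to $\Delta$ by the product formula --- but it should be addressed. The resulting inequality $\Delta(b_1)\Delta(b_2)\le1$ rules out intersection as soon as $T>0$, consistent with the remark in the proof of Proposition~\ref{reghyper} that condition \eqref{condBK} makes the truncation clean already at $T=0$.

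Step 2 is correct but over-engineered, and the ``main obstacle'' you flag at the end does not exist. Since $w_i^{-1}Mw_i\subset B$, left-$B(\adele)$-equivariance of $\Delta$ gives the \emph{exact} identity
$$\Delta(w_i^{-1}m\tilde x)=\delta^{1/2}(w_i^{-1}mw_i)\,\Delta(w_i^{-1}\tilde x),\qquad m\in M(\adele),$$
obtained simply by writing $w_i^{-1}m\tilde x=(w_i^{-1}mw_i)(w_i^{-1}\tilde x)$ with the first factor in $B(\adele)$. There is no Iwasawa computation, no asymptotic and no proportionality constant to control: the two characters $m\mapsto\delta^{1/2}(w_i^{-1}mw_i)$ of $[M]$ are nontrivial (the orbit is strongly regular) and inverse to one another, $\Delta(w_i^{-1}\tilde x)$ is a single positive number, and $[M]^1$ is compact, so the fiber of $V^\rest_T$ over $[\tilde x]$ sits inside $[M]^1\times\{c\le|m|\le C\}\cdot\tilde x$. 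No uniformity over $[\tilde x]$ is needed or used: the lemma only requires each individual fiber to be (relatively) compact, and the global convergence in Proposition~\ref{reghyper} is supplied by the rapid decay of $\Phi$ along the orbits, not by uniform control of $v_T$. As a side remark, since $U_T$ is defined by the closed inequality $\delta^{1/2}(b)\ge e^T$, the sets $V^w_T$ are closed and $V^\rest_T$ is open, so strictly speaking its fibers are only relatively compact; this is harmless for the application.
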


We correspondingly write $f = \sum_w f_T^w + f_T^\rest$ for the decomposition of $f$ into its summands, restricted to these sets. The $w$-summand cancels with the corresponding summand of the asymptotics in \eqref{geom-removeasympt}:

\begin{lemma}
 For each $w$, 
$$\int_{[H]} \left(\sum_{\gamma\in M\backslash H(k)} f_T^w(\gamma g)  -1_{U_T}(g)\cdot  \int_{N(\adele)} f(w  n g) dn\right ) dg = 0.$$
\end{lemma}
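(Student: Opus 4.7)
The plan is to unfold both sides to integrals over $U_T' \subset B(k)\backslash H(\adele)$ and then reconcile them via the Iwasawa decomposition, using the normalization $\vol([N])=1$ from \S\ref{sspreparation}.

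First I would unfold the left-hand side. By definition $f_T^w = f\cdot 1_{V_T^w}$ on $M(k)\backslash H(\adele)$ (with $f$ pulled back from $M(\adele)\backslash H(\adele)$), so the standard $M(k)$-unfolding gives
$$\int_{[H]}\sum_{\gamma\in M(k)\backslash H(k)} f_T^w(\gamma g)\,dg \;=\; \int_{V_T^w} f(x)\,dx.$$
The defining map $M(k)\backslash H(\adele)\to B(k)\backslash H(\adele)$, $M(k)g\mapsto B(k)w^{-1}g$, factors through the bijection $M(k)\backslash H(\adele)\xrightarrow{w^{-1}\cdot} A(k)\backslash H(\adele)$ (since $w^{-1}M(k)w = A(k)$), followed by the quotient $A(k)\backslash H(\adele)\to B(k)\backslash H(\adele)$ with fiber $A(k)\backslash B(k) \cong N(k)$; concretely, the preimage of $h \in U_T'$ in $V_T^w$ consists of the cosets $\{M(k)\cdot wn\tilde h : n\in N(k)\}$ for any lift $\tilde h$ of $h$. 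Using the $M(\adele)$-invariance of $f$, this yields
$$\int_{V_T^w} f(x)\,dx \;=\; \int_{U_T'} \sum_{n\in N(k)} f(wn\tilde h)\,dh.$$

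Next, for the right-hand side, I would note that $g\mapsto \int_{N(\adele)} f(wng)\,dn$ is $N(\adele)$-invariant on the left (trivially) and $A(k)$-invariant (using the $M(\adele)$-equivariance of $f$ together with the product-formula identity $\delta|_{A(k)}=1$), hence descends to a function on $B(k)\backslash H(\adele)$; via the identification $U_T\cong U_T'$ from \eqref{leftright-H} the right-hand side equals $\int_{U_T'} \int_{N(\adele)} f(wn\tilde h)\,dn\,d\tilde h$. To compare the two expressions, I would parametrize $U_T'$ through the Iwasawa decomposition by writing $\tilde h = \tilde n\,a\,k$ with $\tilde n\in\mathcal{F}$, a fundamental domain for $N(k)$ in $N(\adele)$ of volume $\vol([N])=1$, $a$ in the cuspidal part of $A(k)\backslash A(\adele)$, and $k \in K/(K\cap B(\adele))$. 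On the left, the substitution $\mu = n\tilde n$ together with $N(\adele) = \bigsqcup_{n \in N(k)} n\,\mathcal F$ gives the unfolding identity
$$\int_{\mathcal F}\sum_{n\in N(k)} f(wn\tilde n\,ak)\,d\tilde n \;=\; \int_{N(\adele)} f(w\mu\,ak)\,d\mu,$$
while on the right, the substitution $n_0' = n_0\tilde n$ makes $\int_{N(\adele)} f(wn_0\tilde n\,ak)\,dn_0$ independent of $\tilde n$, so the outer $\tilde n$-integration contributes only the factor $\vol(\mathcal F) = 1$. Both sides therefore reduce to the same expression $\int_{U_T^A}\int_{K'}\int_{N(\adele)} f(w\mu\,ak)\,d\mu\,da\,dk$, proving the lemma.

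The substantive point of the argument is the reconciliation of the discrete $N(k)$-fiber of $V_T^w\to U_T'$ with the continuous $N(\adele)$-integration on the asymptotic side; these match precisely because of the normalization $\vol([N])=1$. Absolute convergence of all sums and integrals, needed to justify Fubini, is immediate from the Schwartz property of $f$ inherited from $\Phi$.
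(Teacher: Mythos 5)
Your proof is correct and follows essentially the same route as the paper: both arguments unfold the automorphic sum to an integral over the Siegel set $U_T'\subset B(k)\backslash H(\adele)$, recognize that the remaining discrete sum is over $N(k)$ (the paper phrases this as $M\backslash MwB(k)=wN(k)$, you phrase it as the fiber of $V_T^w\to U_T'$), and then cancel against the $N(\adele)$-integral by splitting the measure on $B(k)\backslash H(\adele)$ into $[N]$- and $[H]_B$-directions with $\vol([N])=1$. Your use of an explicit fundamental domain $\mathcal F$ for $N(k)$ in $N(\adele)$ is just a more verbose rendering of the paper's line ``writing the outer integral as $\int_{[H]_B}\int_{[N]}$.''
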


Here we identify the function $g\mapsto \int_{N(\adele)} f(w  n g) dn$, restricted to $U_T'\subset B(k)\backslash H(\adele)$, as a function on $U_T\subset [H]$.

\begin{proof}
Writing 
$$\sum_{\gamma\in M\backslash H(k)} f_T^w(\gamma g)  = \sum_{\delta\in B\backslash H(k)} \sum_{\gamma\in M\backslash MwB(k)} f_T^w(\gamma \delta g) $$ $$=  \sum_{\delta\in B\backslash H(k)} 1_{U_T'}(\delta g) \sum_{\gamma\in M\backslash MwB(k)} f(\gamma \delta g),$$
 we have
$$\int_{[H]} \left(\sum_{\gamma\in M\backslash H(k)} f_T^w(\gamma g)  -1_{U_T}(g)\cdot  \int_{N(\adele)} f(w  n g) dn\right ) dg = $$
$$= \int_{B(k)\backslash H(\adele)} 1_{U_T'}(g) \left( \sum_{\gamma\in M\backslash MwB(k)} f(\gamma g) - \int_{N(\adele)} f(w  n g) dn \right) dg.
$$

We have $M\backslash MwB(k) = w N(k)$, and writing the outer integral as $\int_{[H]_B} \int_{[N]}$, the expression vanishes.
\end{proof}

\begin{corollary}
 The expression \eqref{geom-removeasympt} is equal to 
$$ \int_{M\backslash H(\adele)} f(x) v_T(x) dx,$$
where $v_T(x)= \Vol\{a\in [M]|  \forall w \, \, \Delta(w^{-1} a  x) < e^T   \}$, where we remind that $\Delta(bk)=\delta^{\frac{1}{2}}(b)$ is the height function on $B(k)\backslash H(\adele)$.
\end{corollary}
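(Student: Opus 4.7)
The plan is to combine the two preceding lemmas with a straightforward unfolding. By the previous lemma, for each $w$ with $w^{-1}Mw\subset B$, the $w$-summand inside \eqref{geom-removeasympt} cancels the corresponding asymptotic contribution $1_{U_T'}(g)\int_{N(\adele)} f(wng)dn$ exactly. Hence, after this cancellation, what remains of \eqref{geom-removeasympt} is
\[
\int_{[H]} \sum_{\gamma\in M\backslash H(k)} f_T^{\mathrm{rest}}(\gamma g)\,dg,
\]
where $f_T^{\mathrm{rest}}$ is the restriction of $f$ (viewed as a function on $M\backslash H(\adele)$) to the image in $M\backslash H(\adele)$ of the set $V_T^{\mathrm{rest}}\subset M(k)\backslash H(\adele)$. (Here one has to be a bit careful: $f_T^{\mathrm{rest}}$ was defined on $M(k)\backslash H(\adele)$, but since $V_T^{\mathrm{rest}}$ is stable under $[M]$--translation on the left, it descends to a function on $M\backslash H(\adele)$, multiplied by the indicator of the image of $V_T^{\mathrm{rest}}$ there. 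I would make this identification explicit at the start.)

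Next I would unfold the sum over $M\backslash H(k)$ against the integral over $[H]=H(k)\backslash H(\adele)$, turning the above expression into
\[
\int_{M(k)\backslash H(\adele)} 1_{V_T^{\mathrm{rest}}}(x)\, f(\bar x)\,dx,
\]
where $\bar x$ denotes the image of $x$ in $M\backslash H(\adele)$. The compactness of the fibers of $V_T^{\mathrm{rest}}\to M\backslash H(\adele)$ (the second statement of the lemma preceding the corollary) together with the rapid decay of $f$ ensures absolute convergence, which is what is needed to justify both the unfolding and the Fubini step to follow.

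Now I would factor the integration through the tower $M(k)\backslash H(\adele)\to M(\adele)\backslash H(\adele)=M\backslash H(\adele)$ with fiber $[M]=M(k)\backslash M(\adele)$, obtaining
\[
\int_{M\backslash H(\adele)} f(x)\left(\int_{[M]} 1_{V_T^{\mathrm{rest}}}(ax)\,da\right)dx.
\]
Finally, I would unravel the definition of $V_T^{\mathrm{rest}}$ as the complement of $\bigcup_w V_T^w$. By construction, $x\in V_T^w$ if and only if $w^{-1}x\in U_T'\subset B(k)\backslash H(\adele)$, that is, $\Delta(w^{-1}x)\ge e^T$. Therefore the inner integral equals
\[
\Vol\bigl\{a\in[M]\,\big|\,\Delta(w^{-1}ax)<e^T\ \text{for all $w$}\bigr\}=v_T(x),
\]
which yields the asserted formula. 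The one place where I would be most cautious is the bookkeeping identifying $f_T^{\mathrm{rest}}$ on $M(k)\backslash H(\adele)$ with the pullback of $f$ from $M\backslash H(\adele)$ times an indicator function; once this and the absolute convergence are in hand, the remainder is formal unfolding.
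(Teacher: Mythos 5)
Your proposal is correct and follows the same route as the paper: use the preceding lemma to cancel the $w$-summands of $f$ against the subtracted asymptotics, reduce to $\int_{[H]}\sum_{\gamma\in M\backslash H(k)} f_T^{\mathrm{rest}}(\gamma h)\,dh = \int_{M(k)\backslash H(\adele)} f_T^{\mathrm{rest}}(h)\,dh$, and then Fubini through the $[M]$-fibration to recognize $v_T$. The paper's own proof is just the compressed version of this unfolding; your extra care about where $f_T^{\mathrm{rest}}$ lives and about absolute convergence is exactly the right bookkeeping to make it airtight.
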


\begin{proof}
This is just the integral 
$$ \int_{[H]} \sum_{\gamma\in M\backslash H(k)} f_T^\rest(\gamma h) dh = \int_{M(k)\backslash H(\adele)} f_T^\rest(h) dh.$$
\end{proof}

Finally, we explicate the $a_{-1}$ and $a_0$ coefficients of the expansion \eqref{expansion}. To formulate, we need to fix some measures. Let $[M]^1$ denote the idele classes of norm one, so that $[M] = [M]^1\times \Rplus \simeq [A]^1 \times \Rplus$, when we identify $M$ with the universal Cartan $A$ though any element $w$. Recall that the embedding $\Rplus\hookrightarrow [A]$ has been fixed to be compatible with the character $\delta^{\frac{1}{2}}$. We take the usual measure $\frac{dx}{x}$ on $\Rplus$, and then choose any compatible measures on $[M]^1$ and $M\backslash H(\adele)$ (compatible with our measure on $[H]$) for the above formulas to hold.

\begin{proposition}\label{reghyper} 
 For $\mathfrak o = M\backslash H$ a strongly regular hyperbolic orbit, writing $f(g)=\Phi(g^{-1}\gamma g)$, the coefficients of \eqref{expansion} are
$$ a_{-1} = - 2 \Vol([M]^1) \int_{M\backslash H (\adele)} f( g) dg$$
(the orbital integral), and
$$ a_0 =  \int_{M\backslash H (\adele)} f(g) v(g) dg,$$
(the weighted orbital integral), where
$$v(g) = v_0(g) = \Vol\{a\in [M]|  \forall w \, \, \Delta(w^{-1} a  g) < 1   \}.$$
\end{proposition}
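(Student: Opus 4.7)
The plan is to pick up where the text leaves off: by the corollary immediately preceding, our task reduces to computing the asymptotic expansion as $T\to\infty$ of $\int_{M\backslash H(\adele)} f(x)\,v_T(x)\,dx$, which by Lemma~\ref{constantcoeffs} differs from the truncated integral \eqref{geom-truncation} only by an error of size $o(e^{-NT})$. So it suffices to produce the expansion $-a_{-1}T + a_0 + o(e^{-NT})$ for this integral and read off the two coefficients.

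I would parametrize $[M] = [M]^1 \times \Rplus$ through the fixed identification $M \simeq A$ via some Weyl representative, with the embedding $\Rplus \hookrightarrow [M]$ the one compatible with $\delta^{\frac{1}{2}}$. Since any $\Rplus$-valued character of $[M]$ is trivial on the norm-one subgroup, $\delta^{\frac{1}{2}}(w^{-1}a^1 w) = 1$ for $a^1 \in [M]^1$ and either Weyl representative, while the non-trivial element inverts the $\Rplus$-coordinate. Writing $a = a^1 t$ and $c_i(x) := \Delta(w_i^{-1}x)$, the defining conditions of $v_T(x)$ become $t c_1(x) < e^T$ and $t^{-1}c_2(x) < e^T$, with $a^1$ unconstrained, so
$$v_T(x) = \Vol([M]^1)\cdot \Vol\bigl\{t \in \Rplus : c_2(x)\,e^{-T} < t < c_1(x)^{-1}e^T\bigr\}.$$

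The crucial geometric input is the global inequality $c_1(x)c_2(x) \le 1$ on $H(\adele)$, which is the substance of the normalization~\eqref{condBK}. Using the intrinsic description $\Delta(g) = \|e_2 g\|^{-1}$ (sup-norm at finite places, Euclidean at infinite ones) and the identity $e_2 w_2^{-1} = e_1$, this becomes $\|e_1 g\| \cdot \|e_2 g\| \ge 1$, which at each place follows from the determinant bound $|\det g_v|_v \le \|e_1 g_v\|_v \|e_2 g_v\|_v$ (Hadamard at archimedean places, a triangle inequality at non-archimedean ones), combined via the product formula. Given $c_1 c_2 \le 1$, the interval above is non-empty for every $T \ge 0$ and has $\tfrac{dt}{t}$-length exactly $2T - \log(c_1c_2)(x)$, giving the clean identity
$$v_T(x) = 2T\,\Vol([M]^1) + v(x), \qquad v(x) = -\Vol([M]^1)\log(c_1c_2)(x),$$
and $v$ coincides with the set-theoretic $v_0$ of the statement precisely because the inequality $c_1c_2 \le 1$ renders the literal $\max$-with-zero in that definition automatic. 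Integrating against $f$ and matching with \eqref{expansion} then immediately yields $a_{-1} = -2\Vol([M]^1)\int f$ and $a_0 = \int f\cdot v$.

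The main obstacle is securing the global bound $c_1c_2 \le 1$; this is the hidden content of condition~\eqref{condBK} and requires a good pair $(B,K)$ compatible with $|\det|_\adele = 1$. Once it is in place the rest of the argument is essentially algebraic, with no analytic estimates required beyond the absolute convergence of $\int f\cdot v$, which follows from the at-most-logarithmic growth of $v$ at the cusps of $M\backslash H(\adele)$ against the Schwartz decay of $f(g) = \Phi(g^{-1}\gamma g)$ for $\gamma$ strongly regular.
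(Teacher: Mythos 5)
Your proof is correct and follows essentially the same route as the paper's: reduce via the preceding corollary to the asymptotics of $\int f\cdot v_T$, observe that $v_T$ equals $2T\,\Vol([M]^1)$ plus a $T$-independent constant once the cuspidal windows are disjoint, and use \eqref{condBK} (in the form $c_1c_2\le 1$) to see this already holds at $T=0$ so that the constant is $\int f\cdot v_0$. What you have added is precisely the explicit computation the paper declares ``easy to see.'' One small remark: the product formula is not actually needed in your Hadamard argument, since the local determinant bound already gives $c_{1,v}c_{2,v}\le 1$ at every place; alternatively, the stated form of \eqref{condBK} yields $c_1c_2\le 1$ directly by noting that $c_1c_2$ is $A(\adele)$-invariant and reducing to $x\in NK$, where $c_1(x)=1$ and $c_2(x)=\Delta(w^{-1}n)\le 1$.
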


\begin{proof}
 
 It is easy to see that in the domain (for $T$) where the sets $V_T^w$ are all disjoint (or, more precisely, intersect with measure zero), the expression is linear in $T$, with linear coefficient equal  to $2 \Vol([M]^1) \int_{M\backslash H (\adele)} f( g) dg$. 
 
 By our choice of an Iwasawa pair $(B,K)$ with property \eqref{condBK}, this will already be true for $T=0$.
\end{proof}

\begin{remark}
 In usual expositions of the trace formula, the torus $M$ is identified with a subtorus of $B$, $w$ runs over the Weyl group, so for $w=1$ the condition $\Delta(w^{-1} a  x) < 1$ reads $\delta(a)<1$, and hence
$$ v(g) = - \log(\Delta(w n_g)),$$
where $w$ is the non-trivial element of the Weyl group and $g = a_g\cdot n_g \cdot k_g \in M N K$ is an Iwasawa decomposition for $g$. Indeed, we may assume that $g=n_g$ and then we have $\Delta(w a n_g) = \Delta(a)^{-1} \Delta(w n_g)$, so the conditions in the definition of $v(g)$ translate to $\Delta(a) <1$ and $\Delta(a)> \Delta(wn_g)$.
\end{remark}

\begin{remark}
 If $M$ is chosen inside of the chosen Borel $B$ (and through that, identified with the universal Cartan $A$), we can write the integral over $M\backslash H$ as an integral over $N\times K$, where the measure on $K$ is chosen compatibly with the chosen measure on $M\backslash H$. More precisely, we have
 $$ \int_{M\backslash H(\adele)} \Phi(g^{-1}\gamma g) dg = \int_{N(\adele)}\int_K \Phi(k^{-1} n^{-1} \gamma n k) dk dn $$
 $$ =\int_{N(\adele)} \int_K \Phi(k^{-1} \gamma n k) dk dn,$$
 where we have used the fact that $\gamma$ is regular and rational. 
\end{remark}

There is also a regular hyperbolic orbit (the orbit represented by a with eigenvalues $(1, -1)$) which is not strongly regular -- that means, centralizers are not connected tori, but disconnected groups whose identity components are tori. \emph{This should not be confused with a geometric orbit:} this non-strongly regular hyperbolic orbit belongs to a geometric orbit isomorphic to $\mathcal N_G(M)\backslash H$, where $M$ is a split torus and $\mathcal N_G$ denotes the normalizer, but its $k$-points also contain elliptic elements. The analysis of this hyperbolic orbit proceeds as before, by pulling the function $f$ back to $M\backslash H(\adele)$ via the quotient map $M\backslash H\to \mathcal N_G(M)\backslash H$. The result is the same formula as in Proposition \ref{reghyper}, but with a coefficient of $\frac{1}{2}$.

Finally, we need to analyze the contribution of $\mathfrak o=$ the class of the identity element. 

\begin{remark}
 The explicit calculation of the contribution of the non-regular classes is not really important! The result is not Eulerian and we don't expect to use it directly. Rather, in every comparison of trace formulas, the equality of irregular summands is deduced indirectly from the equality of regular ones. More important is a property of compatibility with Poisson summation (at the level of Lie algebras), that I will not discuss in this paper.
\end{remark}

\begin{proposition}
 The summand of \eqref{geom} corresponding to $\mathfrak o=$ the class of the identity element is equal to
$$\Vol([H]) \Phi(1) + [Z(F_\Phi,1-\frac{s}{2})],$$
where $F_\Phi(x) = \int_K \Phi\left(k^{-1} \begin{pmatrix} 1 & x \\ & 1 \end{pmatrix} k\right) dk$ and  $Z$ denotes the Tate integral
$$ Z(f,s) = \int_{\adele^\times} f(x) |x|^s d^\times x.$$
The measure on $\adele^\times$ is obtained here by fixing compatible measures on $A(\adele)$ and $K$ (compatible with our fixed measure on $H(\adele)$), \emph{identifying $A(\adele)\simeq \adele^\times$ via the character $\delta^{-1}$}.
\end{proposition}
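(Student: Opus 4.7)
The plan is to split $\mathfrak{o}$ into the identity $\{1\}$ and the non-trivial unipotent orbit $\mathcal U(k)$; in $\PGL_2$ all non-trivial unipotent elements form a single $H(k)$-conjugacy class, with stabilizer equal to $N$ at the representative $u = \begin{pmatrix} 1 & 1 \\ 0 & 1 \end{pmatrix}$. The identity contribution is easy: the constant function $g\mapsto\Phi(1)$, integrated against $\Delta_U^s\,dg$, yields a function of $s$ whose only singularity on $\Re(s)<2$ sits at $s=2$ (from the cusp integral $\int_{\delta^{1/2}\ge e^{T_0}}\delta^{s/2-1}\,da$). Hence this integral is holomorphic at $s=0$ with value $\Phi(1)\Vol([H])$, contributing exactly this constant to the Laurent truncation $[\cdot]$ (no polar part).

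For the unipotent piece I would first unfold: since both $\Phi(g^{-1}ug)$ and $\Delta_U^s$ are left $N(\adele)$-invariant (the former because $N$ is abelian and $u\in N$), and $\Vol([N])=1$, the unipotent contribution becomes $\bigl[\int^*_{N(\adele)\backslash H(\adele)}\Phi(g^{-1}u g)\Delta_U^s(g)\,dg\bigr]$. Writing $g=a(y)k$ in Iwasawa coordinates with measure $\delta^{-1}(a)\,da\,dk$, the direct computation $a(y)^{-1}u\,a(y)=n(y^{-1})$ turns the inner $K$-integral into $F_\Phi(y^{-1})$; the substitution $x=y^{-1}$ then yields
\[ U(\Phi,s) \;=\; \int_{|x|\le e^{-2T_0}} F_\Phi(x)\,|x|^{1-s/2}\,d^\times x \;+\; \int_{|x|> e^{-2T_0}} F_\Phi(x)\,|x|\,d^\times x, \]
where $e^{T_0}$ is the threshold defining the Siegel neighborhood $U$ (and $A(\adele)\simeq\adele^\times$ is identified via $\delta^{-1}$, as stipulated by the proposition).

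The last step will be to compare $U(\Phi,s)$ with the Tate integral $Z(F_\Phi,1-s/2)$: their difference equals
\[ \int_{|x|> e^{-2T_0}} F_\Phi(x)\,|x|\,(|x|^{-s/2}-1)\,d^\times x, \]
an entire function of $s$ (the integration region is bounded away from $x=0$ and $F_\Phi$ is Schwartz--Bruhat, so absolute convergence is uniform in $s$) that vanishes at $s=0$ by inspection. Therefore $Z(F_\Phi,1-s/2)-U(\Phi,s)=O(s)$, so the two-term truncations at $s=0$ coincide, $[U(\Phi,s)]=[Z(F_\Phi,1-s/2)]$, and adding the identity contribution produces the claimed expression.

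The main subtlety will be confirming that $U(\Phi,s)$ genuinely has a simple pole at $s=0$ (so the claimed $[Z]$ term is truly two-dimensional, not just a constant): this pole is global rather than archimedean in nature, inherited from the Dedekind zeta factor $\zeta_k$ in the Euler factorization of $Z(F_\Phi,1-s/2)$ evaluated at argument $1$. Independence of the two-term Laurent truncation from the auxiliary cutoff $T_0$ is guaranteed by Lemma~\ref{constantcoeffs}, which also legitimizes treating the regularized integral on $[H]$ as a well-defined two-term germ in the first place.
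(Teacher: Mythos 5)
Your approach is essentially the same as the paper's: split the class $\mathfrak{o}$ into the identity contribution ($\Vol([H])\Phi(1)$, with the $[\cdot]$-truncation discarding the derivative-in-$s$ term) plus the unipotent orbit, unfold, use the Iwasawa decomposition and the $\delta^{-1}$-identification $A(\adele)\simeq\adele^\times$ to produce $F_\Phi$ and the measure $|x|^{1-s/2}\,d^\times x$, and compare with $Z(F_\Phi,1-s/2)$. The paper packages the last comparison by first replacing $\Delta_U^s$ by $\Delta^s$ (a bona fide function on $Y(\adele)=N\backslash H(\adele)$), deferring the justification to ``details as in Lemma~\ref{differentvariations}''; your explicit $O(s)$-estimate of $Z(F_\Phi,1-s/2)-U(\Phi,s)$ is a concrete substitute for the same point.

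There is, however, one genuine imprecision, and it sits exactly at the step the paper delegates to Lemma~\ref{differentvariations}. You assert that $\Delta_U^s$ is left $N(\adele)$-invariant; as stated this does not parse. $\Delta_U^s$ is a function on $[H]$, on which $N(\adele)$ does not act, and its pullback along $N(k)\backslash H(\adele)\to[H]$ is \emph{not} $N(\adele)$-invariant: the preimage of the Siegel neighborhood $U\subset[H]$ in $N(k)\backslash H(\adele)$ is the canonical lift of $U$ together with its other $H(k)$-translates (classically, the horoballs based at rational points of $\partial\mathcal H$ other than $\infty$), and those extra components are not stable under $N(\adele)$-translation. So the unfolding to $N(\adele)\backslash H(\adele)$ produces the integral of $\Phi(g^{-1}ug)$ against the \emph{constant term} of this pullback, not against the indicator-truncated factor ($|x|^{-s/2}$ for $|x|\le e^{-2T_0}$, else $1$) that defines your $U(\Phi,s)$. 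These two functions agree near the cusp; away from it they differ, but their difference, integrated against $\Phi(g^{-1}ug)$, is holomorphic in $s$, vanishes at $s=0$ (both equal $1$ there), and converges absolutely because $F_\Phi$ is Schwartz--Bruhat on $\adele$ --- precisely the same shape of estimate as your final $O(s)$-comparison. So the $[\cdot]$-germs coincide, but this step has to be argued rather than asserted; once it is, the rest of your computation goes through and matches the paper.
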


\begin{proof}
Notice that the nilpotent cone can be identified with the affine closure of $Y:= N\backslash H$ (i.e.\ the union of $N\backslash H$ and a ``cusp'' corresponding to the identity element $1$), and that the restriction of $\Phi$ to it is of rapid decay at the ``funnel'' of $Y$. Thus, the sum $\sum_{\gamma\in \mathfrak o} {\Phi}(g^{-1}\gamma g)$ can be thought of as a pseudo-Eisenstein series, except that the function we input does not live in $\mathcal S(N\backslash H(\adele)$, but extends (locally, at every place) to a continuous function on the affine closure. 

Set $f=$ the restriction of $\Phi$ to the regular nilpotent set, identified with a function on $Y(\adele)=N\backslash H(\adele)$.  We split the sum $\sum_{\gamma\in \mathfrak o} {\Phi}(g^{-1}\gamma g)$ into $\Phi(1) + \sum_{\gamma\in Y(k)} f(\gamma g).$ The first term is constant and hence integrable over $[H]$. For the other, to calculate the contribution of $\mathfrak o$ to \eqref{geom}, it is here preferable to use meromorphic continuation, and replace the term $\Delta^s_U$ by simply $\Delta^s$, considered here as a function on $Y(\adele)$. (The details are as in Lemma \ref{differentvariations}.) We obtain:

$$  \left[\int_{[H]}^* \sum_{\gamma\in \mathfrak o} {\Phi}(g^{-1}\gamma g) \Delta_U^s(g) dg\right] = \Vol([H]) \Phi(1) + \left[ \int_{N\backslash H(\adele)}  f(g) \Delta^s(g) dg\right]. $$

Decomposing $\int_{N\backslash H(\adele)}$ as $\int_{A(\adele)} \delta^{-1}(a) \int_K$, and remembering that $\Delta = \delta^\frac{1}{2}$, we obtain:
$$ \Vol([H]) \Phi(1) + \left[ \int_{\adele^\times}  F_\Phi (a) |a|^{1-\frac{s}{2}} d^\times a\right].$$

\end{proof}

It can easily be seen (locally, at every place) that the function $F_\Phi$ is a Schwartz function on $\adele$. The residue of $Z(F_\Phi, 1-\frac{s}{2})$ at $s=0$ will therefore be equal to 
$$- 2 \Vol([\Gm]^1) \widehat{F_\Phi}(0) = - 2 \Vol([\Gm]^1) \int_{\adele} \int_K \Phi\left(k^{-1} \begin{pmatrix} 1 & x \\ & 1 \end{pmatrix} k\right) da dx =$$
$$=-  \Vol([A]^1) \int_{\adele} \int_K \Phi\left(k^{-1} \begin{pmatrix} 1 & x \\ & 1 \end{pmatrix} k\right) da dx.$$

\begin{remark}
 A subtle point here is that one cannot fix measures $dx$ on $\adele$ and $d^\times x$ on $\adele^\times$ which satisfy $d^\times x = \frac{dx}{x}$. Thus, we are using the standard measure on $\adele = N(\adele)$ (which is self-dual with respect to characters of $\adele/k$), and quite an arbitary measure on $A(\adele)=\adele^\times$ (chosen compatibly with the measure on $K$). Typically, given a measure on $[\Gm]$ one takes the measure on $[\Gm]^1$ to be the one compatible with the short exact sequence
 $$ 1\to [\Gm]^1\to [\Gm] \xrightarrow{|\bullet|} \Rplus \to 1,$$
 and the standard measure $\frac{dx}{x}$ on $\Rplus$. However, for the universal Cartan $A$ we have above taken the measure on $[A]^1$ to be the one corresponding to the short exact sequence 
 $$ 1\to [A]^1\to [A] \xrightarrow{\delta^{\frac{1}{2}}} \Rplus \to 1.$$
 This explains the disappearance of the factor $2$ from the last equality, since the identification $A\simeq\Gm$ was here via the character $\delta^{-1}$. 

 (It is clearly a poor choice to be working with $\delta^\frac{s}{2}$ instead of $\delta^s$, which I have followed for historical compatibility.)
\end{remark}

We have shown:

\begin{theorem}\label{geometric-final}
Choose any split torus $M\subset H$. Setting $\Phi(g) = \Phi_1^\vee\star \Phi_2$, we have
\begin{eqnarray}
\TF_{-1}(\Phi_1\otimes\Phi_2)= - \sum_{{\alpha\in A(k)}}  \Vol([A]^1) \int_{N(\adele)} \int_K \Phi\left(k^{-1} n k\right) dk dn,
\end{eqnarray}
and 
\begin{eqnarray}
\TF_0(\Phi_1\otimes\Phi_2)= \sum_{\mathfrak o = [\xi]:\mbox{ elliptic}} \Vol([H_\xi]) \int_{H_\xi(\adele)\backslash H(\adele)} \Phi(g^{-1}\xi g) dg + \nonumber \\
 + \frac{1}{2} \sum_{\alpha\in M(k)} \int_{M\backslash H (\adele)} \Phi(g^{-1} \alpha g) v(g) dg + \nonumber \\ 
 \Vol([H]) \Phi(1) + [Z(F_\Phi,1-\frac{s}{2})]_0,
\end{eqnarray}
where $F_\Phi(x) = \int_K \Phi\left(k^{-1} \begin{pmatrix} 1 & x \\ & 1 \end{pmatrix} k\right) dk$.
\end{theorem}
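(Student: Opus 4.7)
The plan is to read off the $a_{-1}$ and $a_0$ Laurent coefficients from each orbital summand of the decomposition \eqref{geom} and assemble them. All the analytic work has already been carried out: the asymptotic expansion for hyperbolic orbital sums (Proposition \ref{reghyper}) and the Tate-integral identification of the identity orbit (the proposition just above the theorem). What remains is bookkeeping, together with the combinatorial merging of the identity contribution into a uniform sum over $\alpha \in A(k)$.

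For each orbit type the contribution reads as follows. An elliptic class $\mathfrak{o}=[\xi]$ has anisotropic stabilizer, so its orbital sum is compactly supported on $[H]$ and the regularized integral is honest; hence $a_{-1}=0$ and $a_0=\Vol([H_\xi])\int_{H_\xi(\adele)\backslash H(\adele)}\Phi(g^{-1}\xi g)\,dg$. For the identity class, the proposition preceding the theorem gives $\Vol([H])\Phi(1)+[Z(F_\Phi,1-\tfrac{s}{2})]$; its $a_0$ piece matches the statement, while the residue of the Tate integral at $s=0$ contributes $-\Vol([A]^1)\int_{N(\adele)}\int_K\Phi(k^{-1}nk)\,dk\,dn$ to $\TF_{-1}$ — the factor of $2$ in the Tate residue $-2\Vol([\Gm]^1)\widehat{F_\Phi}(0)$ being absorbed by the relation $\Vol([A]^1)=2\Vol([\Gm]^1)$, arising from the differing $\delta^{1/2}$- versus $\delta^{-1}$-identifications of $A$ with $\Gm$ explained in the Remark. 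For strongly regular hyperbolic classes, Proposition \ref{reghyper} supplies, for any representative $\alpha\in M(k)$, the coefficients $a_{-1}=-2\Vol([M]^1)\int_{M\backslash H(\adele)}\Phi(g^{-1}\alpha g)\,dg$ and $a_0=\int_{M\backslash H(\adele)}\Phi(g^{-1}\alpha g)v(g)\,dg$; the unique non-strongly-regular hyperbolic class (representative $\alpha=-1$, stabilizer $N_G(M)$) obeys the same formula with an extra factor of $\tfrac{1}{2}$, as noted immediately after Proposition \ref{reghyper}.

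Assembling: $k$-rational strongly regular hyperbolic classes are $W$-orbits on $M(k)\smallsetminus\{\pm 1\}$, so summing over $\alpha\in M(k)$ directly double-counts them — this halves $-2\Vol([M]^1)$ to $-\Vol([A]^1)$ in the $a_{-1}$ sum and produces the uniform factor $\tfrac{1}{2}$ in the $a_0$ sum, while $\alpha=-1$ is $W$-fixed and already carries its intrinsic $\tfrac{1}{2}$. Using the Iwasawa rewriting $\int_{M\backslash H(\adele)}\Phi(g^{-1}\alpha g)\,dg = \int_{N(\adele)}\int_K\Phi(k^{-1}\alpha nk)\,dk\,dn$ from the Remark after Proposition \ref{reghyper}, the $\alpha=1$ term of the hyperbolic $a_{-1}$ sum coincides exactly with the identity-class Tate residue computed above, so the two merge into a single sum over all $\alpha\in A(k)$, yielding the stated formula for $\TF_{-1}$; the $\TF_0$ formula is then the direct assembly of the per-class $a_0$'s. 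The only mild obstacle is keeping the measure normalizations consistent — especially the twin identifications of $A$ with $\Gm$ — which requires care rather than new ideas.
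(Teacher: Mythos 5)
Your proposal is correct and follows the same route as the paper: the theorem is stated with the bare tag ``We have shown:'' precisely because its proof consists of reading off the $a_{-1}$ and $a_0$ coefficients from the elliptic, hyperbolic, and identity-class computations carried out just before it, and assembling them. Your account spells out the bookkeeping the paper leaves implicit — converting from a sum over orbits to a sum over $\alpha \in M(k)$ (halving the strongly regular classes, recognizing the intrinsic $\tfrac12$ at $\alpha = -1$), and identifying the Tate residue from the identity class with the formal $\alpha = 1$ term so that $\TF_{-1}$ becomes a single $A(k)$-sum — and the measure comparison $\Vol([A]^1) = 2\Vol([\Gm]^1)$ is exactly the point made in the paper's Remark.
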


Equating the spectral and geometric expressions for $\TF_0$ from Theorems \ref{spectral-final} and \ref{geometric-final} (see also \eqref{Selberg-spectral}), we get the Selberg trace formula:

\begin{theorem}
 Let $\Phi \in \mathcal S([H])$, then 
 $$\sum_{\pi \in \hat H^\Aut_\disc} \tr(\pi(\Phi)) + \frac{1}{4} \tr (M(0) \pi_0(\Phi))-  \frac{1}{4\pi i} \int_{-i\infty}^{i\infty} \tr(M(-z)M'(z)\pi_{z}(\Phi)) dz = $$ 
$$ =\sum_{\mathfrak o = [\xi]:\mbox{ elliptic}} \Vol([H_\xi]) \int_{H_\xi(\adele)\backslash H(\adele)} \Phi(g^{-1}\xi g) dg  +
  \frac{1}{2} \sum_{\alpha\in M(k)} \int_{M\backslash H (\adele)} \Phi(g^{-1} \alpha g) v(g) dg + $$ 
\begin{eqnarray}
 \Vol([H]) \Phi(1) + [Z(F_\Phi,1-\frac{s}{2})]_0.
 \end{eqnarray}

\end{theorem}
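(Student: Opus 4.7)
The plan is to deduce this final theorem as an immediate consequence of Theorems \ref{spectral-final} and \ref{geometric-final}, which respectively express $\TF_0(\Phi_1\otimes\Phi_2)$ in spectral and geometric terms, combined with a standard translation of bilinear Hilbert--Schmidt pairings into traces. Since $\TF_0$ is well-defined as the constant coefficient of a Laurent expansion (Lemma \ref{constantcoeffs}), equating the two formulas for it yields the identity.

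First I would take $\Phi_1, \Phi_2 \in \mathcal S(H(\adele))$ with $\Phi = \Phi_1^\vee \star \Phi_2$, and observe that the geometric side of Theorem \ref{geometric-final} is already written entirely in terms of $\Phi$, so no further work is needed there. The main translation happens on the spectral side: for every automorphic representation $\pi$ appearing in $L^2_{\disc}([H])$, and more generally for the principal series $\pi_z$ and its residual quotients $\pi_z^\disc$, we have the identity
\[
\left<\pi(\Phi_1\, dg), \pi(\Phi_2\, dg)\right> \;=\; \tr\bigl(\pi(\Phi_1^\vee \star \Phi_2\, dg)\bigr) \;=\; \tr(\pi(\Phi\, dg)),
\]
where the left-hand side is the bilinear Hilbert--Schmidt pairing (between the operator on $\pi$ and the operator on $\tilde\pi$ obtained from $\Phi_2$ via the canonical pairing). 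This is the standard identity that $\Phi \mapsto \pi(\Phi dg)$ is a $*$-homomorphism-like map with $\pi(\Phi_1^\vee \star \Phi_2) = \pi(\Phi_1)^* \pi(\Phi_2)$ under the appropriate pairing, so its trace equals the Hilbert--Schmidt inner product of $\pi(\Phi_1)$ and $\pi(\Phi_2)$.

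Applying this in each summand of \eqref{spectral-STF}, I obtain: the cuspidal and residual terms become $\sum_\pi \tr(\pi(\Phi dg))$, i.e.\ $\sum_{\pi \in \hat H^\Aut_\disc} \tr(\pi(\Phi))$; the $M(0)$-term becomes $\frac{1}{4}\tr(M(0)\pi_0(\Phi dg))$; and the continuous integral term becomes
\[
-\frac{1}{2\pi i}\int_0^{i\infty} \tr\bigl(\pi_z(\Phi\, dg) M(-z) M'(z)\bigr)\, dz \;=\; -\frac{1}{4\pi i}\int_{-i\infty}^{i\infty} \tr(M(-z) M'(z) \pi_z(\Phi))\, dz,
\]
where the last equality uses the symmetry $z\mapsto -z$ together with the functional equation $M(-z) M(z) = \Id$ of Theorem \ref{meromcont} to fold the integral over the full imaginary axis (picking up the factor $\frac{1}{2}$ accordingly). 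Equating this reorganized spectral expression with the geometric expression from Theorem \ref{geometric-final} yields the identity claimed.

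The only mild subtlety, and hence the main thing to verify carefully, is the bookkeeping in the last step: identifying the bilinear Hilbert--Schmidt pairings with operator traces under our sign/duality conventions, and correctly folding the integral from $(0, i\infty)$ to $(-i\infty, i\infty)$ so that the factor $-\frac{1}{4\pi i}$ in \eqref{Selberg-spectral} emerges precisely. Once this is checked, no further argument is required: the theorem is literally the equation $\TF_0 = \TF_0$ applied to the two formulas we have already proved.
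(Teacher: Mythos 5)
Your proposal is correct and takes essentially the same approach as the paper: equating Theorems \ref{spectral-final} and \ref{geometric-final}, with the translation of bilinear Hilbert--Schmidt pairings into traces via $\Phi = \Phi_1^\vee\star\Phi_2$ (already noted in the paper right after Theorem \ref{spectral-final}) and the folding of the integral over $(0,i\infty)$ to $(-i\infty,i\infty)$ using the $z\mapsto -z$ symmetry that follows from differentiating $M(-z)M(z)=\Id$.
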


\section{The invariant trace formula (TO BE ADDED!)}

TO BE ADDED.

\bibliographystyle{alphaurl}
\bibliography{selberg}

\end{document}